\newcommand{\ds}{\displaystyle}
\newcommand{\F}{\mathbb{F}}
\newcommand{\R}{\mathbb{R}}
\newcommand{\C}{\mathbb{C}}
\renewcommand{\H}{\mathbb{H}}
\renewcommand{\O}{\mathbb{O}}
\renewcommand{\P}{\mathbb{P}}
\renewcommand{\S}{\mathbb{S}}
\newcommand{\Zorn}{\mathrm{Zorn}}
\newcommand{\Id}{\mathrm{id}}
\newcommand{\Hom}{\mathrm{Hom}}
\newcommand{\Span}{\mathrm{Span}}
\newcommand{\Set}{\mathrm{\bf Set}}
\newcommand{\Mag}{\mathrm{\bf Mag}}
\newcommand{\Loop}{\mathrm{\bf Loop}}
\newcommand{\Grp}{\mathrm{\bf Grp}}
\newcommand{\catC}{\mathrm{\bf C}} 
\newcommand{\catA}{\mathrm{\bf A}}
\newcommand{\Com}{\mathrm{\bf Com}}
\newcommand{\As}{\mathrm{\bf As}}
\newcommand{\Alg}{\mathrm{\bf Alg}}
\newcommand{\Alt}{\mathrm{\bf Alt}}
\newcommand{\calP}{{\mathcal{P}}}
\newcommand{\bprod}{\bullet}
\newcommand{\sotimes}{\mbox{\small \,$\otimes$\,}}
\newcommand{\Diff}{{\mathrm{Diff}}}
\newcommand{\Inv}{{\mathrm{Inv}}}
\renewcommand{\over}{\slash}
\newcommand{\under}{\backslash}
\newcommand{\HI}{H_{\mathrm{I}}}
\newcommand{\HIex}{H_{\mathrm{I}}^{\scoprod}}
\newcommand{\HU}{H_{\mathrm{U}}}
\newcommand{\HUex}{H_{\mathrm{U}}^{\scoprod}}
\newcommand{\HUCD}{H_{\mathrm{UCD}}}
\newcommand{\HUCDex}{H_{\mathrm{UCD}}^{\scoprod}}
\newcommand{\Hi}{H_{\mathrm{inv}}}
\newcommand{\Hiex}{H_{\mathrm{inv}}^{\scoprod}}
\newcommand{\Hdex}{H_{\mathrm{dif}}^{\scoprod}}
\newcommand{\HFdB}{H_{\mathrm{FdB}}}
\newcommand{\HFdBnc}{H_{\mathrm{FdB}}^{\mathrm{nc}}}
\newcommand{\HFdBex}{H_{\mathrm{FdB}}^{\scoprod}}
\newcommand{\D}{\Delta}
\newcommand{\Di}{\Delta_{\mathrm{inv}}}
\newcommand{\Diex}{\Delta_{\mathrm{inv}}^{\scoprod}}
\newcommand{\DFdB}{\Delta_{\mathrm{FdB}}}
\newcommand{\DFdBnc}{\Delta_{\mathrm{FdB}}^{\mathrm{nc}}}
\newcommand{\DFdBex}{\Delta_{\mathrm{FdB}}^{\scoprod}}
\newcommand{\calC}{\,{\mathcal{C}}}
\newcommand{\calM}{\,{\mathcal{M}}}
\newcommand{\calN}{\,{\mathcal{N}}}
\newcommand{\calE}{\,{\mathcal{E}}}
\newcommand{\bfe}{\mathbf{e}} 
\newcommand{\bfk}{\mathbf{k}}
\newcommand{\bfm}{\mathbf{m}}
\newcommand{\bfn}{\mathbf{n}}
\newcommand{\bfp}{\mathbf{p}}
\newcommand{\bfq}{\mathbf{q}}
\numberwithin{equation}{subsection}
\newtheorem{theorem}[equation]{Theorem} 
\newtheorem{proposition}[equation]{Proposition}
\newtheorem{corollary}[equation]{Corollary}
\newtheorem{lemma}[equation]{Lemma}
\newtheorem{defin}[equation]{Definition}
\newenvironment{definition}{\begin{defin} \em}{\end{defin}\par\vspace{.2cm}}
\newtheorem{rem}[equation]{Remark}
\newenvironment{remark}{\begin{rem} \em}{\end{rem}\par\vspace{.2cm}}
\newtheorem{exa}[equation]{Example}
\newenvironment{example}{\begin{exa} \em}{\end{exa}\par\vspace{.2cm}}
\newtheorem{exas}[equation]{Examples}
\newenvironment{examples}{\begin{exas} \em}{\end{exas}\par\vspace{.2cm}}
\newenvironment{proof}{\bigskip \noindent{\bf Proof.\/}}
        {\hfill$\Box$\par\vspace{.2cm}}
\newenvironment{proof of}[2]{\bigskip \noindent{\bf Proof of #1~\ref{#2}.\/}}
        {\hfill$\square$\par\vspace{.2cm}}
\newenvironment{proof that}[1]{\bigskip \noindent{\bf Proof that #1.\/}}
        {\hfill$\square$\par\vspace{.2cm}}
\newcommand{\treeO}{\, 
\begin{tikzpicture}[scale=.3]
\draw (0,0) -- (0,1);
\end{tikzpicture}\, }
\newcommand{\treeA}{\, 
\begin{tikzpicture}[scale=.15]
\draw (1,0) -- (1,1) ;
\draw (1,1) -- (0,2) ;
\draw (1,1) -- (2,2) ;
\end{tikzpicture}\, }
\newcommand\treeAB{\, 
\begin{tikzpicture}[scale=.12]
\draw (2,0) -- (2,1) ;
\draw (2,1) -- (0,3) ;
\draw (2,1) -- (3,2) ;
\draw (1,2) -- (2,3) ;
\end{tikzpicture}\, }
\newcommand\treeBA{\, 
\begin{tikzpicture}[scale=.12]
\draw (1,0) -- (1,1) ;
\draw (1,1) -- (3,3) ;
\draw (1,1) -- (0,2) ;
\draw (2,2) -- (1,3) ;
\end{tikzpicture}\, }
\newcommand{\veegraft}[2]{\, 
\begin{tikzpicture}[scale=.2]%%%%%[scale=.15]
\draw (1,0) -- (1,1) ;
\draw (1,1) -- (0.2,1.8) ;
\draw (1,1) -- (1.8,1.8) ;
\draw (1,1.5) node[above left] {$#1$} ;
\draw (1,1.5) node[above right] {$#2$} ;
\end{tikzpicture}\, }
\newcommand{\lgraft}[2]
{\, 
\begin{tikzpicture}[scale=.2] %(0,0)(4,4)
\draw (2.1,.9) -- (1.4,1.6) ;
\draw (2.6,0) node {$#1$} ;
\draw (0.5,2.6) node {$#2$} ;
\end{tikzpicture}\, }
\newcommand{\rgraft}[2]
{\, 
\begin{tikzpicture}[scale=.2] %(0,0)(4,4)
\draw (1.5,.9) -- (2.2,1.6) ;
\draw (.4,0) node {$#1$} ;
\draw (3,2.6) node {$#2$} ;
\end{tikzpicture}\, }
\newcommand{\lvertexgraft}[1]
{\, 
\begin{tikzpicture}[scale=.2] %(0,0)(4.5,3.5)
\draw (2,-1) -- (2,.5) ;
\draw (2,.5) -- (1,1.5) ;
\draw (2,.5) -- (3,1.5) ;
\draw (0.6,3) node {$#1$} ;
\end{tikzpicture}\, }
\newcommand{\combLgraft}[3]
{\, 
\begin{tikzpicture}[scale=.2] %(-1,.5)(7,8)
\draw (5,-.5) -- (5,1) ;
\draw (5,1) -- (0,6) ;
\draw (5,1) -- (6,2) ;
\draw (5.8,3.3) node {.} ;
\draw (5,4.1) node {.} ;
\draw (4.1,4.9) node {.} ;
\draw (2,4) -- (3,5) ;
\draw (1,5) -- (2,6) ;
\draw (2.6,8) node {$#1$} ;
\draw (4,6.5) node {$#2$} ;
\draw (7.5,3.2) node {$#3$} ;
\end{tikzpicture}\, }
\begin{document}

\title{Loop of formal diffeomorphisms and Fa\`a di Bruno coloop bialgebra}

\author[,1]{Alessandra Frabetti\thanks{Corresponding author: 
\texttt{frabetti@math.univ-lyon1.fr}}}
\author[,2]{Ivan P. Shestakov\thanks{Email: \texttt{shestak@ime.usp.br}}}

\affil[1]{Univ Lyon, Université Claude Bernard Lyon 1, CNRS UMR 5208, 
Institut Camille Jordan, F-69622 Villeurbanne, France} 
\affil[2]{Instituto de Mat\'ematica e Estatistica, 
Universitade de S\~ao Paulo, 
Caixa Postal 66281, S\~ao Paulo, SP, 05315-970, Brazil}

\date{\today}
 
\maketitle

%%%%%%%%%%%%%%%%%%%%%%%%%%%%%%%%%%%%%%%%%%%%%%%%%%%%%%%%%%%%%%%%%%%%

\begin{abstract}
We consider a generalization of (pro)algebraic loops defined on general 
categories of algebras and the dual notion of a coloop bialgebra suitable 
to represent them as functors. 
We prove that the natural loop of formal diffeomorphisms with associative
coefficients is proalgebraic, and we give the closed formulas of the
codivisions on its coloop bialgebra. 
This result provides a generalization of the Lagrange inversion formula to 
series with non-commutative coefficients, and a loop-theoretic explanation 
to the existence of the non-commutative Fa\`a di Bruno Hopf algebra. 
\end{abstract}

{\em MSC:\/} 20N05, 14L17, 18D35, 16T30
\bigskip

{\em Keywords:\/} Loops; Algebraic groups; Non-associative algebras; 
Cogroups; Coloops; Formal series; Formal diffeomorphisms.   
\bigskip 

%%%%%%%%%%%%%%%%%%%%%%%%%%%%%%%%%%%%%%%%%%%%%%%%%%%%%%%%%%%%%%%%%%%%

\section{Introduction}

\subsection{Presentation and overview of the results}

An affine proalgebraic group $G$ is a representable functor in groups defined 
on the category of commutative associative algebras over a field $\F$.  
The algebra representing $G$ is the commutative Hopf algebra $\F[G]$ 
of regular functions. 
In this paper we consider two generalizations of proalgebraic groups, on one
side to representable functors {\em on categories of non-commutative algebras}, 
on the other side to functors taking values {\em in non-associative groups with 
divisions}, that is, {\em loops}. 

Our main motivation comes from two proalgebraic groups of formal series 
appearing in renormalization in quantum field theory: the group of invertible 
series with constant term equal to $1$, represented by the Hopf algebra of 
symmetric functions, and that of formal diffeomorphisms tangent to the
identity, represented by the Fa\`a di Bruno Hopf algebra.
Details on the role played by these series in quantum field theory are given
in a separate section below. 

Both types of series make sense with non-commutative coefficients, and both
representative Hopf algebras admit a non-commutative version~\cite{BFK}. 
We are interested in the relationship between the non-commutative algebras 
and the sets of series. For this, we first consider generalizations of 
proalgebraic groups to categories of non-commutative algebras.

Functors in groups on general categories have been studied by algebraic 
topologists in the late 50's.
D. Kan considered them on the category of groups~\cite{KanMonoids}, and 
B. Eckmann and P. Hilton~\cite{EckmannHiltonIII} 
introduced them on general categories. 
Their representative Hopf-type object is called a {\em cogroup}.
In a category with coproduct $\scoprod$ and initial object, a cogroup 
is an object $H$ endowed with a comultiplication, a counit and an antipode 
satisfying the usual properties of Hopf algebras, where the comultiplication 
takes values in $H\scoprod H$ instead of $H\sotimes H$ (which is not 
necessarily defined). 
Cogroups are then generalizations of commutative Hopf algebras which,
unlike quantum groups in the case of associative algebras, 
preserve the functorial properties and the adjoint constructions. 
They have proved to be very fruitful in homotopy theory, where they appear as
special $H$-spaces~\cite{KanHomotopy}, as shown by I. Berstein~\cite{Berstein}. 
A comprehensive study of cogroups in many varieties of algebras can be found 
in G. Bergman and A.~Hausknecht's book~\cite{BergmanHausknecht}. 

Not all proalgebraic groups admit an extention to non-commutative algebras.
For instance, while the group of invertible formal series naturally 
extends as a proalgebraic group to the category of associative algebras, 
the group of formal diffeomorphism does not. 
We show, on this example, that the extention of the functor is sometimes 
possible if we regard the original group as a {\em loop}. 

Loops are multiplicative sets with unit and with a left and a right division 
instead of two-sided inverses. 
They first appeared, with some extra properties, in the work of R.~Moufang 
\cite{Moufang} on alternative rings, that is, rings where the associator 
$(a,b,c) = (ab)c-a(bc)$ is skew-symmetric. For an excellent historical review
on loops, see \cite{Pflugfelder2000}. 
Associative loops are groups. 
Similarly to Lie groups, the tangent space of a smooth loop carries 
a particular algebraic structure called a Sabinin algebra 
\cite{SabininMikheev,MikheevSabinin}, which reduces to a Mal'cev algebra 
\cite{Malcev} for smooth Moufang loops. 
The notion of universal enveloping algebra has been extended to 
Sabinin algebras by I. Shestakov, U. U. Umirbaev~\cite{ShestakovUmirbaev} 
and J. Mostovoy, J. M. P\'erez-Izquierdo~\cite{MostovoyPerezIzquierdo}. 
\medskip 

In this paper we consider functors in loops on a general category $\catC$
with coproduct and initial object and call their representative objects 
{\em coloops in $\catC$\/}. 
We specialise $\catC$ to be a variety of algebras over a field $\F$ 
to have a reasonable notion of generalized (pro)algebraic loop. 
The first simple example is the extention of the functors of invertible
elements in a unital algebra and that of unitary elements in a unital
involutive algebra. 
As expected, the largest category on which these functors are representable as 
loops turn out to be respectively that of alternative and of alternative 
involutive algebras (Prop.~\ref{invertible-loop} and Prop.~\ref{unitary-loop}). 
We also show that the loop of unitary elements in the Cayley-Dickson extention 
of an involutive algebra is not representable on non-commutative algebras 
(Prop.~\ref{Cayley-Dickson-loop}), even if examples of such loops exist. 
Then we turn to loops of formal series with coefficients in a non-commutative 
algebra. 
First we consider the set of invertible series (with constant term equal to
$1$). The algebra of series with coefficients in an alternative algebra is
alternative. 
Surprisingly, in contrast to the previous results, we find that the set of 
invertible series is a proalgebraic loop on all algebras, not necessarily 
alternative (Thm.~\ref{thm-invertible series}). 
Finally, our main result concerns the natural loop of formal diffeomorphisms 
(tangent to the identity) with associative coefficients. 
We show that it is proalgebraic, and give the closed formulas of the
codivisions on its representative Fa\`a di Bruno coloop bialgebra 
(Def.~\ref{FaaDiBrunoColoopBialgebra} and Thm.~\ref{Diff(A)-proalgebraic-loop}). 
For this, we express the co-operations in terms of some recursive operators 
defined on any positively graded algebra (Thm.~\ref{FaaDiBruno-operators}), 
which extend the natural pre-Lie product of the Witt Lie algebra 
(cf.~\cite{BFM,FrabettiManchon}) but not as a multibrace product 
(cf.~\cite{LodayRonco}), and which turn out to be very rich in combinatorial 
properties. 
The coefficients appearing in the divisions show up sequences of integer
numbers typical of the Lagrange inversion formula (as Catalan numbers) and some 
new ones, that we call {\em (labeled) Lagrange coefficients} 
(Def.~\ref{definition-d} and \ref{definition-de}). 
This result is a generalization of the Lagrange inversion formula to 
series with non-commutative coefficients, and gives a loop-theoretic 
explanation to the existence of the non-commutative Fa\`a di Bruno Hopf 
algebra~\cite{BFK}. 

%%%%%%%%%%%%%%%%%%%%%%%%%%%%%%%%%%%%%%%%%%%%%%%%%%%%%%%%%%%%%%%%%%%%%%%%

\subsection{Motivation: formal series in quantum field theory}

The main object of study in perturbative quantum field theory are the
correlation functions of the fields describing some elementary particles, 
from which one can compute the probability amplitude of any event involving
the particles.
These quantities are asymptotic series in the powers of a measurable parameter
$\lambda$, such as the electric charge, called the coupling constant. 
For instance, for a self-interacting field $\phi$ with coupling $\lambda$ and 
mass $m$, the $k$-point correlation function is a series 
$$
G^{(k)}(x_1,...,x_k) = \langle \phi(x_1)\cdots \phi(x_k)\rangle 
= \sum_{n=0}^\infty G^{(k)}_n(x_1,...,x_k;m,\hbar)\ \lambda^n 
$$ 
where the $n$th coefficient is a finite sum of amplitudes of suitable Feynman 
graphs with $k$ fixed external legs, which depend on the mass $m$ 
and on the Plank constant's $\hbar$, and $n$ is related to the number of 
internal vertices of the graph. 

The computation of the correlation functions gives rise to some divergent 
integrals, or equivalently to the ill-defined product of singular
distributions. 
Giving a meaning to such terms requires a renormalization procedure, 
which globally amounts to suitably multiplying and composing the correlation 
functions with some others series, called renormalization factors, 
obtained by assembling the counterterms needed to cure each divergence 
\cite{Dyson}, \cite{ItzyksonZuber}.
Given an ambient algebra $A$, typically $\C$ or the algebra 
$\C(\!(\varepsilon)\!)$ of Laurent series in a regularization parameter 
$\varepsilon$, in renomalization theory there appear two groups of formal 
series in the variable $\lambda$ and coefficients in $A$: 
\begin{itemize}
\item
the set 
$\ds\Inv(A)=\Big\{ a(\lambda)=\sum_{n\geq 0} a_n\ \lambda^n\ |\ 
a_0=1, a_n\in A\Big\}$ 
of invertible series, endowed with the pointwise multiplication 
$(ab)(\lambda)=a(\lambda)\ b(\lambda)$ and the unit $1(\lambda)=1$, 
which represent the Green's functions (up to an invertible factor) 
and the renormalization factors;   

\item
the set $\ds\Diff(A)=\Big\{ a(\lambda)=\sum_{n\geq 0} a_n\ 
\lambda^{n+1}\ |\ a_0=1, a_n\in A\Big\}$ 
of formal diffeomorphisms, endowed with the composition law 
$(a\circ b)(\lambda)=a\big( b(\lambda)\big)$ 
and the unit $e(\lambda)=\lambda$, which represent the bare coupling constants
(i.e. the coupling constants before the renormalization is performed). 
\end{itemize}
Dyson's renormalization formulas~\cite{Dyson} are modeled by the 
semi-direct product $\ds\Diff(A)\ltimes \Inv(A)$, endowed with the law 
$$
(a_1,b_1) \cdot (a_2,b_2) 
= \big(a_1\circ a_2, (b_1\circ a_2)\ b_2 \big), 
$$
where $a_1,a_2\in \Diff(A)$ and $b_1,b_2\in \Inv(A)$, which is well defined 
because formal diffeomorphisms act on invertible series from the right, 
by composition. 

These groups are proalgebraic on commutative algebras, so they are perfectly
described by their re*resentative Hopf algebra. Physically, this means 
that the overall renormalization procedure (except the scheme which says how 
to compute the counterterms) is independent of the chosen field theory,
whenever the latter leads to commutative amplitudes.  
The recent results on the Renormalization Hopf algebras, initiated by A.~Connes 
and D.~Kreimer~\cite{ConnesKreimerI,ConnesKreimerII}, show even a stronger 
result: co-operations dual to the multiplication and the composition of series 
exist even on Hopf algebras generated by Feynman graphs, which contain the
coordinate ring of the usual groups of power series. In other words, there 
exist proalgebraic groups of series expanded over Feynman graphs, or over
various types of trees, which project onto the groups $\Inv$ qnd $\Diff$
and which turn out to be extremely efficient in handling the combinatorial
content of renormalization 
procedures~\cite{BFqedren,BFqedtree,BFK,vanSuijlekom,Pinter}. 

The toy model $\phi^3$ theory used by Connes-Kreimer is a scalar field theory 
and leads to the commutative algebra $A=\C$ of amplitudes.  
However, interesting physical situations involve non-commutative algebras. 
In fact, Feynman amplitudes are complex numbers for single scalar fields, 
the coupling constants and the renormalization factors, but they are
$4\times 4$ complex matrices for the fermionic or bosonic fields, and may be
represented by higher order matrices for theories involving several interacting
fields. 
In this case, forcing the final counterterms to be scalar, as imposed by the 
fact that the renormalization factors act on the (scalar) Lagrangian,
prevents us from describing the renormalization in a functorial way,
as shown by the results in \cite{vanSuijlekom-multiplicative}, where the Hopf
algebra does not represent a functorial group on $A=M_4(\C)$.  
In order to preserve this functoriality, there is a need to understand Dyson's 
formulas for sets of series $\Inv(A)$ and $\Diff(A)$ also when $A$ is not a 
commutative algebra. This is the motivation for the present work. 
\bigskip

\noindent
{\bf Acknowledgments.} 
The authors warmly thank Jos\'e Mar\'{\i}a P\'erez-Izquierdo and Jacob Mostovoy 
for the interesting discussions and suggestions related to the topic of this 
paper, and Jiang Zeng for pointing out an alternative combinatorial argument.
The authors warmly thank the anonymous referee for his clever comments on
some key steps of our construction. 

This work was partially supported by the COFECUB Project MA 157-16, by the
Brazilian Ministry of Science and Technology grant CNPq 303916/2014-1 and by 
the LABEX MILYON (ANR-10-LABX-0070) of Universit\'é de Lyon, within the
program "Investissements d'Avenir" (ANR-11-IDEX-0007) operated by the
French National Research Agency (ANR).

%%%%%%%%%%%%%%%%%%%%%%%%%%%%%%%%%%%%%%%%%%%%%%%%%%%%%%%%%%%%%%%%%%%%

{\small
\tableofcontents
}

%%%%%%%%%%%%%%%%%%%%%%%%%%%%%%%%%%%%%%%%%%%%%%%%%%%%%%%%%%%%%%%%%%%%%%%%
%%%%%%%%%%%%%%%%%%%%%%%%%%%%%%%%%%%%%%%%%%%%%%%%%%%%%%%%%%%%%%%%%%%%%%%%%

\section{Loops and coloops}

\subsection{Loops and functors in loops}
\label{section-loops}

A {\bf loop} is a non-empty set $Q$ endowed with a {\bf multiplication} 
$Q\times Q\longrightarrow Q,\,(a,b)\longmapsto a\cdot b$, 
a (two-sided) {\bf unit} $1\in Q$, a {\bf left division} 
$\backslash:Q\times Q\longrightarrow Q$ and a {\bf right division} 
$\slash:Q\times Q\longrightarrow Q$ satisfying the {\bf cancellation properties}
\begin{align}
\label{left-cancellation}
a\cdot (a\backslash b) = b, \qquad & a\backslash (a\cdot b) = b, 
\\ 
\label{right-cancellation}
(a\slash b)\cdot b = a, \qquad & (a\cdot b)\slash b = a. 
% \\ % cf. Bruck p. 9
% \nonumber
% (a\slash b)\backslash a = b, \qquad & a\slash (b\backslash a) = b.   
\end{align}
Given two loops $Q$ and $Q'$, a {\bf homomorphism of loops} 
$f:Q\longrightarrow Q'$ is of course a map which preserves the 
multiplication, and therefore the unit and the divisions.   

The multiplication in a loop $Q$ is not necessarily associative, if it is 
associative then the loop is a group. 
Any element $a$ in a loop $Q$ has a {\bf right inverse} $1\slash a$ 
and a {\bf left inverse} $a\backslash 1$, which do not necessarily coincide 
and do not necessarily determine the divisions, 
in the sense that they do not satisfy the identities 
\begin{align}
\label{loop-division}
(a\backslash 1)\cdot b = a\backslash b \qquad\mbox{and}\qquad  
b\cdot (1\slash a) = b\slash a 
\end{align}
for any $a,b\in Q$, which hold in any group. 

Examples of loops which are not groups are known since a long time, see 
for instance~\cite{Bruck} or \cite{Pflugfelder}. 
For finite loops, the multiplication table is a Latin square, and the number
of non isomorphic loops is known up to order 11 (cf. \cite{A057771}).
For instance, the subset $\{\pm 1, \pm i, \pm j, \pm k\}$ of the hyperbolic
quaternions \cite{Macfarlane} (where $i^2=j^2=k^2=1$ and $ij=k=-ji$, $jk=i=-kj$,
$ki=j=-ik$) forms a finite loop.
Among infinite loops, two well known examples are the set of invertible
octonions and that of unitary octonions, which is homeomorphic to $\S^7$. 
\medskip 

Denote by $\Loop$ the category of loops and let $F:\Loop\longrightarrow \Set_*$ 
be the forgetful functor to the category of pointed sets. 
As for functors in groups, a functor $Q:\catC \longrightarrow \Loop$ on a
given category $\catC$ is said to be {\bf representable} if the composite
functor $F Q$ is representable, cf. \cite{MacLane}. 
This means that $Q$ is naturally isomorphic to a hom-set functor 
$A \mapsto \Hom_{\catC}(H,A)$ for a given object $H$ in $\catC$, and implies
that the loop operations on any loop $Q(A)$ are determined by dual
co-operations on $H$, by convolution. 
Following an established terminology on cogroups, the representative object 
$H$ can then be called a {\bf coloop in $\catC$}.
A reasonable notion of {\bf (pro)algebraic loop} is obtained for $\catC$ being
a variety of algebras over a field $\F$, its representative coloop then being
a sort of bialgebra. 

In this section we describe coloops in an axiomatic way. In the next sections 
we give some easy examples of algebraic and non-algebraic loops on 
associative and non-associative algebras, and then study extensively the loop 
of invertible series and that of formal diffeomorphisms.

%%%%%%%%%%%%%%%%%%%%%%%%%%%%%%%

\subsection{Coloops in general categories}
\label{subsection-coloops}

Given a category $\catC$, Yoneda Lemma says that the category of representable
functors from $\catC$ to $\Set$, with natural transformations,
is equivalent to $\catC$. The equivalence is realized by the contravariant
Yoneda functor $Y$ from $\catC$ to the functor category $\Set^{\catC}$, 
defined on any object $H$ in $\catC$ by the functor $Y(H)=\Hom_{\catC}(H,\ )$, 
and on any map $\phi:H'\rightarrow H$ by the natural transformation 
$Y(\phi):Y(H)\rightarrow Y(H'):\alpha \mapsto \alpha\, \phi$ 
(cf.~\cite{MacLane} for details). 
In this section we characterise the subcategory of $\catC$ equivalent to 
representable functors from $\catC$ to $\Loop$. 
\medskip

The cartesian product of two functors $Y(H_1)$ and $Y(H_2)$ is known to be 
represented by the categorical coproduct $H_1\scoprod H_2$, i.e. 
$Y(H_1)\times Y(H_2)=Y(H_1\scoprod H_2)$, and the constant functor to the base 
point is known to be represented by an initial object $I$, i.e. it is 
of the form $Y(I)$. We recall the categorical notations about the coproduct, 
the initial object and some related categorical maps 
we need to define coloops. 
\bigskip 

The coproduct in a category $\catC$ is a bifunctor $\scoprod$ defined on
two objects $A$ and $B$ as the unique object $A\scoprod B$ together with two
maps $i_1:A\rightarrow A\scoprod B$ and $i_2:B\rightarrow A\scoprod B$
satisfying the following universal property:
for any maps $f:A\rightarrow C$ and $g:B\rightarrow C$, there exists a unique
map $\langle f,g \rangle: A\scoprod B \rightarrow C$ such that
$\langle f,g \rangle\, i_1 = f$ and $\langle f,g \rangle\, i_2 = g$. 
On two maps $f:A\rightarrow A'$ and $g:B\rightarrow B'$, the bifunctor is
defined as the map
$f\scoprod g = \langle i_1'f,i_2'g \rangle:A\scoprod B\rightarrow A'\scoprod B'$.
The coproduct can be extended to several objects and maps with similar
universal constructions, and turns out to be an associative bifunctor,
in the sense that
$(A\scoprod B)\scoprod C = A\scoprod (B\scoprod C)= A\scoprod B\scoprod C$
for any three objects and
$(f\scoprod g)\scoprod h = f\scoprod (g\scoprod h)= f\scoprod g\scoprod h$
for any three maps in $\catC$. 

An initial object in $\catC$ is an object $I$ together with a unique map
$u_A:I\rightarrow A$ on any object, which commutes with any map
$f:A\rightarrow B$, that is, $f\,u_A = u_B$. 
Then, there are canonical isomorphisms $A\scoprod I \cong A \cong I\scoprod A$
given by the universal maps
\begin{align*}
& \varphi_1:=i_1:A \longrightarrow A \scoprod I
\qquad\mbox{with inverse}\qquad
\psi_1=\langle \Id_A,u_A \rangle :A \scoprod I \longrightarrow A, 
\\
& \varphi_2:=i_2:A \longrightarrow I \scoprod A
\qquad\mbox{with inverse}\qquad
\psi_2=\langle u_A,\Id_A \rangle :I \scoprod A \longrightarrow A. 
\end{align*}
In particular, we have $I\scoprod I\cong I$, $\langle u_A,u_A \rangle = u_A$ 
and therefore also $u_{A\scoprod B} = u_A\scoprod u_B$.

For any objects $A$ and $B$, there is a canonical symmetry operator 
$\tau_{A,B}= \langle i_2,i_1 \rangle:A\scoprod B\rightarrow B\scoprod A$ 
such that $\tau_{A,B}^{-1}=\tau_{B,A}$. Note that $B\scoprod A=A\scoprod B$ 
as objects in $\catC$, but the maps $i_1$ and $i_2$ are inverted. 
The twist $\tau$ is precisely the map which identifies $A\scoprod B$ and 
$B\scoprod A$ as universal objects.
To sum up, 
{\em $(\catC,\scoprod,I,\tau)$ is a strict symmetric monoidal category\/}. 

Furthermore, for any $A$, there exists a canonical folding map 
$\mu_A=\langle \Id_A,\Id_A \rangle:A\scoprod A \rightarrow A$ such that, 
for any maps $f,g:A\rightarrow B$, we have 
$$
\langle f,g\rangle = \mu_B\, (f\scoprod g).  
$$
It follows that $\mu$ preserves the unit, i.e.
$\mu_A (u_A\scoprod u_A) = \langle u_A,u_A \rangle =u_A$, 
that it is associative, i.e. 
$\mu_A(\mu_A\scoprod\Id_A) = \mu_A(\Id_A\scoprod\mu_A)$, 
and that it is commutative, i.e. $\mu_A\,\tau_{A,A}=\mu_A$. 
It also follows that $\mu$ commutes with any map $f:A\rightarrow B$ in $\catC$, 
i.e. $\mu_B\, (f\scoprod f) = f\,\mu_A$. 
To sum up, we can say that {\em any object $(A,\mu_A,u_A)$ is a commutative 
monoid in $\catC$\/}, with respect to the monoidal product $\scoprod$, 
and that {\em any map $f:A\rightarrow B$ in $\catC$ is a morphism of monoids\/}. 
Finally, one can prove that the folding map on $A\scoprod B$ is 
given by $\mu_{A\scoprod B} = 
(\mu_A \scoprod \mu_B)\, (\Id_A\scoprod \tau_{B,A} \scoprod \Id_B)$. 

\begin{definition}
\label{def-coloop}
Let us call {\bf coloop in $\catC$} an object $H$ endowed with the following 
maps in $\catC$: 
\begin{enumerate}[label=\roman*),leftmargin=*]
\item
a {\bf comultiplication} $\D:H\longrightarrow H\scoprod H$; 

\item
a {\bf counit} $\varepsilon:H\longrightarrow I$ satisfying the {\bf counitary} 
property
\begin{align}
\label{counitary}
(\varepsilon\scoprod \Id)\,\Delta = \varphi_2
\qquad\mbox{and}\qquad 
(\Id\scoprod \varepsilon)\,\Delta = \varphi_1 ,  
\end{align}
where $\varphi_1:H\rightarrow H\scoprod I$ and 
$\varphi_2:H\rightarrow I\scoprod H$ are the canonical isomorphisms;  

\item 
a {\bf right codivision} $\delta_r:H\longrightarrow H\scoprod H$ 
satisfying the {\bf right cocancellation} properties
\begin{align}
\label{right-cocancellation}
(\Id\scoprod\mu)\, (\delta_r\scoprod\Id)\, \D = i_1 
\qquad\mbox{and}\qquad 
(\Id\scoprod\mu)\, (\Delta\scoprod\Id)\, \delta_r = i_1 ,  
\end{align}
where $i_1:H\rightarrow H\scoprod H$ can be factorized as 
$i_1=(\Id\scoprod u)\,\varphi_1$; 

\item
a {\bf left codivision} $\delta_l:H\longrightarrow H\scoprod H$ 
satisfying the {\bf left cocancellation} properties  
\begin{align}
\label{left-cocancellation}
(\mu\scoprod \Id)\, (\Id\scoprod\delta_l)\, \D = i_2 
\qquad\mbox{and}\qquad 
(\mu\scoprod \Id)\, (\Id\scoprod\Delta)\, \delta_l = i_2 ,   
\end{align}
where $i_2:H\rightarrow H\scoprod H$ can be factorized as 
$i_2=(u\scoprod \Id)\,\varphi_2$. 
\end{enumerate}
If $H$ and $H'$ are two coloops in $\catC$, we say that a map 
$f:H\longrightarrow H'$ is a {\bf homomorphism of coloops} if 
it commutes with the coproducts, the counits and the codivisions. 
\end{definition}

\begin{proposition}
Let $H$ be a coloop in $\catC$. 
\begin{enumerate}
\item
The codivisions verify the identities 
\begin{align}
\label{multiplication-codivision}
\mu\, \delta_r = u\, \varepsilon 
\qquad\mbox{and}\qquad 
\mu\, \delta_l = u\, \varepsilon , 
\end{align}
and the following {\bf partial counitality properties} 
\begin{align}
\label{counit-codivision}
(\Id\scoprod \varepsilon)\, \delta_r = \varphi_1 
\qquad\mbox{and}\qquad
(\varepsilon\scoprod\Id)\, \delta_l = \varphi_2 .  
\end{align}

\item
We can define a {\bf right antipode} $S_r:H\longrightarrow H$
and a {\bf left antipode} $S_l:H\longrightarrow H$ by setting 
\begin{align}
\label{antipode-left-right}
S_r := \psi_2\, (\varepsilon \scoprod \Id)\, \delta_r 
\qquad\mbox{and}\qquad 
S_l := \psi_1\, (\Id \scoprod \varepsilon)\, \delta_l \ ,
\end{align}
where $\psi_1=\langle \Id,u_H\rangle : H\scoprod I \rightarrow H$ and 
$\psi_2=\langle u_H,\Id\rangle : I\scoprod H \rightarrow H$ are isomorphisms. 
The antipodes satisfy the following {\bf left} and {\bf right 5-terms 
identities} 
\begin{align}
\label{antipode-left-right-5terms}
\mu\, (S_r \scoprod \Id)\, \D = u\, \varepsilon
\qquad\mbox{and}\qquad
\mu\, (\Id \scoprod S_l)\, \D = u\, \varepsilon. 
\end{align}
\end{enumerate}
\end{proposition}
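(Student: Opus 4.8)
The plan is to obtain every identity in the proposition by applying the counit $\varepsilon$, possibly composed with one of the canonical isomorphisms $\varphi_i$ or $\psi_i$, to a suitable slot of the cocancellation properties \eqref{right-cocancellation} and \eqref{left-cocancellation}, and then to collapse the resulting composite using only the ambient structure of $\catC$: that $(\catC,\scoprod,I,\tau)$ is strict symmetric monoidal, that every map is a morphism of commutative monoids for $\scoprod$ (so in particular $\varepsilon\mu=\mu_I(\varepsilon\scoprod\varepsilon)$, with $\mu_I:I\scoprod I\to I$ the folding isomorphism), the folding identities $\mu i_1=\mu i_2=\Id$ and $\psi_1 i_1=\psi_2 i_2=\Id$, and the counitary property \eqref{counitary} of $\Delta$. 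The left-handed statements are mirror images of the right-handed ones, obtained by swapping the two coproduct slots and $i_1\leftrightarrow i_2$, $\varphi_1\leftrightarrow\varphi_2$, $\psi_1\leftrightarrow\psi_2$, $\delta_r\leftrightarrow\delta_l$, $S_r\leftrightarrow S_l$, \eqref{right-cocancellation}$\leftrightarrow$\eqref{left-cocancellation}, so I would only carry out the right-handed cases.

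For the partial counitality $(\Id\scoprod\varepsilon)\delta_r=\varphi_1$ and for $\mu\delta_r=u\varepsilon$, I would start from the second identity $(\Id\scoprod\mu)(\Delta\scoprod\Id)\delta_r=i_1$ of \eqref{right-cocancellation}. Applying $\Id\scoprod\varepsilon$, the right-hand side reduces to $\varphi_1$ via $i_1=(\Id\scoprod u)\varphi_1$ and $\varepsilon u=\Id_I$, while on the left $\Id\scoprod(\varepsilon\mu)=(\Id\scoprod\mu_I)(\Id\scoprod\varepsilon\scoprod\varepsilon)$ and $(\Id\scoprod\varepsilon\scoprod\varepsilon)(\Delta\scoprod\Id)=\varphi_1\scoprod\varepsilon$ by \eqref{counitary}; a short chase with the injections identifies $(\Id\scoprod\mu_I)(\varphi_1\scoprod\varepsilon)$ with $\Id\scoprod\varepsilon$, so the left-hand side is exactly $(\Id\scoprod\varepsilon)\delta_r$, which is the first identity of \eqref{counit-codivision}. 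Applying instead $\varepsilon\scoprod\Id$ to the same identity, the left-hand side becomes $(\varepsilon\scoprod\mu)(\Delta\scoprod\Id)\delta_r$, which by \eqref{counitary} and $\mu i_1=\mu i_2=\Id$ equals $\varphi_2\mu\delta_r$, while the right-hand side gives $\varphi_2 u\varepsilon$; cancelling the isomorphism $\varphi_2$ yields $\mu\delta_r=u\varepsilon$, the first identity of \eqref{multiplication-codivision}.

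For the antipodes, I would first note that $S_r$ and $S_l$, being composites of morphisms of $\catC$, are morphisms of $\catC$, so \eqref{antipode-left-right} makes sense. For the right $5$-terms identity $\mu(S_r\scoprod\Id)\Delta=u\varepsilon$, the plan is to apply $\psi_2(\varepsilon\scoprod\Id)$ to the \emph{first} identity $(\Id\scoprod\mu)(\delta_r\scoprod\Id)\Delta=i_1$ of \eqref{right-cocancellation}. The right-hand side then yields $\psi_2(\varepsilon\scoprod\Id)i_1=u\varepsilon$. On the left, $(\varepsilon\scoprod\Id)(\Id\scoprod\mu)=\varepsilon\scoprod\mu$, and restricting the composite $\psi_2(\varepsilon\scoprod\mu)(\delta_r\scoprod\Id)$ to the two summands of its source, using $\mu i_1=\mu i_2=\Id$ and $\psi_2 i_2=\Id$, one finds its restriction to the first summand to be $\psi_2(\varepsilon\scoprod\Id)\delta_r=S_r$ and to the second to be $\Id$; hence the composite equals $\mu(S_r\scoprod\Id)$, and precomposing with $\Delta$ reconstitutes $\mu(S_r\scoprod\Id)\Delta$ on the left, as required.

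I do not expect a genuine obstacle here: each identity drops out formally from the cocancellation axioms and the monoidal bookkeeping. The only delicate point is that bookkeeping — tracking coproduct injections under the strict associativity of $\scoprod$, and, in the last step, recognising that feeding the left-hand side of \eqref{right-cocancellation} through $\psi_2(\varepsilon\scoprod\Id)$ reassembles precisely the combination $\mu(S_r\scoprod\Id)\Delta$ appearing in the $5$-terms identity. This is also why I would route the $5$-terms identities through \eqref{right-cocancellation} and \eqref{left-cocancellation} rather than manipulating the definition \eqref{antipode-left-right} of the antipodes directly.
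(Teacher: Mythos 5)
Your proposal is correct and follows essentially the same strategy as the paper's own proof (carried out with tangle diagrams in the Appendix): each identity is obtained by post-composing one of the cocancellation axioms with a counit in the appropriate slot and collapsing via the counitarity of $\Delta$, the folding identities and the initiality of $I$. The only cosmetic difference is that for the partial counitality \eqref{counit-codivision} the paper feeds $\Id\scoprod\varepsilon$ into the first identity of \eqref{right-cocancellation} rather than the second, which changes nothing of substance.
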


These properties are easily verified. A proof using tangle diagrams is 
given in the Appendix. 

\begin{theorem}
\label{theorem-Q-functor}
Let $\catC$ be a category with coproduct and initial object. 
Then the Yoneda functor is a contravariant equivalence of categories 
from the category of coloops in $\catC$ to that of covariant representable 
functors $Q:\catC\longrightarrow \Loop$. 
\end{theorem}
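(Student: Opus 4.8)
The plan is to exhibit the claimed equivalence as a direct application of the Yoneda embedding, reducing everything to bookkeeping between loop axioms and coloop axioms under the dictionary "multiplication $\leftrightarrow$ comultiplication, divisions $\leftrightarrow$ codivisions". First I would recall that, by the standard properties of the coproduct recalled above, for any objects $H_1,H_2$ in $\catC$ one has a natural isomorphism $Y(H_1\scoprod H_2)\cong Y(H_1)\times Y(H_2)$ of set-valued functors, and $Y(I)$ is the constant functor at a one-point set. Hence a map $\D:H\to H\scoprod H$ in $\catC$ is, via $Y$, exactly a natural transformation $Y(H)\times Y(H)\to Y(H)$, i.e. a natural binary operation on the functor $Y(H)$; similarly $\varepsilon:H\to I$ gives a natural choice of base point (a "unit"), and $\delta_r,\delta_l:H\to H\scoprod H$ give two more natural binary operations. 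Contravariance of $Y$ means: given $\alpha,\beta\in Y(H)(A)=\Hom_\catC(H,A)$, the product $\alpha\cdot\beta$ is $\mu_A\,(\alpha\scoprod\beta)\,\D$, and $\alpha\backslash\beta$, $\alpha\slash\beta$ are obtained the same way from $\delta_l$, $\delta_r$.

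Next I would check, step by step, that each coloop axiom of Definition~\ref{def-coloop} translates under $Y$ into the corresponding loop axiom on $Y(H)(A)$, for every object $A$. The counitary property \eqref{counitary} says that post-composing $\D$ with $\varepsilon$ in either slot recovers the canonical inclusion; applying $Y$ and using that $Y(\varepsilon_A):Y(I)(A)\to\{*\}$... rather, that $\varepsilon$ picks out the distinguished element $u_A\varepsilon\in Y(H)(A)$, this becomes precisely the two-sided unit law $1\cdot\alpha=\alpha=\alpha\cdot 1$. The right cocancellation \eqref{right-cocancellation}, read through $Y$ with the factorization $i_1=(\Id\scoprod u)\varphi_1$ turning into "the first component with the second component set to the unit", becomes $(\alpha\slash\beta)\cdot\beta=\alpha$ and $(\alpha\cdot\beta)\slash\beta=\alpha$, i.e. \eqref{right-cancellation}; symmetrically \eqref{left-cocancellation} gives \eqref{left-cancellation}. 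Thus $H$ a coloop in $\catC$ implies $Y(H)(A)$ is a loop for every $A$, naturally in $A$, so $Y(H):\catC\to\Loop$ is a well-defined functor lifting the representable set-functor; that is, $Y$ sends coloops to representable functors in loops. Because the coloop operations are uniquely determined by (and determine) the natural operations, and because a morphism of coloops is by definition a map commuting with $\D,\varepsilon,\delta_r,\delta_l$, $Y$ sends coloop homomorphisms to (necessarily loop-preserving) natural transformations.

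Conversely, given a representable functor $Q:\catC\to\Loop$, the composite $FQ$ is representable, say $FQ\cong Y(H)$ for some $H$ unique up to isomorphism by Yoneda. The loop structure on each $Q(A)$ consists of natural maps $Q(A)\times Q(A)\to Q(A)$, a natural base point, and two natural division maps; transporting along the isomorphism $FQ\cong Y(H)$ and using $Y(H)\times Y(H)\cong Y(H\scoprod H)$, these become natural transformations $Y(H\scoprod H)\to Y(H)$, $Y(I)\to Y(H)$ (in the other variance), which by the full faithfulness of $Y$ (Yoneda) correspond to unique maps $\D:H\to H\scoprod H$, $\varepsilon:H\to I$, $\delta_r,\delta_l:H\to H\scoprod H$ in $\catC$. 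Running the translation of the previous paragraph backwards, the loop axioms on $Q(A)$ — valid for all $A$, in particular for $A=H$ applied to $\Id_H$ — force the coloop axioms on these maps; here faithfulness of $Y$ (distinct parallel maps in $\catC$ induce distinct natural transformations) is what lets us conclude equalities of morphisms in $\catC$ from equalities of the induced operations. So $H$ is a coloop, and $Q\cong Y(H)$ as functors in loops. Full faithfulness of $Y$ on morphisms, together with the observation that a natural transformation between loop-valued representable functors automatically preserves all loop operations (they are determined by the structure maps), shows the correspondence is an equivalence. The only genuinely delicate point, and the one I would write out most carefully, is the precise identification of the factorizations $i_1=(\Id\scoprod u)\varphi_1$ and $i_2=(u\scoprod\Id)\varphi_2$ and of the folding map $\mu$ with the actual pointwise loop operations — i.e. making sure the "apply $Y$" dictionary matches inclusions and folding maps on the nose with "insert the unit in one slot" and "evaluate the operation"; everything else is a mechanical check against \eqref{left-cancellation}–\eqref{right-cancellation}.
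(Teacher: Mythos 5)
Your proposal is correct and follows essentially the same route as the paper: both directions rest on the Yoneda dictionary, and your "transport the natural operation along $Y(H)\times Y(H)\cong Y(H\scoprod H)$ and apply full faithfulness" is exactly the paper's construction $\Delta=i_1\cdot i_2$, $\delta_r=i_1\slash i_2$, $\delta_l=i_1\backslash i_2$ via the universal elements $i_1,i_2\in Q(H\scoprod H)$. The paper simply carries out in full the "mechanical check" you defer (the convolution formulas $\alpha\cdot\beta=\mu_A(\alpha\scoprod\beta)\Delta$ and the verification that cocancellation translates into cancellation using $i_1=(\Id\scoprod u)\varphi_1$ and the properties of $\mu$); the one caveat is that your parenthetical claim that naturality alone forces a transformation to preserve the loop operations is not right in general — preservation must be imposed, and then faithfulness of $Y$ is what shows the representing map is a coloop homomorphism.
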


\begin{proof}
We follow the ideas of Eckmann-Hilton~\cite{EckmannHiltonIII}, who 
characterized the subcategories of $\catC$ equivalent to the category 
of representable functors from $\catC$ respectively to the category $\Mag$ 
of unital multiplicative sets, called unital magmas in~\cite{Serre,Bourbaki}, 
and to the category $\Grp$ of groups\footnote{
  Eckmann-Hilton require $\catC$ to have zero-maps, we replace them with an
  initial object.}. 

Let us first prove that the Yoneda functor, applied to coloops in $\catC$, 
gives rise to a functor in loops. 
On a given coloop $H$, let us call $Q=Y(H)$. We define the multiplication 
and the divisions on each set $Q(A)=\Hom_{\catC}(H,A)$ as usual convolution 
with the coproduct and the codivisions in $H$, namely
\begin{align}
\nonumber
\alpha\cdot \beta &= \langle \alpha,\beta\rangle\, \Delta
= \mu_A\, (\alpha\scoprod \beta)\, \Delta
\\ 
\label{convolution}
\alpha\slash \beta &= \langle \alpha,\beta\rangle\, \delta_r
= \mu_A\, (\alpha\scoprod \beta)\, \delta_r
\\ 
\nonumber
\alpha\backslash \beta &= \langle \alpha,\beta\rangle\, \delta_l
= \mu_A\, (\alpha\scoprod \beta)\, \delta_l , 
\end{align}
for any $\alpha,\beta\in Q(A)$. The unit in $Q(A)$ is given, as usual,
by the map $1_A=u_A\,\varepsilon$, and the left and right inverses of
$\alpha$ are then easily described as
$\alpha\backslash 1 = \alpha\, S_l$ and $1\slash \alpha = \alpha\, S_r$. 
Then, using the cocancellation identities (\ref{right-cocancellation}) and 
(\ref{left-cocancellation}), and because $\mu_A$ is associative and commutes 
with $\catC$-maps, it is easy to verify that the divisions given by 
(\ref{convolution}) satisfy the cancellation properties 
(\ref{right-cancellation}) and (\ref{left-cancellation}). 

Now fix a homomorphism of coloops $\phi: H'\longrightarrow H$, and call 
$Q=Y(H)$, $Q'=Y(H')$ and $\Phi=Y(\phi)$. Yoneda Lemma tells us already that 
$\Phi$ is given on an object $A$ by the map 
\begin{align*}
\Phi_A:\ & Q(A)\longrightarrow Q'(A) \\
& \alpha \longmapsto \Phi_A(\alpha) = \alpha\, \phi,  
\end{align*}
and that, for any $f:A\rightarrow B$, $\Phi$ acts on the map 
$Q(f):Q(A)\rightarrow Q(B)$ given by $Q(f)(\alpha)= f\,\alpha$ as a natural 
transformation, i.e. 
\begin{align*}
\Phi_B\big(Q(f)(\alpha)\big) 
= f\, \alpha\, \phi 
= Q'(f)\big(\Phi_A(\alpha)\big).   
\end{align*}
It is then easy to verify that $\Phi_A$ is a homomorphism of loops, that is, 
for any $\alpha,\beta\in Q(A)$, we have
\begin{align*}
\Phi_A(\alpha\cdot \beta) & = \Phi_A(\alpha)\cdot \Phi_A(\beta), 
\end{align*}
and similarly for the other co-operations. 

Viceversa, let us describe how a functor in loops $Q$ gives rise to a coloop
structure on its representative object $H$. 
Suppose that the covariant functor $Q$ is represented by an object $H$, 
i.e. $Q=Y(H)$, that the set $Q(A)$ is a loop for any $A$ in $\catC$, 
and that for any map $f:A\rightarrow B$ the induced map 
$Q(f):Q(A)\rightarrow Q(B)$ given by $\alpha\mapsto Q(f)(\alpha)=f \alpha$ 
is a loop homomorphism. 
We use repeatedly the fact that, given $\alpha,\beta\in Q(A)$, for the
composite maps $f (\alpha\cdot \beta), f \alpha, f \beta\in Q(B)$ we have
\begin{align}
\label{propQ(f)}
f\,(\alpha\cdot \beta) &= Q(f)(\alpha\cdot \beta)
= Q(f)(\alpha)\cdot Q(f)(\beta)
= (f \alpha)\cdot (f \beta)
\end{align}
and similarly for the operations $\slash$ and $\backslash$.
Seeing $i_1,i_2:H\rightarrow H\scoprod H$ as elements of the loop
$Q(H\scoprod H)$, we define the comultiplication and the codivisions on $H$ by 
\begin{align*}
%\label{cat-convolution}
\Delta = i_1\cdot i_2, \qquad
\delta_r = i_1 \slash i_2, \qquad 
\delta_l = i_1 \backslash i_2
\end{align*}
and the counit $\varepsilon$ as the unit $1_I$ in $Q(I)$.
It follows that the antipodes are the inverses of the identity map, 
$S_r = 1_H \slash \Id_H$ and $S_l = \Id_H\backslash 1_H$. 

Let us show that these maps give a coloop structure to $H$, and that the
functor $Q\mapsto H$ is inverse to the Yoneda one, $H\mapsto Q=Y(H)$. 
For any $\alpha,\beta \in Q(A)$, we apply (\ref{propQ(f)}) to $A=H\scoprod H$, 
$B=A$, $f=\langle \alpha,\beta \rangle:H\scoprod H\rightarrow A$ and
to the elements $\alpha=i_1$, $\beta=i_2$ of $Q(H\scoprod H)$, and get
\begin{align}
\nonumber 
\langle \alpha,\beta \rangle\, \Delta
& = \langle \alpha,\beta \rangle\, (i_1\cdot i_2)
\\ 
\label{comult-mult}
& = (\langle \alpha,\beta \rangle\, i_1) \cdot
(\langle \alpha,\beta \rangle\, i_2)
= \alpha\cdot \beta
\end{align}
and similarly for the operations $\slash$ and $\backslash$. 
Now apply $Q$ to a unit map $u_A:I\rightarrow A$.
Since $Q(u_A):Q(I)\rightarrow Q(A)$ is a homomorphism of loops, it preserves
the units, and therefore, for $\varepsilon=1_I\in Q(I)$, we have
$$
Q(u_A)(\varepsilon) = u_A\, \varepsilon = 1_A. 
$$
In particular we have $u_H\,\varepsilon = 1_H$, and therefore, using
(\ref{comult-mult}), we have 
\begin{align*}
\langle u_H\varepsilon,\Id_H\rangle\, \Delta
& = 1_H\cdot \Id_H = \Id_H. 
\end{align*}
On the other side, we have 
\begin{align*}
\langle u_H\varepsilon,\Id_H\rangle\, \Delta
& = \langle u_H,\Id_H\rangle\, (\varepsilon\scoprod \Id)\, \Delta \\
& = \psi_2\, (\varepsilon\scoprod \Id)\, \Delta, 
\end{align*}
and we obtain the equality $\psi_2\, (\varepsilon\scoprod \Id)\, \Delta = \Id$. 
Since $\psi_2$ is the inverse map to $\varphi_2$, we obtain 
$(\varepsilon\scoprod \Id)\, \Delta = \varphi_2$, which proves (\ref{counitary}). 
Let us show equalities (\ref{right-cocancellation}).
Firstly, we have trivially that
$$
(\Id\scoprod u_H)\, i_1 = \langle i_1,i_2\, u_H\rangle\, i_1 = i_1. 
$$
Secondly, note that $\delta_r \cdot i_2 = (i_1\slash i_2)\cdot i_2 = i_1$,
therefore 
\begin{align*}
(\Id\scoprod \mu)\, (\delta_r\scoprod \Id)\, \Delta 
& = \langle i_1,i_2\mu \rangle\, \langle i_1 \delta_r,i_2 \rangle\,
(i_1\cdot i_2) \\
& = \langle i_1,i_2\mu \rangle\, (\delta_r \cdot i_2) \\
& = \langle i_1,i_2\mu \rangle\, i_1 = i_1. 
\end{align*}
Finally, since
$\langle \alpha,\beta \rangle (i_1\slash i_2) = \alpha \slash \beta$ 
and $(i_1\cdot i_2)\slash i_2 = i_1$, we also have 
\begin{align*}
(\Id\scoprod \mu)\, (\Delta\scoprod \Id)\, \delta_r 
& = \langle i_1,i_2\mu \rangle\, \langle i_1 \Delta,i_2 \rangle\,
(i_1\slash i_2) \\
& = \langle i_1,i_2\mu \rangle\, \big((i_1\cdot i_2) \slash i_2\big) \\
& = \langle i_1,i_2\mu \rangle\, i_1 = i_1. 
\end{align*}
The same arguments apply to the left codivision. 
\end{proof}
\medskip 

%%%%%%%%%%%%%%%%%%%%%%%%%%%%%%%%%%%%%%%%%%%%%%%%%%%%%%%%%%%%%%%%%%%%%%%

The relationship between coloops and cogroups is straightforward. 
As usual, a coloop $H$ is {\bf coassociative} if 
\begin{align}
\label{coassociative}
(\D\scoprod \Id)\,\D & = (\Id\scoprod\D)\,\D, 
\end{align}
and $H$ is {\bf cocommutative} if $\tau\, \D=\D$. 

We say that $H$ has the {\bf left} and {\bf right coinverse property} 
if the codivisions are determined by the antipodes, that is, 
\begin{align}
\label{coinverse}
\delta_l = (S_l\scoprod \Id)\,\D. 
\qquad\mbox{and}\qquad 
\delta_r = (\Id\scoprod S_r)\,\D 
\end{align}
These identities correspond to the analogues (\ref{loop-division}) in the 
loop $Q=Y(H)$. 

Furthermore, an {\bf antipode} on $H$ is a map $S:H\rightarrow H$ satisfying 
the {\bf 5-terms identity} 
\begin{align}
\label{antipode-5-terms}
\mu\,  (S \scoprod \Id) \,  \D 
= \mu\,  (\Id \scoprod S) \,  \D 
= u\, \varepsilon .  
\end{align}
This can happen if and only if $S_r=S_l=:S$. Note that the unicity of the
antipode satisfying (\ref{antipode-5-terms}) does not imply that the coinverse
properties (\ref{coinverse}) are verified. A counterexample is given by the
coloop of formal diffeomorphisms, cf. Section \ref{section-diffeomorphisms}. 

A cogroup in a category $\catC$ is an object $H$ endowed with a coassociative 
comultiplication $\Delta$, a counit $\varepsilon$ satisfying the counitary 
property (\ref{counitary}) and an antipode $S$ satisfying the 5-terms identity 
(\ref{antipode-5-terms}), cf.~\cite{Berstein}.
It follows from the dual statement on loops and groups (cf. \cite{Bruck}), that

\begin{proposition}
\label{coloop-cogroup}
If $H$ is a coassociative coloop, then it is a cogroup. 
\end{proposition}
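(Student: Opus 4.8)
The plan is to dualise the classical fact that an associative loop is a group. Let $H$ be a coassociative coloop and let $Q=Y(H):\catC\longrightarrow\Loop$ be the functor in loops it represents, via Theorem~\ref{theorem-Q-functor}. The crucial point is that coassociativity of $\Delta$ is exactly the diagrammatic counterpart of associativity of the convolution product on the loops $Q(A)$. Indeed, for $\alpha,\beta,\gamma\in Q(A)$, unwinding the definition (\ref{convolution}), using the identity $\langle f,g\rangle=\mu_B(f\scoprod g)$ and the associativity of both $\scoprod$ and the folding maps $\mu$, one finds
\begin{gather*}
(\alpha\cdot\beta)\cdot\gamma
=\mu_A\,(\mu_A\scoprod\Id)\,(\alpha\scoprod\beta\scoprod\gamma)\,(\Delta\scoprod\Id)\,\Delta ,\\
\alpha\cdot(\beta\cdot\gamma)
=\mu_A\,(\Id\scoprod\mu_A)\,(\alpha\scoprod\beta\scoprod\gamma)\,(\Id\scoprod\Delta)\,\Delta ,
\end{gather*}
and the two right-hand sides coincide because $\mu_A(\mu_A\scoprod\Id)=\mu_A(\Id\scoprod\mu_A)$ and, by (\ref{coassociative}), $(\Delta\scoprod\Id)\,\Delta=(\Id\scoprod\Delta)\,\Delta$. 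Hence each $Q(A)$ is an associative loop, that is, a group.

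Next I would use that in a group the left and the right inverse of an element coincide. Applying this in the group $Q(H)=\Hom_{\catC}(H,H)$ to the element $\Id_H$, and recalling (as in the proof of Theorem~\ref{theorem-Q-functor}) that $S_l=\Id_H\backslash 1_H$ and $S_r=1_H\slash\Id_H$ are precisely its left and right inverses, I obtain $S_l=S_r$. Setting $S:=S_l=S_r$, the left and right $5$-terms identities (\ref{antipode-left-right-5terms}) become $\mu\,(S\scoprod\Id)\,\Delta=\mu\,(\Id\scoprod S)\,\Delta=u\,\varepsilon$, i.e. $S$ satisfies the $5$-terms identity (\ref{antipode-5-terms}). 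Since moreover $\Delta$ is coassociative by hypothesis and $\varepsilon$ satisfies the counitary property (\ref{counitary}) by the very definition of a coloop, the data $(H,\Delta,\varepsilon,S)$ is a cogroup, which is the assertion.

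The only step with genuine content is the identification of coassociativity with associativity of the convolution product, and this is a routine manipulation of the monoidal identities collected in Section~\ref{subsection-coloops}; everything else is the verbatim dual of the one-line loop computation $1\slash a=(1\slash a)\cdot\big(a\cdot(a\backslash 1)\big)=\big((1\slash a)\cdot a\big)\cdot(a\backslash 1)=a\backslash 1$. Should one wish to avoid the functor, the same reasoning can be run directly on $H$ with the tangle calculus of the Appendix, replacing each loop identity above by its cocancellation counterpart from Definition~\ref{def-coloop}; this is more laborious but fully self-contained.
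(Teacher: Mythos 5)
Your proof is correct and follows essentially the paper's own route: the text justifies Proposition~\ref{coloop-cogroup} precisely by appeal to the dual statement that an associative loop is a group, and the Appendix lemma carries out on $H$, with tangles, exactly the uniqueness-of-inverse computation that you perform in the group $Q(H)=\Hom_{\catC}(H,H)$ applied to $\Id_H$. The only part of the paper's argument you do not reproduce is the verification of the coinverse properties $\delta_r=(\Id\scoprod S)\,\D$ and $\delta_l=(S\scoprod\Id)\,\D$, but these are not required by the paper's definition of a cogroup, so your argument suffices for the statement as given.
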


In the Appendix we prove with tangles that coassociativity implies 
that the left and the right antipodes coincide, and therefore $H$ has an 
antipode satisfying the coinverse property.

%%%%%%%%%%%%%%%%%%%%%%%%%%%%%%%%%%%%%%%%%%%%%%%%%%%%%%%%%%%%%%%%%%%%%%%%%

\subsection{(Pro)algebraic loops}

Let $\catA$ be a variety of unital algebras over a field $\F$, that is, the 
subcategory of vector spaces over $\F$ which collects all algebras of a certain 
{\em type}, given by a set of operations of various arities, including the unit 
of arity $0$, defined by a set of identities (cf.~\cite{MacLane} ch.~V). 
For instance, $\catA$ can be the category of $\calP$-algebras, where $\calP$ 
is an algebraic operad with $\calP(0)=\{\,\mbox{unit map}\,\}$
(cf.~\cite{LodayVallette}). 

Then, $\catA$ has a coproduct and an initial object 
(cf.~\cite{MacLane} ch.~IX), therefore we can apply to $\catA$ the results 
of the previous section. 
More precisely, the initial object is given by the trivial unital algebra $\F$. 
Suppose that in $\catA$ there are operations $p$ of arity $n>0$, and let 
$\bar{A}$ denote the subalgebra of an algebra $A$ determined by such
operations. Then, given two algebras $A$ and $B$ in $\catA$, the coproduct
$A\scoprod B$ is the quotient of the free algebra
$\catA(\bar{A}\oplus \bar{B})$ (which always exists, cf.~\cite{MacLane} ch.~V)
by the ideal generated by the identities
\begin{align}
\nonumber
& p_{\catA(A\oplus B)}(a_1,...,a_n) = p_{A}(a_1,...,a_n) \in \bar{A}, 
\qquad \mbox{for any $a_1,...,a_n\in \bar{A}$} 
\\ 
\label{free A-product relations}
& p_{\catA(A\oplus B)}(b_1,...,b_n) = p_{B}(b_1,...,b_n) \in \bar{B}, 
\qquad \mbox{for any $b_1,...,b_n\in \bar{B}$} 
\\
\nonumber
& 1_A = 1_B = 1_{\catA(A\oplus B)},  
\end{align}
for all the operations $p$ admitted in $\catA$. 
The universal properties of $\scoprod$ follow from the universal properties 
of the free algebra $\catA(\bar{A}\oplus \bar{B})$. 

\begin{examples}
\begin{enumerate}
\item 
In the category $\Com_\F$ of unital commutative and associative algebras over 
$\F$, the free algebra $\Com_\F(V)$ on a vector space $V$ is the 
symmetric algebra $S(V)$, and the coproduct of two algebras $A$ and $B$ 
is the tensor product $A\sotimes B$. 

\item 
In the category $\As_\F$ of unital associative algebras over $\F$, 
the free algebra $\As_\F(V)$ is the tensor algebra $T(V)$, and the 
coproduct\footnote{
In associative algebras, the coproduct is usually called {\em free product} 
and denoted by $\star$.
} 
$A\scoprod B$ of two algebras $A$ and $B$ is the tensor algebra 
$T(\bar{A}\oplus \bar{B})$ modulo relations (\ref{free A-product relations}), 
which mean that $a\sotimes a' = a\, a$ whenever $a$ and $a'$ are both in 
$\bar{A}$ or both in $\bar{B}$. As a vector space, we then have
$$
A\scoprod B = \F \oplus \bigoplus_{n\geq 1}\ 
\Big[\underset{n}{
\underbrace{\bar{A}\otimes \bar{B}\otimes \bar{A} \otimes\cdots}} \,\oplus\, 
\underset{n}{
\underbrace{\bar{B}\otimes \bar{A}\otimes \bar{B}\otimes\cdots}}\Big],   
$$
and the multiplication in $A\scoprod B$ is given by the 
concatenation modulo the above relations. 
For instance, if we denote the multiplication in $A\scoprod B$ by $\bprod$, 
we have
\begin{align*}
& (b\sotimes a)\bprod (b'\sotimes a'\sotimes b'') 
= b\sotimes a\sotimes b'\sotimes a'\sotimes b'' \\ 
& (b\sotimes a)\bprod (a'\sotimes b'\sotimes a'')
= b\sotimes (a\,a')\sotimes b'\sotimes a''.    
\end{align*}

\item 
Let $\Alg_\F$ be the category of unital algebras (not necessarily associative, 
also called magmatic) over $\F$. The free unital algebra on a vector space $V$ 
is the tensor algebra with parenthesizing $T\{V\}$, and the coproduct 
$A\scoprod B$ of two algebras $A$ and $B$ is the quotient of 
$T\{\bar{A}\oplus \bar{B}\}$ modulo relations 
(\ref{free A-product relations}), which again mean that $a\sotimes a' = a\, a$ 
whenever $a$ and $a'$ are both in $\bar{A}$ or both in $\bar{B}$. 
The multiplication in $A\scoprod B$ is the concatenation with parenthesis. 

\item 
An {\bf alternative algebra} is an algebra $A$ such that the associator 
$(a,b,c) = (ab)c-a(bc)$ is skew-symmetric, that is 
\begin{align}
\label{alternative}
(b,a,c) = -(a,b,c) \qquad\mbox{and}\qquad (a,c,b) = -(a,b,c)
\end{align}
for any $a,b,c\in A$. This is equivalent to requiring that 
$$
(ab)b = a(bb) \qquad\mbox{and}\qquad (aa)b = a(ab)
$$
for any $a,b\in A$. 
Unital alternative algebras over a field $\F$ form a subcategory of $\Alg_\F$, 
denoted by $\Alt_\F$, which is a variety with initial object $\F$. 
The coproduct $A\scoprod B$ of two unital alternative algebras is the quotient 
of the coproduct in $\Alg_\F$ by the relations (\ref{alternative}). 
For details see~\cite{ZSSS} or \cite{KuzminShestakov}. 

\item 
In a category $\catA$, an (anti) {\bf involution} is a unary linear operation 
${}^*:A\longrightarrow A$ such that 
\begin{align}
\label{involution}
(a^*)^* = a
\qquad\mbox{and}\qquad 
(a_1\,a_2)^* = a_2^*\,a_1^*, 
\end{align}
for any $a,a_1,a_2\in A$. Each of the four previous categories of algebras 
can be considered with involution, and denote by $\catA^*$. For such algebras, 
the initial object and the coproduct are the same as in $\catA$, the involution 
on $A\scoprod B$ is automatically defined from the involutions on $A$ and $B$ 
by properties (\ref{involution}). 
Note that, in $\Alg_\F$ and in $\Alt_\F$, the parenthesizing of a word 
$a_1\sotimes\cdots\sotimes a_n$ is inverted from left to right by the involution, 
together with the single letters of the word. 
\end{enumerate}
\end{examples}

\begin{definition}
\label{def-proalgebraic-loop}
A coloop $H$ in a variety of unital algebras $\catA$ is called a 
{\bf coloop $\catA$-bialgebra}. 
Its associated functor in loops $Q=Y(H)$ is then called an 
{\bf algebraic loop on $\catA$} if $H$ is a finitely generated algebra, 
and a {\bf proalgebraic loop on $\catA$} if $H$ is not finitely generated. 
In this case, it is an inductive limit of finitely generated coloop 
$\catA$-bialgebras. 
\end{definition}

There are not many known examples of algebraic loops on non-commutative
algebras, but some of them are quite special.    
In section \ref{section-unitary}, we give two easy examples of algebraic groups 
on commutative algebras which can be extended as groups to associative algebras 
(the groups of invertible elements and that of unitary ones), 
and another one which can not be extended to a functor on associative algebras
even as a loop (the Cayley-Dickson loop). 
Viceversa, in section \ref{section-diffeomorphisms} we give the example of 
a proalgebraic group which can be extended to associative algebras only as a 
proalgebraic loop (the loop of formal diffeomorphisms).

Finally, the two groups of invertible and unitary elements can be extended to
alternative algebras if we regard them as algebraic loops, and in section
\ref{section-invertible series} we also give an example of an algebraic group
which can be extended to all associative algebras as a group,
and to non-associative algebras as a loop (the loop of invertible series).

While the functoriality of the first examples is straightforward, for the
two loops of formal series it is not. The group of formal diffeomorphisms
is a local approximation of the most simple group of smooth diffeomorphisms
on a manifold, and the existence of a proalgebraic version on associative
algebras is a new step in the study of non-commutative geometry.
In particular, its existence as a proalgebraic loop allows us to consider
a physical ``renormalization loop'' to replace the standard group
\cite{ItzyksonZuber}, which could be applied in any perturbative theory
when the function rings have to be replaced by tensor algebras, as in
\cite{Herscovich}. 

\begin{remark}
All known examples of algebraic groups and loops on non-commutative algebras
have free underlying algebra structure. The fact that this should hold in any
category (under certain completeness hypothesis) has not been proved,
but it was proved for cogroups in several categories:
by D. Kan~\cite{KanMonoids} in the category of groups, 
by I. Berstein~\cite{Berstein} (and later reproved by J. Zhang in~\cite{Zhang}) 
in the category of graded connected associative algebras, 
and by B. Fresse~\cite{Fresse} in the category of complete algebras over any 
operad. 
For coloops, this result is proved by G. Bergman and A.O. Hausknecht 
\cite{BergmanHausknecht} in the category of graded connected associative 
rings.
\end{remark}

Before giving the examples, we mention two maps which allow us to compare 
coloop and cogroup bialgebras to usual Hopf algebras. 
A coloop $\catA$-bialgebra has the operations 
$p:H^{\otimes\, n}\longrightarrow H$ from $\catA$, 
and the categorical folding map $\mu:H\scoprod H\rightarrow H$ needed 
to describe the coloop axioms, which can be iterated on $n$ copies of $H$. 
In general, there is no relationship between these two types of operations,
since $H^{\otimes\, n}$ need not be an algebra in $\catA$. 
\medskip 

Assume that $\catA$ is a category of algebras such that, for any 
$\catA$-algebras $A$ and $B$, the tensor product $A\sotimes B$ is again 
an $\catA$-algebra with componentwise operations 
$$
p^{(n)}_{A\otimes B}(a_1\otimes b_1,\cdots,a_n\otimes b_n) = 
p^{(n)}_{A}(a_1,\cdots,a_n)\otimes p^{(n)}_{B}(b_1,\cdots,b_n)
$$
and unit $1_{A\otimes B}=1_A\sotimes 1_B$.

\begin{definition}
\label{canonical-projection}
For any $n\geq 2$ and for any $n$ algebras $A_k$, with $k=1,...,n$, 
we call {\bf canonical projection} of $A_1\scoprod\cdots\scoprod A_n$
onto $A_1\sotimes\cdots\sotimes A_n$ the algebra homomorphism
$$
\pi:= \langle j_1,...,j_n \rangle:
A_1\scoprod\cdots\scoprod A_n \longrightarrow A_1\sotimes\cdots\sotimes A_n
$$
induced by the injective algebra maps
$j_k:A_k\rightarrow A_1\sotimes \cdots \sotimes A_n$ given by 
$$
j_k(a_k)=1_{A_1}\sotimes \cdots \sotimes a_k\sotimes \cdots \sotimes 1_{A_n}. 
$$
The map $\pi$ reorders the elements of $A_1\scoprod\cdots\scoprod A_n$ and 
then multiplies them within each $A_k$ to get elements in 
$A_1\sotimes\cdots\sotimes A_n$. For instance, if we denote by $a^{(k)}$ an 
element $a\in A_k$ seen in the coproduct $A_1\scoprod\cdots\scoprod A_n$, we have 
$$
\pi\big(a^{(1)} b^{(2)} c^{(1)} d^{(2)}\big) = (ac)\sotimes (bd).
$$
This map is surjective, because a preimage of any 
$a_1\sotimes a_2\sotimes \cdots\sotimes a_n\in A_1\sotimes\cdots\sotimes A_n$ 
by $\pi$ is given by 
$a_1^{(1)} a_2^{(2)}\cdots a_n^{(n)}\in A_1\scoprod\cdots\scoprod A_n$. 

Note that, when all $A_k$ coincide and we are given an operation $p$ 
of arity $n$, the map $p\, \pi: A^{\scoprod n} \longrightarrow A$ 
is not, in general, an algebra homomorphism (because $p$ is not), 
and therefore it surely differs from the folding map
$\mu= \langle \Id_A,...,\Id_A \rangle$. 
In fact, $\mu$ multiplies the elements of $A$ in the order they appear 
in $A^{\scoprod n}$ (it is a {\em concatenation}), 
while $p\, \pi$ first reorders the factors in $A^{\scoprod n}$ with $\pi$,
as explained above, then multiplies them (it is a {\em componentwise 
operation}).  
\end{definition}

\begin{definition}
\label{canonical-inclusion}
On the other side, for any $n\geq 2$ and for any $n$ algebras $A_k$, with
$k=1,...,n$, there are categorical maps 
$i_k:A_k\rightarrow A_1\scoprod\cdots\scoprod A_n$. 
For any operation $p$ of arity $n$ in $\catA$, we call 
{\bf canonical inclusion} of $A_1\sotimes\cdots\sotimes A_n$ in 
$A_1\scoprod\cdots\scoprod A_n$ the linear map 
$\iota_p:A_1\sotimes\cdots\sotimes A_n \longrightarrow
A_1\scoprod\cdots\scoprod A_n$
defined by 
$$
\iota_p(a_1\sotimes\cdots\sotimes a_n) := p_\scoprod\big(i_1(a_1),...,i_n(a_n)\big), 
$$
where $p_\scoprod:(A_1\scoprod\cdots\scoprod A_n)^{\sotimes n}\rightarrow 
A_1\scoprod\cdots\scoprod A_n$ 
denotes the operation $p$ on the coproduct algebra
$A_1\scoprod\cdots\scoprod A_n$. 
It follows from the definition of $\scoprod$ that this map is injective. 

Note that $\iota_p$ is not, in general, an algebra homomorphism, because 
the operation $p$ in $\catA$ is not. 
However, when all $A_k$ coincide (say, with $A$), the map $\iota_p$ allows us
to recover the operation $p_A:A^{\sotimes n}\rightarrow A$ from the folding map
$\mu$, in the sense that $\mu\, \iota_p=p_A$, because 
$$
\mu\ p_\scoprod\big(i_1(a_1),...,i_n(a_n)\big) = p_A(a_1,...,a_n)
$$ 
for any $a_1,...,a_n\in A$. 
\end{definition}

\begin{proposition}
When the map $\iota_p$ is well defined, we have
$\pi\,\iota_p=\Id_{A_1\otimes \cdots \otimes A_n}$.
\end{proposition}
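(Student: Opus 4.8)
The plan is to verify the identity $\pi\,\iota_p = \Id_{A_1\otimes\cdots\otimes A_n}$ by evaluating both sides on a simple tensor $a_1\sotimes\cdots\sotimes a_n$, using only the definitions of $\pi$ and $\iota_p$ together with the fact that $\pi$ is an algebra homomorphism while $p$ is ``componentwise'' on tensor products. First I would unfold $\iota_p(a_1\sotimes\cdots\sotimes a_n) = p_\scoprod\big(i_1(a_1),\dots,i_n(a_n)\big)$, so the left-hand side becomes $\pi\Big(p_\scoprod\big(i_1(a_1),\dots,i_n(a_n)\big)\Big)$. Since $\pi = \langle j_1,\dots,j_n\rangle$ is by construction an algebra homomorphism from $A_1\scoprod\cdots\scoprod A_n$ to $A_1\sotimes\cdots\sotimes A_n$, it commutes with every operation of $\catA$, in particular with $p$; hence this equals $p_{A_1\otimes\cdots\otimes A_n}\big(\pi\, i_1(a_1),\dots,\pi\, i_n(a_n)\big)$. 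By the universal property of the coproduct, $\pi\, i_k = j_k$, so the expression reduces to $p_{A_1\otimes\cdots\otimes A_n}\big(j_1(a_1),\dots,j_n(a_n)\big)$.

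Next I would compute this using the hypothesis that $A_1\sotimes\cdots\sotimes A_n$ carries componentwise operations, i.e.
\begin{align*}
p^{(n)}_{A_1\otimes\cdots\otimes A_n}(x_1\otimes\cdots\otimes x_n,\dots)
= p^{(n)}_{A_1}(\text{first slots})\otimes\cdots\otimes p^{(n)}_{A_n}(\text{last slots}).
\end{align*}
Plugging in $j_k(a_k) = 1_{A_1}\sotimes\cdots\sotimes a_k\sotimes\cdots\sotimes 1_{A_n}$, the $k$-th tensor slot of the result is $p^{(n)}_{A_k}(1_{A_k},\dots,1_{A_k},a_k,1_{A_k},\dots,1_{A_k})$ where $a_k$ sits in the $k$-th argument position. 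I would then invoke the unitality axiom of $\catA$ — valued in the sense that the arity-$0$ operation (the unit) interacts with every operation $p$ so that $p$ evaluated on units in all but one slot returns that remaining entry — to conclude that the $k$-th slot is simply $a_k$. Therefore $\pi\,\iota_p(a_1\sotimes\cdots\sotimes a_n) = a_1\sotimes\cdots\sotimes a_n$, which is the claim by linearity.

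The only subtlety, and the step I expect to require the most care, is the last one: the reduction $p_{A_k}(1,\dots,1,a_k,1,\dots,1) = a_k$ is not literally one of the defining identities of an arbitrary variety, but it does hold in all the concrete categories considered in the paper ($\Com_\F$, $\As_\F$, $\Alg_\F$, $\Alt_\F$ and their involutive versions), where every operation $p$ is built from the single binary product and the unit, and where $1$ is a two-sided identity for that product. More structurally, the hypothesis under which $\iota_p$ is defined already presupposes that $A\sotimes B$ is an $\catA$-algebra with unit $1_A\sotimes 1_B$; the compatibility of this unit with the componentwise operation $p_{A\otimes B}$ forces exactly the absorption property we need, since $j_k(a_k)$ is obtained from $1_{A_1\otimes\cdots\otimes A_n}$ by replacing one tensor factor. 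So I would phrase the argument to extract $p_{A_k}(1,\dots,a_k,\dots,1)=a_k$ from the requirement that $1_{A_1\otimes\cdots\otimes A_n}$ be a unit compatible with all operations of $\catA$, rather than from a generic variety axiom; with that in hand the proof is a short unwinding of definitions.
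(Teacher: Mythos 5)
Your proposal is correct and follows essentially the same route as the paper: unfold $\iota_p$, use that $\pi$ is an algebra homomorphism to pull it inside $p_\scoprod$, identify $\pi\, i_k=j_k$, and evaluate the componentwise operation on the $j_k(a_k)$. The only difference is that you explicitly justify the final absorption step $p_{A_k}(1,\dots,a_k,\dots,1)=a_k$, which the paper leaves implicit; your observation that this follows from unit compatibility rather than a generic variety axiom is a sound way to close that small gap.
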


\begin{proof}
Denote by $p_{\otimes}$ the operation $p$ on the tensor algebra
$A_1\sotimes\cdots\sotimes A_n$. Since $\pi$ is an algebra homomorphism,
for any $a_k\in A_k$, with $k=1,...,n$, we have 
\begin{align*}
\pi\, \iota_p(a_1\sotimes\cdots\sotimes a_n)
&= \pi\, p_\scoprod\big(i_1(a_1),...,i_n(a_n)\big) 
= p_{\otimes}\Big(\pi\big(i_1(a_1)\big),...,\pi\big(i_n(a_n)\big)\Big)  
\\ 
& = p_{\sotimes}\Big(\langle j_1,...,j_n \rangle \big(i_1(a)\big),...,
\langle j_1,...,j_n \rangle \big(i_n(a_n)\big)\Big)
\\
& = p_{\otimes}\Big(j_1(a_1),...,j_n(a_n)\Big)
\\
& = a_1\sotimes \cdots \sotimes a_n. 
\end{align*}
\end{proof}

\begin{remark}
These maps allow us in particular to compare the coloop bialgebra representing 
some loop to other types of bialgebras related to it which appear in the 
literature. In particular, the universal enveloping algebra of the Sabinin 
algebra associated to the loop has been studied in 
\cite{Perez-Izquierdo, MostovoyPerezIzquierdo, 
MostovoyPerezIzquierdoShestakov-Hopf}. 
Because of the axioms, it is clear that the graded dual of this universal 
enveloping algebra does not coincide with the bialgebra $H^\otimes$ induced 
by a coloop bialgebra $H$, nor in $\Alg_\F$ nor in $\As_\F$.
\end{remark}

Finally, let us use these maps to compare associative coloop bialgebras and 
Hopf algebras. Let $H$ be a coloop bialgebra in $\As_\F$. 
Denote by $H^\otimes$ the algebra $H$ endowed with the usual co-operations 
$$
\D^\otimes=\pi\,\D,\ \delta_r^\otimes=\pi\,\delta_r,\ 
\delta_l^\otimes=\pi\,\delta_r: 
H^\otimes \longrightarrow H^\otimes\otimes H^\otimes, 
$$
the counit $\varepsilon$ and the antipodes $S_r$, $S_l$, which are all still 
algebra homomorphisms on $H^\otimes$. 

\begin{proposition}
\label{Coloop to Hopf} 
If $\D$ is coassociative, then $\D^\otimes$ is coassociative.
Moreover, we have 
$$
S_r= (\varepsilon \otimes \Id)\,\delta_r^\otimes 
\qquad\mbox{and}\qquad 
S_l= (\Id\otimes \varepsilon)\,\delta_l^\otimes.  
$$
\end{proposition}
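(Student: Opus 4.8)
The plan is to prove both assertions by reducing them to the corresponding identities for coloops already established in the excerpt, and then transporting them through the canonical projection $\pi$. Recall that $\D^\otimes = \pi\,\D$, $\delta_r^\otimes = \pi\,\delta_r$, $\delta_l^\otimes = \pi\,\delta_l$, and that for $n=2,3$ copies of $H$ the map $\pi$ is an algebra homomorphism from the coproduct $H\scoprod\cdots\scoprod H$ onto the tensor power $H^\otimes\otimes\cdots\otimes H^\otimes$. The key structural fact I would use repeatedly is the naturality of $\pi$ with respect to the categorical operations: $\pi$ intertwines the categorical folding map $\mu$ on coproducts with the ordinary multiplication $m$ on tensor products, it intertwines $i_1,i_2$ with the tensor inclusions, and, for the iterated version, $(\pi\scoprod\Id)$ composed appropriately with a three-fold $\pi$ makes the two routes around the relevant square agree (this is the standard compatibility of $\pi$ with $\scoprod$). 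Concretely, $\pi_{2}\,(\Id\scoprod u)=\Id\otimes 1$ as maps $H\to H\otimes H$, $\pi_{2}\,i_1 = \Id\otimes 1$, and $\pi_3 = (\pi_2\otimes\Id)\,(\pi_2\scoprod\Id)=(\Id\otimes\pi_2)\,(\Id\scoprod\pi_2)$ after the obvious reassociations.

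For coassociativity: I would start from the coloop identity \eqref{coassociative}, $(\D\scoprod\Id)\,\D=(\Id\scoprod\D)\,\D$, which holds by hypothesis. Apply the three-fold canonical projection $\pi_3:H\scoprod H\scoprod H\to H^\otimes\otimes H^\otimes\otimes H^\otimes$ to both sides. On the left, $\pi_3\,(\D\scoprod\Id)\,\D$; using $\pi_3=(\pi_2\otimes\Id)\,(\pi_2\scoprod\Id)$ and the fact that $(\pi_2\scoprod\Id)\,(\D\scoprod\Id)=(\pi_2\D\scoprod\Id)=(\D^\otimes\scoprod\Id)$ followed by $(\pi_2\otimes\Id)$ — but here one must be careful, since $\D^\otimes$ lands in $H^\otimes\otimes H^\otimes$, not in a coproduct. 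The clean way is: $\pi_3\,(\D\scoprod\Id) = (\D^\otimes\otimes\Id)\,\pi_2$ as maps $H\scoprod H\to (H^\otimes)^{\otimes 3}$, which follows because $\pi$ is a functorial/natural transformation and $\D$ is an algebra map. Then $\pi_3\,(\D\scoprod\Id)\,\D = (\D^\otimes\otimes\Id)\,\pi_2\,\D = (\D^\otimes\otimes\Id)\,\D^\otimes$. Symmetrically, using $\pi_3=(\Id\otimes\pi_2)\,(\Id\scoprod\pi_2)$ and $\pi_3\,(\Id\scoprod\D)=(\Id\otimes\D^\otimes)\,\pi_2$, we get $\pi_3\,(\Id\scoprod\D)\,\D = (\Id\otimes\D^\otimes)\,\D^\otimes$. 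Equality of the original two sides therefore yields $(\D^\otimes\otimes\Id)\,\D^\otimes=(\Id\otimes\D^\otimes)\,\D^\otimes$, which is coassociativity of $\D^\otimes$.

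For the antipode formulas: I would take the partial counitality properties \eqref{counit-codivision} from the Proposition after Definition~\ref{def-coloop}, namely $(\Id\scoprod\varepsilon)\,\delta_r=\varphi_1$ and $(\varepsilon\scoprod\Id)\,\delta_l=\varphi_2$, together with the definitions \eqref{antipode-left-right}, $S_r=\psi_2\,(\varepsilon\scoprod\Id)\,\delta_r$ and $S_l=\psi_1\,(\Id\scoprod\varepsilon)\,\delta_l$. The point is that on the tensor side the counit still satisfies $(\varepsilon\otimes\Id)\,\pi_2 = \psi_2'$ and $(\Id\otimes\varepsilon)\,\pi_2=\psi_1'$ where $\psi_1',\psi_2'$ are the canonical identifications $H^\otimes\otimes\F\cong H^\otimes$, $\F\otimes H^\otimes\cong H^\otimes$ — this is just the statement that $\pi$ commutes with $\varepsilon$ in the appropriate slot, which is immediate from the description of $\pi$ as "reorder and multiply within each factor." Hence $(\varepsilon\otimes\Id)\,\delta_r^\otimes = (\varepsilon\otimes\Id)\,\pi_2\,\delta_r = \psi_2'\,(\varepsilon\scoprod\Id)\,\delta_r$ — wait, one must check that $(\varepsilon\otimes\Id)\,\pi_2 = \pi_1\,(\varepsilon\scoprod\Id)$ with $\pi_1$ the identification, which again holds by naturality. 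This gives $(\varepsilon\otimes\Id)\,\delta_r^\otimes = \psi_2\,(\varepsilon\scoprod\Id)\,\delta_r = S_r$, and symmetrically $(\Id\otimes\varepsilon)\,\delta_l^\otimes=S_l$, as claimed.

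The main obstacle I anticipate is bookkeeping the precise compatibility of $\pi$ with the iterated coproduct and with the unit/counit identifications $\varphi_i,\psi_i$ — i.e., verifying cleanly that $\pi$ is a morphism of the relevant structures so that all the squares commute. None of this is deep; it is the kind of diagram-chase that the paper handles elsewhere with tangle diagrams in the Appendix, and indeed the cleanest writeup would either invoke the naturality of $\pi$ as a transformation between the two monoidal structures on algebras (coproduct versus tensor product) or else redo the short tangle computations with $\pi$ inserted. I would present it in the concise form above, deferring the routine $\pi$-compatibility lemmas to a remark or to the Appendix.
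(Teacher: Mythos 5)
Your proposal is correct and follows essentially the same route as the paper: for coassociativity you apply the three-fold canonical projection to the identity $(\D\scoprod\Id)\,\D=(\Id\scoprod\D)\,\D$ and use that the two composite projections agree with the standard $\pi:H\scoprod H\scoprod H\to H^{\otimes 3}$, exactly as in the paper's proof. For the antipode identities your derivation via the naturality relation $(\varepsilon\otimes\Id)\,\pi=\psi_2\,(\varepsilon\scoprod\Id)$ (checked on the generators $i_1,i_2$ of the coproduct) is just a more categorical phrasing of the paper's element-wise argument that $\varepsilon$ turns the $H^{(1)}$-factors into scalars which may be moved to the leftmost position, so the content is the same.
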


\begin{proof}
If $\D$ is coassociative, the two terms 
$$
(\D^\otimes\otimes\Id)\,  \D^\otimes 
= (\pi\otimes\Id)\,  \pi_{(H\scoprod H)\scoprod H} \,  (\D\scoprod \Id)\,  \D
$$
and 
$$
(\Id\otimes \D^\otimes)\,  \D^\otimes 
= (\Id\otimes \pi)\,  \pi_{H\scoprod (H\scoprod H)} \,  (\Id\scoprod \D)\,  \D 
$$
coincide, because $\D$ is coassociative and because the two maps 
$(\pi\otimes\Id)\otimes \pi_{(H\scoprod H)\scoprod H}$ 
and $(\Id\otimes \pi)\otimes \pi_{H\scoprod (H\scoprod H)}$ coincide with the 
standard projection 
$\pi:H\scoprod H\scoprod H \longrightarrow H\otimes H\otimes H$. 
Therefore $\D^\otimes$ is coassociative. 

For any $a\in H$, the term $\delta_r(a)\in H^{(1)}\scoprod H^{(2)}$ 
is a finite sum of products of elements of $H^{(1)}$ and of $H^{(2)}$ 
in alternative order. 
The right antipode $S(a)_r=(\varepsilon\scoprod \Id)\,\delta_r(a)$ 
turns all the factors belonging to $H^{(1)}$ into scalars, which can then be 
positioned on the lefthand side of all the remaining elements belonging to 
$H^{(2)}$. Therefore the result is the same that we obtain if we first 
reorder the factors in $H^{(1)}$ all at the leftmost position by 
applying $\delta_r(a)^\otimes$. 
Same with $S_l$ by putting all the scalars on the rightmost position. 
\end{proof}

Note however that $S_r$ and $S_l$ do not necessarily satisfy the left 
and right 5-terms identities for $\D^\otimes$ on $H^\otimes$, because 
$$
m\,(S_r\sotimes\Id)\,\D^\otimes = m\,\pi\,(S_r\scoprod\Id)\,i\,\D^\otimes 
= \mu\,\iota\,\pi\,(S_r\scoprod\Id)\,\iota\,\pi\,\D
$$
and $\iota\,\pi$ is not the identity map on $H\scoprod H$. 
Therefore, even if $H$ is a cogroup bialgebra, $H^\otimes$ is not necessarily 
a Hopf algebra. 

%%%%%%%%%%%%%%%%%%%%%%%%%%%%%%%%%%%%%%%%%%%%%%%%%%%%%%%%%%%%%%%%%%%%

\section{Coloops of invertible and unitary elements} 
\label{section-unitary}

\subsection{Loop of invertible elements} 

In this section we give an example of an abelian algebraic group which can be 
extended to associative algebras as a group, to alternative algebras as a loop, 
but not to non-associative algebras, even as a loop. 
\medskip 

Let $\F$ be a field. 
For any unital commutative algebra $A$ over $\F$, the set 
$$
I(A) = \{ a\in A\ |\ \mbox{$a$ admits an inverse $a^{-1}$}\ \}
$$
is the abelian group of invertible elements in $A$. 
The functor $I$ is represented on $\Com_\F$ by the commutative 
(and cocommutative) Hopf algebra of Laurent polynomials $\HI = \F[x,x^{-1}]$, 
with co-operations 
\begin{align*}
\Delta (x) = x \otimes x, \qquad \varepsilon (x) = 1, \qquad S(x) = x^{-1}. 
\end{align*}
In fact, elements $a\in I(A)$ are in bijection with algebra homomorphisms
$\alpha:\HI\to A$ such that $\alpha(x)=a$ and $\alpha(x^{-1})=a^{-1}$.
Then, if $\alpha$ and $\beta$ give respectively
the elements $a$ and $b$, their convolution product coincides with the product
in A, because we have
\begin{align*}
  (\alpha \cdot \beta)(x) & = \mu_A (\alpha\otimes \beta) \Delta(x)
  = \alpha(x) \beta(x) = ab, \\
  \alpha^{-1}(x) & = \alpha S(x) = \alpha (x^{-1})=a^{-1}. 
\end{align*}
We show that the functor $I$ admits an extention to associative algebras as 
a group, and that it admits an extention to non-associative algebras, as a loop, 
only on alternative algebras.  

\begin{definition}
\label{invertible-coloop}
We call {\bf invertible coloop bialgebra} on $\F$ the 
associative algebra $\HIex = \F[x,x^{-1}]$ endowed with the following co-operations 
with values in the coproduct $\HIex \scoprod \HIex$ of the category $\As_\F$: 
\begin{align*}
& \D(x) = x^{(1)}\, x^{(2)} && \D(x^{-1}) = (x^{-1})^{(2)}\, (x^{-1})^{(1)},
\\
& \epsilon(x) = 1 && \epsilon(x^{-1}) = 1 ,
\\
& \delta_r(x) = x^{(1)}\, (x^{-1})^{(2)} && \delta_r(x^{-1}) = x^{(2)}\, (x^{-1})^{(1)} ,
\\
& \delta_l(x) = (x^{-1})^{(1)}\, x^{(2)} && \delta_l(x^{-1}) = (x^{-1})^{(2)}\, x^{(1)} ,  
\end{align*}
where $x^{(k)}=i_k(x)$ is the generator $x$ seen in the $k$th copy of $\HIex$
of the coproduct algebra $\HIex \scoprod \HIex$, for $k=1,2$, and similarly
for $(x^{-1})^{(k)}=i_k((x^{-1})^{(k)})$. 

It follows that there is a two-sided antipode given by $S(x)=x^{-1}$ and 
$S(x^{-1})=x$. 
\end{definition}

\begin{proposition}
The algebra $\HIex$ is a cogroup bialgebra in $\As_\F$ and represents, 
for any associative algebra $A$, the group 
$$
I(A) = \Hom_{\As_\F}(\HIex,A)
$$
of invertible elements of $A$. 
Moreover, the group $I(A)$ is abelian if $A$ is commutative. 
\end{proposition}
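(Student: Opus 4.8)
The plan is to deduce the proposition from Theorem~\ref{theorem-Q-functor} by exhibiting $I$ as a representable functor in groups on $\As_\F$. First I would record that $I$ genuinely is a functor in groups: for any unital associative algebra $A$ the two-sided units of $A$ form a group (the inverse of a unit is unique, products of units are units, associativity is inherited from $A$), and any algebra homomorphism $f:A\to B$ restricts to a group homomorphism $I(A)\to I(B)$ since $f(1)=1$ forces $f(a)f(a^{-1})=f(a^{-1})f(a)=1$. Composing with the inclusion $\Grp\hookrightarrow\Loop$ (associative loops are groups), $I$ becomes a functor in loops.

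Next I would check that $I$ is representable by $H:=\HIex$. As an object of $\As_\F$ this algebra is presented by the two generators $x,x^{-1}$ subject only to $xx^{-1}=x^{-1}x=1$: every word in these generators reduces to a power $x^{n}$, $n\in\Z$, so the quotient of the free associative algebra by these relations is the group algebra $\F[\Z]$, which is precisely the commutative ring $\F[x,x^{-1}]$. Hence an algebra homomorphism $\alpha:H\to A$ is exactly a choice of the invertible element $a=\alpha(x)$, with $\alpha(x^{-1})=a^{-1}$ forced, and the bijection $\Hom_{\As_\F}(H,A)\cong I(A)$, $\alpha\mapsto\alpha(x)$, is natural in $A$. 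By Theorem~\ref{theorem-Q-functor}, $H$ therefore carries a coloop structure; and since the group law on each $I(A)$ is associative, the induced comultiplication is coassociative (by the same kind of Yoneda argument used in the proof of that theorem to get $\langle\alpha,\beta\rangle\D=\alpha\cdot\beta$), so $H$ is a coassociative coloop, i.e. a cogroup by Proposition~\ref{coloop-cogroup}, with two-sided antipode the inverse of $\Id_H$ in $I(H)$, i.e. $S(x)=x^{-1}$.

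It then remains only to check that the co-operations produced by this construction are the ones written in Definition~\ref{invertible-coloop}. Using the formulas $\Delta=i_1\cdot i_2$, $\delta_r=i_1\slash i_2$, $\delta_l=i_1\backslash i_2$, $\varepsilon=1_\F$ from the proof of Theorem~\ref{theorem-Q-functor}, computed in the group $I(H\scoprod H)$ where $a\slash b=ab^{-1}$ and $a\backslash b=a^{-1}b$, one gets $\Delta(x)=x^{(1)}x^{(2)}$, $\delta_r(x)=i_1(x)\,i_2(x)^{-1}=x^{(1)}(x^{-1})^{(2)}$ and $\delta_l(x)=i_1(x)^{-1}\,i_2(x)=(x^{-1})^{(1)}x^{(2)}$; since these maps are algebra homomorphisms, their values on $x^{-1}$ are the inverses of their values on $x$ in $H\scoprod H$, e.g. $\Delta(x^{-1})=(x^{(2)})^{-1}(x^{(1)})^{-1}=(x^{-1})^{(2)}(x^{-1})^{(1)}$, and likewise for $\delta_r,\delta_l$, matching Definition~\ref{invertible-coloop}. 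Finally the convolution law on $\Hom_{\As_\F}(H,A)$ induced by $\Delta$ is the group law of $I(A)$, since for $\alpha,\beta$ corresponding to $a,b$ one has $(\alpha\cdot\beta)(x)=\mu_A(\alpha\scoprod\beta)\Delta(x)=\alpha(x)\beta(x)=ab$; and when $A$ is commutative, $I(A)$ inherits commutativity from $A$, so it is abelian.

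The argument has essentially no hard step; the one point that deserves care — and which I would spell out — is the presentation of $\HIex$ as an associative algebra used above, since without it the identification of $\Hom_{\As_\F}(\HIex,A)$ with the naive set of invertible elements is not automatic. Alternatively one can bypass Theorem~\ref{theorem-Q-functor} and verify directly that the maps of Definition~\ref{invertible-coloop} are well-defined algebra homomorphisms (they respect the relations, e.g. $\Delta(x)\Delta(x^{-1})=x^{(1)}x^{(2)}(x^{-1})^{(2)}(x^{-1})^{(1)}=1$ because $x^{(2)}(x^{-1})^{(2)}=1$ in $\HIex\scoprod\HIex$, and symmetrically) and then check the counitary, cocancellation and coassociativity axioms together with the convolution identities — all routine but longer.
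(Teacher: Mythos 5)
Your proof is correct, but it runs Theorem~\ref{theorem-Q-functor} in the opposite direction from the paper. The paper works entirely on the coalgebra side: it verifies the cocancellation identities (\ref{right-cocancellation}) for $\delta_r$ by explicit computation in $\HIex\scoprod\HIex$ on the generator $x$, observes that $\DFdBex$ -- rather, that $\D$ -- is coassociative since $(\D\scoprod\Id)\D(x)=x^{(1)}x^{(2)}x^{(3)}=(\Id\scoprod\D)\D(x)$, and only then invokes Theorem~\ref{theorem-Q-functor} and Proposition~\ref{coloop-cogroup} to conclude that $I(A)$ is a group. You instead verify the trivial fact that $A\mapsto I(A)$ is a functor in groups, establish representability by the presentation $\HIex\cong\F[\Z]$ (a point the paper leaves implicit, and which does deserve the care you give it, since $a$ and $a^{-1}$ must be seen to generate a commutative subalgebra of $A$ for $\Hom_{\As_\F}(\HIex,A)\cong I(A)$ to hold), and then use the converse construction $\D=i_1\cdot i_2$, $\delta_r=i_1\slash i_2$, $\delta_l=i_1\backslash i_2$ in $I(\HIex\scoprod\HIex)$ to recover exactly the co-operations of Definition~\ref{invertible-coloop}. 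What your route buys is that the cocancellation and coassociativity axioms come for free from the group axioms in $A$; what the paper's route buys is a template for the later, harder cases (Theorems~\ref{thm-invertible series} and~\ref{Diff(A)-proalgebraic-loop}), where the functor-side verification is no longer trivial and one is forced to work with the codivisions directly. Your treatment of the abelian claim (commutativity of $I(A)$ is inherited pointwise from $A$) is also fine and matches the paper's remark that this holds despite $\D$ not being cocommutative, because the generators are group-like.
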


\begin{proof}
The axioms of a coloop bialgebra for the codivisions are easily verified.
For instance, the computations
\begin{align*}
(\Id\scoprod \mu)(\delta_r\scoprod \Id) \Delta (x)
  & = (\Id\scoprod \mu)(\delta_r\scoprod \Id)(x^{(1)} x^{(2)}) \\ 
  & = (\Id\scoprod \mu)(x^{(1)} (x^{-1})^{(2)} x^{(3)}) \\ 
  & = x^{(1)} (x^{-1})^{(2)} x^{(2)} \\
  & = x^{(1)} = i_1(x) \\
(\Id\scoprod \mu)(\Delta\scoprod \Id) \delta_r (x)
  & = (\Id\scoprod \mu)(\Delta\scoprod \Id)(x^{(1)} (x^{-1})^{(2)}) \\ 
  & = (\Id\scoprod \mu)(x^{(1)} x^{(2)} (x^{-1})^{(3)}) \\ 
  & = x^{(1)} x^{(2)} (x^{-1})^{(2)} \\
  & = x^{(1)} = i_1(x)   
\end{align*}
and the analogue computations for $x^{-1}$ prove the cocancellation
(\ref{right-cocancellation}) for $\delta_r$. 
The first claim is then ensured by the fact that $\D$ is coassociative. 
In fact,
\begin{align*}
  (\Delta \scoprod \Id) \Delta (x) & = x^{(1)}\, x^{(2)}\, x^{(3)}
  = (\Id\scoprod \Delta) \Delta (x),  
\end{align*}
and similarly for $x^{-1}$. 
Thus, $I(A)$ is a group by Theorem \ref{theorem-Q-functor} and Proposition
\ref{coloop-cogroup}. 

The fact that the group $I(A)$ is abelian if $A$ is commutative 
is less evident because $\D$ is not cocommutative. In fact, we have 
$$
\tau\,\D(x) = \tau(x^{(1)}\, x^{(2)}) = x^{(2)}\, x^{(1)} 
\neq x^{(1)}\, x^{(2)} = \D(x). 
$$
It is however true because the generators $x$ and $x^{-1}$ are group-like, and 
therefore the commutativity of the convolution product only depends on that 
of the multiplication in $A$. 
\end{proof}

\begin{example}
The group $I(A)$ is the simplest algebraic group at all: it describes
invertible elements in an associative algebra $A$ whatever is the nature of
$A$, that is, without making use of any internal structure of $A$. 
The simplest non-trivial example is the group $I(M_n(\F)) = GL_n(\F)$,
which is recovered as the set of $M_n(\F)$-valued algebra homomorphisms
on $\HI$ without using the non-homogeneous relation $\det(A)\neq 0$
which defines invertible matrices (or, more precisely, the relation
$\det(A)=t$ where $t$ determines a new scalar invertible generator
of the coordinate ring). 
\end{example}

\begin{proposition}
\label{invertible-loop}
The algebraic group $I$ can be extended as a loop to a variety of algebras 
$\catA \subset \Alg_\F$ if $\catA$ is a subcategory of alternative algebras
$\Alt_\F$ admitting coproduct and initial object. 
In particular, it is an algebraic loop on $\Alt_\F$. 
\end{proposition}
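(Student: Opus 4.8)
The plan is to show that the same generator $x$ (together with its formal inverse $x^{-1}$) and the same co-operations $\Delta,\varepsilon,\delta_r,\delta_l$ from Definition~\ref{invertible-coloop} still make sense and satisfy the coloop axioms when the ambient category is a variety $\catA\subset\Alt_\F$ with coproduct and initial object, rather than $\As_\F$. Concretely, I would take $\HIex$ as the free unital alternative algebra on the set $\{x,x^{-1}\}$ modulo the relations $x\,x^{-1}=x^{-1}\,x=1$, which is the analogue in $\Alt_\F$ of $\F[x,x^{-1}]$; since a single element together with its inverse generates an associative (indeed commutative) subalgebra in any alternative algebra by Artin's theorem, this object is in fact $\F[x,x^{-1}]$ again, but now regarded as an object of $\catA$. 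The maps $\Delta,\varepsilon,\delta_r,\delta_l$ are defined by the very same formulas, with $x^{(k)}=i_k(x)$ now sitting in the coproduct $\HIex\scoprod\HIex$ taken in $\catA$; because $x$ is group-like and $x^{(1)},x^{(2)}$ individually lie in associative subalgebras, each formula gives a well-defined algebra homomorphism out of $\HIex$.

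First I would check that these maps land where they should and are $\catA$-morphisms: one must verify $\Delta(x)\Delta(x^{-1})=1$ in $\HIex\scoprod\HIex$, and similarly for $\varepsilon,\delta_r,\delta_l$, so that the universal property of $\HIex$ as a quotient of the free alternative algebra yields genuine maps. This is where alternativity enters in an essential way: products such as $x^{(1)}x^{(2)}x^{(3)}$ or $x^{(1)}(x^{-1})^{(2)}x^{(2)}$ involve elements from only two of the copies at a time inside any single monomial, so by the alternative-algebra fact that a subalgebra generated by two elements is associative (Artin's theorem), all the bracketings that occur are unambiguous and the computations reduce to the associative ones already displayed in the proof of the previous proposition. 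Then I would simply re-run those computations verbatim: the counitary identity~(\ref{counitary}), the cocancellation identities~(\ref{right-cocancellation}) and~(\ref{left-cocancellation}), and coassociativity~(\ref{coassociative}) of $\Delta$, all hold by exactly the same one-line manipulations, since each manipulation only ever multiplies elements drawn from at most two of the tensor/coproduct factors. By Theorem~\ref{theorem-Q-functor} this makes $\HIex$ a coloop in $\catA$, hence a coloop $\catA$-bialgebra, and $Y(\HIex)$ a representable functor in loops; restricting to $\catA=\Alt_\F$ and noting $\HIex$ is finitely generated gives an algebraic loop.

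It remains to identify the represented functor with $I$. For any alternative unital $A$, an $\catA$-homomorphism $\alpha:\HIex\to A$ is determined by $\alpha(x)=a$ with $\alpha(x^{-1})$ forced to be the two-sided inverse of $a$ (which exists and is unique in the alternative setting exactly when $a$ is invertible), so $\Hom_{\catA}(\HIex,A)\cong I(A)$ as sets; and the convolution formulas~(\ref{convolution}) give back the multiplication and the two one-sided divisions of $a$ in $A$, just as in the associative case, because $a$ and $a^{-1}$ generate an associative subalgebra. The main obstacle I anticipate is not any of these computations but making the reduction-to-two-generators argument airtight: one must argue carefully that in each expression appearing in the coloop axioms — e.g. $(\Id\scoprod\mu)(\Delta\scoprod\Id)\delta_r(x)$ expanded out — every monomial really does involve elements of at most two of the copies of $\HIex$, so that Artin's theorem applies and the bracketing is irrelevant; once that bookkeeping is done, the verification is mechanical and mirrors the associative proof line for line. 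I would also remark, as in the surrounding text, that $\catA$ being a subvariety of $\Alt_\F$ closed under coproduct and initial object is all that is used, so the statement holds at this level of generality and not merely for $\Alt_\F$ itself.
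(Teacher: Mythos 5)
Your overall strategy coincides with the paper's: keep the same generators and co-operations from Definition~\ref{invertible-coloop}, form the coproduct in the alternative category, and observe that the cocancellation identities reduce to identities that alternativity guarantees. The paper does this much more tersely, citing directly the inverse property $(ab)b^{-1}=a=b^{-1}(ba)$ for invertible $b$ in an alternative algebra (from~\cite{ZSSS}); that single identity is exactly what each cocancellation computation needs, e.g.\ $\big(x^{(1)}(x^{-1})^{(2)}\big)x^{(2)}=x^{(1)}$. Your appeal to Artin's two-generator theorem is in the right spirit but is not quite the right tool: the monomials in question involve the three elements $x^{(1)}$, $x^{(2)}$ and $(x^{-1})^{(2)}$, and ``two copies'' is not the same as ``two generators'', so the two-generated-subalgebras-are-associative statement does not literally apply. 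The correct citation is the inverse property (equivalently, the fact that $a$, $b$, $b^{-1}$ generate an associative subalgebra when $b$ is invertible).

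There is one genuine error: you assert that coassociativity of $\Delta$ survives ``by the same one-line manipulations''. It does not. Coassociativity compares $\big(x^{(1)}x^{(2)}\big)x^{(3)}$ with $x^{(1)}\big(x^{(2)}x^{(3)}\big)$, which involves three distinct copies, and the associator of three generators is not killed by the alternativity relations in $\HIex\scoprod\HIex\scoprod\HIex$. Indeed coassociativity \emph{must} fail here, since by Theorem~\ref{theorem-Q-functor} and Proposition~\ref{coloop-cogroup} it would force $I(A)$ to be a group for every alternative $A$, contradicting the fact that $I(\O)$ is a proper (non-associative) Moufang loop. Fortunately the proposition only claims a loop, so coassociativity is not needed; dropping that sentence leaves your argument sound and essentially identical to the paper's.
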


\begin{proof}
If $I$ could be extended as an algebraic loop to $\Alg$, its representative 
coloop bialgebra should be the algebra $\HIex = \F[x,x^{-1}]$ with 
co-operations defined on generators as in Def.~\ref{invertible-coloop} but 
taking values in the coproduct $\HIex\scoprod \HIex$ of the category $\Alg$. 
This algebra is not a coloop bialgebra in $\Alg$, because the 
codivisions do not satisfy the cocancelation properties 
(\ref{left-cocancellation}) and (\ref{right-cocancellation}). 
In fact, the element 
\begin{align*}
(\Id\scoprod \mu)\,(\delta_r\scoprod \Id)\,\Delta(x) 
& = \big(x^{(1)}\, (x^{-1})^{(2)}\big)\, x^{(2)} 
\end{align*}
can not coincide with $i_1(x) = x^{(1)}$ in $\HIex\scoprod \HIex$. 
However, the conditions under which the cocancellation properties hold, all 
similar to the one above, are guaranteed in the category of alternative 
algebras, where $(ab)b^{-1}=a=b^{-1}(ba)$ for any $a$ and any invertible $b$
(cf.~\cite{ZSSS}). 
\end{proof}

\begin{example}
The octonions $\O$ form an alternative algebra, therefore one can apply $I$
to $\O$. The set $I(\O)$ of invertible octonions is a well
known Moufang loop (cf.~\cite{Bruck}), that is, it is a loop satisfying
the Moufang identities 
\begin{align*}
a(b(ca)) = ((ab)a)c \qquad\qquad (ab)(ca) & = (a(bc))a \\
a(b(cb)) = ((ab)c)b \qquad\qquad (ab)(ca) & = a((bc)a 
\end{align*} 
for any elements $a,b,c$. 
\end{example}

%%%%%%%%%%%%%%%%%%%%%%%%%%%%%%%%%%%%%%%%%%%%%%%%%

\subsection{Loop of unitary elements} 
\label{subsection-unitary}

Consider now involutive algebras $A$, and the subgroup of $I(A)$ made of 
unitary elements in $A$, namely
$$
U(A) = \{ a\in A\ |\ a\,a^*=1 \}, 
$$
when $A$ is commutative. Exactly as for $I$, the functor $U$ is represented 
on $\Com_\F^*$ by the commutative Hopf algebra $\HU = \F[x,x^*\ |\ x\,x^*=1 ]$, 
with co-operations 
\begin{align*}
\Delta (x) = x \otimes x, \qquad  
\varepsilon (x) = 1, \qquad 
S(x) = x^* . 
\end{align*}

\begin{definition}
\label{unitary-coloop}
Let us call {\bf unitary coloop bialgebra} on $\F$ the associative algebra 
$\HUex = \F[x,x^*\ |\ x\,x^*=1 ]$ endowed with the co-operations defined 
on generators exactly as those in Def.~\ref{invertible-coloop}, 
where the generator $x^{-1}$ is replaced by $x^*$. 
\end{definition}

As for the invertible coloop bialgebra, one can prove that 

\begin{proposition}
The algebra $\HUex$ is a cogroup bialgebra in $\As_\F^*$ and represents, 
for any involutive associative algebra $A$, the group 
$$
U(A) = \Hom_{\As_\F^*}(\HUex,A)
$$
of unitary elements of $A$. 
Moreover, the group $U(A)$ is abelian if $A$ is commutative. 
\end{proposition}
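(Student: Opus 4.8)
The plan is to mirror the proof given above for the invertible coloop bialgebra $\HIex$, since by Definition~\ref{unitary-coloop} the unitary coloop bialgebra is obtained from $\HIex$ by replacing the generator $x^{-1}$ with $x^*$, the latter playing the role of a two-sided inverse of $x$ (so $x\,x^*=x^*x=1$). First I would check that the four co-operations of Definition~\ref{unitary-coloop} are well defined as morphisms of $\As_\F^*$, that is, as unital $*$-algebra homomorphisms into the relevant coproducts. That $\D,\varepsilon,\delta_r,\delta_l$ respect the relations of $\HUex$ is verified on generators exactly as for $\HIex$. For compatibility with the involution, recall that the involution on a coproduct $A\scoprod B$ in $\As_\F^*$ is induced canonically from those on $A$ and $B$ (reversing words and conjugating letters), so it suffices to check on $x$ that each co-operation intertwines $*$; this is immediate, e.g.\ $\D(x)^*=\big(x^{(1)}x^{(2)}\big)^*=(x^{(2)})^*(x^{(1)})^*=(x^*)^{(2)}(x^*)^{(1)}=\D(x^*)$, and likewise $\delta_r(x)^*=\delta_r(x^*)$ and $\delta_l(x)^*=\delta_l(x^*)$. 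Hence the formulas of Definition~\ref{unitary-coloop} are the only consistent ones and all co-operations are legitimate.

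Next I would verify the coloop axioms. The counitary property~(\ref{counitary}) is trivial on generators. The cocancellation identities~(\ref{right-cocancellation}) and~(\ref{left-cocancellation}) follow from the same bracket manipulations used for $\HIex$, now collapsing the middle factor by $x^*x=1$ or $xx^*=1$ inside a single copy of $\HUex$; for instance $(\Id\scoprod\mu)(\delta_r\scoprod\Id)\D(x)=x^{(1)}(x^*)^{(2)}x^{(2)}=x^{(1)}=i_1(x)$, and the remaining identities (and their analogues on $x^*$) are obtained the same way. Coassociativity is clear, since $(\D\scoprod\Id)\D(x)=x^{(1)}x^{(2)}x^{(3)}=(\Id\scoprod\D)\D(x)$ and similarly for $x^*$; so by Proposition~\ref{coloop-cogroup} the algebra $\HUex$ is a cogroup bialgebra in $\As_\F^*$, with two-sided antipode $S(x)=x^*$, $S(x^*)=x$.

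By Theorem~\ref{theorem-Q-functor}, $A\mapsto\Hom_{\As_\F^*}(\HUex,A)$ is then a representable functor in groups on $\As_\F^*$. To identify each of these groups with $U(A)$, I would observe that a unital $*$-algebra homomorphism $\alpha:\HUex\to A$ is determined by $a:=\alpha(x)$, automatically satisfies $\alpha(x^*)=a^*$, and exists precisely when $a\,a^*=a^*a=1$; thus $\alpha\mapsto a$ is a bijection onto the set of unitary elements of $A$. Exactly as in the invertible case the convolution product is then the product of $A$, $(\alpha\cdot\beta)(x)=\mu_A(\alpha\scoprod\beta)\D(x)=\alpha(x)\,\beta(x)=ab$, with unit $1\in A$ and inverse $\alpha S(x)=a^*$, so the group structure on $\Hom_{\As_\F^*}(\HUex,A)$ is that of $U(A)$. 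Finally, $\D$ is not cocommutative, so commutativity of $U(A)$ is not formal; but $x$ and $x^*$ are group-like for $\D$, so the convolution of two homomorphisms is a single product of their images in $A$, which commute if and only if $A$ is commutative. The one point that needs any care is keeping track of the involution alongside the order reversal built into the non-cocommutative $\D$ and into the codivisions; the computations themselves are the same routine identities as for $\HIex$, so no genuinely new obstacle arises.
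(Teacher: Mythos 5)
Your proposal is correct and follows essentially the same route as the paper, which simply asserts the result "as for the invertible coloop bialgebra": the co-operations of Def.~\ref{unitary-coloop} are the $\HIex$ ones with $x^{-1}$ renamed $x^*$, and the cocancellation, coassociativity, and group-like arguments carry over verbatim. Your extra check that the co-operations intertwine the involution (using that the involution on $\HUex\scoprod\HUex$ reverses words and conjugates letters) is the only point the paper leaves entirely implicit, and you handle it correctly.
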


\begin{examples}
For $\F=\R$, this functor allows us to describe several groups of 
unitary matrices. 
\begin{enumerate}
\item
Applied to the algebra $M_n(\R)$, if we take the transposition of matrices as 
involution, it gives $U(\R) = \{1,-1\}$, and the orthogonal group 
$U\big(M_n(\R)\big)=O(n)$ for $n>1$.  

\item 
On $M_n(\C)$, we take as involution the complex conjugate of the transposition. 
Then $U(\C)=U(1) \cong \S^1$ and $U\big(M_n(\C)\big)=U(n)$ is the unitary 
group. 

\item 
Let $\H$ be the algebra of quaternions, spanned over $\R$ by $1$ and by
three imaginary units $i$, $j$, $k$ which anticommute with each other.
The conjugate of a quaternion $q=a+b\,i+c\,j+d\,k$ is the quaternion
$q^*=a-b\,i-c\,j-d\,k$. The conjugation is an involution, and the
real number $\| q\| = \sqrt{q q^*} = \sqrt{q^* q} = \sqrt{a^2+b^2+c^2+d^2}$
defines a multiplicative norm on $\H$. Then, the functor $U$ applied to $\H$
gives the subgroup $U(\H) \cong Sp(1) \cong SU(2) \cong \S^3$ of $I(\H)$
consisting of unit norm quaternions. 

On the set of matrices $M_n(\H)$, we take as involution the quaternionic
conjugate of the transposition. Then
$U\big(M_n(\H)\big) \cong U(n,\H) \cong Sp(n)$ is the compact symplectic group,
also called the hyperunitary group. 
\end{enumerate}
\end{examples}

Again exactly as for the invertible coloop bialgebra, one can prove the 
next result. 

\begin{proposition}
\label{unitary-loop}
The algebraic group $U$ can be extended as a loop to a variety of algebras 
$\catA \subset \Alg_\F^*$ if $\catA$ is a subcategory of involutive 
alternative algebras $\Alt_\F^*$ admitting coproduct and initial object. 
In particular, $U$ is an algebraic loop on $\Alt_\F^*$. 
\end{proposition}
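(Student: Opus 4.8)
The plan is to transcribe, essentially verbatim, the proof of Proposition~\ref{invertible-loop}, replacing the generator $x^{-1}$ of $\HIex$ by $x^{*}$ and invertibility by unitarity. The natural candidate is $\HUex$ equipped with the co-operations of Definition~\ref{unitary-coloop} (those of Definition~\ref{invertible-coloop} with $x^{-1}$ replaced by $x^{*}$), now read with values in the coproduct $\HUex\scoprod\HUex$ of $\catA$. As in the invertible case I would first record that this \emph{fails} to be a coloop bialgebra on all of $\Alg_\F^{*}$: for instance
\[
(\Id\scoprod\mu)\,(\delta_r\scoprod\Id)\,\Delta(x)=\big(x^{(1)}\,(x^{*})^{(2)}\big)\,x^{(2)}
\]
cannot be rewritten as $i_1(x)=x^{(1)}$ in the magmatic free product, since the parenthesization keeps the mutually inverse factors $(x^{*})^{(2)}$ and $x^{(2)}$ of the second copy from being multiplied directly — an associator must vanish.

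The core of the proof is then the verification of the cocancellation identities (\ref{right-cocancellation}) and (\ref{left-cocancellation}) on the two generators $x$ and $x^{*}$. Pushing the algebra homomorphisms $\Delta,\delta_r,\delta_l$ through and folding the appropriate pair of copies with $\mu$, each left-hand side reduces to a three-letter word in which one letter, $a$, lies in one copy of $\HUex$ while the other two, $b$ and $b^{*}$, lie in the other; inside a single copy of $\HUex$ the generators satisfy $x\,x^{*}=x^{*}\,x=1$, so there $b^{*}$ is a two-sided inverse of $b$. The collapses one needs are then precisely the inverse properties $(ab)b^{-1}=a$, $b^{-1}(ba)=a$, $b(b^{-1}a)=a$, $(ab^{-1})b=a$, which hold in every alternative algebra (cf.~\cite{ZSSS}) but not magmatically. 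The same alternative-algebra facts — together with $(uv)^{-1}=v^{-1}u^{-1}$, which ensures that $\Delta,\delta_r,\delta_l$ respect the relation $x\,x^{*}=1$ and so are well defined as maps into $\HUex\scoprod\HUex$ — show that $\HUex$ is a coloop $\catA$-bialgebra whenever $\catA\subset\Alt_\F^{*}$. For a general such $\catA$ one uses that its coproduct is a quotient of that of $\Alt_\F^{*}$, so equalities of elements valid in the latter descend to the former.

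It remains to identify the represented loop. The counitary property (\ref{counitary}) holds by the same one-line check as in $\As_\F^{*}$, since $\varepsilon(x)=\varepsilon(x^{*})=1$, and the left and right antipodes built from $\delta_l,\delta_r$ both come out equal to the map $S$ with $S(x)=x^{*}$, $S(x^{*})=x$, so $\HUex$ has a two-sided antipode. Note however that $\Delta$ is \emph{not} coassociative over $\catA$ — the associator of $x^{(1)},x^{(2)},x^{(3)}$ in the non-associative algebra $\HUex\scoprod\HUex\scoprod\HUex$ does not vanish — so $\HUex$ is a coloop but not a cogroup, and $U$ is genuinely only a loop; consistently, $U(\O)\cong\S^{7}$ is a Moufang loop that is not a group (cf.~\cite{Bruck}). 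By Theorem~\ref{theorem-Q-functor}, $Y(\HUex)$ is a representable functor in loops on $\catA$, and it is naturally isomorphic to $U$ because a $\catA$-homomorphism $\HUex\to A$ is exactly a unitary element of $A$; since $\HUex$ is generated by $x$ and $x^{*}$, this exhibits $U$ as an algebraic loop on $\Alt_\F^{*}$ and as a loop on any subvariety $\catA\subset\Alt_\F^{*}$ with coproduct and initial object.

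The only step that is not mechanical is the reduction in the second paragraph, and the real obstacle there is the bookkeeping: tracking which of the three copies of $\HUex$ each letter occupies along the composites $(\Id\scoprod\mu)(\delta_r\scoprod\Id)\Delta$ and $(\Id\scoprod\mu)(\Delta\scoprod\Id)\delta_r$ (and their $\delta_l$-analogues), and checking that after folding the word one obtains is always of a shape to which one of the above inverse properties applies — equivalently, that the associators forced to vanish are all of the admissible forms $(a,b,b^{-1})$, $(b,b^{-1},a)$, $(b^{-1},b,a)$ covered by (\ref{alternative}). Granting the $\HIex$ computation of Proposition~\ref{invertible-loop}, everything else is transcription.
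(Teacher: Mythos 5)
Your proposal is correct and follows exactly the route the paper intends: the paper gives no separate argument for Proposition~\ref{unitary-loop}, stating only that it is proved ``exactly as for the invertible coloop bialgebra,'' i.e.\ by transcribing the proof of Proposition~\ref{invertible-loop} with $x^{-1}$ replaced by $x^{*}$, observing the failure of cocancellation in $\Alg_\F^{*}$ and its recovery from the inverse properties $(ab)b^{-1}=a=b^{-1}(ba)$ valid in alternative algebras. Your additional remarks (well-definedness modulo $x\,x^{*}=1$, identification of the represented loop, the two-sided antipode) are consistent with the paper and only elaborate on what it leaves implicit.
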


\begin{example}
\label{unitary-loop-example}
An alternative algebra of octonions $\O=\O(\alpha,\beta,\gamma)$ is spanned
over a field $\F$ of characteristic not $2$ by $e_0=1$ and by seven imaginary
units $e_i$, for $i=1,...,7$, with an involved table of multiplication
(cf. \cite{Bruck}, \cite[Ch.2]{ZSSS}, \cite{KuzminShestakov}).
Over the field $\R$ there are two non-isomorphic octonion algebras:
the classical division Cayley octonions $\O$ and the split matrix
Cayley-Dickson algebra $\Zorn(\R)$ \cite{Bruck}, also known as the Zorn
vector-matrix algebra \cite{Paige}. The last one may be defined over an
arbitrary commutative ring.
The conjugate of an octonion $q=\sum_{i=0}^7a_ie_i$ is the octonion
$q^*=a_0e_0-\sum_{i=1}^7a_ie_i$. Again, the conjugation is an involution,
and the scalar $n(q)=qq^*=q^*q$ defines a multiplicative norm on $\O$
(and an isotropic quadratic form on $\Zorn(\R)$).

Then, for the classical Cayley octonions $\O$, the set $U(\O)$ is the Moufang
subloop of the loop $I(\O)$ consisting of unit norm octonions, which is  
homeomorphic to the sphere $S^7$, while for the matrix Cayley-Dickson algebra
$\Zorn(\R)$ the loop $U(\Zorn(\R))$ is not compact. 
The loops $U(\O)$ and $U(\Zorn(\R))$ can be compared to the groups
$U(\H)\cong \S^3$ and $U(\P)\cong SL_2(\R)$ obtained respectively for
division quaternions $\H$ and for split quaternions $\P$.  
\end{example}

%%%%%%%%%%%%%%%%%%%%%%%%%%%%%%%%%%%%%%%%%%

\subsection{Unitary Cayley-Dickson loops}

In this section we give an example of a loop which is not algebraic on 
associative algebras. 
\medskip 

Let $\F$ be a field and $j$ denote an imaginary unit.  
For any involutive commutative algebra $A$ over $\F$, the set 
$$
U_{CD}(A) = \{ a+b\,j \in A+A\,j\ |\ a\,a^*+b\,b^*=1 \}
$$
gives the group of unitary elements in the Cayley-Dickson algebra $A+A\,j$ 
with multiplication 
$$
(a+b\,j)\,(c+dj) = (ac-d^*b)+(da+bc^*)\,j ,
$$
unit $1$, and involution $(a+b\,j)^* = a^*-b\,j$. 

The functor $A\longmapsto U_{CD}(A)$ is representable on $\Com_\F^*$, 
by the commutative Hopf algebra 
$$
\HUCD = \F[x,x^*,y,y^*\ |\ x\,x^*+y\,y^*=1 ] 
$$
with co-operations 
\begin{align*}
& \Delta (x) = x\sotimes x - y\sotimes y^* & 
& \Delta (y) = x\sotimes y + y\sotimes x^*,  
\\  
& \varepsilon (x) = 1 & & \varepsilon (x) = 0, 
\\
& S(x) = x^* & & S(y) = -y . 
\end{align*}

\begin{proposition}
\label{Cayley-Dickson-loop}
The algebraic group $U_{CD}$ can not be extended as an algebraic loop to the
category of involutive associative algebras. 
\end{proposition}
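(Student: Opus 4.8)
The plan is to argue by contradiction, exactly in the spirit of the proof of Proposition~\ref{invertible-loop} but pushing the failure further: there the codivisions on $\HIex$ failed the cocancellation identities unless the algebra was alternative, and alternativity was enough to rescue them. Here I expect that \emph{no} reasonable category of involutive associative algebras will do, because the division $(a+bj)(c+dj)^{-1}$ in a Cayley--Dickson algebra genuinely requires commutativity of the underlying $A$. So suppose $U_{CD}$ were represented as a loop on some variety $\catA\subset\As_\F^*$ with coproduct and initial object. Then its representative coloop bialgebra would have to be $\HUCD=\F[x,x^*,y,y^*\mid xx^*+yy^*=1]$ (this is forced: the underlying set functor $A\mapsto U_{CD}(A)$ is represented by $\HUCD$ as an algebra, since solving $aa^*+bb^*=1$ in $A$ is the same as an algebra map out of $\HUCD$), equipped with a comultiplication $\D$, a counit $\varepsilon$, and codivisions $\delta_r,\delta_l$ valued in the associative coproduct $\HUCD\scoprod\HUCD$. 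The comultiplication, by functoriality and convolution, must act on generators by the same formulas that produce the group law $(a+bj)(c+dj)=(ac-d^*b)+(da+bc^*)j$, i.e. $\D(x)=x^{(1)}x^{(2)}-y^{(2)}(y^{*})^{(1)}$ and $\D(y)=y^{(1)}(x^*)^{(2)}+x^{(1)}y^{(2)}$ (with the precise placement of the two coproduct-components dictated by which of $a,b,c,d$ sit on the left in each monomial), and likewise $\varepsilon(x)=1$, $\varepsilon(y)=0$.

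Next I would compute the right codivision forced by the right-cocancellation axiom $(\Id\scoprod\mu)(\Delta\scoprod\Id)\delta_r=i_1$ together with $\mu\,\delta_r=u\,\varepsilon$; equivalently, $\delta_r$ is the convolution-inverse-in-the-second-slot of $\D$, so on the associative coproduct $\delta_r(x),\delta_r(y)$ are determined by the formula for the Cayley--Dickson inverse of a unit-norm element, namely $(c+dj)^{-1}=(c+dj)^*=c^*-dj$ when $cc^*+dd^*=1$. Thus $\delta_r(x)=x^{(1)}(x^*)^{(2)}-y^{(2)}(-y)^{(1)}=x^{(1)}(x^*)^{(2)}+y^{(2)}y^{(1)}$ and so on. Then I would plug this candidate $\delta_r$ into the \emph{other} right-cocancellation identity, $(\Id\scoprod\mu)(\delta_r\scoprod\Id)\D=i_1$, and expand in the free product $\HUCD\scoprod\HUCD$, keeping track of the order of the non-commuting factors. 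The point is that the identity $(a+bj)\bigl((a+bj)^{-1}\bigr)=1$, when written out and with $a,b$ replaced by the generators living in the \emph{first} coproduct-slot and their ``inverse'' images living in the \emph{second} slot and then folded back by $\mu$, produces a word that collapses to $x^{(1)}$ \emph{only if} certain monomials like $y^{(1)}x^{(1)}(x^*)^{(1)}$ and $x^{(1)}y^{(1)}(x^*)^{(1)}$ are forced equal --- i.e. only if $yx$ and $xy$ coincide modulo the defining ideal. In the commutative Hopf algebra this is automatic; in any associative $\catA$ it is not, and it cannot be imposed without trivialising the representable functor (forcing $U_{CD}(A)$ to see only the commutative part of $A$, contradicting representability on all of $\catA$).

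Concretely, I would exhibit one offending monomial: after the expansion, the coefficient of some tensor word in $\HUCD\scoprod\HUCD$ must vanish for the cocancellation identity to hold, and that word is a genuine, nonzero element of the associative free product because $x,y$ do not commute there. So the identity $(\Id\scoprod\mu)(\delta_r\scoprod\Id)\D=i_1$ fails in $\HUCD\scoprod\HUCD$, whence $\HUCD$ is not a coloop bialgebra in any such $\catA$, and $U_{CD}$ is not an algebraic loop on involutive associative algebras. The main obstacle, and the only real work, is the bookkeeping: writing down the six or so monomials that appear in $(\Id\scoprod\mu)(\delta_r\scoprod\Id)\D(x)$ and in $(\cdots)\D(y)$ in the free product, identifying precisely which pair fails to cancel, and checking that this pair is nonzero modulo the ideal $(xx^*+yy^*-1)$ in $\HUCD\scoprod\HUCD$ --- i.e. that the obstruction is not accidentally killed by the norm relation. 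I would also remark, as a sanity check mirroring Example~\ref{unitary-loop-example} and Prop.~\ref{invertible-loop}, that the obstruction is exactly the one reflecting the well-known fact that $A+Aj$ is associative (indeed alternative) only when $A$ is commutative, so that $U_{CD}(A)$ as a loop genuinely needs the product in $A$ to be commutative, unlike $I$ and $U$ which survive on alternative algebras.
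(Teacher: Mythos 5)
Your overall strategy coincides with the paper's: assume a representing coloop bialgebra exists, note that the comultiplication and the codivisions are forced on generators by the Cayley--Dickson product and the inverse $(c+dj)^{-1}=(c+dj)^*$, and then show the cocancellation identities fail in $\HUCDex\scoprod\HUCDex$. But you have misidentified the obstruction, and that is where the whole proof lives. If you actually expand $(\Id\scoprod\mu)(\Delta\scoprod\Id)\delta_r(x)$ in the free product, the mixed $x$--$y$ terms cancel and what survives has the shape $x^{(1)}(xx^*)^{(2)}+(y^*y)^{(2)}x^{(1)}$; the identity therefore reduces to requiring
$x^{(1)}(x^*x)^{(2)}=(x^*x)^{(2)}x^{(1)}$ and $y^{(1)}(y^*y)^{(2)}=(y^*y)^{(2)}y^{(1)}$
in $\HUCDex\scoprod\HUCDex$, so that the norm relation $xx^*+y^*y=1$ can be applied. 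This can happen only if (a) the norm $xx^*$ is a scalar, i.e.\ $\HUCDex$ is a composition algebra --- a case you never consider, and which the paper rules out because composition algebras admit no categorical coproduct --- or (b) slot-$1$ and slot-$2$ elements commute in the coproduct, which forces $\scoprod=\otimes$ and hence the commutative category. Your proposed offending pair $y^{(1)}x^{(1)}(x^*)^{(1)}$ versus $x^{(1)}y^{(1)}(x^*)^{(1)}$ lives entirely in one slot and does not occur in the expansion; imposing $xy=yx$ \emph{inside} $H$ would not repair the cocancellation, because the failure is between the two slots of the free product, not inside $H$. So the computation you defer as ``bookkeeping'' would not terminate in the contradiction you expect, and case (a) would remain unaddressed even if it did.

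Two further inaccuracies. Your closing ``sanity check'' is false and contradicts the paper's own examples: $U_{CD}(M_2(\R))\cong U(\Zorn(\R))$ and $U_{CD}(\H)\cong U(\O)$ are loops although $M_2(\R)$ and $\H$ are not commutative. The proposition is an obstruction to \emph{representability over the whole category}, not a claim that $U_{CD}(A)$ fails to be a loop for each noncommutative $A$; the pointwise criterion is that $A$ be a composition algebra, and the functorial obstruction is precisely that composition algebras do not form a variety with coproducts. Finally, the representing algebra on $\As_\F^*$ cannot be the commutative polynomial ring $\F[x,x^*,y,y^*\mid xx^*+yy^*=1]$: homomorphisms out of a commutative algebra only reach pairwise commuting elements, so one must take a free associative algebra on the generators modulo a noncommutative lift of the norm relation, as the paper (deliberately vaguely) does.
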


\begin{proof}
If $U_{CD}$ could be extended to an algebraic loop to $\As_\F^*$, its 
representative coloop bialgebra $\HUCDex$ should be an associative algebra 
generated by $x$, $x^*$, $y$ and $y^*$ submitted to conditions which give 
$x\,x^*+y\,y^*=1$ if the variables commute. 
The co-operations should then be defined on generators exactly as in the 
commutative case, but taking values in the coproduct $\HUCDex\scoprod \HUCDex$
of the category $\As_\F^*$. 

The conditions $x\,x^*=x^*x$ and $x\,x^*+y^*y=1$ are enough to guarantee that 
the algebra $\HUCDex$ has a well defined comultiplication, a counit and an 
antipode satisfying the 5-terms relations. 
However, the codivisions, defined according to the coinverse properties 
(\ref{coinverse}) as 
\begin{align*}
& \delta_r(x) = x^{(1)}\, (x^*)^{(2)} + (y^*)^{(2)}\, y^{(1)} 
& & \delta_r(y) = - y^{(2)}\, x^{(1)} + y^{(1)}\, x^{(2)}
\\
& \delta_l(x) = (x^*)^{(1)}\, x^{(2)} + (y^*)^{(2)}\, y^{(1)} 
& & \delta_l(y) = y^{(2)}\, (x^*)^{(1)} - y^{(1)}\, (x^*)^{(2)} , 
\end{align*}
satisfy the cocancellation identities (\ref{right-cocancellation}) and  
(\ref{left-cocancellation}) if and only if 
$$
x^{(1)}\,(x^*x)^{(2)} = (x^*x)^{(2)}\,x^{(1)} 
\qquad\mbox{and}\qquad 
y^{(1)}\,(y^*y)^{(2)} = (y^*y)^{(2)}\,y^{(1)} 
$$
in $\HUCDex\scoprod \HUCDex$. This could happen for two reasons. 
The first is that the map $n:\HUCDex\longrightarrow \HUCDex$ given by 
$n(a) = a a^* = a^*a$ has scalar values, i.e. its image is in 
$u(\F)\subset \HUCDex$. 
This is the case if $\HUCDex$ is a {\em composition algebra},
cf.~\cite{Albert}. 
But composition algebras do not have a categorical coproduct. 
The second possibility to verify these conditions is that the identity
$a^{(1)} b^{(2)} = b^{(2)} a^{(1)}$ holds in $\HUCDex\scoprod \HUCDex$ for any 
elements $a,b\in \HUCDex$. This means that $\scoprod = \sotimes$ and 
therefore it is only possible in the category $\Com_\F^*$. 
\end{proof}

\begin{examples} 
In agreement with this result, namely that the construction $U_{CD}$ is not
functorial on associative algebras, there are few examples of loops arising 
as sets of unitary elements in the Cayley-Dickson algebra constructed 
on an associative algebra. For instance, we can consider the associative
algebras of matrices $A=M_n(K)$ with entries in involutive algebras $K$ over
the field $\F=\R$, with involution given by the transposition of the matrices
plus the involution of their matrix elements.
The unitary elements in $A+Aj$ are preserved by divisions if $A$ is a
composition algebra, and matrix algebras, in general, are not. So, in general,  
$U_{CD}(A)$ is not a loop. There are few exceptions: 
\begin{enumerate}
\item
The set $U_{CD}(M_n(\R))$ is a loop for $n=1,2$. 
For $n=1$ (when $A=\R$ is commutative) it is an abelian group
$U_{CD}(\R) = U(\C) \cong \S^1$, and for $n=2$ the loop $U_{CD}(M_2(\R))$
coincides with the loop $U(\Zorn(\R))$ from Example
\ref{unitary-loop-example}, since $M_2(\R)+M_2(\R)j\cong \Zorn(\R)$ is a
matrix Cayley-Dickson algebra.

\item 
The set $U_{CD}(M_n(\C))$ is a loop for $n=1,2$.
For $n=1$ (when $A=\C$ is commutative) it is a group
$U_{CD}(\C) = U(\H) \cong \S^3$, and for $n=2$ the loop $U_{CD}(M_2(\C))$
coincides with the loop $U(\Zorn(\C))$ of unital elements in the split
matrix Cayley-Dickson algebra over the complex numbers $\C$.

\item
The set $U_{CD}(M_n(\H))$ is a Moufang loop only for $n=1$, and we have 
$U_{CD}(\H) \cong U(\O) \cong \S^7$. For $n>1$, the set $U_{CD}(M_n(\H))$
is not a loop because $M_n(\H)$ is not a composition algebra. 
\end{enumerate}
\end{examples}

%%%%%%%%%%%%%%%%%%%%%%%%%%%%%%%%%%%%%%%%%%%%%%%%%%%%%%%%%%%%%%%%%%%%%%%%

\section{Coloop of invertible series}
\label{section-invertible series}

The group of invertible series (with constant term equal to $1$), is the set 
of formal series
$$
\Inv(A) = 
\Big\{\ a(\lambda)=\sum_{n\geq 0} a_n\ \lambda^n\ |\ a_0=1,\ a_n\in A\ \Big\}
$$
with coefficients $a_n$ taken in a commutative algebra $A$, 
endowed with the pointwise multiplication 
$(ab)(\lambda)=a(\lambda)\ b(\lambda)$, unit $1(\lambda)=1$, 
and where the inverse of a series $a(\lambda)$ is found by recursion. 
It is an abelian proalgebraic group on $\Com$, represented by the 
cocommutative Hopf algebra 
\begin{align*}
\Hi & = \F[x_n,\ n\geq 1] \qquad (x_0=1) \\
\Di(x_n) & = \sum_{m=0}^n x_m\otimes x_{n-m} 
\end{align*}
known as Hopf algebra of symmetric functions~\cite{Geissinger}. 

The functor $\Inv$ admits an evident extention to associative algebras 
as a functor in groups (but not abelian), represented by the cogroup bialgebra 
\cite{BFK} 
\begin{align*}
\Hi & = \F\langle x_n,\ n\geq 1 \rangle \qquad (x_0=1) \\
\Di(x_n) & = \sum_{m=0}^n x_m^{(1)}\ x_{n-m}^{(2)} 
\end{align*}
with antipode defined recursively. The projection of this bialgebra by the 
canonical map $\pi$ given in Def.~\ref{canonical-projection} coincides with 
the Hopf algebra of non-commutative symmetric functions (cf.~\cite{GKLLRT}). 

In this section we show that the functor $\Inv$ can be extended to 
non-associative algebras, as a proalgebraic loop. 

%%%%%%%%%%%%%%%%

\subsection{Loop of invertible series}

\begin{definition}
Let $A$ be a unital algebra and let $\lambda$ be a formal variable. 
We call invertible series in $\lambda$ with coefficients in $A$ 
the formal series in the set 
$$
\Inv(A)=\Big\{\ a=\sum_{n\geq 0} a_n\,\lambda^n\ |\ a_0=1,\ a_n\in A\ \Big\}, 
$$ 
endowed with the multiplication
$$
a\cdot b = 
\sum_{n\geq 0}\ \sum_{m=0}^n\ a_m\ b_{n-m}\,\lambda^n
$$ 
and the unit $e$ given by $e_0=1$ and $e_n=0$ for all $n>1$. 
For instance, 
\begin{align*}
(a\cdot b)_1 & = a_1+b_1, \\ 
(a\cdot b)_2 & = a_2+a_1b_1+b_2, \\
(a\cdot b)_3 & = a_3+a_2b_1+a_1b_2+b_3.  
\end{align*}
\end{definition}

\begin{proposition}
For any unital algebra $A$, the set of invertible series $\Inv(A)$ is a loop. 
\end{proposition}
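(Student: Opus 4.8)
The plan is to show directly that the multiplication on $\Inv(A)$ admits a left and a right division satisfying the cancellation properties \eqref{left-cancellation} and \eqref{right-cancellation}, without assuming associativity. The key observation is that the multiplication is filtered: for series $a,b\in\Inv(A)$ with $a_0=b_0=1$, the coefficient $(a\cdot b)_n = \sum_{m=0}^n a_m b_{n-m}$ has the form $a_n + b_n + (\text{terms involving only } a_1,\dots,a_{n-1},b_1,\dots,b_{n-1})$. This triangular structure makes all the required equations solvable coefficient by coefficient.

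First I would construct the right division. Given $a,b\in\Inv(A)$, I want to produce $c=a\slash b$ such that $c\cdot b = a$; equivalently, for every $n\geq 1$,
\[
c_n + \sum_{m=0}^{n-1} c_m b_{n-m} = a_n,
\]
where $c_0=1$ is forced by the $n=0$ equation. Reading this as a recursion, $c_n = a_n - \sum_{m=0}^{n-1} c_m b_{n-m}$, so $c_n$ is uniquely determined by $a_1,\dots,a_n$ and $b_1,\dots,b_n$ and the previously constructed $c_0,\dots,c_{n-1}$. This defines $c\in\Inv(A)$, and by construction $(c\cdot b)_n = a_n$ for all $n$, i.e. $(a\slash b)\cdot b = a$, which is the first identity in \eqref{right-cancellation}. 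For the second identity, $(a\cdot b)\slash b = a$: set $a' = a\cdot b$ and let $c' = a'\slash b$ be the series just constructed; then $c'\cdot b = a' = a\cdot b$, and comparing coefficients in the order $n=0,1,2,\dots$ shows $c'_n = a_n$ by the same triangular argument (at stage $n$, $(c'\cdot b)_n - (a\cdot b)_n = c'_n - a_n$ plus terms in lower-index coefficients of $c'$ and $a$ which agree by induction), hence $c'=a$. Thus $\slash$ is a well-defined right division.

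The left division $\backslash$ is handled symmetrically: given $a,b$, I construct $d = a\backslash b$ with $a\cdot d = b$ by solving $d_n = b_n - \sum_{m=1}^{n} a_m d_{n-m}$ recursively (again $d_0=1$ is forced), which gives $a\cdot(a\backslash b) = b$, and the identity $a\backslash(a\cdot b) = b$ follows by the same uniqueness-by-induction argument as above. Since $A$ is only required to be unital (not associative or commutative), none of these steps uses any algebra identity beyond the existence of $1$; the only structure used is the grading in $\lambda$. I do not expect a genuine obstacle here — the statement is essentially the observation that a "triangular" magma operation on sequences starting with $1$ is automatically a loop — so the main point is simply to organize the four cancellation checks cleanly, each being a finite induction on the degree $n$ using the fact that $(x\cdot y)_n - x_n - y_n$ depends only on $x_1,\dots,x_{n-1},y_1,\dots,y_{n-1}$.
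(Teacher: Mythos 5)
Your proposal is correct and follows essentially the same route as the paper: both define $a\slash b$ (resp.\ $a\backslash b$) by the triangular recursion forced by $(a\slash b)\cdot b=a$ (resp.\ $a\cdot(a\backslash b)=b$) and then obtain the remaining cancellation identities $(a\cdot b)\slash b=a$ and $a\backslash(a\cdot b)=b$ by induction on the degree, using only that $a_0=b_0=1$ and no associativity of $A$. Your phrasing of the second step as a uniqueness-of-solutions argument is just a repackaging of the paper's explicit induction, so there is nothing further to add.
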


\begin{proof}
It is clear that the series $e$ is a unit for the given multiplication, so 
we only have to show that there exist a left and a right divisions satisfying 
the cancellation properties (\ref{right-cancellation}) and 
(\ref{left-cancellation}). 
Since the multiplication is completely symmetric in the the two variables, 
the proof for the two divisions is exactly the same. We do it for the right 
division. 

Given two series $a=\sum a_n\,\lambda^n$ and $b=\sum b_n\,\lambda^n$, 
we define the right division $a\slash b=\sum (a\slash b)_n\,\lambda^n$ 
so that $(a\slash b)\dot b = a$, that is
$$
\sum_{m=0}^n\ (a\slash b)_m\ b_{n-m} = a_n  \qquad\mbox{for any $n\geq 0$}.  
$$ 
These equations are solved recusively from $(a\slash b)_0=1$, and give 
the $n$th term
$$
(a\slash b)_n=a_n-\sum_{m=0}^{n-1} (a\slash b)_m\ b_{n-m}. 
$$
Let us then prove by induction that $(a\cdot b)\slash b = a$, that is, 
$\big((a\cdot b)\slash b\big)_n = a_n$ for any $n\geq 0$. 
We have $\big((a\cdot b)\slash b\big)_0 = a_0=1$ and, for any $n\geq 1$, 
\begin{align*}
\big((a\cdot b)\slash b\big)_n 
& = (a\cdot b)_n-\sum_{m=0}^{n-1} (a\cdot b)_m\,b_{n-m} 
\\
& = a_n+ \sum_{m=0}^{n-1} \big(a_m-(a\cdot b)_m\big)\,b_{n-m} , 
\end{align*}
so if we suppose that $\big((a\cdot b)\slash b\big)_m = a_m$ for any $m\geq n-1$, 
we have $\big((a\cdot b)\slash b\big)_n = a_n$. 
\end{proof}

For instance, for the right division we find 
\begin{align*}
(a\slash b)_1 & = a_1-b_1, 
\\ 
(a\slash b)_2 & = a_2-a_1b_1-b_2+b_1b_1, 
\\ 
(a\slash b)_3 & = a_3-(a_1b_2+a_2b_1)+(a_1b_1)b_1-b_3+(b_1b_2+b_2b_1)-(b_1 b_1)b_1,  
\end{align*}
and for the left division we find
\begin{align*}
(a\backslash b)_1 & = b_1-a_1, 
\\ 
(a\backslash b)_2 & = b_2-a_1b_1-a_2+a_1a_1, 
\\ 
(a\backslash b)_3 & = b_3-(a_1b_2+a_2b_1)+a_1(a_1b_1)-b_3+(a_1a_2+a_2a_1)-a_1(a_1a_1).
\end{align*}

%%%%%%%%%%%%%%%%%%%%%%%

\subsection{Coloop bialgebra of invertible series}
\label{subsection-invertible-coloop-bialgebra}

For any $n\geq 1$, let $x_n$ be a graded variable of degree $n$. 
For $X=\Span_\F\{x_n,\ n\geq 1\}$, the tensor algebra $H=T(X)$ can be seen as 
the set of non-commutative polynomials in the variables $x_1,x_2,...$, that 
we denote by $\F\langle x_n,\ n\geq 1\rangle$. It is then useful to denote 
the unit $1$ of $H$ by $x_0$. 

The unital associative coproduct algebra $H\scoprod H$ is then the tensor 
algebra $T(X^{(1)}\oplus X^{(2)})$ on two identical sets of variables, and
similarly $H\scoprod H\scoprod H = T(X^{(1)}\oplus X^{(2)}\oplus X^{(3)})$. 
To simplify the notations, in this section we denote by $x_n=x_n^{(1)}$, 
$y_n=x_n^{(2)}$ and $z_n=x_n^{(3)}$ the generators taken in the different copies
of $X$ in a coproduct algebra.
\medskip 

For any integer $n\geq 1$ and any $1\leq \ell \leq n$, let $\calC_n^\ell$ 
denote the set of compositions of $n$ of length $\ell$, that is, 
the set of ordered sequences $\bfn = (n_1,...,n_\ell)$ such that 
\begin{align}
\label{sequence-C}
n_1+\cdots +n_\ell=n, \qquad\mbox{and}\qquad n_1,...,n_\ell\geq 1.  
\end{align}
For instance, for $\ell=1,2,3$, we have 
\begin{align*}
& \calC_1^1 = \big\{ (1) \big\}, \qquad 
\calC_2^1 = \big\{ (2) \big\}, \quad 
\calC_2^2 = \big\{ (1,1) \big\}, 
\\ 
& \calC_3^1 = \big\{ (3) \big\}, \quad 
\calC_3^2 = \big\{ (2,1),(1,2) \big\}, \quad 
\calC_3^3 = \big\{ (1,1,1) \big\}. 
\end{align*}

\begin{definition}
Let us call {\bf coloop bialgebra of invertible series} the free unital algebra 
$$
\Hiex = T\{x_n\ |\ n\geq 1\} 
$$
with the following graded co-operations: 
\begin{itemize}
\item
comultiplication $\Diex: \Hiex \longrightarrow \Hiex \scoprod \Hiex$ given by
\begin{align*}
\Diex(x_n) &= \sum_{m=0}^n x_m\,y_{n-m} ; 
\end{align*}

\item
counit $\varepsilon:\Hiex\longrightarrow \F$ given by 
$\varepsilon(x_n) = \delta_{n,0}$; 

\item
right codivision $\delta_r:\Hiex \longrightarrow \Hiex \scoprod \Hiex$ 
given by
\begin{align*}
\delta_r(x_n) &= x_n-y_n + \sum_{\ell=1}^{n-1} (-1)^\ell \sum_{\bfn\in \calC_n^{\ell+1}}
\Big(\big(((x_{n_1}-y_{n_1})\,y_{n_2})\, y_{n_3}\big)\cdots \Big) y_{n_{\ell+1}}, 
\end{align*}
where $\calC_n^{\ell+1}$ is the set of compositions of $n$ of length $\ell+1$, 
cf.~(\ref{sequence-C}); 

\item
left codivision $\delta_l:\Hiex \longrightarrow \Hiex \scoprod \Hiex$ 
given by
\begin{align*}
\delta_l(x_n) &= y_n-x_n + \sum_{\ell=1}^{n-1} (-1)^\ell \sum_{\bfn\in \calC_n^{\ell+1}}
x_{n_1}\Big(\cdots \big(x_{n_2}\,(x_{n_\ell} (y_{n_{\ell+1}}-x_{n_{\ell+1}}))\big) \Big).  
\end{align*}
\end{itemize} 
\end{definition}

\begin{theorem}
\label{thm-invertible series}
The algebra $\Hiex$ is a coloop bialgebra and represents the loop of 
invertible series as a functor $\Inv:\Alg\longrightarrow \Loop$. 

As a consequence, given an algebra $A$, a series 
$a=\sum_{n\geq 0} a_n\,\lambda^n \in \Inv(A)$ can be seen as an algebra 
homomorphism $a:\Hiex\longrightarrow A$ defined on the generators of $\Hiex$ 
by $a(x_n)=a_n$, and the right and left division $a\slash b$ and 
$a\backslash b$ are given at any order $n$ by the following closed formulas: 
\begin{align*}
(a\slash b)_n & = \mu_A\,(a\scoprod b)\, \delta_r(x_n) 
\\ 
& = a_n-b_n + \sum_{\ell=1}^{n-1} (-1)^\ell \sum_{\bfn\in \calC_n^{\ell+1}} 
\Big(\big(((a_{n_1}-b_{n_1}) b_{n_2})b_{n_3}\big)\cdots\Big)b_{n_{\ell+1}} , 
\\ 
(a\backslash b)_n & = \mu_A\,(a\scoprod b)\, \delta_l(x_n) 
\\
& = b_n-a_n + \sum_{\ell=1}^{n-1} (-1)^\ell \sum_{\bfn\in \calC_n^{\ell+1}}
a_{n_1}\Big(\cdots \big(a_{n_2}\,(a_{n_\ell} (b_{n_{\ell+1}}-a_{n_{\ell+1}}))\big) \Big) . 
\end{align*}
\end{theorem}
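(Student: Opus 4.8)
The strategy is to route the proof through Theorem~\ref{theorem-Q-functor}, which lets us avoid checking the cocancellation axioms by hand. First, $\Hiex=T\{x_n\mid n\geq1\}$ is by construction the free object of $\Alg$ on the graded set $\{x_n\}_{n\geq1}$; hence for every unital algebra $A$ the assignment $\alpha\mapsto\sum_{n\geq0}\alpha(x_n)\,\lambda^n$ (with the convention $\alpha(x_0)=1$) is a bijection $\Hom_{\Alg}(\Hiex,A)\xrightarrow{\ \sim\ }\Inv(A)$, natural in $A$ and carrying the trivial homomorphism to the unit series $e$; thus $Y(\Hiex)\cong\Inv$ as functors to $\Set_*$. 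Next one checks that $\Inv$ is actually a functor in loops: for $f\colon A\to B$ the map $\Inv(f)$ sends $\sum a_n\lambda^n$ to $\sum f(a_n)\lambda^n$, and since the multiplication and --- by the recursion of the previous proposition --- the two divisions of $\Inv(A)$ are built at every order from the coefficients using only the multiplication of $A$, the map $\Inv(f)$ is a loop homomorphism. Together with the previous proposition (each $\Inv(A)$ is a loop), this shows that $Y(\Hiex)$ is a representable functor in loops, so Theorem~\ref{theorem-Q-functor} endows $\Hiex$ with a coloop bialgebra structure and identifies $\Inv$ with $Y(\Hiex)$ as functors in loops.

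It then remains to recognise that the coloop structure produced by Theorem~\ref{theorem-Q-functor} is the one in the definition. By the reconstruction in the proof of that theorem, $\Delta=i_1\cdot i_2$, $\delta_r=i_1\slash i_2$, $\delta_l=i_1\backslash i_2$ (computed in the loop $\Inv(\Hiex\scoprod\Hiex)$) and $\varepsilon=1_\F$. Under the bijection above, $i_1$ and $i_2$ correspond to the series $\sum_{n\geq0}x_n^{(1)}\lambda^n$ and $\sum_{n\geq0}x_n^{(2)}\lambda^n$, so the $n$-th coefficient of $i_1\cdot i_2$ is $\sum_{m=0}^n x_m^{(1)}x_{n-m}^{(2)}=\sum_{m=0}^n x_m y_{n-m}$, which is $\Diex(x_n)$; the counit sends each $x_n$ to $\delta_{n,0}$; and $\delta_r(x_n)=(i_1\slash i_2)_n$ is fixed by the recursion $(i_1\slash i_2)_n=x_n-\sum_{m=0}^{n-1}(i_1\slash i_2)_m\,y_{n-m}$, with the mirror recursion for $\delta_l(x_n)=(i_1\backslash i_2)_n$.

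The only genuinely computational point is the combinatorial identity asserting that the closed formula for $\delta_r(x_n)$ solves this recursion (and likewise for $\delta_l$). I would prove it by induction on $n$. Writing the claimed formula in the absorbed form $\sum_{\ell=0}^{n-1}(-1)^\ell\sum_{\bfn\in\calC_n^{\ell+1}}L(x_{n_1}-y_{n_1},y_{n_2},\dots,y_{n_{\ell+1}})$, where $L$ denotes the left-nested product, one substitutes the inductive expressions for $(i_1\slash i_2)_m$ with $1\leq m\leq n-1$ into the right-hand side of the recursion; the term $m=0$ contributes $-y_n$, which together with the leading $x_n$ gives the $\ell=0$ summand. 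For $m\geq1$, multiplying an $L$-term attached to a composition $\bfm\in\calC_m^{\ell'+1}$ on the right by $y_{n-m}$ produces the $L$-term attached to the composition $(m_1,\dots,m_{\ell'+1},n-m)\in\calC_n^{\ell'+2}$, and the rule ``read off the last part $n-m$, keep the rest as $\bfm$'' is a bijection from the compositions of $n$ of length $\geq2$ onto such pairs; the minus sign of the recursion combined with the inductive sign $(-1)^{\ell'}$ gives exactly $(-1)^{\ell'+1}$, the sign of the new summand. Summing over $\bfm$ and $m$ reassembles $\delta_r(x_n)$. The identity for $\delta_l$ is the exact mirror, peeling off the leftmost part instead and using right-nested products. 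I do not expect any serious obstacle here beyond careful bookkeeping of compositions and parenthesization.

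Finally, for the stated consequence: a series $a\in\Inv(A)$ is the algebra homomorphism $a\colon\Hiex\to A$ with $a(x_n)=a_n$, and formulas~(\ref{convolution}) give $(a\slash b)_n=\mu_A\,(a\scoprod b)\,\delta_r(x_n)$ and $(a\backslash b)_n=\mu_A\,(a\scoprod b)\,\delta_l(x_n)$; since $\mu_A\,(a\scoprod b)$ is an algebra homomorphism sending $x_{n_i}\mapsto a_{n_i}$ and $y_{n_i}\mapsto b_{n_i}$, substituting the closed forms for $\delta_r(x_n)$ and $\delta_l(x_n)$ yields the displayed expressions verbatim.
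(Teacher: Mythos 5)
Your proposal is correct, but it takes a genuinely different route from the paper. The paper works entirely at the level of the bialgebra: it takes the closed formulas for $\delta_r$ and $\delta_l$ as the definition and directly verifies the two cocancellation identities (\ref{right-cocancellation}) by expanding $(\Id\scoprod\mu)(\delta_r\scoprod\Id)\Diex(x_n)$ and $(\Id\scoprod\mu)(\Diex\scoprod\Id)\delta_r(x_n)$ and reassembling compositions, then appeals to the symmetry of the construction for $\delta_l$. You instead invoke the equivalence of Theorem~\ref{theorem-Q-functor}: since $\Hiex$ is free on the $x_n$ it represents $\Inv$ as a set-valued functor, the earlier proposition makes each $\Inv(A)$ a loop, and functoriality of the divisions (they are built order by order from the algebra multiplication alone, and in any case a multiplication-preserving map of loops automatically preserves divisions by cancellation) makes $\Inv$ a functor in loops; the theorem then manufactures the coloop structure with $\delta_r=i_1\slash i_2$ and $\delta_l=i_1\backslash i_2$ in $\Inv(\Hiex\scoprod\Hiex)$, and all that remains is to check that the closed formulas solve the one-step division recursions, which your induction on compositions does correctly (appending the last part $y_{n-m}$ on the right for $\delta_r$, prepending $x_m$ on the left for $\delta_l$). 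What your route buys is economy: each codivision requires only one recursion to be verified rather than two cocancellation identities, and the existence of the coloop structure is explained conceptually rather than recomputed. What the paper's route buys is a self-contained verification of the coloop axioms on $\Hiex$ itself, independent of the functorial machinery, and an explicit display of how the compositions recombine under both cocancellations --- which is the pattern reused later in the harder Fa\`a di Bruno case where no shortcut through a pre-existing recursion is as clean. Both arguments ultimately rest on the same bijections $\bigcup_m\calC_m^{\ell}\times\calC_{n-m}^{1}\cong\calC_n^{\ell+1}$ and $\bigcup_m\calC_m^{1}\times\calC_{n-m}^{\ell}\cong\calC_n^{\ell+1}$.
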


\begin{proof}
The algebra $\Hiex$ clearly represents the functor $\Inv$ with values in sets, 
and the comultiplication $\Diex$ represents the pointwise multiplication 
of series. 
The only thing which should be proved is that $\Hiex$ is a coloop bialgebra 
with the given codivisions. 
The formulas for the left and for the right codivisions are perfectly 
symmetric, in the sense that $\delta_l=\tau\, \delta_r$, 
so it suffices to give the details for one codivision. 
Let us then show that the right codivision satisfies the two equations 
(\ref{right-cocancellation}). 

Concerning the first one, we have 
\begin{align*}
(\delta_r\scoprod \Id)\Diex (x_n) 
& = \delta_r(x_n) + z_n + \sum_{m=1}^{n-1} \delta_r(x_m)\,z_{n-m}, 
\end{align*}
which is an element of $\Hiex\scoprod \Hiex\scoprod \Hiex$, and since
$\Id\scoprod \mu:\Hiex\scoprod \Hiex\scoprod \Hiex \longrightarrow
\Hiex\scoprod \Hiex$ multiplies the variables $y$ and $z$
(in the order they appear) and puts the result in the right-hand side copy of
$\Hiex$ in $\Hiex\scoprod \Hiex$, we have 
\begin{align*}
(\Id\scoprod \mu)(\delta_r\scoprod \Id)\Diex (x_n) 
& = \delta_r(x_n) + y_n 
+ \sum_{m=1}^{n-1} \delta_r(x_m)\,y_{n-m} 
\\
& = x_n - y_n +\sum_{\ell=1}^{n-1} (-1)^\ell \sum_{\bfn\in \calC_n^{\ell+1}}
\Big(\big((u_{n_1}\,y_{n_2})\, y_{n_3}\big)\cdots \Big) y_{n_{\ell+1}} 
+ y_n 
\\
& \hspace{1cm}
+ \sum_{m=1}^{n-1} u_m\,y_{n-m} 
+ \sum_{m=1}^{n-1} \sum_{\lambda=1}^{m-1} (-1)^\lambda \sum_{\bfm\in \calC_m^{\lambda+1}}
\Big(\big((u_{m_1}\,y_{m_2})\cdots \big) y_{m_{\lambda+1}} \Big)\,y_{n-m} 
\end{align*}
where we set $u_n:=x_n-y_n$ and therefore we have 
\begin{align*}
\sum_{m=1}^{n-1} u_m\,y_{n-m} 
& = \sum_{\bfn\in \calC_n^2} u_{n_1}\,y_{n_2} . 
\end{align*}
Setting $\ell=\lambda+1$ in the last sum, we have $2\leq \ell\leq n-1$ 
and $\ell\leq m\leq n-1$ with 
$$
\bigcup_{m=\lambda}^{n-1} \calC_m^\ell \times \calC_{n-m}^1 = \calC_n^{\ell+1}, 
$$
therefore
\begin{align*}
\sum_{m=1}^{n-1} \sum_{\lambda=1}^{m-1} (-1)^\lambda \sum_{\bfm\in \calC_m^{\lambda+1}}
\Big(\big((u_{m_1}\,y_{m_2})\cdots \big) y_{m_{\lambda+1}} \Big)\,y_{n-m} 
& = 
\sum_{\ell=2}^{n-1} (-1)^\ell \sum_{\bfn\in \calC_n^{\ell+1}}
\Big(\big((u_{n_1}\,y_{n_2})\, y_{n_3}\big)\cdots \Big) y_{n_{\ell+1}}.
\end{align*}
Thus, we finally obtain 
\begin{align*}
(\delta_r\scoprod \Id)\Diex(x_n) & = x_n . 
\end{align*}

For the second identity, we rewrite the comultiplication as 
\begin{align*}
\Diex(x_n) &= x_n + y_n + \sum_{\bfn\in\calC_n^2} x_{n_1}\,y_{n_2} 
\end{align*}
and using the fact that 
$$
\calC_n^{\ell+1}= \bigcup_{m=1}^{n-1} \calC_m^1 \times \calC_{n-m}^\ell, 
$$
and setting $\lambda=\ell$, we rewrite the right codivision as  
\begin{align*}
\delta_r(x_n) 
&= u_n+ \sum_{m=1}^{n-1} \sum_{\lambda=1}^{n-m} (-1)^\lambda \sum_{\bfk\in\calC_{n-m}^\lambda} 
\Big(\big((u_m\,y_{k_1})\, y_{k_2}\big)\cdots \Big) y_{k_\lambda}. 
\end{align*}
We then have 
\begin{align*}
(\Diex \scoprod \Id)\delta_r(x_n) 
& = \Diex(x_n)-z_n 
+ \sum_{m=1}^{n-1} \sum_{\lambda=1}^{n-m} (-1)^\lambda \!\! \sum_{\bfk\in\calC_{n-m}^\lambda} \!\! 
\Big(\big((\Diex(x_{m})-z_m)\,z_{k_1}\big)\cdots \Big) z_{k_\lambda} 
\\ 
& = x_n + y_n + \sum_{\bfn\in\calC_n^2} x_{n_1}\,y_{n_2} 
+ \sum_{m=1}^{n-1} \sum_{\lambda=1}^{n-m} (-1)^\lambda \!\! \sum_{\bfk\in\calC_{n-m}^\lambda} \!\!
\big((x_{m}\,z_{k_1})\cdots \big) z_{k_\lambda} -z_n 
\\ 
& \hspace{1cm}
+ \sum_{m=1}^{n-1} \sum_{\lambda=1}^{n-m} (-1)^\lambda \!\! \sum_{\bfk\in\calC_{n-m}^\lambda} \!\!
\big((y_{m}\,z_{k_1})\cdots \big) z_{k_\lambda} 
\\ 
& \hspace{1cm}
+ \sum_{m=1}^{n-1} \sum_{\bfm\in\calC_m^2} 
\sum_{\lambda=1}^{n-m} (-1)^\lambda \!\! \sum_{\bfk\in\calC_{n-m}^\lambda} \!\!
\Big(\big((x_{m_1} y_{m_2})\,z_{k_1}\big)\cdots \Big) z_{k_\lambda} 
\\ 
& \hspace{1cm}
- \sum_{m=1}^{n-1} \sum_{\lambda=1}^{n-m} (-1)^\lambda \!\! \sum_{\bfk\in\calC_{n-m}^\lambda} \!\!
\big((z_{m}\,z_{k_1})\cdots \big) z_{k_\lambda} .
\end{align*}
When we then apply $\Id\scoprod\mu$, we identify $z_m=y_m$ and 
$z_{k_i}=y_{k_i}$ for $i=1,...,\lambda$, and therefore we have  
\begin{align*}
(\Id\scoprod\mu)(\Diex \scoprod \Id)\delta_r(x_n) 
& = x_n + \sum_{\bfn\in\calC_n^2} x_{n_1}\,y_{n_2} 
+ \sum_{m=1}^{n-1} \sum_{\lambda=1}^{n-m} (-1)^\lambda \!\! \sum_{\bfk\in\calC_{n-m}^\lambda} \!\!
\big((x_{m}\,y_{k_1})\cdots \big) y_{k_\lambda} 
\\ 
& \hspace{1cm}
+ \sum_{m=1}^{n-1} \sum_{\bfm\in\calC_m^2} 
\sum_{\lambda=1}^{n-m} (-1)^\lambda \!\! \sum_{\bfk\in\calC_{n-m}^\lambda} \!\!
\Big(\big((x_{m_1} y_{m_2})\,y_{k_1}\big)\cdots \Big) y_{k_\lambda} 
\end{align*}
where 
\begin{align*}
\sum_{m=1}^{n-1} \sum_{\lambda=1}^{n-m} (-1)^\lambda \!\! \sum_{\bfk\in\calC_{n-m}^\lambda} \!\!
\big((x_{m}\,y_{k_1})\cdots \big) y_{k_\lambda} 
& = \sum_{\ell=1}^{n-1} (-1)^\ell \sum_{\bfn\in \calC_n^{\ell+1}}
\big((x_{n_1}\,y_{n_2})\cdots \big) y_{n_{\ell+1}} , 
\end{align*}
and 
\begin{align*}
\sum_{\bfn\in\calC_n^2} x_{n_1}\,y_{n_2} + \sum_{m=1}^{n-1} \sum_{\bfm\in\calC_m^2} 
\sum_{\lambda=1}^{n-m} (-1)^\lambda \!\! \sum_{\bfk\in\calC_{n-m}^\lambda} \!\!
\Big(\big((x_{m_1} y_{m_2})\,y_{k_1}\big)\cdots \Big) y_{k_\lambda} 
\\ 
& \hspace{-8cm}
= - \sum_{\bfn\in\calC_n^{1+1}} (-1)^1 x_{n_1}\,y_{n_{1+1}} 
- \sum_{\ell=2}^{n-1} (-1)^\ell \sum_{\bfn\in \calC_n^{\ell+1}}
\big((x_{n_1}\,y_{n_2})\cdots \big) y_{n_{\ell+1}} . 
\end{align*}
This we finally have
\begin{align*}
(\Id\scoprod\mu)(\Diex \scoprod \Id)\delta_r(x_n) 
& = x_n . 
\end{align*}
\end{proof}

%%%%%%%%%%%%%%%%%%%%%%%%%%%%%%%%%%%%%

\subsection{Properties of the loop of invertible series}

Loops satisfying weak versions of associativity have many applications,
for instance in Blaschke's Web Geometry through nets \cite{BlaschkeBol}.
It is therefore interesting to ask what kind of identities are satisfied
by the loops $\Inv(A)$ of invertible series. 

\begin{proposition}
Given an algebra $A$, the loop $\Inv(A)$ satisfies an identity 
$$
(*)\qquad u(a,b,...,c) = v(a,b,...,c)
$$
for any series $a,b,...,c \in \Inv(A)$ if and only if the identity $(*)$ 
is satisfied in $A$, that is, for any elements $a,b,...,c\in A$.
\end{proposition}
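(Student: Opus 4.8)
The plan is to prove both implications separately, exploiting the functorial description of $\Inv$ as $\Hom_{\Alg}(\Hiex,-)$ given by Theorem~\ref{thm-invertible series}. The ``only if'' direction is almost trivial: the inclusion $A \hookrightarrow \Inv(A)$ sending $a \in A$ to the series $e + a\lambda$ (with all higher coefficients zero) is an injection, and one checks directly that under the multiplication of invertible series this does \emph{not} quite realize $A$ as a subloop, so instead I would argue as follows. An identity $(*)$ in the loop $\Inv(A)$ means an equality of two words built from the loop operations $\cdot$, $\slash$, $\backslash$; evaluating such an identity on series $a^{(1)}, \dots, a^{(k)}$ and reading off the degree-$1$ coefficient, the closed formulas for the divisions in Theorem~\ref{thm-invertible series} show that $(a\slash b)_1 = a_1 - b_1$ and $(a\backslash b)_1 = b_1 - a_1$ and $(a\cdot b)_1 = a_1+b_1$, so at degree $1$ every loop word collapses to the \emph{same} linear expression regardless of the algebra; that degree is useless. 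Instead I pass to \emph{graded} evaluation: feed in $a^{(j)} = e + t_j \lambda$ where $t_1,\dots,t_k$ are the generators of a free algebra $\Alg_\F\langle t_1,\dots,t_k\rangle$ (or, to capture an identity of a fixed algebra $A$, use generic elements of $A$), but this is not enough since higher coefficients vanish.

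So the cleaner route is purely structural. First I would record the key observation that the element $x_1 \in \Hiex$ is \emph{group-like up to degree considerations}: more precisely, the degree-$1$ part $\Hiex_1 = \F x_1$ together with the degree-$0$ part spans a sub-coalgebra on which $\Di(x_1) = x_1^{(1)} + x_1^{(2)}$, i.e. $x_1$ is \emph{primitive}. This means the assignment $A \mapsto (A,+)$, $a \mapsto $ (the functional picking out $a_1$) exhibits the abelian group $(A,+)$ as a quotient loop of $\Inv(A)$, which only recovers additive identities. To get at \emph{all} identities of $A$ including the multiplication, I would instead use the other natural transformation: for each $n \geq 1$ there is an algebra surjection $\Hiex \to A$ realizing, for any tuple of elements $a, b, \dots \in A$, a series whose behaviour under divisions encodes products of the $a,b,\dots$. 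Concretely, the plan is: \emph{(i)} show $\Inv(A)$ contains, for every $a\in A$, the series $\mathbf{1}+a\lambda$, and compute that in $\Inv(A)$ the loop word $\big((\mathbf 1 + a\lambda)\slash(\mathbf 1+b\lambda)\big)$, etc., has degree-$2$ coefficient equal to $b^2 - ab$ or similar, so that by going to degree $2,3,\dots$ one can extract any word in the algebra operations of $A$ as the appropriate coefficient of a loop word in $\Inv(A)$ evaluated on such linear series; \emph{(ii)} conversely, observe that every coefficient $(u(a^{(1)},\dots,a^{(k)}))_n$ of a loop word $u$, when evaluated on \emph{arbitrary} series $a^{(j)} = \sum a^{(j)}_m \lambda^m$, is by the closed formulas of Theorem~\ref{thm-invertible series} a fixed $\Z$-linear combination of products (in $A$, with fixed parenthesizations) of the coefficients $a^{(j)}_m$ — a ``universal polynomial'' with integer coefficients in the $a^{(j)}_m$, the \emph{same polynomial for every algebra $A$}.

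Granting \emph{(ii)}, the ``if'' direction follows: if $(*)$ holds in $A$, write $u - v$ applied to generic series as a single universal $\Z$-polynomial $P$ in the variables $a^{(j)}_m$; the hypothesis says $P$ vanishes whenever the $a^{(j)}_m$ are specialised to elements of $A$ satisfying the relations of $A$, but since $u$ and $v$ are loop words and $(*)$ is an identity (holds for \emph{all} substitutions in $A$), each graded piece $P_n$ is itself an identity of $A$ obtained by substituting the $a^{(j)}_m$ independently, hence $P_n = 0$ as an element of the free object, hence $u(a^{(1)},\dots,a^{(k)}) = v(a^{(1)},\dots,a^{(k)})$ in $\Inv(A)$ for \emph{all} series, not just linear ones. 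The ``only if'' direction follows from \emph{(i)}: given an identity $(*)$ in $\Inv(A)$, restrict to linear series $\mathbf 1 + a^{(j)}\lambda$ with $a^{(j)}\in A$ arbitrary, and extract from the coefficients the desired identity in $A$ by a triangular (in the grading) elimination — the degree-$2$ coefficient gives binary products, degree $3$ gives ternary associators and products, and so on, so that any word in the operations of $A$ of any arity appears as a coefficient of a suitable loop word in $\Inv(A)$.

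The main obstacle is step \emph{(i)}: I must be sure that the map from words in $A$'s operations to coefficients of loop words in $\Inv(A)$ is \emph{surjective enough}, i.e. that every element of the free algebra on $a^{(1)},\dots,a^{(k)}$ — every parenthesized monomial of every degree — really does arise (up to already-known identities) as such a coefficient. Inspecting the closed formulas for $\delta_r$ in Theorem~\ref{thm-invertible series}, the degree-$n$ coefficient of $a \slash b$ involves only \emph{left-combed} products $\big((((\ \cdot\ )\,b_{n_2})\,b_{n_3})\cdots\big)b_{n_{\ell+1}}$, so a single division produces only left-combed associativity patterns; to reach a general parenthesization I will need to \emph{nest} divisions and multiplications and argue by induction on the complexity of the target word, using the cancellation laws to peel off outer factors. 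I expect this nesting argument — showing that loop words in $\Inv(A)$, restricted to linear series, generate (as coefficient-extractions) the whole free magma on the $a^{(j)}$ — to be the genuine content of the proof; everything else is the bookkeeping of ``universal $\Z$-polynomial'' formal manipulations, which is routine once the closed formulas of Theorem~\ref{thm-invertible series} are in hand.
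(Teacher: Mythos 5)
Your overall strategy --- universal integer polynomials in the coefficients for one direction, evaluation on linear series $\mathbf 1+a\lambda$ for the other --- is the paper's strategy, but both halves of your write-up have a problem, and the step you flag as ``the genuine content'' is a red herring while the step you wave through is where the real work sits. In the direction ``$(*)$ holds in $A$ $\Rightarrow$ $(*)$ holds in $\Inv(A)$'', the assertion that ``each graded piece $P_n$ is itself an identity of $A$ obtained by substituting the $a^{(j)}_m$ independently, hence $P_n=0$'' is unjustified when $u,v$ are not multilinear. For $u(a)=a\cdot a$ the degree-$n$ coefficient is $\sum_{i+j=n}a_ia_j$, which puts \emph{different} coefficients into the two occurrences of $a$; your hypothesis only controls substitutions with the \emph{same} element in every occurrence, and an identity does not imply its independent-substitution version without polarization (e.g.\ $x^2=0$ for all $x$ gives $xy+yx=0$, not $xy=0$). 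This is precisely why the paper's proof ends with ``consider $a=a^1+\cdots+a^k$'': the (partial) multilinearizations of $(*)$ are integer linear combinations of substitution instances of $(*)$, and it is these multilinearizations, not $(*)$ itself, that force the graded pieces to vanish. (Also, ``$P_n=0$ as an element of the free object'' is stronger than what you get or need; you only get that $P_n$ vanishes under substitutions from $A$.)

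For the converse, the ``triangular elimination'' and the deferred ``nesting argument showing that coefficient extraction reaches the whole free magma'' are unnecessary. You are handed a specific identity $u=v$, so you only need to realize the two words $u(a,\dots,c)$ and $v(a,\dots,c)$, not every parenthesized monomial. Evaluating the loop word $u$ on $\mathbf 1+a\lambda,\dots,\mathbf 1+c\lambda$ and reading off the coefficient of $\lambda^{d}$, where $d$ is the number of letters of $u$, every factor must contribute its degree-one coefficient (a degree-zero contribution would drop the total below $d$), so that coefficient is exactly $u(a,\dots,c)$ computed in $A$, and likewise for $v$. No surjectivity onto the free algebra and no analysis of the left-combed shape of $\delta_r$ is required --- indeed the codivision formulas play no role, since an identity that is to be compared with one in $A$ involves only the multiplication. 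So the obstacle you plan to spend the proof on does not exist, while the linearization you treat as routine bookkeeping is the actual content.
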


\begin{proof}
Roughly speaking, this result follows from the fact that the comultiplication 
$\Diex$ is linear on both sides on generators. 
More precisely, if in the identity $(*)$ the operators $u$ and $v$ are 
multilinear, the implication 
``$(*)$ on $A$\quad$\Rightarrow$\quad $(*)$ on $\Inv(A)$'' 
is proved by direct inspection, and the opposite implication 
is proved by considering series of the form 
$a=1+a_1\,\lambda$, $b=1+b_1\,\lambda$,..., $c=1+c_1\,\lambda$. 

If in the identity $(*)$ the operators $u$ and $v$ are not multilinear, 
for instance the element $a$ appears $k$ times, it suffices to linearize
them, by considering the sum $a=a^1+\cdots +a^k$ of $k$ different elements. 
\end{proof}

In particular, this result implies that $\Inv(A)$ is a Moufang loop if and
only if $A$ is alternative, that $\Inv(A)$ is a group if and only if 
$A$ is associative, and that $\Inv(A)$ is an abelian group if and only if 
$A$ is commutative and associative.

We give below counterexamples to some interesting properties of loops which
fail on the loops $\Inv(A)$ for associative algebras $A$, which can be
deduced by the coloop bialgebra $\Hiex$. 

\begin{example}
\label{example-Inv-inverses}
The left and the right inverses of any $a\in \Inv(A)$ do not coincide,
that is
$$
a\backslash e \neq e \slash a. 
$$
In fact, the left and right inverses in $\Inv(A)$ coincide if and only if
the left antipode $S_l$ and the right antipode $S_r$ of $\Hiex$ coincide. 
Applying equations (\ref{antipode-left-right}), we find
\begin{align*}
S_r(x_n) & := (\varepsilon \scoprod \Id) \,  \delta_r(x_n) 
\\ 
& = -x_n - \sum_{\ell=1}^{n-1} (-1)^\ell \sum_{\bfn\in \calC_n^{\ell+1}}
\Big(\big((x_{n_1}\,x_{n_2})\, x_{n_3}\big)\cdots \Big) x_{n_{\ell+1}} 
\end{align*}
and 
\begin{align*}
S_l(x_n) & := (\Id \scoprod \varepsilon) \,  \delta_l(x_n)
\\ 
& = -x_n - \sum_{\ell=1}^{n-1} (-1)^\ell \sum_{\bfn\in \calC_n^{\ell+1}}
x_{n_1}\Big(\cdots \big(x_{n_2}\,(x_{n_\ell} x_{n_{\ell+1}})\big) \Big),
\end{align*}
therefore the two antipodes do not coincide. 
For instance, for a series $a=1+a_1\,\lambda$, we have 
\begin{align*}
e\slash a & := a\,  S_r 
= 1 - a_1\,\lambda + a_1 a_1\,\lambda^2 - (a_1 a_1)a_1\,\lambda^3 
+ \big((a_1 a_1)a_1\big)a_1\,\lambda^4 + \cdots 
\\ 
a\backslash e & := a\,  S_l 
= 1 - a_1\,\lambda + a_1 a_1\,\lambda^2 - a_1(a_1 a_1)\,\lambda^3 
+ a_1\big(a_1(a_1 a_1)\big)\,\lambda^4 + \cdots  
\end{align*}
To have a counterexample to the equality $e\slash a=a\backslash e$,
take for $A$ the algebra of $2\times 2$ matrices over the sedenions,
spanned by $1$ and by the imaginary units $e_i$ for $i=1,...,15$. 
If $a=1+a_1\,\lambda$ is the series with coefficient 
$$
a_1 = \left(\begin{array}{cc}
  e_1+e_{10} & e_5+e_{14} \\ 0 & 1
\end{array}\right),
$$
we have 
$$
e\slash a - a\backslash e 
= \left(\begin{array}{cc} 
0 & -2(e_5+e_{14}) \\ 0 & 0 \end{array}\right) \,\lambda^3 + O(\lambda^4). 
$$
\end{example}

\begin{example}
\label{example-Inv-divisions}
The left and right inversions in $\Inv(A)$ do not allow us to construct
the divisions, that is, 
$$
a \slash b \neq a\,  (e\slash b) \qquad\mbox{and}\qquad 
a\backslash b \neq (a\backslash e)\,  b 
$$
for any $a,b\in \Inv(A)$. 

In fact, to show that $a \slash b \neq a\, (e\slash b)$ and 
$a \backslash b \neq (a\backslash e)\,b$ in the loop $\Inv(A)$ is 
equivalent to show that 
$$
\delta_r \neq (\Id\scoprod S_r)\,  \Diex 
\qquad\mbox{and}\qquad 
\delta_l \neq (S_l\scoprod \Id)\,  \Diex 
$$
in the coloop bialgebra $\Hiex$. Let us show it for the right codivision. 
For any generator $x_n$, we have 
\begin{align*}
(\Id\scoprod S_r)\,  \Diex(x_n) 
& = x_n + S_r(y_n) + \sum_{m=1}^{n-1} x_m\,S_r(y_{n-m}) 
\\ 
& = x_n - y_n - \sum_{\ell=1}^{n-1} (-1)^\ell \sum_{\bfn\in \calC_n^{\ell+1}}
\big((y_{n_1}\,y_{n_2})\cdots \big) y_{n_{\ell+1}} 
\\ 
& \hspace{1cm}
- \sum_{m=1}^{n-1} x_m\,y_{n-m} 
- \sum_{m=1}^{n-1} \sum_{\lambda=1}^{n-m-1} (-1)^\lambda \sum_{\bfk\in \calC_{n-m}^{\lambda+1}}
x_m\,\Big(\big((y_{k_1}\,y_{k_2})\cdots \big) y_{k_{\lambda+1}}\Big). 
\end{align*}
Writing the last two sums in terms of compositions of $n$ yields 
\begin{align*}
(\Id\scoprod S_r)\,  \Diex(x_n) 
& = u_n + \sum_{\ell=1}^{n-1} (-1)^\ell \sum_{\bfn\in \calC_n^{\ell+1}}
\left( x_{n_1}\,\big(\big((y_{n_2}\,y_{n_3})\cdots \big) y_{n_{\ell+1}}\big)- 
\big((y_{n_1}\,y_{n_2})\cdots \big) y_{n_{\ell+1}} \right), 
\end{align*}
which is clearly different from the expression of $\delta_r(x_n)$. 
\end{example}

\begin{example}
\label{example-Inv-alternative}
A loop $Q$ is {\bf left alternative} if $a(ab)=(aa)b$ for any $a,b\in Q$,
and it is {\bf right alternative} if $(ab)b=a(bb)$. 
The proalgebraic loop $\Inv$ on the category $\Alg$ is not left nor right
alternative.

For this, it suffices to show that the coloop bialgebra $\Hiex$ is not
right coalternative, that is $(\Id\scoprod\mu)\,K\neq 0$, where 
$K=(\Diex\scoprod\Id)\,\Diex(\Id\scoprod\Diex)\,\Diex$ is the
coassociator.  
The first deviation from right alternativity appears on the generator
$x_3$, since we have
\begin{align*}
K(x_3) & = (x_1y_1)z_1-x_1(y_1z_1) \\ 
(\Id\scoprod\mu)\,K(x_3) &= (x_1y_1)y_1-x_1(y_1 y_1) \neq 0.
\end{align*}
For instance, if $A$ is the algebra of sedenions, the deviation from
right alternativity can be seen comparing $(a b) b$ and $a(b b)$
for the two series 
$$
a=1+(e_1+e_{10})\lambda \qquad\mbox{and}\qquad 
b=1+(e_5+e_{14})\lambda 
$$
because $(e_1+e_{10})(e_5+e_{14}) = 0$ and therefore 
$$
(a b) b - a(b b) 
= -(e_1+e_{10})(e_5+e_{14})^2\,\lambda^3 = 2(e_1+e_{10})\,\lambda^3. 
$$
\end{example}

\begin{example}
\label{example-Inv-power associative}
A loop $Q$ is {\bf power associative} if every element of the loop
generates an abelian subgroup. 
The proalgebraic loop $\Inv$ on the category $\Alg$ is not power
associative. In particular, power associativity requires the associativity
$(aa)a=a(aa)$ for any element. Therefore, it suffices to show that 
$$
\mu\, (\Id\scoprod\mu)\,K(x_3) = (x_1x_1)x_1-x_1(x_1x_1) \neq 0. 
$$

For instance, if we take $A$ to be the algebra of $2\times 2$ matrices 
with coefficients in the sedenion algebra, for the series 
$a=1+a_1\lambda$ of Example \ref{example-Inv-divisions} with 
$$
a_1 = \left(\begin{array}{cc} e_1+e_{10} & e_5+e_{14} \\ 0 & 1 \end{array}\right) 
$$
we have 
$$
a_1^2a_1 = \left(\begin{array}{cc} 
-2(e_1+e_{10}) & -(e_5+e_{14}) \\ 0 & 1 \end{array}\right) 
\qquad\mbox{and}\qquad 
a_1\,a_1^2= \left(\begin{array}{cc} 
-2(e_1+e_{10}) &  e_5+e_{14} \\ 0 & 1 \end{array}\right)  
$$
and therefore 
$$
(a a) a - a(aa) = \left(\begin{array}{cc} 
0 & -2(e_5+e_{14}) \\ 0 & 0 \end{array}\right) \,\lambda^3. 
$$
\end{example}

%%%%%%%%%%%%%%%%%%%%%%%%%%%%%%%%%%%%%%%%%

\section{Coloop of formal diffeomorphisms}
\label{section-diffeomorphisms}

The group of formal diffeomorphisms (tangent to the identity) 
is the set of series 
$$
\Diff(A)=\Big\{\ a=\sum_{n\geq 0} a_n\,\lambda^{n+1}\ |\ a_0=1,\ a_n\in A\ \Big\} 
$$ 
with coefficients $a_n$ taken in a commutative algebra $A$, 
endowed with the composition law $(a\circ b)(\lambda)=a\big( b(\lambda)\big)$, 
unit $e(\lambda)=\lambda$, and where the inverse of a series $a(\lambda)$ 
is given by the Lagrange inversion formula~\cite{Lagrange}. 
It is a proalgebraic group on $\Com$, represented by the Fa\`a di Bruno Hopf 
algebra~\cite{Doubilet}, \cite{JoniRota}
\begin{align*}
\HFdB & =\F[x_n,\ n\geq 1] \qquad (x_0=1) \\
\DFdB(x_n) &=\sum_{m=0}^n x_m \otimes \sum_{(p)} 
\dfrac{(m+1)!}{p_0! p_1!\cdots p_n!}\ x_1^{p_1}\cdots x_n^{p_n} 
\end{align*}
where the sum is done over the set of tuples $(p_0,p_1,p_2,...,p_n)$ 
of non-negative integers such that $p_0+p_1+p_2+\cdots +p_n=m+1$ and 
$p_1+2p_2+\cdots +np_n=n-m$. 
In this section we show that this group can be extended as a proalgebraic loop 
to the category $\As$. 

%%%%%%%%%%%%%%%%%%%%%%%%%%%%%%%%%%%%%%%%

\subsection{Loop of formal diffeomorphisms}

\begin{definition}
\label{definition-Diff-loop}
Let $A$ be a unital associative algebra, non necessarily commutative, and 
let $\lambda$ be a formal variable. 
We call formal diffeomorphisms in $\lambda$ with coefficients in $A$ 
the formal series in the set 
$$
\Diff(A)=\Big\{\ a=\sum_{n\geq 0} a_n\,\lambda^{n+1}\ |\ 
a_0=1,\ a_n\in A\ \Big\}, 
$$ 
endowed with the composition law 
\begin{align*}
a\circ  b 
& = \sum_{n\geq 0}\ \sum_{m=0}^n\ 
\underset{k_0,...,k_m\geq 0}{\sum_{k_0+\cdots +k_m=n-m}} 
a_m\ b_{k_0}\cdots b_{k_m} \,\lambda^{n+1} \\ 
& = \sum_{n\geq 0}\ \left(a_n + b_n + \sum_{m=1}^{n-1}\ a_m\ \sum_{l=1}^n 
\binom{m+1}{l} \underset{k_1,...,k_m\geq 1}{\sum_{k_1+\cdots +k_m=n-m}} 
b_{k_1}\cdots b_{k_m}\right)\,\lambda^{n+1}
\end{align*}
and the unit $e$ given by $e_0=1$ and $e_n=0$ for all $n>1$. 
For instance, 
\begin{align*}
(a\circ b)_1 & = a_1+b_1, \\ 
(a\circ b)_2 & = a_2+2a_1b_1+b_2, \\
(a\circ b)_3 & = a_3+3a_2b_1+a_1(2b_2+b_1^2)+b_3.  
\end{align*}
The indeterminate $\lambda$ is not necessary to define the loop law, 
but helps to keep track of the degree of the terms in the sum. 
\end{definition}

\begin{proposition}
For any unital associative algebra $A$, the set $\Diff(A)$ is a loop.  
\end{proposition}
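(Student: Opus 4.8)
The plan is to argue by mimicking the earlier proof that the set of invertible series $\Inv(A)$ is a loop: the series $e$ is clearly a two-sided unit, since substituting $e_m=\delta_{m,0}$ into the composition formula of Definition~\ref{definition-Diff-loop} gives $(e\circ a)_n=a_n=(a\circ e)_n$ for all $n$, so it remains only to exhibit a left and a right division fulfilling the cancellation properties (\ref{right-cancellation}) and (\ref{left-cancellation}). Both divisions will be built by solving triangular recursions order by order in $\lambda$, and in each case the ``second'' cancellation identity will follow from the fact that the recursion determines its unknown uniquely, i.e. that right (resp. left) composition by a fixed series is injective on $\Diff(A)$.

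For the right division, fix $a,b\in\Diff(A)$ and look for $c\in\Diff(A)$ with $c\circ b=a$. The key point is that in the expansion $(c\circ b)_n=\sum_{m=0}^{n}c_m\sum_{k_0+\cdots+k_m=n-m}b_{k_0}\cdots b_{k_m}$ the coefficient $c_n$ occurs only in the term $m=n$, where it equals $c_n\,b_0^{\,n+1}=c_n$, while all remaining terms involve only $c_0,\dots,c_{n-1}$ together with the given $b_j$. Hence $(c\circ b)_n=a_n$ has a unique solution, namely $c_0=1$ and $c_n=a_n-\sum_{m=0}^{n-1}c_m\sum_{k_0+\cdots+k_m=n-m}b_{k_0}\cdots b_{k_m}$; in particular $c\in\Diff(A)$. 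Setting $a\slash b:=c$ gives $(a\slash b)\circ b=a$ by construction. The same recursion recovers $c$ from the series $c\circ b$, so for fixed $b$ the map $c\mapsto c\circ b$ is injective on $\Diff(A)$; since $a$ and $(a\circ b)\slash b$ are both sent to $a\circ b$, we get $(a\circ b)\slash b=a$. This establishes (\ref{right-cancellation}).

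For the left division, fix $a,b$ and look for $d\in\Diff(A)$ with $a\circ d=b$. Here the dual observation is used: in $(a\circ d)_n=\sum_{m=0}^{n}a_m\sum_{k_0+\cdots+k_m=n-m}d_{k_0}\cdots d_{k_m}$ the coefficient $d_n$ occurs only in the term $m=0$ (a summand containing $d_n$ needs some $k_i=n$, which forces $m=0$ and $k_0=n$), where it equals $a_0\,d_n=d_n$, and all other terms involve only $d_0,\dots,d_{n-1}$. Therefore $a\circ d=b$ has the unique solution $d_0=1$, $d_n=b_n-\sum_{m=1}^{n}a_m\sum_{k_0+\cdots+k_m=n-m}d_{k_0}\cdots d_{k_m}$, so $d\in\Diff(A)$; putting $a\backslash b:=d$ gives $a\circ(a\backslash b)=b$, and the injectivity of $d\mapsto a\circ d$ read off from the same recursion gives $a\backslash(a\circ b)=b$. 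This is (\ref{left-cancellation}), and $\Diff(A)$ is a loop.

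The argument is routine, but two features are worth flagging. Unlike for $\Inv(A)$, the composition is not symmetric in its two arguments, so the left and right divisions genuinely have to be treated separately: in the recursion the new unknown $c_n$ sits at the top index $m=n$ of the sum defining $-\circ b$, whereas the new unknown $d_n$ sits at the bottom index $m=0$ of the sum defining $a\circ-$. Also, $\circ$ is genuinely non-associative as soon as $A$ is non-commutative, because the operation $f\mapsto f(c(\lambda))$ with coefficients placed on the left fails to be multiplicative: the coefficients of $c(\lambda)^k$ do not commute with those of $b$. Thus $\Diff(A)$ is in general a loop but not a group, which is exactly what makes the statement require an argument rather than being a special case of ``every group is a loop''.
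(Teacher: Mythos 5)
Your proof is correct and follows essentially the same route as the paper: both construct the two divisions by the same triangular recursions, using that $c_n$ enters $(c\circ b)_n$ only through the $m=n$ term and $d_n$ enters $(a\circ d)_n$ only through the $m=0$ term. The only cosmetic difference is that you obtain the second cancellation identity in each pair from injectivity of composition with a fixed series, whereas the paper verifies $\big((a\circ b)\slash b\big)_n=a_n$ and $\big(a\backslash(a\circ b)\big)_n=b_n$ by an explicit induction; both arguments rest on the same recursion.
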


\begin{proof}
It is clear that the composition is a well-defined operation, and that $e$ 
is a unit. Let us show that the left and right divisions exist.  
\bigskip 

\noindent
i) Let us prove that there exists a right division $\slash$ 
satisfying the two equations (\ref{right-cancellation}). 
Given two series $a=\sum a_n\,\lambda^{n+1}$ and $b=\sum b_n\,\lambda^{n+1}$, 
let us define the series $a\slash b=\sum (a\slash b)_n\,\lambda^{n+1}$ 
so that $(a\slash b)\circ b = a$, that is
$$
\sum_{m=0}^n\ \underset{k_0,...,k_m\geq 0}{\sum_{k_0+\cdots +k_m=n-m}}
(a\slash b)_m\ b_{k_0}\cdots b_{k_m} = a_n  \qquad\mbox{for any $n\geq 0$}.  
$$ 
From now on, in the sum over the integers $k_0,...,k_p$ we omit to write 
that all integers can be zero. 
These equations are solved recursively, starting from $(a\slash b)_0=a_0=1$. 
The $n$th term is given by 
\begin{align*}
(a\slash b)_n & = a_n - \sum_{m=0}^{n-1}\ \sum_{k_0+\cdots +k_m=n-m}  
(a\slash b)_m\ b_{k_0}\cdots b_{k_m}. 
\end{align*}
To prove that $(a\circ b)\slash b = a$, i.e. that 
$\Big((a\circ b)\slash b\Big)_n = a_n$ for any $n\geq 0$, 
we proceed by induction. 
We have $\Big((a\circ b)\slash b\Big)_0=(a\circ b)_0=1$, therefore 
$$
\Big((a\circ b)\slash b\Big)_1 
= (a\circ b)_1 - \Big((a\circ b)\slash b\Big)_0 b_1 
= a_1+b_1-b_1 = a_1
$$
and 
\begin{align*}
\Big((a\circ b)\slash b\Big)_n & = (a\circ b)_n - \sum_{m=0}^{n-1}\ 
\sum_{k_0+\cdots +k_m=n-m}
\Big((a\circ b)\slash b\Big)_m\ b_{k_0}\cdots b_{k_m} \\ 
& = \sum_{m=0}^{n}\ \sum_{k_0+\cdots +k_m=n-m} 
a_p\ b_{k_0}\cdots b_{k_m} 
- \sum_{m=0}^{n-1}\ \sum_{k_0+\cdots +k_m=n-m} 
\Big((a\circ b)\slash b\Big)_m\ b_{k_0}\cdots b_{k_m} \\ 
& = a_n +  \sum_{m=0}^{n-1}\ \sum_{k_0+\cdots +k_m=n-m} 
\left(a_m-\Big((a\circ b)\slash b\Big)_m\right)\ b_{k_0}\cdots b_{k_m}, 
\end{align*}
so, if we suppose that $\Big((a\circ b)\slash b\Big)_m=a_m$ for any 
$m\leq n-1$, we have $\Big((a\circ b)\slash b\Big)_n=a_n$. 
\bigskip 

\noindent
ii) To prove the existence of the left division we proceed in the same way: 
the series $a\backslash b$ that satisfies the identity 
$a\circ (a\backslash b)=b$ of equations (\ref{left-cancellation}), that is, 
$$
\sum_{m=0}^n\ \sum_{k_0+\cdots +k_m=n-m} 
a_m\ (a\backslash b)_{k_0}\cdots (a\backslash b)_{k_m} = b_n 
\qquad\mbox{for any $n\geq 0$}, 
$$
is given recursively by $(a\backslash b)_0=1$ and 
\begin{align*}
(a\backslash b)_n & = b_n - \sum_{m=1}^n\ 
\sum_{k_0+\cdots +k_m=n-m} 
a_m\ (a\backslash b)_{k_0}\cdots (a\backslash b)_{k_m}. 
\end{align*}
The identity $a\backslash (a\circ b)=b$ means that, for any $n\geq 0$, we have 
$\Big(a\backslash (a\circ b)\Big)_n = b_n$. This is proved by induction. 
We have $\Big(a\backslash (a\circ b)\Big)_0=1$, therefore 
$$
\Big(a\backslash (a\circ b)\Big)_1 
= (a\circ b)_1 
- a_1 \Big(a\backslash (a\circ b)\Big)_0 \Big(a\backslash (a\circ b)\Big)_0 
= a_1+b_1-a_1 = b_1
$$
and 
\begin{align*}
\Big(a\backslash (a\circ b)\Big)_n 
& = (a\circ b)_n - \sum_{m=1}^n\ \sum_{k_0+\cdots +k_m=n-m} 
a_m \Big(a\backslash (a\circ b)\Big)_{k_0} \cdots 
\Big(a\backslash (a\circ b)\Big)_{k_m} \\ 
& = \sum_{m=0}^{n}\ \sum_{k_0+\cdots +k_m=n-m}
a_m\ b_{k_0}\cdots b_{k_m} 
- \sum_{m=1}^n\ \sum_{k_0+\cdots +k_m=n-m} 
a_m \Big(a\backslash (a\circ b)\Big)_{k_0} \cdots 
\Big(a\backslash (a\circ b)\Big)_{k_m} \\ 
& = a_0b_n +  \sum_{m=1}^n\ \sum_{k_0+\cdots +k_m=n-m} 
a_m\ \left(b_{k_0}\cdots b_{k_m} - \Big(a\backslash (a\circ b)\Big)_{k_0}\cdots  
\Big(a\backslash (a\circ b)\Big)_{k_m}\right) \\ 
& = b_n +  \sum_{m=1}^{n-1}\ \sum_{k_0+\cdots +k_m=n-m} 
a_m\ \left(b_{k_0}\cdots b_{k_m} - \Big(a\backslash (a\circ b)\Big)_{k_0}\cdots  
\Big(a\backslash (a\circ b)\Big)_{k_m}\right) \\ 
& \hspace{2cm} 
+ a_n\ \left(b_0\cdots b_0 - \Big(a\backslash (a\circ b)\Big)_0\cdots  
\Big(a\backslash (a\circ b)\Big)_0\right)\\ 
& = b_n +  \sum_{m=1}^{n-1}\ \sum_{k_0+\cdots +k_m=n-m} 
a_m\ \left(b_{k_0}\cdots b_{k_m} - \Big(a\backslash (a\circ b)\Big)_{k_0}\cdots  
\Big(a\backslash (a\circ b)\Big)_{k_m}\right),  
\end{align*}
so, if we suppose that $\Big(a\backslash (a\circ b)\Big)_m=b_m$ for any 
$m\leq n-1$, we have $\Big(a\backslash (a\circ b)\Big)_n=b_n$. 
\end{proof}

For instance, the first terms of the right division are
\begin{align*}
(a\slash b)_1 & = a_1-b_1, \\ 
(a\slash b)_2 & = a_2-\Big[b_2+2(a\slash b)_1 b_1\Big] 
= a_2-2a_1b_1-(b_2-2b_1^2), \\ 
(a\slash b)_3 & = a_3-\Big[b_3+(a\slash b)_1(2b_2+b_1^2)+3(a\slash b)_2b_1 \Big]\\
& = a_3-\big(2a_1b_2+3a_2b_1\big)+5a_1b_1^2-\big[b_3-(2b_1b_2+3b_2b_1)+5b_1^3\big],
\end{align*}
and the first terms of the left division are 
\begin{align*}
(a\backslash b)_1 & =b_1-a_1, \\ 
(a\backslash b)_2 & =b_2-\Big[2a_1 (a\backslash b)_1+a_2\Big] 
= b_2-2a_1b_1-(a_2-2a_1^2), \\ 
(a\backslash b)_3 & =b_3-\Big[a_1(2(a\backslash b)_2+(a\backslash b)_1^2)
+a_2(3(a\backslash b)_1)+a_3\Big] \\ 
& = b_3-\big(2a_1b_2+3a_2b_1\big)+\big(5a_1^2b_1+a_1b_1a_1-a_1b_1^2\big)
-\big[a_3-(2a_1a_2+3a_2a_1)+5a_1^3\big]. 
\end{align*}

We now prove that the loop of formal diffeomorphism is proalgebraic over 
associative algebras, and give its representative coloop bialgebra. 

%%%%%%%%%%%%%%%%%%%%%%%%%%%%%%%%%%%%%%%%%%%%%%%%%%%%%%%%%%%%%%%%%%%%%%%

\subsection{Fa\`a di Bruno coloop bialgebra} 

As in Section~\ref{subsection-invertible-coloop-bialgebra}, let
$X=\Span_\F\{x_n,\ n\geq 1\}$ be the set of graded variables $x_n$ of degree
$n$, and identify the tensor algebra $T(X)=\F\langle x_n,\ n\geq 1\rangle$
with the set of non-commutative polynomials $\F\langle x_n,\ n\geq 1\rangle$. 
We endow this algebra with the structure of a coloop biagebra which represents
the loop $\Diff$. 

As before, to simplify the notations, we denote by $x_n=x_n^{(1)}$ and 
$y_n=x_n^{(2)}$ the generators taken in the two copies of $X$ in 
$T(X)\scoprod T(X)\cong T(X^{(1)}\oplus X^{(2)})$. 

To describe the codivisions we need to introduce some sets of sequences 
and two types of related integer coefficients. 

\begin{definition}
\label{definition-d}
For any $\ell \geq 1$, let $\calM_\ell$ denote the set of 
sequences $\bfm = (m_1,...,m_\ell)$ such that  
\begin{align}
\label{sequence-M}
m_1+\cdots +m_\ell=\ell, \qquad\mbox{and}\qquad 
m_1+\cdots +m_j\geq j \quad\mbox{for}\quad j=1,...,\ell-1 . 
\end{align} 
For instance, for $\ell=1,2,3$, we have 
$$
\calM_1 = \big\{ (1) \big\}, \quad \calM_2 = \big\{ (2,0),(1,1) \big\}, \quad 
\calM_3 = \big\{ (3,0,0),(2,1,0),(2,0,1),(1,2,0),(1,1,1) \big\}. 
$$

For any $\ell\geq 1$ and any sequence $(n_1,...,n_{\ell+1})$ of positive 
integers, we call {\bf Lagrange coefficient}\footnote{
These coefficients appear in the Lagrange inversion formula~\cite{Lagrange}, 
cf.~\cite{BFK}. 
}
the number
$$ 
d_\ell(n_1,...,n_\ell) 
= \sum_{\bfm\in\calM_\ell} \binom{n_1+1}{m_1}\cdots 
\binom{n_\ell+1}{m_\ell}. 
$$
For $\ell=0$, $\calM_0$ is empty and we set $d_0 = 1$.

For instance, for $\ell=1,2,3$, we have
\begin{align*}
d_1(n_1) & = \binom{n_1+1}{1} ,
\\ 
d_2(n_1,n_2) & = \binom{n_1+1}{2} + \binom{n_1+1}{1} \binom{n_2+1}{1} ,
\\ 
d_3(n_1,n_2,n_3)
& = \binom{n_1+1}{3} 
+ \binom{n_1+1}{2} \binom{n_2+1}{1} + \binom{n_1+1}{2} \binom{n_3+1}{1} 
\\ 
& \hspace{1cm}
+ \binom{n_1+1}{1} \binom{n_2+1}{2} 
+ \binom{n_1+1}{1} \binom{n_2+1}{1} \binom{n_3+1}{1} .
\end{align*}
\end{definition}

\begin{definition}
\label{definition-de}
For any $\ell \geq 1$, let $\calE_\ell$ be the set of sequences
$\bfe = (e_1,...,e_\ell)$ of bits $e_i\in\{1,2\}$. 
For any $\bfe\in \calE_\ell$, let $\calM_\ell^{\bfe}$ be the set of sequences 
$\bfm = (m_1,...,m_\ell)\in \calM_\ell$ such that  
\begin{align*}
%\label{sequence-ME}
m_i=0 \quad\mbox{if}\quad e_i=2, \quad\mbox{for}\quad i=2,...,\ell .
\end{align*}
The bits $e_i$ will be used in Eq.~(\ref{labeled-variables}) to label
the generators $x_i$ of the coloop bialgebra in order to determine to which
copy of the coproduct algebra the variables $x_i^{(e_i)}$ belong.
To simplify the final formulas for the codivisions, we chose for $e_i$
the bits $1$ and $2$, even if, for the present discussion, the bits $1$ and $0$
would be more appropriate. 

In particular, if $\bfe = (1,1,...,1)$ then $\calM_\ell^{\bfe} = \calM_\ell$.
If $\bfe$ starts with the bit $e_1=2$, then $\calM_\ell^{\bfe}$ is empty,
because the condition (\ref{sequence-M}) implies that $m_1\geq 1$. 
If $\bfe$ starts with the bit $e_1=1$ and contains at least a bit value $2$,
then the set $\calM_\ell^{\bfe}$ is a proper subset of $\calM_\ell$ obtained
by keeping only those sequences $\bfm$ which have the value $0$ in all
the positions where the bit value of $\bfe$ is $2$. 
For instance, $\calM_1^{(1)} = \calM_1 = \{ (1) \}$ and for $\ell=2$ we have
\begin{align}
\nonumber
\calM_2^{(1,1)} & = \calM_2 = \{ (2,0),(1,1) \}, 
\\ \label{labeled-sequence M_2}
\calM_2^{(1,2)} & = \{ (2,0) \}. 
\end{align}
For $\ell=3$, we have 
\begin{align}
\nonumber
\calM_3^{(1,1,1)} & = \calM_3 
= \big\{ (3,0,0),(2,1,0),(2,0,1),(1,2,0),(1,1,1) \big\}  
\\ \nonumber
\calM_3^{(1,1,2)} & = \{ (3,0,0),(2,1,0),(1,2,0) \}  
\\ \label{labeled-sequence M_3}
\calM_3^{(1,2,1)} & = \{ (3,0,0),(2,0,1) \}  
\\ \nonumber
\calM_3^{(1,2,2)} & = \{ (3,0,0) \} . 
\end{align}

For any $\ell\geq 1$, any sequence $\bfe\in \calE_\ell$ 
and any sequence $(n_1,...,n_{\ell+1})$ of positive integers, we call 
{\bf labeled Lagrange coefficient} the number 
\begin{align*}
d_\ell^{\bfe}(n_1,...,n_\ell) & = \sum_{\bfm \in \calM_\ell^{\bfe}}
\binom{n_1+1}{m_1}\cdots \binom{n_\ell+1}{m_\ell}.
\end{align*}
For $\ell=0$, $\calM_0$ and $\calE_0$ are empty and we set $d_0^{\bfe}=d_0 1$. 
Of course, if $\bfe = (1,1,...,1)$ then 
$d_\ell^{\bfe}(n_1,...,n_\ell) = d_\ell(n_1,...,n_\ell)$,
if $\bfe$ starts by $2$ then $d_\ell^{\bfe}=0$, 
and if $\bfe$ starts by $1$ and contains some bit values equal to $2$, then 
$d_\ell^{\bfe}(n_1,...,n_\ell) < d_\ell(n_1,...,n_\ell)$. 

Here are the values of $d_\ell^{\bfe}$ for $\ell=1,2,3$, obtained by summing
the patterns according to the labeled sequences given in
(\ref{labeled-sequence M_2}) and (\ref{labeled-sequence M_3}): 
\begin{align*}
d_1^{(1)}(n_1) & = d_1(n_1) = \binom{n_1+1}{1} , 
\\ 
d_2^{(1,1)}(n_1,n_2) & = d_2(n_1,n_2) 
= \binom{n_1+1}{2} + \binom{n_1+1}{1} \binom{n_2+1}{1},
\\ 
d_2^{(1,2)}(n_1,n_2) & = \binom{n_1+1}{2} , 
\end{align*}
where the term $\binom{n_1+1}{1} \binom{n_2+1}{1}$ disappears because it 
corresponds to the sequence $\bfm = (1,1)$ which does not have the value $0$
in the same position as the bit $2$ in $\bfe = (1,2)$, 
\begin{align*}
d_3^{(1,1,1)}(n_1,n_2,n_3) & = d_3(n_1,n_2,n_3) 
\\ 
& = \binom{n_1+1}{3} 
+ \binom{n_1+1}{2} \binom{n_2+1}{1} + \binom{n_1+1}{2} \binom{n_3+1}{1} 
\\ 
& \hspace{1cm}
+ \binom{n_1+1}{1} \binom{n_2+1}{2} 
+ \binom{n_1+1}{1} \binom{n_2+1}{1} \binom{n_3+1}{1} ,
\\ 
d_3^{(1,1,2)}(n_1,n_2,n_3) 
& = \binom{n_1+1}{3} 
+ \binom{n_1+1}{2} \binom{n_2+1}{1} 
+ \binom{n_1+1}{1} \binom{n_2+1}{2} , 
\\ 
d_3^{(1,2,1)}(n_1,n_2,n_3) 
& = \binom{n_1+1}{3} + \binom{n_1+1}{2} \binom{n_3+1}{1} ,
\\
d_3^{(1,2,2)}(n_1,n_2,n_3) 
& = \binom{n_1+1}{3} . 
\end{align*}
\end{definition}

\begin{definition}
\label{FaaDiBrunoColoopBialgebra}
We call {\bf Fa\`a di Bruno coloop bialgebra} the free unital associative 
algebra 
\begin{align*}
\HFdBex & =\F\langle x_n,\ n\geq 1\rangle, \qquad x_0=1 
\end{align*} 
of non-commutative polynomials in the graded variables $x_n$, 
with the following graded co-operations: 
\begin{itemize}
\item 
comultiplication $\DFdBex:\HFdBex\longrightarrow \HFdBex \scoprod \HFdBex$ 
given by 
\begin{align*}
\DFdBex(x_n) 
& = x_n + y_n + \sum_{\ell=1}^{n-1} \sum_{\bfn\in \calC_n^{\ell+1}} \binom{n_1+1}{\ell}\ 
x_{n_1} \ y_{n_2}\cdots y_{n_{\ell+1}} , 
\end{align*} 
where $\calC_n^{\ell+1}$ is the set of compositions of $n$ of length $\ell+1$, 
cf.~(\ref{sequence-C}); 

\item 
counit $\varepsilon:\HFdBex\longrightarrow \F$ given by 
$\varepsilon(x_n) = \delta_{n,0}$; 

\item
right codivision 
$\delta_r:\HFdBex\longrightarrow \HFdBex \scoprod \HFdBex$ given by
\begin{align*}
%\label{diff-right-codivision}
\delta_r(x_n) 
& = \sum_{\ell=0}^{n-1} (-1)^\ell \sum_{\bfn \in \calC_n^{\ell+1}} 
d_\ell(n_1,...,n_\ell)\ 
(x_{n_1}-y_{n_1})\, y_{n_2}\cdots y_{n_{\ell+1}},  
\end{align*}
where the Lagrange coefficients $d_\ell$ are given in Def.~\ref{definition-d};

\item 
left codivision 
$\delta_l:\HFdBex\longrightarrow \HFdBex \scoprod \HFdBex$ given by
\begin{align*}
%\label{diff-left-codivision}
\delta_l(x_n) 
& = % (y_n-x_n) - \sum_{\bfn \in \calC_n^2} d_2(n_1,n_2)\ x_{n_1} (y_{n_2}-x_{n_2})
%\\
% & \hspace{1cm}
\sum_{\ell=0}^{n-1} (-1)^\ell \!\sum_{\bfn\in \calC_{n}^{\ell+1}} 
\sum_{\bfe\in \calE_\ell} (-1)^{\bfe}\ d_\ell^{\bfe}(n_1,...,n_\ell)\ 
x^{(e_1)}_{n_1} x^{(e_2)}_{n_2}\cdots x^{(e_\ell)}_{n_\ell}\, (y_{n_{\ell+1}}-x_{n_{\ell+1}}) 
\end{align*}
where the set of sequences $\calE_\ell$ and the labeled Lagrange coefficients 
$d_\ell^{\bfe}$ are given in Def.~\ref{definition-de}, and where we set 
$$
(-1)^{\bfe} = (-1)^{e_1+\cdots +e_\ell-\ell}
$$
and, according to the previous convention, we set 
\begin{align}
\label{labeled-variables}
x^{(e_i)}_n & = \left\{ \begin{array}{ll} 
x_n & \mbox{if $e_i=1$} \\ 
y_n & \mbox{if $e_i=2$}
\end{array}\right. . 
\end{align}
In particular, since $d_\ell^{\bfe}=0$ if $e_1=2$, the first variable is always
$x^{(e_1)}_{n_1}=x_{n_1}$.
\end{itemize}
\end{definition} 

For instance, on the first five generators, the comultiplication is 
\begin{align*}
\DFdBex(x_1) &= x_1 + y_1 
\\ 
\DFdBex(x_2) &= x_2 + y_2 + 2x_1y_1 
\\ 
\DFdBex(x_3) &= x_3 + y_3 + \big( 2x_1y_2 + 3x_2y_1 \big) + x_1y_1^2
\\ 
\DFdBex(x_4) &= x_4 + y_4 + \big( 2x_1y_3 + 3x_2y_2 + 4x_3y_1\big) 
+ \big( x_1(y_1y_2+y_2y_1) + 3x_2y_1^2 \big) 
\\ 
\DFdBex(x_5) 
&= x_5 + y_5 + \big( 2x_1y_4 + 3x_2y_3 + 4x_3y_2 + 5x_4y_1 \big) 
\\ 
& \qquad\quad 
+ \big( x_1(y_1y_3+y_2^2+y_3y_1) + 3x_2(y_1y_2+y_2y_1) +6x_3y_1^2 \big) 
+ x_2y_1^3 
\end{align*}
the right codivision, with $u_n=x_n-y_n$, is 
\begin{align*}
\delta_r(x_1) &= u_1 
\\ 
\delta_r(x_2) &= u_2 - 2u_1y_1 
\\ 
\delta_r(x_3) &= u_3 - \big( 2u_1y_2 + 3u_2y_1 \big) + 5u_1y_1^2 
% \\ 
% &= \Big( x_3 - \big( 2x_1y_2 + 3x_2y_1 \big) + 5x_1y_1^2 \Big) 
% - \Big( y_3 - \big( 2y_1y_2 + 3y_2y_1 \big) + 5y_1^3 \Big) 
\\ 
\delta_r(x_4) &= u_4 - \big( 2u_1y_3 + 3u_2y_2 + 4u_3y_1 \big) 
+ \big( 5u_1y_1y_2 + 7u_1y_2y_1 + 9u_2y_1^2 \big) - 14u_1y_1^3 
% \\ 
% &= \Big( x_4 - \big( 2x_1y_3 + 3x_2y_2 + 4x_3y_1 \big) 
% + \big( 5x_1y_1y_2 + 7x_1y_2y_1 + 9x_2y_1^2 \big) - 14x_1y_1^3 \Big) 
% \\ 
% & \qquad
% - \Big( y_4 - \big( 2y_1y_3 + 3y_2^2 + 4y_3y_1 \big) 
% + \big( 5y_1^2y_2 + 7y_1y_2y_1 + 9y_2y_1^2 \big) - 14y_1^4 \Big) 
\\ 
\delta_r(x_5) &= u_5 - \big( 2u_1y_4 + 3u_2y_3 + 4u_3y_2 + 5u_4y_1 \big) 
\\ 
& \qquad\qquad 
+ \big( 5u_1y_1y_3 + 7u_1y_2^2 + 9u_1y_3y_1 + 9u_2y_1y_2 + 12u_2y_2y_1 
+ 14u_3y_1^2 \big) 
\\ 
& \qquad\qquad 
- \big( 14u_1y_1^2y_2 + 19u_1y_1y_2y_1 + 23u_1y_2y_1^2 + 28u_2y_1^3 \big)   
+ 42u_1y_1^4 
\\ 
% &= \Big( x_5 - \big( 2x_1y_4 + 3x_2y_3 + 4x_3y_2 + 5x_4y_1 \big) 
% \\ 
% & \qquad\qquad 
% + \big( 5x_1y_1y_3 + 7x_1y_2^2 + 9x_1y_3y_1 + 9x_2y_1y_2 + 12x_2y_2y_1 
% + 14x_3y_1^2 \big) 
% \\ 
% & \qquad\qquad 
% - \big( 14x_1y_1^2y_2 + 19x_1y_1y_2y_1 + 23x_1y_2y_1^2 + 28x_2y_1^3 \big)   
% + 42x_1y_1^4 \Big) 
% \\ 
% & \qquad 
% - \Big( y_5 - \big( 2y_1y_4 + 3y_2y_3 + 4y_3y_2 + 5y_4y_1 \big) 
% \\ 
% & \qquad\qquad\quad 
% + \big( 5y_1^2y_3 + 7y_1y_2^2 + 9y_1y_3y_1 + 9y_2y_1y_2 + 12y_2^2y_1 
% + 14y_3y_1^2 \big) 
% \\ 
% & \qquad\qquad\quad 
% - \big( 14y_1^3y_2 + 19y_1^2y_2y_1 + 23y_1y_2y_1^2 + 28y_2y_1^3 \big) 
% + 42y_1^5 \Big) 
\end{align*}
and the left codivision has additional terms which contain both 
variables $x$ and $y$ in alternative order beside the first position 
which is always $x$, and last position which is always $v_n=y_n-x_n$:
\begin{align*}
\delta_l(x_1) &= v_1 
\\ 
\delta_l(x_2) &= v_2 - 2x_1v_1  
\\ 
\delta_l(x_3) &= v_3 - \big( 2x_1v_2 + 3x_2v_1 \big) + 5x_1^2v_1 - x_1y_1v_1 
% \\ 
% &= \Big( y_3 - \big( 2x_1y_2 + 3x_2y_1 \big) + 5x_1^2y_1 \Big) 
% - \Big( x_3 - \big( 2x_1x_2 + 3x_2x_1 \big) + 5x_1^3 \Big) 
% - x_1y_1v_1 
\\ 
\delta_l(x_4) &= v_4 - \big( 2x_1v_3 + 3x_2v_2 + 4x_3v_1 \big) 
+ \big( 5x_1^2v_2 + 7x_1x_2v_1 + 9x_2x_1v_1 \big) - 14x_1^3v_1 
\\ 
& \qquad 
-\big( x_1y_1v_2 + x_1y_2v_1 + 3x_2y_1v_1 \big) 
+ \big( 4x_1^2y_1v_1 + 2x_1y_1x_1v_1\big) 
% \\ 
% &= \Big( y_4 - \big( 2x_1y_3 + 3x_2y_2 + 4x_3y_1 \big) 
% + \big( 5x_1^2y_2 + 7x_1x_2y_1 + 9x_2x_1y_1 \big) - 14x_1^3y_1 \Big) 
% \\ 
% & \qquad 
% - \Big( x_4 - \big( 2x_1x_3 + 3x_2^2 + 4x_3x_1 \big) 
% + \big( 5x_1^2x_2 + 7x_1x_2x_1 + 9x_2x_1^2 \big) - 14x_1^4 \Big) 
% \\ 
% & \qquad 
% + \Big( -\big( x_1y_1v_2 + x_1y_2v_1 + 3x_2y_1v_1 \big) 
% + \big( 4x_1^2y_1v_1 + 2x_1y_1x_1v_1\big) \Big)
\\ 
\delta_l(x_5) &= v_5 - \big( 2x_1v_4 + 3x_2v_3 + 4x_3v_2 + 5x_4v_1 \big) 
\\ 
& \qquad\qquad 
+ \big( 5x_1^2v_3 + 7x_1x_2v_2 + 9x_1x_3v_1 + 9x_2x_1v_2 + 12x_2^2v_1 
+ 14x_3x_1v_1 \big) 
\\ 
& \qquad\qquad 
- \big( 14x_1^3v_2 + 19x_1^2x_2v_1 + 23x_1x_2x_1v_1 + 28x_2x_1^2v_1 \big)   
+ 42x_1^4v_1 
\\ 
& \qquad 
-\big( x_1y_1v_3 + x_1y_2v_2 + x_1y_3v_1 + 3x_2y_1v_2 + 3x_2y_2v_1 
+ 6x_3y_1v_1 \big) 
\\ 
& \qquad\qquad\quad 
+ \big( 4x_1^2y_1v_2 + 4x_1^2y_2v_1 + 9x_1x_2y_1v_1 + 10x_2x_1y_1v_1 
\\ 
& \qquad\qquad\qquad\qquad  
+ 2x_1y_1x_1v_2 + 3x_1y_1x_2v_1 + 2x_1y_2x_1v_1 
+ 7x_2y_1x_1v_1 - x_2y_1^2v_1 \big) 
\\ 
& \qquad\qquad\quad 
- \big( 14x_1^3y_1v_1 + 9x_1^2y_1x_1v_1 + 5x_1y_1x_1^2v_1 
- x_1^2y_1^2v_1 - x_1y_1x_1y_1v_1 \big) . 
% \\
% &= \Big( y_5 - \big( 2x_1y_4 + 3x_2y_3 + 4x_3y_2 + 5x_4y_1 \big) 
% \\ 
% & \qquad\qquad 
% + \big( 5x_1^2y_3 + 7x_1x_2y_2 + 9x_1x_3y_1 + 9x_2x_1y_2 + 12x_2^2y_1 
% + 14x_3x_1y_1 \big) 
% \\ 
% & \qquad\qquad 
% - \big( 14x_1^3y_2 + 19x_1^2x_2y_1 + 23x_1x_2x_1y_1 + 28x_2x_1^2y_1 \big)   
% + 42x_1^4y_1 \Big)
% \\ 
% & \qquad 
% - \Big( x_5 - \big( 2x_1x_4 + 3x_2x_3 + 4x_3x_2 + 5x_4x_1 \big) 
% \\ 
% & \qquad\qquad\quad
% + \big( 5x_1^2x_3 + 7x_1x_2^2 + 9x_1x_3x_1 + 9x_2x_1x_2 + 12x_2^2x_1 
% + 14x_3x_1^2 \big) 
% \\ 
% & \qquad\qquad\quad 
% - \big( 14x_1^3x_2 + 19x_1^2x_2x_1 + 23x_1x_2x_1^2 + 28x_2x_1^3 \big) 
% + 42x_1^5 \Big) 
% \\ 
% & \qquad 
% + \Big( -\big( x_1y_1v_3 + x_1y_2v_2 + x_1y_3v_1 + 3x_2y_1v_2 + 3x_2y_2v_1 
% + 6x_3y_1v_1 \big) 
% \\ 
% & \qquad\qquad\quad 
% + \big( 4x_1^2y_1v_2 + 4x_1^2y_2v_1 + 9x_1x_2y_1v_1 + 10x_2x_1y_1v_1 
% \\ 
% & \qquad\qquad\qquad\qquad  
% + 2x_1y_1x_1v_2 + 3x_1y_1x_2v_1 + 2x_1y_2x_1v_1 
% + 7x_2y_1x_1v_1 - x_2y_1^2v_1 \big) 
% \\ 
% & \qquad\qquad\quad 
% - \big( 14x_1^3y_1v_1 + 9x_1^2y_1x_1v_1 + 5x_1y_1x_1^2v_1 
% - x_1^2y_1^2v_1 - x_1y_1x_1y_1v_1 \big) \Big) . 
\end{align*}
\bigskip 

We now want to prove that the algebra given above is indeed a coloop bialgebra. 
The only difficulty is to prove that the codivisions satisfy the 
cocancellation properties (\ref{left-cocancellation}) and 
(\ref{right-cocancellation}), which are equivalent to some recurrence 
relations on the Lagrange coefficients $d_\ell$ and $d_\ell^{\bfe}$. 

We prove in fact a stronger result, namely, that there exist some 
operators $R_\ell$ and $R_\ell^{\bfe}$ defined on the tensor space $T(A)$ 
over any positively graded algebra $A$, which produce the Lagrange coefficients 
and which satisfy the wished recurrence relations. 
These operators provide an alternative definition of the Fa\`a di Bruno 
codivisions when applied to the non-unital associative coproduct 
algebra $A=\overline{\HFdBex}\scoprod \overline{\HFdBex} 
= \overline{T}(X^{(1)}\oplus X^{(2)})$. 

%%%%%%%%%%%%%%%%%%%%%%%%%%%%%%%%%%%%%%%%%

\subsection{Fa\`a di Bruno co-operations in terms of recursive operators} 

Let $A=\oplus_{n\geq 1} A_n$ be a positively graded associative algebra over 
a field $\F$, and let us denote by $|a|$ the degree of an element $a\in A$, 
that is, the integer $n$ such that $a\in A_n$. 
The tensor algebra $T(A) = \bigoplus_{\ell\geq 0} A^{\sotimes \ell}$ is then bigraded, 
on one side by the tensor power $\ell$, that we call {\bf length}, 
and on the other side by the grading induced by that of $A$, that we call 
{\bf degree}, 
$$
|a_1\sotimes a_2\sotimes \cdots \sotimes a_\ell| = \sum_{i=1}^{\ell} |a_i|. 
$$
A multi-monomial is a homogeneous element of $T(A)$ with respect to the length,
that is, an element of the form $a_1\sotimes a_2\sotimes \cdots \sotimes a_\ell$
for some $\ell\geq 1$. 
Then $T(A)$ can be decomposed into the following direct sum with respect to 
the degree\footnote{
Note that if $A$ had a null degree component $A_0$, then $T(A)$ would contain 
an infinite sum of terms in each degree, namely
$T(A)_0 = \bigoplus_{p\geq 0} A_0^{\otimes p}$ and 
$T(A)_n = \bigoplus_{\ell=1}^n\ \bigoplus_{\bfn\in\calC_n^\ell}\ 
\bigoplus_{p\geq 0} \binom{\ell+p}{\ell}\ A_0^{\otimes p} \sotimes 
A_{n_1}\sotimes\cdots\sotimes A_{n_\ell}$
for $n\geq 1$. 
}: 
\begin{align*}
T(A) & = \F \oplus 
\bigoplus_{n\geq 1}\ \left( \bigoplus_{\ell=1}^n\ \bigoplus_{\bfn\in\calC_n^\ell}\ 
A_{n_1}\sotimes\cdots\sotimes A_{n_\ell} \right) , 
\end{align*}
where the compositions $\bfn\in \calC_n^\ell$ are defined by 
eq.~(\ref{sequence-C}). 

\begin{definition}
\label{definition-rhd}
Let us define a graded linear operation 
$$
\rhd : T(A) \sotimes T(A) \longrightarrow \F\oplus A
$$
by setting 
\begin{align*} 
a \rhd (b_1\sotimes \cdots \sotimes b_\ell)
& = \binom{|a|+1}{\ell}\, a\, b_1\cdots b_\ell
\\
(a_1\sotimes\cdots\sotimes a_{\ell'}) \rhd (b_1\sotimes\cdots\sotimes b_{\ell''}) 
& = a_1 \rhd (a_2\sotimes\cdots\sotimes a_{\ell'} \sotimes
b_1\sotimes\cdots\sotimes b_{\ell''}) 
\\ 
& = \binom{|a_1|+1}{\ell'\!+\!\ell''\!-\!1}\, 
a_1\cdots a_{\ell'}\, b_1\cdots b_{\ell''}
\end{align*}
where the expressions on the right-hand side mean the product in the algebra 
$A$. 

In particular, if we apply these rules to $1\in \F = A^{\sotimes 0}$, we have 
\begin{align*} 
1 \rhd 1 & = 1 
\\ 
1 \rhd b & = b 
\\
1 \rhd (b_1\sotimes \cdots \sotimes b_\ell)
& = 0 \qquad\mbox{if $\ell >1$} 
\\
a \rhd 1 & = a 
\\
(a_1\sotimes\cdots\sotimes a_\ell) \rhd 1
& = \binom{|a_1|+1}{\ell\!-\!1}\, a_1\cdots a_\ell . 
\end{align*}
\end{definition}

\begin{remark}
The restriction $\rhd:A\otimes T(A) \longrightarrow A$ is a brace product 
on $A$ which is symmetric if $A$ is commutative and generalises the natural 
pre-Lie product of the Lie subalgebra of strictly positive generators in 
the Witt algebra (cf.~\cite{BFM,FrabettiManchon}). 
Note however that $\rhd$ on $T(A) \sotimes T(A)$ is not a multibrace product 
(cf.~\cite{LodayRonco}), even excluding the scalar component, because the first 
non-trivial multibrace identity 
$M_{21}(a\sotimes b+b\sotimes a;c)+M_{11}(M_{11}(a;b);c) = 
M_{11}(a;M_{11}(b;c))+M_{12}(a;b\sotimes c+c\sotimes b)$ is not satisfied. 
Moreover, a unit for $\rhd$ can not exist, because of length arguments, 
and $\rhd$ is not associative, since for any $a,b,c\in A$ we have 
$$
(a\rhd b) \rhd c - a\rhd (b\rhd c) = (|a|\!+\!1) |a|\, abc \neq 0. 
$$
The algebraic structure described by the operator $\rhd$ in terms of generators 
and relations is an open question. 
\end{remark}

\begin{definition}
\label{definition-L}
We call {\bf left recursive operator} $L:T(A)\longrightarrow T(A)$ 
the collection $L=\{ L_\ell,\ \ell\geq 0 \}$ of (non homogeneous) linear
operators $L_0=\Id :\F\longrightarrow \F$ and 
\begin{align*}
& L_\ell: 
A^{\sotimes \ell} \longrightarrow \bigoplus_{\lambda=1}^{\ell} A^{\sotimes \lambda}, 
\qquad \ell \geq 1
\end{align*}
defined recursively by 
\begin{align*}
%\label{definition-L-equation}
L_\ell(a_1,...,a_\ell) &= \sum_{i=0}^{\ell-1} (-1)^{\ell-1-i}
\Big( L_i(a_1,...,a_i)\rhd a_{i+1} \Big) \sotimes 
a_{i+2} \sotimes \cdots \sotimes a_\ell ,  
\end{align*}
where we denote 
$L_\ell(a_1,...,a_\ell):= L_\ell(a_1\sotimes\cdots \sotimes a_\ell)$
and $L_0$ is understood as acting on $1$. 
\end{definition}

The first left operators give
\begin{align*}
L_1(a) & = L_0(1)\rhd a =1 \rhd a = a, 
\\
L_2(a,b) & = -( L_0(1)\rhd a) \sotimes b + L_1(a)\rhd b = 
- a\sotimes b + a\rhd b 
\\ 
& = - a \sotimes b + \binom{|a|\!+\!1}{1}\, ab , 
\\
L_3(a,b,c) & = (L_0(1)\rhd a)\sotimes b\sotimes c - (L_1(a)\rhd b) \sotimes c
+ L_2(a,b) \rhd c 
\\ 
& = a \sotimes b \sotimes c - (a\rhd b) \sotimes c 
- (a\sotimes b) \rhd c + (a\rhd b) \rhd c 
\\
& = 
a\sotimes b\sotimes c 
- \binom{|a|\!+\!1}{1}\, ab\sotimes c 
+ \Big(\binom{|a|\!+\!1}{1}\binom{|a|\!+\!|b|\!+\!1}{1}
- \binom{|a|\!+\!1}{2}\Big)\, abc 
. 
\end{align*}
\bigskip 

The left operators $L_\ell$ can be easily described in a closed way. 

\begin{lemma}
\label{lemma-L1}
For any $\ell \geq 2$ and any $a_1,...,a_\ell\in A$ we have
\begin{align*}
L_\ell(a_1,...,a_\ell) 
&= L_{\ell-1}(a_1,...,a_{\ell-1}) \rhd a_\ell 
- L_{\ell-1}(a_1,...,a_{\ell-1}) \sotimes a_\ell. 
\end{align*}

As a consequence, $L_\ell(a_1,...,a_\ell)$ is the sum of the $2^{\ell-1}$ possible 
multi-monomials obtained by combining the operations $\rhd$ and $\sotimes$ 
with fixed parenthesizing on the left, namely
\begin{align*}
L_\ell(a_1,...,a_\ell) 
&= \sum_{\sigma_1,...,\sigma_{\ell-1} \in \{0,1\}} (-1)^{\sigma_1+\cdots +\sigma_{\ell-1}} 
\big(\cdots \big( (a_1 \ast_{\sigma_1} a_2) \ast_{\sigma_2} a_3 \big) 
\ast_{\sigma_3} \cdots \ast_{\sigma_{\ell-2}} a_{\ell-1} \big) \ast_{\sigma_{\ell-1}} a_l
\end{align*}
where we set 
$$
\ast_{\sigma} = \left\{ \begin{array}{ll} 
\rhd & \quad\mbox{if $\sigma=0$} \\ 
\sotimes & \quad\mbox{if $\sigma=1$}
\end{array} \right. . 
$$
\end{lemma}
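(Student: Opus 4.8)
The plan is to prove the displayed recursion first, and then extract the closed form by a short induction on $\ell$.

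For the recursion, I start from the defining formula of Definition~\ref{definition-L},
$$
L_\ell(a_1,\dots,a_\ell) = \sum_{i=0}^{\ell-1} (-1)^{\ell-1-i}\bigl(L_i(a_1,\dots,a_i)\rhd a_{i+1}\bigr)\sotimes a_{i+2}\sotimes\cdots\sotimes a_\ell ,
$$
and isolate the top index $i=\ell-1$, for which the tensor tail $a_{i+2}\sotimes\cdots\sotimes a_\ell$ is empty; this term contributes exactly $L_{\ell-1}(a_1,\dots,a_{\ell-1})\rhd a_\ell$. In the remaining terms $i=0,\dots,\ell-2$ the tensor tail always ends in $a_\ell$, so I factor $\sotimes\, a_\ell$ out on the right, leaving inside
$$
\sum_{i=0}^{\ell-2}(-1)^{\ell-1-i}\bigl(L_i(a_1,\dots,a_i)\rhd a_{i+1}\bigr)\sotimes a_{i+2}\sotimes\cdots\sotimes a_{\ell-1} .
$$
Writing $(-1)^{\ell-1-i}=-(-1)^{(\ell-1)-1-i}$ and comparing this sum term by term with the defining formula for $L_{\ell-1}(a_1,\dots,a_{\ell-1})$, I see it equals $-L_{\ell-1}(a_1,\dots,a_{\ell-1})$. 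Combining the two pieces yields $L_\ell(a_1,\dots,a_\ell)=L_{\ell-1}(a_1,\dots,a_{\ell-1})\rhd a_\ell - L_{\ell-1}(a_1,\dots,a_{\ell-1})\sotimes a_\ell$.

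For the closed form I argue by induction on $\ell$, the case $\ell=1$ being $L_1(a_1)=1\rhd a_1=a_1$, the unique ($2^{0}=1$) ``word'' carrying no operation and sign $+1$. Assuming $L_{\ell-1}(a_1,\dots,a_{\ell-1})=\sum_{\sigma\in\{0,1\}^{\ell-2}}(-1)^{\sigma_1+\cdots+\sigma_{\ell-2}}\bigl(\cdots(a_1\ast_{\sigma_1}a_2)\cdots\bigr)\ast_{\sigma_{\ell-2}}a_{\ell-1}$, I apply the recursion just proved together with the bilinearity of $\rhd$ and $\sotimes$ in their left argument: $L_{\ell-1}\rhd a_\ell$ reproduces every such word with an extra $\ast_0=\rhd$ applied to $a_\ell$ at the far right (so $\sigma_{\ell-1}=0$), with the same sign, while $-L_{\ell-1}\sotimes a_\ell$ reproduces every word with an extra $\ast_1=\sotimes$ applied to $a_\ell$ at the far right (so $\sigma_{\ell-1}=1$), and the overall minus sign is precisely $(-1)^{\sigma_{\ell-1}}$ in that case. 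Here I use that for a multi-monomial $w=b_1\sotimes\cdots\sotimes b_\lambda$ the symbol $w\rhd a_\ell$ denotes the genuine binary operation of Definition~\ref{definition-rhd} (namely $b_1\rhd(b_2\sotimes\cdots\sotimes b_\lambda\sotimes a_\ell)$), so the left-parenthesized reading of $\ast_0$ is consistent with the recursion. Adding the two contributions gives the asserted sum over all $\sigma\in\{0,1\}^{\ell-1}$ with sign $(-1)^{\sigma_1+\cdots+\sigma_{\ell-1}}$, which has $2^{\ell-1}$ terms.

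There is no genuine obstacle here; the only points that need care are the handling of the boundary index $i=\ell-1$ (empty tensor tail), the sign shift $(-1)^{\ell-1-i}=-(-1)^{(\ell-1)-1-i}$ used to recognise $-L_{\ell-1}$ inside the truncated sum, and the verification that the left-parenthesized reading of the mixed $\ast_\sigma$-words matches the $\rhd$ and $\sotimes$ operations appearing in the recursion.
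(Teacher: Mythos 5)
Your proof is correct, and for the recursion it takes a genuinely more direct route than the paper's. The paper argues by induction on $\ell$: it repeatedly separates the first two terms of the defining sum, collapses them into $(-1)^{\ell-2}L_2(a_1,a_2)\sotimes a_3\sotimes\cdots\sotimes a_\ell$, absorbs the $i=2$ term to form $L_3$, and sweeps left to right until only $L_{\ell-1}\rhd a_\ell - L_{\ell-1}\sotimes a_\ell$ remains, invoking the recursion at lower orders along the way. You instead peel off the single boundary term $i=\ell-1$ (which is exactly $L_{\ell-1}(a_1,\ldots,a_{\ell-1})\rhd a_\ell$, with empty tensor tail) and observe that the remaining sum over $i\le \ell-2$, after factoring the trailing $\sotimes\, a_\ell$ and using $(-1)^{\ell-1-i}=-(-1)^{(\ell-1)-1-i}$, is verbatim the defining sum for $-L_{\ell-1}(a_1,\ldots,a_{\ell-1})$. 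This is an induction-free, one-step computation and is arguably the more transparent argument; the paper's iterated collapse has the side benefit of displaying the intermediate partial sums $L_2,L_3,\ldots$ explicitly. For the closed form the paper merely states it as a consequence, whereas you supply the routine induction, and you correctly flag the one point where care is needed: for a multi-monomial $w=b_1\sotimes\cdots\sotimes b_\lambda$ the expression $w\rhd a_\ell$ means $b_1\rhd(b_2\sotimes\cdots\sotimes b_\lambda\sotimes a_\ell)$ by Definition~\ref{definition-rhd}, which is what makes the left-parenthesized $\ast_\sigma$-words consistent with the recursion.
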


\begin{proof}
By induction on $\ell$. For $\ell=2$, we have
\begin{align*}
L_1(a_1) \rhd a_2 - L_1(a_1) \sotimes a_2 
& = a_1\rhd a_2 - a_1 \sotimes a_2 = L_2(a_1,a_2). 
\end{align*}
Now suppose that for any $i=2,...,\ell-1$ we have 
$$
L_i(a_1,...,a_i) 
= L_{i-1}(a_1,...,a_{i-1}) \rhd a_i - L_{i-1}(a_1,...,a_{i-1}) \sotimes a_i.
$$
Let us expand the sum defining $L_\ell(a_1,...,a_\ell)$. 
At each step, we separate the first two terms of the sum over $i=0,...,\ell-1$:
\begin{align*}
L_\ell(a_1,...,a_\ell) 
&= \sum_{i=0}^{\ell-1} (-1)^{\ell-1-i}
\Big( L_i(a_1,...,a_i)\rhd a_{i+1} \Big) \sotimes 
a_{i+2} \sotimes \cdots \sotimes a_\ell 
\\ 
&= (-1)^{\ell-1} a_1\sotimes a_2\sotimes \cdots \sotimes a_\ell 
+ (-1)^{\ell-2} (a_1\rhd a_2) \sotimes \cdots \sotimes a_\ell 
\\ 
& \hspace{2cm}
+ \sum_{i=2}^{\ell-1} (-1)^{\ell-1-i}
\Big( L_i(a_1,...,a_i)\rhd a_{i+1} \Big) \sotimes 
a_{i+2} \sotimes \cdots \sotimes a_\ell 
\\ 
&= (-1)^{\ell-2} L_2(a_1,a_2)\sotimes a_3\sotimes \cdots \sotimes a_\ell 
+ (-1)^{\ell-3} \Big( L_2(a_1,a_2)\rhd a_3\Big)\sotimes \cdots \sotimes a_\ell
\\ 
& \hspace{2cm}
+ \sum_{i=3}^{\ell-1} (-1)^{\ell-1-i}
\Big( L_i(a_1,...,a_i)\rhd a_{i+1} \Big) \sotimes 
a_{i+2} \sotimes \cdots \sotimes a_\ell = \ldots
% \\ 
% &= (-1)^{\ell-3} L_3(a_1,a_2,a_3)\sotimes a_4\sotimes \cdots \sotimes a_\ell 
% + (-1)^{\ell-4} \Big( L_3(a_1,a_2,a_3)\rhd a_4\Big)\sotimes \cdots \sotimes a_\ell
%\\ 
% & \hspace{2cm}
% + \sum_{i=4}^{\ell-1} (-1)^{\ell-1-i}
% \Big( L_i(a_1,...,a_i)\rhd a_{i+1} \Big) \sotimes 
% a_{i+2} \sotimes \cdots \sotimes a_\ell 
% \\ 
% &= \ldots
\end{align*}
Iterating this expansion we obtain 
\begin{align*}
L_\ell(a_1,...,a_\ell) 
&= (-1)^{\ell-(\ell-1)} L_{\ell-1}(a_1,...,a_{\ell-1}) \sotimes a_\ell 
+ (-1)^{\ell-\ell} L_{\ell-1}(a_1,...,a_{\ell-1}) \rhd a_\ell 
\\ 
&= L_{\ell-1}(a_1,...,a_{\ell-1}) \rhd a_\ell 
- L_{\ell-1}(a_1,...,a_{\ell-1}) \sotimes a_\ell . 
\end{align*}
\end{proof}

\begin{definition}
\label{definition-R}
We call {\bf right recursive operator} $R:T(A)\longrightarrow T(A)$ 
the collection $R=\{ R_\ell,\ \ell\geq 0 \}$ of (non homogeneous) linear
operators $R_0=\Id :\F\longrightarrow \F$ and 
\begin{align*}
& R_\ell: 
A^{\sotimes \ell} \longrightarrow \bigoplus_{\lambda=1}^{\ell} A^{\sotimes \lambda}, 
\qquad \ell \geq 1
\end{align*}
defined recursively by 
\begin{align}
\label{definition-R-equation}
R_\ell(a_1,...,a_\ell) &= \sum_{j=1}^\ell \sum_{\bfp\in \calC_\ell^j} 
\big( a_1 \rhd R_{p_1-1}(a_2,...,a_{p_1}) \big) 
\\ \nonumber
& \hspace{2cm} 
\sotimes \big( a_{p_1+1} \rhd R_{p_2-1}(a_{p_1+2},...,a_{p_1+p_2}) \big) 
\sotimes \cdots 
\\ \nonumber
& \hspace{3cm} 
\sotimes \big( a_{p_1+\cdots +p_{j-1}+1} \rhd 
R_{p_2-1}(a_{p_1+\cdots +p_{j-1}+2},...,a_{p_1+\cdots +p_j}) \big) ,  
\end{align}
where we denote 
$R_\ell(a_1,...,a_\ell):= R_\ell(a_1\sotimes\cdots \sotimes a_\ell)$
and where $R_0$ is understood as acting on $1$. 
\end{definition}

For instance, the first right operators are
\begin{align*}
R_1(a) & = a \rhd R_0(1) = a , 
\\
R_2(a,b) & = a\rhd R_1(b) + (a\rhd R_0(1)) \sotimes (b\rhd R_0(1)) \\ 
& = a\rhd b + a \sotimes b  
= \binom{|a|\!+\!1}{1}\, ab + a \sotimes b , 
\\
R_3(a,b,c) & = a\rhd R_2(b,c) + (a\rhd R_1(b)) \sotimes c
+ a \sotimes (b \rhd R_1(c)) + a \sotimes b\sotimes c \\ 
& = a\rhd (b\rhd c) 
+ a\rhd (b\sotimes c) 
+ a \sotimes (b\rhd c)
+ (a\rhd b)\sotimes c 
+ a\sotimes b\sotimes c 
\\
& = 
\left(\binom{|a|\!+\!1}{1}\binom{|b|\!+\!1}{1} + \binom{|a|\!+\!1}{2}\right)\, abc
+ \binom{|b|\!+\!1}{1}\, a \sotimes bc \\
& \hspace{2cm}
+ \binom{|a|\!+\!1}{1}\, ab\sotimes c 
+ a\sotimes b\sotimes c . 
\end{align*}

Note that the right recursive operator is not just a flip of the left recursive
one, basically because the recursion defining the two operators takes place
on the left and on the right-hand side of $\rhd$, which is not a symmetric
operation. The precise relationship between $R_\ell$ and $L_\ell$ is given in
Cor.~\ref{corollary-LR}, after some preliminary results. 

The right operators $R_\ell$ can also be described in a closed way. 

\begin{definition}
\label{definition-R_m}
Let $\calM_\ell$ be the set of sequences satisfying (\ref{sequence-M}). 
For any $\bfm\in \calM_\ell$, we define a length-homogeneous linear operator 
$R^\ell_{\bfm}:A^{\sotimes \ell} \longrightarrow T(A)$ which nests the operation
$\rhd$ in a multi-monomial $a_1\sotimes \cdots \sotimes a_\ell$ according to
the sequence $\bfm=(m_1,...,m_\ell)\in \calM_\ell$. The idea is the following: 
\begin{itemize}
\item
  The multi-monomial $R^\ell_{\bfm}(a_1,...,a_\ell)$ is constructed by nesting
  tensor monomials of the form $a_i\rhd Q_{i+1}(a_{i+1},a_{i+2},...)$
  one into the other one, where $Q_{i+1}(a_{i+1},a_{i+2},...)$ is a multi-monomial
  whose tensor factors can be single variables or monomials of the same form
  $a_j\rhd Q'_{j+1}(a_{j+1},a_{j+2},...)$. 
\item
  Every tensor monomial $a_i\rhd Q_{i+1}(a_{i+1},a_{i+2},...)$ is   determined
  by the length of the multi-monomial $Q_{i+1}(a_{i+1},a_{i+2},...)$ and that
  of the nested monomials of the same form. 
  The sequence $\bfm$ fixes the lengths of all the nested multi-monomials:
  \begin{itemize}
  \item
    The coefficient $m_1$ is the overall length of the multi-monomial
    $R^\ell_{\bfm}(a_1,...,a_\ell)$ in the tensor algebra $T(A)$, that is,
    we have $R^\ell_{\bfm}(a_1,...,a_\ell)\in A^{\sotimes m_1}$.
  \item
    For $i=2,...,\ell-1$, the coefficient $m_i$ is the length of the
    multi-monomial $Q_i(a_{i},a_{i+1},...)$ on which $a_{i-1}$ acts by $\rhd$:
    if $m_i=0$, then $a_{i-1}$ appears as an insolated tensor factor,
    if $m_i\neq 0$, then $a_{i-1}$ acts by $\rhd$ on a multimonomial of
    length $m_i$, which is determined by the values $m_j$ for $j>i$. 
  \end{itemize}
\end{itemize}
And now we give the algorithm to construct $R^\ell_{\bfm}(a_1,...,a_\ell)$: 
\begin{enumerate}
\item[(1)]
The coefficient $m_1$ tells us how many tensor factors we have to construct. 
\item[(2)] 
  Start with $a_1$ and read the coefficient $m_2$: if $m_2=0$ write
  $a_1 \otimes a_2 \cdots$, if $m_2\neq 0$ write $a_1\rhd (a_2\cdots)$
  and expect to close the parenthesis after a multi-monomial of length $m_2$. 
\item[(3)]
  Then read the next coefficient of $\bfm$ and repeat the procedure of (2).
  For any $i=2,...,\ell$, if $m_i=0$ write $a_{i-1} \otimes a_i \cdots$,
  if $m_i\neq 0$ write $a_{i-1}\rhd (a_i \cdots)$ and expect to close this
  parenthesis after a multi-monomial of length $m_i$. 
\item[(4)]
  The procedure stops with the coefficient $m_\ell$ which, by definition of
  $\bfm$, can be only $0$ or $1$, and tells if the last pattern is
  $a_{\ell-1}\otimes a_\ell$ or $a_{\ell-1}\rhd a_\ell$. 
\end{enumerate}
\end{definition}

\begin{example}
Let us give some examples of this algorithm, for $\ell=5$. 
Fix $a_1,...,a_5\in A$ and set $n_i=|a_i|$ for $i=1,...,5$. 
For $\bfm = (2,1,0,2,0)$, the multi-monomial $R^5_{(2,1,0,2,0)}(a_1,...,a_5)$ 
is composed of two tensor factors (because $m_1=2$). 
The variable $a_1$ acts by $\rhd$ on a multi-monomial of length $1$ 
(because $m_2=1$) which starts necessarily by $a_2$, 
and since $a_2$ does not act by $\rhd$ (because $m_3=0$), 
the first tensor factor is necessarily of the form $a_1\rhd a_2$. 
Then the second tensor factor starts with $a_3$ acting by $\rhd$ on a 
multi-monomial of length $2$ (because $m_4=2$), which starts 
necessarily by $a_4$. Since $a_4$ does not act by $\rhd$ (because $m_5=0$), 
the second tensor factor is necessarily of the form $a_3\rhd (a_4\sotimes a_5)$.
Therefore we finally have
\begin{align*}
R^5_{(2,1,0,2,0)}(a_1,...,a_5) 
& = (a_1\rhd a_2)\sotimes \big(a_3\rhd (a_4\sotimes a_5)\big) 
\\
& = \binom{n_1+1}{1} \binom{n_3+1}{2}\, a_1 a_2 \sotimes a_3 a_4 a_5.
\end{align*}

For $\bfm=(2,1,2,0,0)$, the variable $a_1$ still acts by $\rhd$ 
on a multi-monomial of length $1$ which starts necessarily by $a_2$, 
but this time $a_2$ itself acts by $\rhd$ on a multi-monomial of length $2$, 
and this exhausts the possible $\rhd$ operations. Finally, this time we have
\begin{align*}
R^5_{(2,1,2,0,0)}(a_1,...,a_5) 
& = [a_1\rhd (a_2\rhd (a_3\sotimes a_4)] \sotimes a_5 
\\
& = \binom{n_1+1}{1}\binom{n_2+1}{2}\, a_1 a_2 a_3 a_4 \sotimes a_5.
\end{align*}

Note that the binomial coefficients given by a sequence $\bfm\in \calM_\ell$
can be determined directly from the last $\ell-1$ digits, plus an 
extra null value. For $(2,1,0,2,0)$ we have exactly 
$$
\binom{n_1+1}{1} \binom{n_2+1}{0} \binom{n_3+1}{2} 
\binom{n_4+1}{0} \binom{n_5+1}{0} . 
$$
and for $(2,1,2,0,0)$ we have
$$
\binom{n_1+1}{1} \binom{n_2+1}{2} \binom{n_3+1}{0} 
\binom{n_4+1}{0} \binom{n_5+1}{0} . 
$$

Two more examples of the algorithm: for $\bfm=(3,0,2,0,0)$, 
\begin{align*}
R^5_{(3,0,2,0,0)}(a_1,...,a_5) 
& = a_1 \sotimes \big(a_2\rhd (a_3\sotimes a_4)\big) \sotimes a_5
\\
& = \binom{n_2+1}{2}\, a_1 \sotimes a_2 a_3a_4 \sotimes a_5 , 
\end{align*}
and for $\bfm=(4,0,1,0,0)$, 
\begin{align*}
R^5_{(4,0,1,0,0)}(a_1,...,a_5) 
& = a_1 \sotimes (a_2\rhd a_3) \sotimes a_4\sotimes a_5 
\\
& = \binom{n_2+1}{1}\, a_1 \sotimes a_2 a_3\sotimes a_4\sotimes a_5 .  
\end{align*}
\end{example}

\begin{lemma}
\label{lemma-R1}
For any $\ell\geq 1$ and any $(a_1,...,a_\ell)$, we have 
\begin{align*}
R_\ell(a_1,...,a_\ell) 
& = \sum_{\bfm\in\calM_\ell} R^\ell_{\bfm}(a_1,...,a_\ell). 
\end{align*}
\end{lemma}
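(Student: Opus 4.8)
The proof is by induction on $\ell$. For $\ell=1$ the recursion of Definition~\ref{definition-R} gives $R_1(a_1)=a_1\rhd R_0(1)=a_1$, while $\calM_1=\{(1)\}$ and $R^1_{(1)}(a_1)=a_1$, so both sides agree. For the inductive step I would first unfold the defining recursion \eqref{definition-R-equation}: writing $c_i:=p_1+\cdots+p_i$ and $c_0:=0$, it reads
$$R_\ell(a_1,\ldots,a_\ell)=\sum_{j=1}^\ell\ \sum_{\bfp\in\calC_\ell^j}\ \bigotimes_{i=1}^j\Big(a_{c_{i-1}+1}\rhd R_{p_i-1}(a_{c_{i-1}+2},\ldots,a_{c_{i-1}+p_i})\Big).$$
Since every $p_i-1<\ell$, the inductive hypothesis applies to each factor; using it (with the conventions $\calM_0=\{\emptyset\}$, $R^0_\emptyset=\Id_\F$, consistent with $d_0=1$, so that a block of size $1$ contributes the isolated factor $a_{c_{i-1}+1}\rhd R_0(1)=a_{c_{i-1}+1}$) and expanding the tensor products of sums, $R_\ell(a_1,\ldots,a_\ell)$ is rewritten as a sum indexed by triples $(j,\bfp,(\bfm^{(i)})_{i=1}^j)$ with $\bfp\in\calC_\ell^j$ and $\bfm^{(i)}\in\calM_{p_i-1}$, the summand attached to such a triple being $\bigotimes_{i=1}^j\big(a_{c_{i-1}+1}\rhd R^{p_i-1}_{\bfm^{(i)}}(a_{c_{i-1}+2},\ldots,a_{c_{i-1}+p_i})\big)$.

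The core of the argument is then a bijection between this index set of triples and $\calM_\ell$ under which each summand above becomes the corresponding $R^\ell_{\bfm}(a_1,\ldots,a_\ell)$. Concretely, the bijection sends $(j,\bfp,(\bfm^{(i)}))$ to the concatenation
$$\bfm=\big(j\big)\cdot\big(\bfm^{(1)}\cdot(0)\big)\cdot\big(\bfm^{(2)}\cdot(0)\big)\cdots\big(\bfm^{(j-1)}\cdot(0)\big)\cdot\bfm^{(j)},$$
so that $m_1=j$ records the number of top-level tensor factors, the segment $\bfm^{(i)}$ records how the $i$-th tensor factor $a_{c_{i-1}+1}\rhd R^{p_i-1}_{\bfm^{(i)}}(\ldots)$ is internally nested, and each interpolated $0$ marks the final leaf of a tensor factor that is not the last one. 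One checks readily that $\bfm\in\calM_\ell$: it has $\ell$ entries, these sum to $j+\sum_{i}(p_i-1)=\ell$, and the partial-sum inequalities of \eqref{sequence-M} are inherited from those of the $\bfm^{(i)}\in\calM_{p_i-1}$ together with $j\geq1$. Conversely, every $\bfm\in\calM_\ell$ decomposes uniquely in this way, the block sizes $p_i$ being recovered by peeling segments from the left (the first-return decomposition of such ballot-type sequences) exactly as the algorithm of Definition~\ref{definition-R_m} groups the variables $a_1,\ldots,a_\ell$ into the successive tensor factors of $R^\ell_{\bfm}$; comparing the two constructions factor by factor — and using that $a\rhd(b_1\sotimes\cdots\sotimes b_k)=\binom{|a|+1}{k}\,ab_1\cdots b_k$, so that applying $\rhd$ collapses a length-$k$ multi-monomial to a single tensor factor carrying the binomial weight dictated by the corresponding entry of $\bfm$ — shows that the summand attached to $(j,\bfp,(\bfm^{(i)}))$ equals $R^\ell_{\bfm}(a_1,\ldots,a_\ell)$. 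Summing over all triples therefore yields $\sum_{\bfm\in\calM_\ell}R^\ell_{\bfm}(a_1,\ldots,a_\ell)$, completing the induction.

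The only genuinely delicate point is the combinatorial bijection: one must verify that the concatenation map is well defined into $\calM_\ell$ and surjective with the stated inverse, i.e. that $\calM_\ell$ carries precisely the recursive ``first-block'' decomposition mirrored by the recursion for $R_\ell$ — a Łukasiewicz/ballot-type structure. Once this is settled, matching the monomials is a routine unwinding of Definition~\ref{definition-R_m}. An equivalent and perhaps tidier packaging is to set $\widetilde R_\ell:=\sum_{\bfm\in\calM_\ell}R^\ell_{\bfm}$ and prove directly, via the same bijection, that the family $\{\widetilde R_\ell\}$ satisfies the recursion \eqref{definition-R-equation} with $\widetilde R_0=\Id$, whence $\widetilde R_\ell=R_\ell$ by induction.
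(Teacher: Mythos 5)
Your proof is correct and follows essentially the same route as the paper's: both set $\widetilde R_\ell:=\sum_{\bfm\in\calM_\ell}R^\ell_{\bfm}$, unfold the recursion (\ref{definition-R-equation}), apply the inductive hypothesis to each block, and reindex the resulting triples $(j,\bfp,(\bfm^{(i)}))$ by the concatenated sequence $(j,\bfm^{(1)},0,\ldots,\bfm^{(j)})$. The one point of divergence is the verification that this concatenation map is a bijection onto $\calM_\ell$: the paper proves it by transporting everything through the bijection $\Phi:\calM_\ell\to PBT_{\ell+1}$ with planar binary trees and a grafting decomposition, whereas you invoke the first-return (ballot/Łukasiewicz) decomposition of $\calM_\ell$ directly --- which is a legitimate and arguably more economical argument, but it is precisely the step you leave at the level of ``one checks readily,'' so it would need to be written out (existence and uniqueness of the segment boundaries) to make the proof complete.
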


For instance, for $\ell=1,2,3$ we have
\begin{align*}
R^1_{(1)}(a) & = a, 
\\
R^2_{(1,1)}(a,b) & = a\rhd b 
= \binom{n_1+1}{1}\, a b
\\
R^2_{(2,0)}(a,b) & = a\sotimes b
\\
R^3_{(1,1,1)}(a,b,c) & = a\rhd (b\rhd c)  
= \binom{n_1+1}{1}\binom{n_2+1}{1}\, a b c .
\\ 
R^3_{(1,2,0)}(a,b,c) & = a\rhd (b\sotimes c) 
= \binom{n_1+1}{2}\, a b c 
\\
R^3_{(2,0,1)}(a,b,c) & = a \sotimes (b\rhd c) 
= \binom{n_2+1}{1}\, a\sotimes b c 
\\ 
R^3_{(2,1,0)}(a,b,c) & = (a\rhd b)\sotimes c 
= \binom{n_1+1}{1}\, a b\sotimes c 
\\ 
R^3_{(3,0,0)}(a,b,c) & = a\sotimes b\sotimes c 
\end{align*}
Comparing with the value of $R_1$, $R_2$ and $R_3$ given above, the assertion 
is easily verified. 
\bigskip 

\begin{proof}
Let us call $\tilde{R}_\ell(a_1,...,a_\ell)$ the sum over 
$\bfm\in \calM_\ell$ of Lemma \ref{lemma-R1}, and prove that it solves 
equation (\ref{definition-R-equation}) by induction on $\ell$. 

For $\ell=1,2,3$ the assertion was proved in the examples. 
For any $\ell\geq 1$, we then suppose that on the 
right-hand side of eq.~(\ref{definition-R-equation}) we have 
$R_{p_i-1} = \tilde{R}_{p_i-1}$ for any $1\leq i\leq j$, and we set 
$$
P_i=p_1+p_2+\cdots +p_i , 
$$
so that 
\begin{align*}
R_\ell(a_1,...,a_\ell) 
&= \sum_{j=1}^\ell\ \sum_{\bfp\in \calC_\ell^j}\ 
\sum_{i=1}^j \sum_{\bfq^{(i)}\in \calM_{p_i-1}}
\big( a_1 \rhd \tilde{R}^{p_1-1}_{\bfq^{(1)}}(a_2,...,a_{p_1}) \big) \sotimes 
\\ 
& \hspace{-1cm}
\sotimes 
\big( a_{p_1+1} \rhd \tilde{R}^{p_2-1}_{\bfq^{(2)}}(a_{p_1+2},...,a_{P_2}) \big) 
\sotimes \cdots 
\sotimes \big( a_{P_{j-1}+1} \rhd 
\tilde{R}^{p_j-1}_{\bfq^{(j)}}(a_{P_{j-1}+2},...,a_{P_j}) \big). 
\end{align*}
In this sum, we can note the following things: 
\begin{itemize}
\item
The running value $j$ gives the length of the corresponding multi-monomial. 
\item
In the first tensor factor, the value $q^{(1)}_1$ represents the length of 
$\tilde{R}^{p_1-1}_{\bfq^{(1)}}(a_2,...,a_{p_1})$, that is, a sequence number 
associated to $a_1$, and more generally $\bfq^{(1)}$ rules the nested 
operations up to the variable $a_{p_1-1}$. 
The last variable $a_{p_1}$ does not act on further variables and so it 
should be associated to a missing value $0$. 
Therefore, the nested operations in the whole first tensor factor are ruled 
by the sequence $(\bfq^{(1)},0)$. 
\item
Similarly, for any $i\leq j$, the nested operations in the $i$th tensor factor 
are ruled by the sequence $(\bfq^{(i)},0)$. 
\end{itemize}
Let us then associate to this expression the sequence
$$
\bfm = (j,\bfq^{(1)},0,\bfq^{(2)},0,...,\bfq^{(j)}), 
$$
that is, 
\begin{align*}
& m_1 = j \\ 
& m_2 = q^{(1)}_1,\ \ldots\ ,\ m_{p_1} = q^{(1)}_{p_1-1}\ ,\ m_{p_1+1} = 0\ , \\ 
& m_{P_{i-1}+2} = q^{(i)}_1,\ \ldots\ ,\ m_{P_i} = q^{(i)}_{p_i-1}\ ,\ m_{P_i+1} = 0\ , 
\quad\mbox{for $1\leq i\leq j-1$} \\ 
& m_{P_{j-1}+2} = q^{(j)}_1,\ \ldots\ ,\ m_{P_j} = q^{(j)}_{p_j-1}, 
\end{align*}
which has precisely length 
$$
1+(p_1-1)+1+(p_2-1)+\cdots +1+(p_j-1) = p_1+\cdots +p_j = P_j= \ell.   
$$
Note that in the sum over the sequences $\bfp=(p_1,...,p_j)\in \calC^j_\ell$,  
where $p_i\geq 1$ for $i=1,...,j$, there occur the terms with $p_i-1=0$. 
In this case the multipolynomial $\tilde{R}_{p_i-1}=R_0=1$ has no variables, and 
the set $\calM_{p_1-1}=\calM_0$ is empty. The corresponding sequence $\bfq^{(i)}$ 
is then absent in $\bfm$, but its associated null value must be present, 
for any $i=1,...,j-1$, to preserve the total length $\ell$. 
Following the rules of the algorithm given in Def.~\ref{definition-R_m}, 
we can therefore write 
\begin{align*}
\big( a_1 \rhd \tilde{R}^{p_1-1}_{\bfq^{(1)}}(a_2,...,a_{p_1}) \big) \sotimes 
\big( a_{p_1+1} \rhd \tilde{R}^{p_2-1}_{\bfq^{(2)}}(a_{p_1+2},...,a_{P_2}) \big) 
\sotimes \cdots \qquad & 
\\ 
\cdots \sotimes \big( a_{P_{j-1}+1} \rhd 
\tilde{R}^{p_j-1}_{\bfq^{(j)}}(a_{P_{j-1}+2},...,a_{P_j}) \big) \quad 
& = \quad R_{\bfm}(a_1,...,a_\ell) . 
\end{align*}

Let us call 
$$
\calN_\ell = \{ \bfm = (j,\bfq^{(1)},0,\bfq^{(2)},0,...,\bfq^{(j)})\ |\ 
1\leq j\leq \ell,\ \bfp\in \calC_\ell^j,\ \bfq^{(i)}\in \calM_{p_i-1} 
\ \mbox{for}\ 1\leq i\leq j \} 
$$
the set of sequences obtained in this way. 
Then the equality 
\begin{align*}
R_\ell(a_1,a_2,...,a_\ell) 
& = \sum_{\bfm\in \calN_\ell} R^\ell_{\bfm}(a_1,...,a_\ell)
= \tilde{R}_\ell(a_1,a_2,...,a_\ell)
\end{align*}
holds if we show that $\calN_\ell = \calM_\ell$. 

Let us first show that $\calN_\ell \subset \calM_\ell$. 
For fixed $j$, $\bfp$ and $\bfq^{(1)},...,\bfq^{(j)}$, we have 
\begin{align*}
m_1+\cdots +m_\ell 
& = j+\sum_{i=1}^j \big(q^{(i)}_1+\cdots + q^{(i)}_{p_i-1}\big) \\ 
& = j+(p_1-1)+(p_2-1)+\cdots +(p_j-1) \\ 
& = j+ p_1+\cdots +p_j -j= \ell. 
\end{align*}
For any $h=1,...,\ell$, suppose that $h$ belongs to the $r$th block, for 
some $r\leq j$, that is, 
$$
h = P_{r-1}+1+k = p_1+\cdots +p_{r-1}+1+k, 
$$
with $1\leq k\leq p_r-1$. Then we have
\begin{align*}
m_1+\cdots +m_h 
& = j+\sum_{i=1}^{r-1} \big(q^{(i)}_1+\cdots + q^{(i)}_{p_i-1}\big) 
+ \big(q^{(r)}_1+\cdots + q^{(r)}_k\big)
\\ 
& = j+(p_1-1)+(p_2-1)+\cdots +(p_{r-1}-1) 
+ \big(q^{(r)}_1+\cdots + q^{(r)}_k\big) \\
& = (p_1+\cdots +p_{r-1})+\big(q^{(r)}_1+\cdots +q^{(r)}_k\big)+(j-r)+1 \\
& \geq P_{r-1}+1+k = h
\end{align*}
because $q^{(r)}_1+\cdots +q^{(r)}_k \geq k$ and $j-r\geq 0$. 

Finally, let us show that there is a bijection between $\calN_\ell$ and 
$\calM_\ell$. 
The set $\calM_\ell$ is well known to be in bijection with the set $PBT_{\ell+1}$ 
of planar binary trees with $\ell+1$ leaves (and a root). 
An explicit bijection $\Phi:\calM_\ell \longrightarrow PBT_{\ell+1}$ is described
in~\cite{BFK}, Definition 2.16, using the {\em over} and {\em under} 
grafting operations on trees, namely 
$$
t\over s = \raisebox{-.1\height}{\lgraft{s}{t}} 
\qquad\mbox{and}\qquad 
t\under s = \raisebox{-.1\height}{\rgraft{t}{s}}. 
$$
The first values of $\Phi$, for the empty sequence in $\calM_0$ 
and for $(1)\in \calM_1$ and $(2,0),(1,1)\in\calM_2$, are 
$$
\Phi(\ ) = \treeO, \qquad \Phi(1) = \treeA, 
\qquad \Phi(2,0) = \treeAB, \qquad \Phi(1,1) = \treeBA. 
$$
So, for our purpose, it is enough to show that $\calN_\ell$ is in bijection 
with $PBT_{\ell+1}$. 
For this, since $\calN_\ell\subset \calM_\ell$, consider the map $\Phi$ 
restricted to $\calN_\ell$ and let us show that the image $\Phi(\calN_\ell)$ 
coincides with $PBT_{\ell+1}$. 

For a given sequence 
$\bfm = (j,\bfq^{(1)},0,\bfq^{(2)},0,...,\bfq^{(j)})\in \calN_\ell$, we have: 
\begin{itemize}
\item
The sequence $\bfm$ is decomposable as $\bfm = (\bfm',\bfq^{(j)})$ 
into the two well-defined sequences 
$\bfm'=(j,\bfq^{(1)},0,\bfq^{(2)},0,...,\bfq^{(j-1)},0) \in \calM_{P_{j-1}+1}$ 
and $\bfq^{(j)}\in \calM_{p_j-1}$. 
In fact, if we set $\ell'=P_{j-1}+1$, we have 
\begin{align*}
m'_1+\cdots m'_{\ell'} & = 
j+(p_1-1)+\cdots +(p_{j-1}-1)= j+P_{j-1}-(j-1) = \ell',  
\end{align*}
and for any $h=1,...,\ell'$ one can see that $m'_1+\cdots m'_h\geq h$ 
with a computation similar to that used to show that $\bfm\in \calM_{\ell}$. 
 
According to the definition of $\Phi$, we then have 
$\Phi(\bfm) = \Phi(\bfm')\under \Phi(\bfq^{(j)})$. 
Graphically, if we denote the trees by $t=\Phi(\bfm)$, 
$t'=\Phi(\bfm')$ and $t_j=\Phi(\bfq^{(j)})$, this means that 
$$
t = \raisebox{-.1\height}{\rgraft{t'}{t_j}}.  
$$

\item 
The sequence $\bfm'$ is surely not decomposable because it is of the form 
$$
\bfm'=(m''_1+1,m''_2,...,m''_{\ell''},0)
$$ 
with 
$$
\bfm''=(j\!-\!1,\bfq^{(1)},0,\bfq^{(2)},0,...,\bfq^{(j-1)})\in \calM_{\ell''}, 
\qquad \ell''=\ell'-1. 
$$ 
The sequence $\bfm''$ indeed belongs to $\calM_{\ell''}$ for the same reason 
used to show that $\bfm\in \calM_\ell$. 
Then, the sequence $\bfm'$ is not decomposable in position $1$ because 
$m'_1=m''_1+1 \geq 2$, and it is not decomposable in any position 
$h=2,...,\ell'$ because $\bfm'' \in \calM_{\ell''}$ implies that 
$m'_1+\cdots +m'_h = m''_1+1+m''_2+\cdots + m''_h \geq h+1$, and therefore 
surely $m'_1+\cdots +m'_h \neq h$.  

Finally, according to the definition of $\Phi$, we then have 
$\Phi(\bfm') = \Phi(\bfm'') \over \treeA$. 
If we set $t''=\Phi(\bfm'')$, this means that 
$$
t' = \raisebox{-.1\height}{\lvertexgraft{t''}} 
$$
and therefore 
$$
t = \raisebox{-.1\height}{\veegraft{t''}{t_j}}. 
$$

\item
The same arguments can be applied to the sequence $\bfm''$ and its new 
components, until we reach a full description of the tree $t=\Phi(\bfm)$ 
in terms of the trees $t_i = \Phi(\bfq^{(i)})$, for $i=1,...,j$, namely 
$$
t = \raisebox{-.1\height}{\combLgraft{t_1}{t_2}{t_j}}. 
$$
Let us denote this tree by $G^j(t_1,...,t_j)$. 
\end{itemize}
In conclusion, if we let $j$ run from $1$ to $\ell$, we consider all possible 
sequences $\bfp\in \calC^j_\ell$ and for any $i=1,...,j$ all trees 
$t_i\in \Phi(\calM_{p_i-1})= PBT_{p_i}$, the result $G^j(t_1,...,t_j)$ is 
any possible tree with number of leaves given by 
$$
\left|G^j(t_1,...,t_j)\right| 
= 1+|t_1|+\cdots +|t_j| 
= 1+p_1+\cdots +p_j= \ell+1.
$$
In other words, we have 
$$
\Phi(\calN_\ell) = \big\{ t=G^j(t_1,...,t_j)\ |\ 
j=1,...,\ell,\ 
\bfp\in \calC^j_\ell,\ 
t_i \in PBT_{p_i},\ 1\leq i\leq j \big\}  = PBT_{\ell+1}. 
$$
\end{proof}

\begin{corollary}
\label{corollary-R1}
For any $\ell\geq 0$ and any $a_1,...,a_{\ell+1}\in A$, set $n_i=|a_i|\geq 1$ for 
$i=1,...,\ell+1$.  
Then, for any sequence $\bfm\in \calM_\ell$, we have 
\begin{align*}
a_1\rhd R_{\bfm}(a_2,...,a_{\ell+1}) 
&= \binom{n_1+1}{m_1}\cdots \binom{n_\ell+1}{m_\ell} \binom{n_{\ell+1}+1}{0}\
a_1 a_2\cdots a_{\ell+1} . 
\end{align*}
Therefore 
\begin{align*}
a_1\rhd R_\ell(a_2,...,a_{\ell+1}) 
&= d(n_1,...,n_{\ell+1})\ a_1 a_2\cdots a_{\ell+1} , 
\end{align*}
where the Lagrange coefficients $d(n_1,...,n_{\ell+1})$ are given in 
Def. \ref{definition-d}. 
\end{corollary}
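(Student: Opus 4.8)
The plan is to establish the first identity componentwise — for a fixed $\bfm\in\calM_\ell$ — and then read off the second one by summing over $\calM_\ell$ via Lemma~\ref{lemma-R1}. The key observation is that inside $R_{\bfm}(b_1,\dots,b_\ell)$ every occurrence of $\rhd$ has a single variable $b_i$ as its left argument, so by the first rule of Definition~\ref{definition-rhd} each inner block $b_i\rhd Q$ with $Q$ a multi-monomial of length $k$ collapses to $\binom{|b_i|+1}{k}$ times the product in $A$ of $b_i$ with the tensor factors of $Q$. Unwinding the nesting prescribed by $\bfm$ — for $i=2,\dots,\ell$ the entry $m_i$ is exactly the length of the multi-monomial on which the $(i-1)$-st variable acts ($m_i=0$ meaning that variable is an isolated tensor factor), and the last variable is governed by a formal trailing $0$ — this shows that $R_{\bfm}(b_1,\dots,b_\ell)$ is a multi-monomial of length $m_1$ whose product in $A$ equals $\binom{|b_1|+1}{m_2}\binom{|b_2|+1}{m_3}\cdots\binom{|b_{\ell-1}|+1}{m_\ell}\binom{|b_\ell|+1}{0}\,b_1 b_2\cdots b_\ell$. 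This is precisely the bookkeeping already illustrated in the Note following the examples of Definition~\ref{definition-R_m}.

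To make it rigorous I would induct on $\ell$, using the block decomposition $\bfm=(m_1,\bfq^{(1)},0,\bfq^{(2)},0,\dots,\bfq^{(j)})$ with $m_1=j$, $\bfp=(p_1,\dots,p_j)\in\calC_\ell^j$ and $\bfq^{(k)}\in\calM_{p_k-1}$ — this is the correspondence $\calN_\ell=\calM_\ell$ from the proof of Lemma~\ref{lemma-R1}. By the algorithm of Definition~\ref{definition-R_m} the first tensor factor of $R_{\bfm}$ is $b_1\rhd R_{\bfq^{(1)}}(b_2,\dots,b_{p_1})$ (which is read as $b_1$ when $p_1=1$), and the remaining $j-1$ factors assemble into $R_{\bfm'}(b_{p_1+1},\dots,b_\ell)$ for $\bfm'=(j-1,\bfq^{(2)},0,\dots,\bfq^{(j)})\in\calM_{\ell-p_1}$; the induction hypothesis applies to both pieces (including the limiting case $j=1$, $p_1=\ell$, where $R_{\bfm}(b_1,\dots,b_\ell)=b_1\rhd R_{(m_2,\dots,m_\ell)}(b_2,\dots,b_\ell)$), and the accumulated binomials combine into the displayed product since the binomials sitting at the trailing $0$-slots are all $1$. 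Granting this, applying $a_1\rhd(\cdot)$ to $R_{\bfm}(a_2,\dots,a_{\ell+1})$, a multi-monomial of length $m_1$, multiplies in the factor $\binom{n_1+1}{m_1}$ and yields
$$a_1\rhd R_{\bfm}(a_2,\dots,a_{\ell+1})=\binom{n_1+1}{m_1}\binom{n_2+1}{m_2}\cdots\binom{n_\ell+1}{m_\ell}\binom{n_{\ell+1}+1}{0}\,a_1 a_2\cdots a_{\ell+1},$$
with $\binom{n_{\ell+1}+1}{0}=1$; this is the first identity. Summing over $\bfm\in\calM_\ell$, and using Lemma~\ref{lemma-R1} together with Definition~\ref{definition-d}, gives
$$a_1\rhd R_\ell(a_2,\dots,a_{\ell+1})=\Big(\sum_{\bfm\in\calM_\ell}\binom{n_1+1}{m_1}\cdots\binom{n_\ell+1}{m_\ell}\Big)\,a_1 a_2\cdots a_{\ell+1}=d_\ell(n_1,\dots,n_\ell)\,a_1 a_2\cdots a_{\ell+1}.$$

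The only real work is the inductive step of the closed-form computation for $R_{\bfm}$, namely matching the positions of the entries of $\bfm$ with the variables they control under the nested $\rhd$'s; conceptually this is just the numerical shadow of the bijection $\calN_\ell=\calM_\ell$ already proved in Lemma~\ref{lemma-R1}, so the points to watch are the one-slot index shift between the arguments $a_2,\dots,a_{\ell+1}$ of $R_\ell$ and the degrees $n_1,\dots,n_\ell$ appearing in $d_\ell$, and the fact that the absent degree $n_{\ell+1}$ is always swallowed by a trivial binomial $\binom{n_{\ell+1}+1}{0}=1$. Beyond this bookkeeping there is no structural obstacle.
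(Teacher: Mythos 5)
Your argument is correct and is essentially the one the paper relies on implicitly: the corollary is stated without a separate proof, being read off from Definition~\ref{definition-R_m}, the rule $a\rhd(b_1\sotimes\cdots\sotimes b_k)=\binom{|a|+1}{k}ab_1\cdots b_k$, and the Note in the example following Definition~\ref{definition-R_m} observing that the binomials attached to a sequence $\bfm$ are governed by its last $\ell-1$ digits plus a trailing $0$. Your careful handling of the one-slot index shift and of the final factor $\binom{n_{\ell+1}+1}{0}=1$, together with the proposed induction on the block decomposition from Lemma~\ref{lemma-R1}, supplies exactly the bookkeeping the paper leaves to the reader.
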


%%%%%%%%%%%%%%%%%%%%%%%%%%%%%%%%%%%%%%%%%

To describe the left codivision we introduce a last set of operators 
corresponding to the labeled Lagrange coefficients. 

\begin{definition}
\label{definition-Re}
For any $\ell \geq 1$, let $\calE_\ell$ be the set of sequences of bits
$e_i\in \{1,2\}$, as in Def.~\ref{definition-de}. 
We call {\bf labeled right recursive operator} 
$R^{\bfe}:T(A)\longrightarrow T(A)$ the collection 
$R^{\bfe} = \{ R^{\bfe}_\ell,\ \ell\geq 0,\ \bfe\in \calE_\ell \}$  
of (non homogeneous) linear operators $R^{\bfe}_0=\Id:\F\longrightarrow \F$ and 
\begin{align*}
& R^{\bfe}_\ell: 
A^{\sotimes \ell} \longrightarrow \bigoplus_{\lambda=1}^{\ell} A^{\sotimes \lambda},
\qquad \ell \geq 1
\end{align*}
defined recursively by
\begin{align*}
& R_0^{(1)}(a) = a \qquad\mbox{and}\qquad R_0^{(2)}(a)= 0, 
\end{align*}
and, for $\ell \geq 2$ and for any $\bfe = (e_1,...,e_\ell)\in \calE_\ell$, by
\begin{align}
\label{definition-Re-equation}
R^{\bfe}_\ell(a_1,...,a_\ell) &= 
\sum_{j=1}^\ell \sum_{\bfp\in \calC_\ell^j} 
\big( R_1^{(e_1)}(a_1) \rhd R_{p_1-1}^{(e_2,...,e_{p_1})}(a_2,...,a_{p_1}) \big) 
\\ \nonumber
& \hspace{2cm} 
\sotimes \big( a_{p_1+1} \rhd
R_{p_2-1}^{(e_{p_1+2},...,e_{p_1+p_2})}(a_{p_1+2},...,a_{p_1+p_2}) \big) 
\sotimes \cdots 
\\ \nonumber
& \hspace{3cm} 
\cdots \sotimes \big( a_{p_1+\cdots +p_{j-1}+1} \rhd 
R_{p_j-1}^{(e_{p_1+\cdots +p_{j-1}+2},...,e_\ell)}
(a_{p_1+\cdots +p_{j-1}+2},...,a_\ell) \big) . 
\end{align}
It turns out that $R_\ell^{\bfe}=R_\ell$ if $\bfe=(1,1,...,1)$. 
If $\bfe$ starts by $2$, then $R_\ell^{\bfe}=0$. 
If $\bfe$ starts by $1$ and contains a bit value $e_i=2$ (in position $i$),
then $R_\ell^{\bfe}$ is obtained from $R_\ell$ by removing the term which
contains the factor $a_{i-1}\rhd Q_i$.
\bigskip 

For instance, for $\ell=2$, we have $\calE_1 = \{ (1,1),(1,2),(2,1),(2,2) \}$
and therefore 
\begin{align*}
R_2^{(1,1)}(a,b)
& = R_1^{(1)}(a)\rhd R_1^{(1)}(b) + R_1^{(1)}(a) \sotimes (b \rhd R_0(1)) 
\\ 
& = a\rhd b + a \sotimes b  
\\[.2cm] 
R_2^{(1,2)}(a,b)
& = R_1^{(1)}(a)\rhd R_1^{(2)}(b) + R_1^{(1)}(a) \sotimes (b \rhd R_0(1))  
\\
& = a \sotimes b   
\\[.2cm] 
R_2^{(2,1)}(a,b)
& = R_1^{(2)}(a)\rhd R_1^{(1)}(b) + R_1^{(2)}(a) \sotimes (b \rhd R_0(1)) = 0
\\[.2cm] 
R_2^{(2,2)}(a,b) 
& = R_1^{(2)}(a)\rhd R_1^{(2)}(b) + R_1^{(2)}(a) \sotimes (b \rhd R_0(1)) = 0. 
\end{align*}
For $\ell=3$, the set $\calE_2$ contains $8$ sequences, which give
\begin{align*}
R_3^{(1,1,1)}(a,b,c) 
& = a\rhd R_2^{(1,1)}(b,c) 
+ \big(a\rhd R_1^{(1)}(b)\big)\sotimes c 
+ a \sotimes \big(b\rhd R_1^{(1)}(c)\big)
+ a\sotimes b\sotimes c 
\\
& = a\rhd (b\rhd c) 
+ a\rhd (b\sotimes c) 
+ (a\rhd b)\sotimes c 
+ a \sotimes (b\rhd c)
+ a\sotimes b\sotimes c 
\\[.3cm]
R_3^{(1,1,2)}(a,b,c) 
& = a\rhd R_2^{(1,2)}(b,c) 
+ \big(a\rhd R_1^{(1)}(b)\big)\sotimes c 
+ a \sotimes \big(b\rhd R_1^{(2)}(c)\big)
+ a\sotimes b\sotimes c 
\\
& = a\rhd (b\sotimes c) 
+ (a\rhd b)\sotimes c 
+ a\sotimes b\sotimes c 
\\[.3cm]
R_3^{(1,2,1)}(a,b,c) 
& = a\rhd R_2^{(2,1)}(b,c) 
+ \big(a\rhd R_1^{(2)}(b)\big)\sotimes c 
+ a \sotimes \big(b\rhd R_1^{(1)}(c)\big)
+ a\sotimes b\sotimes c 
\\ 
& = a \sotimes (b\rhd c)
+ a\sotimes b\sotimes c 
\\[.3cm]
R_3^{(1,2,2)}(a,b,c) 
& = a\rhd R_2^{(2,2)}(b,c) 
+ \big(a\rhd R_1^{(2)}(b)\big)\sotimes c 
+ a \sotimes \big(b\rhd R_1^{(2)}(c)\big)
+ a\sotimes b\sotimes c 
\\ 
& = a\sotimes b\sotimes c 
\end{align*}
and finally\quad $R_3^{(2,1,1)} = R_3^{(2,1,2)} = R_3^{(2,2,1)} = R_3^{(2,2,2)} =0$.  
\end{definition}

The labeled right operations can also be given by a closed formula. 

\begin{lemma}
\label{lemma-Re3}
For any $\ell\geq 2$, for any sequence $\bfe\in\calE_\ell$ and for any 
$a_1,...,a_\ell\in A$, we have
\begin{align*}
R_\ell^{\bfe}(a_1,...,a_\ell) 
& = \sum_{\bfm\in \calM_\ell^{\bfe}} R^\ell_{\bfm}(a_1,...,a_\ell). 
\end{align*}

As a consequence, if for $a_1,...,a_{\ell+1}\in A$ we denote $n_i=|a_i|$ 
for $i=1,...,\ell+1$, we then have 
\begin{align*}
a_1\rhd R_\ell^{\bfe}(a_2,...,a_{\ell+1}) & = 
d_\ell^{\bfe}(n_1,...,n_\ell)\ a_1\cdots a_{\ell+1} \in A, 
\end{align*}
where the labeled Lagrange coefficients $d_\ell^{\bfe}(n_1,...,n_\ell)$ 
are given in Def. \ref{definition-de}. 
\end{lemma}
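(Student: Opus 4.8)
The plan is to establish first the closed formula $R_\ell^{\bfe}(a_1,\dots,a_\ell)=\sum_{\bfm\in\calM_\ell^{\bfe}}R^\ell_{\bfm}(a_1,\dots,a_\ell)$ by induction on $\ell$, following the pattern of the proof of Lemma~\ref{lemma-R1} while carrying the bit labels along, and then to read off the value of $a_1\rhd R_\ell^{\bfe}(a_2,\dots,a_{\ell+1})$ from Corollary~\ref{corollary-R1}. The second assertion is immediate from the first: applying $a_1\rhd(\,\cdot\,)$ term by term to $\sum_{\bfm\in\calM_\ell^{\bfe}}R^\ell_{\bfm}(a_2,\dots,a_{\ell+1})$ and invoking Corollary~\ref{corollary-R1} (legitimate since $\calM_\ell^{\bfe}\subseteq\calM_\ell$), each summand becomes $\binom{n_1+1}{m_1}\cdots\binom{n_\ell+1}{m_\ell}\,a_1a_2\cdots a_{\ell+1}$, and summing over $\bfm\in\calM_\ell^{\bfe}$ reconstructs $d_\ell^{\bfe}(n_1,\dots,n_\ell)$ by its very Definition~\ref{definition-de}.

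For the closed formula, the cases $\ell\le 3$ are exactly the explicit computations of $R_2^{\bfe}$ and $R_3^{\bfe}$ displayed after Definition~\ref{definition-Re}, matched against the list of operators $R^\ell_{\bfm}$ following Lemma~\ref{lemma-R1} and the labeled sets $\calM_2^{\bfe},\calM_3^{\bfe}$ of Definition~\ref{definition-de} (together with the trivial bottom cases $R_0^{\bfe}=\Id$, $R_1^{(1)}(a)=a$, $R_1^{(2)}(a)=0$). For the inductive step I would substitute the inductive hypothesis $R_{p-1}^{(f_1,\dots,f_{p-1})}=\sum_{\bfq\in\calM_{p-1}^{(f_1,\dots,f_{p-1})}}R^{p-1}_{\bfq}$ into every factor on the right-hand side of (\ref{definition-Re-equation}), precisely as $R_{p-1}=\sum_{\bfq\in\calM_{p-1}}R^{p-1}_{\bfq}$ was substituted in the proof of Lemma~\ref{lemma-R1}. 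If $e_1=2$ the factor $R_1^{(e_1)}(a_1)=0$ annihilates every term, which agrees with $\calM_\ell^{\bfe}=\emptyset$. If $e_1=1$, the term indexed by a length $j$, a composition $\bfp\in\calC_\ell^j$ and sequences $\bfq^{(1)},\dots,\bfq^{(j)}$ equals, by the grafting bookkeeping already carried out for Lemma~\ref{lemma-R1}, the operator $R^\ell_{\bfm}(a_1,\dots,a_\ell)$ attached to $\bfm=(j,\bfq^{(1)},0,\bfq^{(2)},0,\dots,\bfq^{(j)})$; hence $R_\ell^{\bfe}(a_1,\dots,a_\ell)=\sum_{\bfm\in\calN_\ell^{\bfe}}R^\ell_{\bfm}(a_1,\dots,a_\ell)$, where $\calN_\ell^{\bfe}$ is the set of such $\bfm$ when each $\bfq^{(k)}$ is now forced to lie in the labeled set $\calM_{p_k-1}^{(e_{P_{k-1}+2},\dots,e_{P_k})}$, with $P_i=p_1+\cdots+p_i$ and $P_0=0$.

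It remains to identify $\calN_\ell^{\bfe}$ with $\calM_\ell^{\bfe}$, and this is where the real work lies. Since Lemma~\ref{lemma-R1} already gives $\calN_\ell=\calM_\ell$ as a bijection of indexing data, I would take $\bfm\in\calM_\ell$ with its canonical block decomposition and split the positions $i\in\{1,\dots,\ell\}$ into three families, checking for each that "$\bfm\in\calN_\ell^{\bfe}$" and "$e_1=1$ and $m_i=0$ for every $i\ge 2$ with $e_i=2$" impose the same thing. The head position $i=1$ is governed by $R_1^{(e_1)}$ as above. The block-boundary positions $i=P_k+1$ ($1\le k<j$) carry $m_i=0$ automatically and their label $e_i$ is never read in (\ref{definition-Re-equation}), so the constraint is vacuous on both sides. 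The block-head-argument positions $i=P_{k-1}+2$ are controlled by the convention that a label sequence beginning with $2$ forces the corresponding $R$-operator to vanish: if $e_i=2$ the only surviving subcomposition has $p_k=1$, which makes $i$ a boundary position with $m_i=0$, exactly as $\calM_\ell^{\bfe}$ requires. Finally the remaining positions lie strictly inside a sub-block, where $m_i=q^{(k)}_r$ with the relevant label being $e_i$ after the shift $f_r=e_{P_{k-1}+1+r}$, and the membership $\bfq^{(k)}\in\calM_{p_k-1}^{(f)}$ is, entry by entry, the condition $m_i=0$ whenever $e_i=2$. The main obstacle is precisely this last verification: keeping the three families of indices, the index shifts between $\bfe$ and the sub-label-sequences, and the vanishing conventions for $R_0^{(e)}$ and for labels beginning with $2$ all consistently aligned, so that the nested local constraints assemble into the single global constraint defining $\calM_\ell^{\bfe}$. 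Once $\calN_\ell^{\bfe}=\calM_\ell^{\bfe}$ is proved the induction closes and the lemma follows.
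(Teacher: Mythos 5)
Your proposal is correct, and its second half (applying $a_1\rhd(\,\cdot\,)$ termwise and invoking Corollary~\ref{corollary-R1}, legitimate since $\calM_\ell^{\bfe}\subseteq\calM_\ell$) is exactly how the consequence is meant to follow. For the main identity, however, you take a genuinely more laborious route than the paper. The paper's proof is three sentences: it invokes the structural remark stated right after Definition~\ref{definition-Re} --- that for each position $i$ with $e_i=2$ the labeled operator $R_\ell^{\bfe}$ is obtained from $R_\ell$ by deleting the terms containing a factor $a_{i-1}\rhd Q_i$ --- and then observes that, under the correspondence of Lemma~\ref{lemma-R1} and the algorithm of Definition~\ref{definition-R_m}, such a factor occurs in the term indexed by $\bfm$ precisely when $m_i\neq 0$, so the surviving terms are exactly those with $\bfm\in\calM_\ell^{\bfe}$. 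You instead re-run the entire induction behind Lemma~\ref{lemma-R1} with the labels carried along, substituting the labeled inductive hypothesis into (\ref{definition-Re-equation}) and checking position by position (head, block boundaries $P_k+1$, block-head arguments $P_{k-1}+2$, interior positions) that the nested local constraints assemble into the single global condition defining $\calM_\ell^{\bfe}$. This is more work, but it buys something real: it actually proves the term-deletion remark that the paper merely asserts after Definition~\ref{definition-Re}, so your version is the more self-contained of the two. Your position analysis is sound --- in particular you correctly note that the unread boundary labels $e_{P_k+1}$ are matched by $m_{P_k+1}=0$ being automatic, and that a label sequence starting with $2$ kills exactly the decompositions where $m_{P_{k-1}+2}=q^{(k)}_1\geq 1$ --- though the phrase that ``the only surviving subcomposition has $p_k=1$'' is loose, since a surviving $m_i=0$ can also occur at a genuinely interior position of a sub-block; this does not affect the conclusion, because in every surviving case one still has $m_i=0$ and in every killed case $m_i\neq 0$.
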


\begin{proof}
If $\bfe=(1,1,...,1)$, and if $\bfe$ starts by $2$, there is nothing to prove.
Otherwise, for any value $e_i=2$ in $\bfe$, we obtain
$R_\ell^{\bfe}(a_1,...,a_\ell)$ from $R_\ell(a_1,...,a_\ell)$ by removing the term
containing the factor $a_{i-1}\rhd Q_i$. 
By Lemma \ref{lemma-R1}, such a term is associated to a sequence 
$\bfm\in \calM_\ell$, and by Def.~\ref{definition-R_m} the factor 
$a_{i-1}\rhd Q_i$ corresponds to a non-zero value $m_i$. 
Therefore, in order to remove such terms, it suffices to consider sequences 
$\bfm$ with $m_i=0$ whenever $e_i=2$. 
\end{proof}

%%%%%%%%%%

\begin{theorem}
\label{FaaDiBruno-operators}
The co-operations of the Fa\`a di Bruno coloop bialgebra can be equivalently 
defined in terms of the recursive operators as follows: 
\begin{align}
\nonumber
\DFdBex(x_n) 
& = x_n + y_n + \sum_{\ell=1}^{n-1} \sum_{\bfn\in \calC_n^{\ell+1}} \
x_{n_1} \rhd \big(y_{n_2}\sotimes\cdots \sotimes y_{n_{\ell+1}}\big)  , 
\\ \nonumber
\delta_r(x_n) 
& = u_n + \sum_{\ell=1}^{n-1} (-1)^\ell \sum_{\bfn \in \calC_n^{\ell+1}} 
L_\ell(u_{n_1},y_{n_2},...,y_{n_\ell})\rhd y_{n_{\ell+1}} 
\\ \nonumber
& = u_n + \sum_{\ell=1}^{n-1} (-1)^\ell \sum_{\bfn \in \calC_n^{\ell+1}} 
u_{n_1} \rhd R_\ell(y_{n_2},...,y_{n_{\ell+1}}) ,  
\\ \label{FaaDiBruno-operators-left-codivision}
\delta_l(x_n) 
& = v_n + \sum_{\ell=1}^{n-1} (-1)^\ell \sum_{\bfn \in \calC_n^{\ell+1}} 
\sum_{\bfe\in \calE_\ell} (-1)^{\bfe} \ x_{n_1}^{(e_1)}\rhd 
R_\ell^{\bfe}(x^{(e_2)}_{n_2},...,x^{(e_\ell)}_{n_\ell},v_{n_{\ell+1}})
\end{align}
where we recall that $u_n=x_n-y_n$, $v_n=y_n-x_n$, and also that
$(-1)^{\bfe}=(-1)^{e_1+\cdots +e_\ell-\ell}$ and that the bit value in $x^{(e)}_n$
tells us in which copy of $\Hdex \scoprod \Hdex$ falls the generator $x_n$,
cf. Def.~\ref{FaaDiBrunoColoopBialgebra}.
\end{theorem}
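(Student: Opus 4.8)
The plan is to verify the three displayed identities one at a time by unfolding the operator expressions on the right-hand sides, using the closed-form descriptions of $\rhd$, $L_\ell$, $R_\ell$ and $R_\ell^{\bfe}$ proved above, and matching them term by term with the combinatorial formulas of Definition~\ref{FaaDiBrunoColoopBialgebra}. All the operators are to be applied to the positively graded associative coproduct algebra $\overline{\HFdBex}\scoprod\overline{\HFdBex}=\overline{T}(X^{(1)}\oplus X^{(2)})$, in which $x_n^{(1)}=x_n$, $x_n^{(2)}=y_n$, $u_n=x_n-y_n$ and $v_n=y_n-x_n$ are homogeneous of degree $n$; hence every degree entering a binomial coefficient depends only on the subscripts $n_i$, so that $d_\ell(n_1,\dots,n_\ell)$ and $d_\ell^{\bfe}(n_1,\dots,n_\ell)$ are unaffected by the labels $e_i$.

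For the comultiplication nothing is needed beyond Definition~\ref{definition-rhd}: since the multi-monomial $y_{n_2}\sotimes\cdots\sotimes y_{n_{\ell+1}}$ has length $\ell$, one gets $x_{n_1}\rhd(y_{n_2}\sotimes\cdots\sotimes y_{n_{\ell+1}})=\binom{n_1+1}{\ell}\,x_{n_1}y_{n_2}\cdots y_{n_{\ell+1}}$, exactly the summand of $\DFdBex(x_n)$ in Definition~\ref{FaaDiBrunoColoopBialgebra}. For the $R$-form of the right codivision, Corollary~\ref{corollary-R1} applied with first argument $u_{n_1}$ of degree $n_1$ and remaining arguments $y_{n_2},\dots,y_{n_{\ell+1}}$ gives $u_{n_1}\rhd R_\ell(y_{n_2},\dots,y_{n_{\ell+1}})=d_\ell(n_1,\dots,n_\ell)\,u_{n_1}y_{n_2}\cdots y_{n_{\ell+1}}$; summing over $\bfn\in\calC_n^{\ell+1}$ with sign $(-1)^\ell$ and identifying the $\ell=0$ summand of Definition~\ref{FaaDiBrunoColoopBialgebra} (for which $\calC_n^1=\{(n)\}$ and $d_0=1$) with the isolated term $u_n$, one recovers $\delta_r(x_n)$ verbatim.

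The remaining two expressions are handled the same way. For the left codivision, Lemma~\ref{lemma-Re3} gives $x_{n_1}^{(e_1)}\rhd R_\ell^{\bfe}(x_{n_2}^{(e_2)},\dots,x_{n_\ell}^{(e_\ell)},v_{n_{\ell+1}})=d_\ell^{\bfe}(n_1,\dots,n_\ell)\,x_{n_1}^{(e_1)}x_{n_2}^{(e_2)}\cdots x_{n_\ell}^{(e_\ell)}v_{n_{\ell+1}}$; inserting the sign $(-1)^{\bfe}=(-1)^{e_1+\cdots+e_\ell-\ell}$ and summing over $\bfe\in\calE_\ell$, $\bfn\in\calC_n^{\ell+1}$ and $\ell\ge1$, together with the $\ell=0$ term $v_n$, reproduces $\delta_l(x_n)$, the identity $R_\ell^{\bfe}=0$ for $e_1=2$ matching the convention $d_\ell^{\bfe}=0$ so that the leading factor is always $x_{n_1}$. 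For the $L$-form of $\delta_r$ I would use the relation $L_\ell(a_1,\dots,a_\ell)\rhd a_{\ell+1}=a_1\rhd R_\ell(a_2,\dots,a_{\ell+1})$ for homogeneous $a_i$, which is the comparison between the two recursive operators recorded in Corollary~\ref{corollary-LR}; failing that, it can be proved directly by induction on $\ell$ from the closed form of Lemma~\ref{lemma-L1}, writing $L_\ell(a_1,\dots,a_\ell)=L_{\ell-1}(a_1,\dots,a_{\ell-1})\rhd a_\ell-L_{\ell-1}(a_1,\dots,a_{\ell-1})\sotimes a_\ell$, applying $\rhd a_{\ell+1}$, and using that $|a_1|$ is the only degree surviving in the outer binomial, so that the signed sum over the $2^{\ell-1}$ left-nested patterns collapses to $d_\ell(|a_1|,\dots,|a_\ell|)\,a_1\cdots a_{\ell+1}$, which by Corollary~\ref{corollary-R1} is $a_1\rhd R_\ell(a_2,\dots,a_{\ell+1})$.

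The step I expect to require real work is exactly this coincidence of the $L$-form and the $R$-form of $\delta_r$: because $L_\ell$ and $R_\ell$ recurse on opposite sides of $\rhd$, which is neither associative nor symmetric, the equality is not formal and is genuinely the content of Corollary~\ref{corollary-LR} (or of the inductive collapse sketched above). Granting that, the rest of the theorem is a mechanical substitution of the closed forms of Lemmas~\ref{lemma-L1}, \ref{lemma-R1}, \ref{lemma-Re3} and Corollary~\ref{corollary-R1}, plus the bookkeeping of the $\ell=0$ boundary terms $u_n$ and $v_n$.
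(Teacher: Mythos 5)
Your proposal is correct and follows essentially the same route as the paper's own (very terse) proof: unfold $\rhd$ for the comultiplication, invoke Corollary~\ref{corollary-R1} for the $R$-form of $\delta_r$ and Lemma~\ref{lemma-Re3} for $\delta_l$, and defer the $L$-versus-$R$ equivalence to Corollary~\ref{corollary-LR}, which is indeed where the paper places the only nontrivial content. Your observation that $\rhd$ is linear and that $u_{n_1}$, $v_{n_{\ell+1}}$ are homogeneous of the right degrees, so the Lagrange coefficients are unaffected, is exactly the bookkeeping the paper leaves implicit.
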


\begin{proof}
It follows from the definition of $\rhd$ given in Def.~\ref{definition-rhd}, 
the expression of $R_\ell$ given in Cor.~\ref{corollary-R1}, and that of 
$R_\ell^{\bfe}$ given in Lemma~\ref{lemma-Re3}. The equivalence of the 
presentations of the right codivision in terms of $R_\ell$ and $L_\ell$ 
is proved in Cor.~\ref{corollary-LR} in next section. 
\end{proof}

Note that in the term $x_{n_1}^{(e_1)}\rhd 
R_\ell^{\bfe}(x^{(e_2)}_{n_2},...,x^{(e_\ell)}_{n_\ell},v_{n_{\ell+1}})$ of the left
codivision (\ref{FaaDiBruno-operators-left-codivision}),
the labeled operator $R_\ell^{\bfe}=R_\ell^{(e_1,...,e_\ell)}$ is applied to variables
which are also labeled, but only by the last $\ell-1$ bits of $\bfe$. 
For instance, no labels affect the variables in 
\begin{align*}
\delta_l(x_2)
& = v_2 - x_1^{(1)} \rhd R_1^{(1)}(v_1) + x_1^{(2)} \rhd R_1^{(2)}(v_1)
\\
& = v_2 - x_1 \rhd v_1 + y_1 \rhd 0 
\\
& = v_2 - 2x_1 v_1
\end{align*}
but labels do affect the variables in 
\begin{align*}
\delta_l(x_3)
& = v_3 - \big( x_1^{(1)} \rhd R_1^{(1)}(v_2) + x_2^{(1)} \rhd R_1^{(1)}(v_1) \big) 
- \big( -x_1^{(2)} \rhd R_1^{(2)}(v_2) - x_2^{(1)} \rhd R_1^{(2)}(v_1) \big)
\\ 
& \hspace{1cm}
+ x_1^{(1)} \rhd R_2^{(1,1)}(x_1^{(1)},v_1) - x_1^{(1)} \rhd R_2^{(1,2)}(x_1^{(2)},v_1)
- x_1^{(2)} \rhd R_2^{(2,1)}(x_1^{(1)},v_1) 
\\ 
& \hspace{2cm}
+ x_1^{(2)} \rhd R_2^{(2,2)}(x_1^{(2)},v_1) 
\\
& = v_3 - (x_1 \rhd v_2 + x_2 \rhd v_1) + x_1 \rhd (x_1 \rhd v_1)
+ x_1 \rhd (x_1 \sotimes v_1) - x_1 \rhd (y_1 \sotimes v_1)
\\
& = v_3 - \left( \binom{2}{1} x_1 v_2 + \binom{3}{1} x_2 v_1 \right)
+ \left( \binom{2}{1}\binom{2}{1}+\binom{2}{2} \right) x_1^2 v_1 
- \binom{2}{2} x_1 y_1 v_1 
\\
& = v_3 - (2x_1 v_2 + 3 x_2 v_1) + 5 x_1^2 v_1 - x_1 y_1 v_1. 
\end{align*}

%%%%%%%%%%%%%%%%%%%%%%%%%%%%%%%%%%%%%%%%%

\subsection{Functoriality of the diffeomorphisms loop}

To prove the main theorem of this section we need some preliminary recurrence 
relations for the recursive operators, and consequently for the Lagrange 
coefficients.
\bigskip 

\begin{corollary}
\label{corollary-R2}
For any $\ell\geq 1$ and any sequence $(n_1,...,n_{\ell+1})$ of positive 
integers, the coefficients $d_{\ell+1}(a_1,...,a_\ell)$ satisfy the 
following recursive equation: 
\begin{align*}
%\label{corollary-R2-equation}
d_\ell(n_1,...,n_\ell) &= \sum_{j=1}^\ell \sum_{\bfp\in \calC_\ell^j} 
\binom{n_1+1}{j}\ d_{p_1-1}(n_2,...,n_{p_1})\ d_{p_2-1}(n_{p_1+2},...,n_{p_1+p_2})\cdots
\\ \nonumber
& \hspace{4cm} 
\cdots d_{p_j-1}(n_{p_1+\cdots +p_{j-1}+2},...,n_\ell). 
\end{align*}
\end{corollary}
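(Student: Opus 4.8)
The plan is to derive this recursion by computing the element $a_1\rhd R_\ell(a_2,\ldots,a_{\ell+1})$ in two different ways, where $a_1,\ldots,a_{\ell+1}$ are homogeneous generators of a free positively graded associative algebra over $\F$, with $|a_i|=n_i\geq 1$. On the one hand, Corollary~\ref{corollary-R1} gives at once
\[
a_1\rhd R_\ell(a_2,\ldots,a_{\ell+1}) = d_\ell(n_1,\ldots,n_\ell)\ a_1 a_2\cdots a_{\ell+1},
\]
the Lagrange coefficient depending, as displayed there, only on $n_1,\ldots,n_\ell$. On the other hand, the defining recursion~(\ref{definition-R-equation}) for $R_\ell$, written with all indices shifted up by one so that it expands $R_\ell(a_2,\ldots,a_{\ell+1})$, presents this element as a sum over $j=1,\ldots,\ell$ and $\bfp\in\calC_\ell^j$ of $j$-fold tensor monomials whose $i$-th factor is $a_{q_i+1}\rhd R_{p_i-1}(a_{q_i+2},\ldots,a_{q_i+p_i})$, where $q_i=p_1+\cdots+p_{i-1}+1$. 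I would then collapse each of these inner factors to a single element of $A$ by Corollary~\ref{corollary-R1} again, obtaining $a_{q_i+1}\rhd R_{p_i-1}(a_{q_i+2},\ldots,a_{q_i+p_i}) = d_{p_i-1}(n_{q_i+1},\ldots,n_{q_i+p_i-1})\ a_{q_i+1}\cdots a_{q_i+p_i}$.

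The next step is the bookkeeping: concatenating the blocks over $i=1,\ldots,j$ produces exactly the word $a_2 a_3\cdots a_{\ell+1}$, and the arguments of the successive Lagrange coefficients become $n_2,\ldots,n_{p_1}$ for the first block, $n_{p_1+2},\ldots,n_{p_1+p_2}$ for the second, and so on down to $n_{P_{j-1}+2},\ldots,n_\ell$ for the last (with $P_i=p_1+\cdots+p_i$; note that the degree $n_{\ell+1}$ of the final variable never appears). Hence the $(j,\bfp)$-summand of $R_\ell(a_2,\ldots,a_{\ell+1})$ equals the length-$j$ multi-monomial $\bigl(a_2\cdots a_{p_1+1}\bigr)\sotimes\cdots\sotimes\bigl(a_{P_{j-1}+2}\cdots a_{\ell+1}\bigr)$ scaled by $\prod_{i=1}^{j}d_{p_i-1}(\ldots)$. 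Applying $a_1\rhd$ to it, and using linearity together with the rule $a_1\rhd(c_1\sotimes\cdots\sotimes c_j)=\binom{|a_1|+1}{j}\,a_1c_1\cdots c_j$ of Definition~\ref{definition-rhd}, this summand turns into $\binom{n_1+1}{j}\,\prod_{i=1}^{j}d_{p_i-1}(\ldots)\ a_1 a_2\cdots a_{\ell+1}$.

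Finally I would compare the two resulting expressions for $a_1\rhd R_\ell(a_2,\ldots,a_{\ell+1})$ and cancel the common nonzero basis monomial $a_1a_2\cdots a_{\ell+1}$, which yields the asserted integer identity for all $n_1,\ldots,n_\ell\geq 1$. The only point that requires genuine care is the index bookkeeping of the previous paragraph — checking, through the shift of~(\ref{definition-R-equation}), that the $j$ blocks partition $\{2,\ldots,\ell+1\}$, that their concatenated product is precisely $a_2\cdots a_{\ell+1}$, and that the degrees fed into each $d_{p_i-1}$ are exactly the ones claimed; everything else is a direct substitution of Corollary~\ref{corollary-R1} and Definition~\ref{definition-rhd}. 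As a consistency check one can verify the degenerate cases $\ell=1$ and $\ell=2$ by hand, using $d_0=1$ and $R_0=\Id$.
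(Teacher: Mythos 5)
Your proof is correct and is exactly the paper's argument: the paper's proof consists of the single sentence ``Applying $a_1\rhd(\ )$ to the recursive expression (\ref{definition-R-equation}) of $R_\ell(a_2,\dots,a_{\ell+1})$ and using Cor.~\ref{corollary-R1} immediately gives the result,'' and your write-up simply supplies the index bookkeeping (and the sensible precaution of working in a free graded algebra so the monomial $a_1\cdots a_{\ell+1}$ can be cancelled) that the paper leaves implicit.
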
 

\begin{proof}
Applying $a_1\rhd (\ )$ to the recursive expression
(\ref{definition-R-equation}) of $R_\ell(a_2,a_3,...,a_{\ell+1})$,
and using Cor.~\ref{corollary-R1}, immediately gives the result. 
\end{proof}

\begin{lemma}
\label{lemma-R2}
For any $\ell\geq 0$ and any $a_1,...,a_{\ell+1}\in A$, the following recursive 
equation holds:  
\begin{align*}
%\label{lemma-R2-equation-R}
a_1 \rhd R_\ell(a_2,...,a_{\ell+1})
&= \sum_{i=0}^{\ell-1} (-1)^{\ell-1-i} 
\big( a_1 \rhd R_i(a_2,...,a_{i+1}) \big) 
\rhd (a_{i+2}\sotimes \cdots \sotimes a_{\ell+1} ). 
\end{align*}
Modulo the factor $a_1 a_2\cdots a_{\ell+1}$, this means that 
\begin{align*}
%\label{lemma-R2-equation-d}
d_\ell(n_1,...,n_\ell) 
& = \sum_{i=0}^{\ell-1} (-1)^{\ell-1-i} 
\binom{n_1+\cdots + n_{i+1}+1}{\ell-i}\ d_i(n_1,...,n_i). 
\end{align*}
\end{lemma}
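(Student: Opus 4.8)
The plan is to reduce both displayed identities to a single scalar identity for the Lagrange coefficients and then prove that identity by induction on $\ell$ (the case $\ell=0$ being the trivial $a_1\rhd R_0(1)=a_1$).

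First I would observe that the two assertions of the lemma are literally the same statement. By Corollary~\ref{corollary-R1}, for each $i$ we have $a_1\rhd R_i(a_2,\ldots,a_{i+1}) = d_i(n_1,\ldots,n_i)\,a_1\cdots a_{i+1}$, a single homogeneous element of $A$ of degree $n_1+\cdots+n_{i+1}$. Plugging this into the closed formula of Definition~\ref{definition-rhd} (where the left argument enters only through the degree of its first tensor factor, the total number of tensor factors, and the product of all of them) gives
$$
\bigl(a_1\rhd R_i(a_2,\ldots,a_{i+1})\bigr)\rhd(a_{i+2}\sotimes\cdots\sotimes a_{\ell+1}) = \binom{n_1+\cdots+n_{i+1}+1}{\ell-i}\,d_i(n_1,\ldots,n_i)\,a_1\cdots a_{\ell+1},
$$
while $a_1\rhd R_\ell(a_2,\ldots,a_{\ell+1}) = d_\ell(n_1,\ldots,n_\ell)\,a_1\cdots a_{\ell+1}$ again by Corollary~\ref{corollary-R1}. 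Hence the operator identity of the lemma is exactly the scalar identity
$$
d_\ell(n_1,\ldots,n_\ell) = \sum_{i=0}^{\ell-1}(-1)^{\ell-1-i}\binom{n_1+\cdots+n_{i+1}+1}{\ell-i}\,d_i(n_1,\ldots,n_i) \qquad (\star)
$$
multiplied through by the monomial $a_1\cdots a_{\ell+1}$; conversely, evaluating the operator identity on a free positively graded algebra with $a_k$ a generator of degree $n_k$ recovers $(\star)$. In particular this is already the ``modulo $a_1a_2\cdots a_{\ell+1}$'' form stated in the lemma.

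Next I would prove $(\star)$ by induction on $\ell$; for $\ell=1$ it reads $d_1(n_1)=\binom{n_1+1}{1}$, which is Definition~\ref{definition-d}. For the inductive step I would play the ``linear'' recursion $(\star)$ for smaller indices against the ``first-block'' recursion of Corollary~\ref{corollary-R2} for $d_\ell$ itself: substituting the inductive expressions for the coefficients occurring on each side and collapsing the resulting multiple sums by repeated use of the Vandermonde convolution $\sum_r\binom{n_1+1}{r}\binom{n_2+\cdots+n_{i+1}}{\ell-i-r}=\binom{n_1+\cdots+n_{i+1}+1}{\ell-i}$ turns $(\star)$ into a finite, elementary identity among binomial coefficients. (Alternatively, one can feed the closed description $d_i(n_1,\ldots,n_i)=\sum_{\bfm\in\calM_i}\prod_k\binom{n_k+1}{m_k}$ from Lemma~\ref{lemma-R1} and Corollary~\ref{corollary-R1} directly into the right-hand side of $(\star)$ and verify the identity at the level of these binomial products.)

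The delicate point, and the main obstacle, is precisely $(\star)$: each individual manipulation is a routine binomial identity, but organising the alternating-sign cancellation — keeping track of the interplay between the ballot condition defining the sets $\calM_i$, the moving upper indices $n_1+\cdots+n_{i+1}+1$, and the signs $(-1)^{\ell-1-i}$ — is where the real care is needed, and I would expect to spend most of the effort setting up that bookkeeping cleanly. Once $(\star)$ is established the operator form follows by the reduction above, and as a by-product one obtains Corollary~\ref{corollary-LR}: applying $(-)\rhd a_{\ell+1}$ to the defining recursion of $L$ (Definition~\ref{definition-L}), using that each $L_i(a_1,\ldots,a_i)\rhd a_{i+1}$ lies in $A$ and that $\rhd$ satisfies the ``boundary-moving'' rule $(w\sotimes w')\rhd w''=w\rhd(w'\sotimes w'')$, gives $L_\ell(a_1,\ldots,a_\ell)\rhd a_{\ell+1}=\sum_{i=0}^{\ell-1}(-1)^{\ell-1-i}\bigl(L_i(a_1,\ldots,a_i)\rhd a_{i+1}\bigr)\rhd(a_{i+2}\sotimes\cdots\sotimes a_{\ell+1})$, which by Lemma~\ref{lemma-R2} and induction on $\ell$ equals $a_1\rhd R_\ell(a_2,\ldots,a_{\ell+1})$.
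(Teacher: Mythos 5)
Your reduction of the operator identity to the scalar identity $(\star)$ is correct and is exactly the paper's first move: the proof opens with ``the two assertions are equivalent,'' and your justification via Corollary~\ref{corollary-R1} together with the degree/length dependence of $\rhd$ is the right way to make that precise (including the converse direction by evaluating on a free graded algebra). Your closing remark about how Corollary~\ref{corollary-LR} then follows also matches the paper.

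Where you diverge is on the scalar identity itself, and that is where the substance of the lemma lives. The paper does not prove $(\star)$ by the induction you propose: it identifies $(\star)$ as the known recursion for the coefficients of the non-commutative Lagrange inversion formula and disposes of it by citing the Chu--Vandermonde identity, the hypergeometric function ${}_2F_1$, and Lemma~2.15 of \cite{BFK}, where a complete (if tricky) proof is given. Your plan --- induct on $\ell$, play $(\star)$ for smaller indices against the ``first-block'' recursion of Corollary~\ref{corollary-R2}, and collapse the sums by Vandermonde --- is plausible in outline but is asserted rather than carried out; you yourself flag the alternating-sign bookkeeping over the ballot sequences $\calM_i$ as the main obstacle and then do not resolve it. As written, the crux of the lemma is therefore unproved in your proposal. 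This is a genuine gap relative to a self-contained argument: either carry the induction through explicitly (and verify that Corollary~\ref{corollary-R2}, whose inner structure is a Catalan-type composition over $\calC_\ell^j$ rather than a linear recursion, really does mesh with the alternating sum in $(\star)$ after Vandermonde), or do what the paper does and invoke \cite{BFK}, Lemma~2.15, where this exact identity is established.
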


\begin{proof}
The two assertions are equivalent, and the second one appears as a recursion
for the coefficients in the non-commutative Lagrange inversion formula. 
It is essentially based on the Chu-Vandermonde identity and can be 
proved\footnote{
We warmly thank Jiang Zeng for pointing out this method to us.} 
using the hypergeometric function ${}_2F_1$ or using some trick as in 
\cite{BFK}, Lemma 2.15. 
\end{proof}

\begin{corollary}
\label{corollary-LR}
For any $\ell\geq 0$ and any $a_1,...a_{\ell+1}\in A$ we have 
\begin{align*}
a_1 \rhd R_\ell(a_2,...,a_{\ell+1}) &= L_\ell(a_1,...,a_\ell) \rhd a_{\ell+1}
\end{align*}
\end{corollary}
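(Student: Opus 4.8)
The plan is to prove the identity by induction on $\ell$, using Lemma~\ref{lemma-R2} to unfold the left-hand side, the defining recursion of Definition~\ref{definition-L} to unfold the right-hand side, and then matching the two expansions term by term. The base case $\ell=0$ is immediate, since $a_1\rhd R_0(1)=a_1\rhd 1=a_1=1\rhd a_1=L_0(1)\rhd a_1$; the case $\ell=1$ is an equally quick check, as $a_1\rhd R_1(a_2)=a_1\rhd a_2=(1\rhd a_1)\rhd a_2=L_1(a_1)\rhd a_2$.

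For the inductive step I would fix $\ell\ge 1$, assume the statement for all smaller indices, and start from Lemma~\ref{lemma-R2}, which gives
\[
a_1 \rhd R_\ell(a_2,\dots,a_{\ell+1})
= \sum_{i=0}^{\ell-1} (-1)^{\ell-1-i}\,
\big( a_1 \rhd R_i(a_2,\dots,a_{i+1}) \big) \rhd (a_{i+2}\sotimes\cdots\sotimes a_{\ell+1}).
\]
By the inductive hypothesis each factor $a_1\rhd R_i(a_2,\dots,a_{i+1})$ with $i<\ell$ equals $L_i(a_1,\dots,a_i)\rhd a_{i+1}$; since $A$ is positively graded and $\rhd$ is graded, this is a homogeneous element $c_i\in A$ of degree $n_1+\cdots+n_{i+1}$, landing in the $A$-summand of $\F\oplus A$ (never in the scalar summand, precisely because every $a_j$ has positive degree). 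On the other side, expanding $L_\ell(a_1,\dots,a_\ell)$ by Definition~\ref{definition-L} and using that $\rhd$ is linear in its first argument yields
\[
L_\ell(a_1,\dots,a_\ell)\rhd a_{\ell+1}
= \sum_{i=0}^{\ell-1} (-1)^{\ell-1-i}\,
\big( c_i \sotimes a_{i+2}\sotimes\cdots\sotimes a_\ell \big) \rhd a_{\ell+1}.
\]
Then I would invoke the ``peeling'' rule built into Definition~\ref{definition-rhd}: for $c\in A$ one has $(c\sotimes b_1\sotimes\cdots\sotimes b_k)\rhd b_{k+1}=c\rhd(b_1\sotimes\cdots\sotimes b_k\sotimes b_{k+1})$, because $\rhd$ on a multi-monomial strips off its first tensor factor and concatenates the remaining string with the second argument. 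Applying this with $c=c_i$ and the string $a_{i+2},\dots,a_\ell,a_{\ell+1}$ turns the $i$-th term of the second display into $c_i\rhd(a_{i+2}\sotimes\cdots\sotimes a_{\ell+1})$, which is exactly the $i$-th term of the first display after substituting $c_i=a_1\rhd R_i(a_2,\dots,a_{i+1})$. Hence the two sums agree and the induction closes. As a cross-check one could instead compare the closed forms of Lemma~\ref{lemma-L1} and Lemma~\ref{lemma-R1}, but the inductive route is shorter since Lemma~\ref{lemma-R2} already absorbs the Chu--Vandermonde combinatorics.

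The only real care needed, hence the ``hard part,'' is bookkeeping at the degenerate endpoints: the term $i=0$, where $c_0=1\rhd a_1=a_1$ so that $(a_1\sotimes a_2\sotimes\cdots\sotimes a_\ell)\rhd a_{\ell+1}=a_1\rhd(a_2\sotimes\cdots\sotimes a_{\ell+1})$, and the term $i=\ell-1$, where the tensor tail $a_{i+2}\sotimes\cdots\sotimes a_\ell$ is empty and the peeling identity degenerates to the plain statement $c_{\ell-1}\rhd a_{\ell+1}$ on both sides. Once those are matched, and once one notes that the homogeneity of $c_i$ makes the binomial coefficient in $\rhd$ well defined, there is nothing else to verify.
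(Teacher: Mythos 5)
Your proposal is correct and follows essentially the same route as the paper: induction on $\ell$ with base cases $\ell=0,1$, unfolding the left-hand side via Lemma~\ref{lemma-R2}, substituting the inductive hypothesis, and using the ``peeling'' identity $(c\sotimes b_1\sotimes\cdots\sotimes b_k)\rhd b_{k+1}=c\rhd(b_1\sotimes\cdots\sotimes b_{k+1})$ built into Definition~\ref{definition-rhd} to recognize the resulting sum as the defining recursion of $L_\ell$ applied to $a_{\ell+1}$. The extra care you take with the endpoint terms and with homogeneity is sound but not needed beyond what the paper already does implicitly.
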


\begin{proof}
By induction on $\ell$. For $\ell=0,1$ the identity is easily verified, 
because 
\begin{align*}
a_1\rhd 1 & = a_1 = 1 \rhd a_1 
\\ 
a_1 \rhd R_1(a_2) & = a_1 \rhd a_2 = L_1(a_1) \rhd a_2. 
\end{align*}
Now suppose that for $i=1,...,\ell-1$ we have 
$a_1 \rhd R_i(a_2,...,a_{i+1}) = L_i(a_1,...,a_i) \rhd a_{i+1}$. 
Then by Lemma \ref{lemma-R2} and Def.~\ref{definition-L} we have
\begin{align*}
a_1 \rhd R_\ell(a_2,...,a_{\ell+1})
&= \sum_{i=0}^{\ell-1} (-1)^{\ell-1-i} 
\big( a_1 \rhd R_i(a_2,...,a_{i+1}) \big) 
\rhd (a_{i+2}\sotimes \cdots \sotimes a_{\ell+1} ) 
\\ 
& = \sum_{i=0}^{\ell-1} (-1)^{\ell-1-i} 
\big( L_i(a_1,...,a_i)\rhd a_{i+1} \big) 
\rhd (a_{i+2}\sotimes \cdots \sotimes a_{\ell+1} ) 
\\ 
& = \sum_{i=0}^{\ell-1} (-1)^{\ell-1-i} 
\Big( \big( L_i(a_1,...,a_i)\rhd a_{i+1} \big) 
\sotimes a_{i+2}\sotimes \cdots \sotimes a_\ell \Big) \rhd a_{\ell+1}  
\\ 
& = L_\ell(a_1,...,a_\ell) \rhd a_{\ell+1} . 
\end{align*}
\end{proof}

\begin{remark}
In the case $\bfe\neq (1,1,...,1)$, whether there exists an operator 
$L_\ell^{\bfe} :A^{\sotimes \ell} \longrightarrow T(A)$ such that 
$$
a_1\rhd R_\ell^{\bfe}(a_2,...,a_{\ell+1}) = L_\ell^{\bfe}(a_1,...,a_\ell)\rhd a_{\ell+1} 
$$
is an open question. 
\end{remark}

\begin{lemma}
\label{lemma-R3}
For any $\ell\geq 1$ and any $a_1,...,a_{\ell+1}\in A$, the following recursive 
equation holds:  
\begin{align*}
%\label{lemma-R3-equation-R} 
a_1 \rhd R_\ell(a_2,...,a_{\ell+1})
&= \sum_{i=1}^\ell (-1)^{i-1} 
\big( a_1 \rhd (a_2\sotimes\cdots \sotimes a_{i+1}) \big) 
\rhd R_{\ell-i}(a_{i+2}\sotimes \cdots \sotimes a_{\ell+1} ) . 
\end{align*}
Modulo the factor $a_1 a_2\cdots a_{\ell+1}$, and if we call $n_i=|a_i|$ for 
$i=1,...,\ell+1$, this means that 
\begin{align}
\label{lemma-R3-equation-d} 
d_\ell(n_1,...,n_\ell) 
& = \sum_{i=1}^\ell (-1)^{i-1} 
\binom{n_1+1}{i}\ d_{\ell-i}(n_1\!+\!\cdots \!+\! n_{i+1},n_{i+2},...,n_\ell). 
\end{align}
\end{lemma}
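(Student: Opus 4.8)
The plan is to reduce the operator identity to a numerical recursion for the Lagrange coefficients of Definition~\ref{definition-d}, and then to establish that recursion by the same summation argument used for Lemma~\ref{lemma-R2}.

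First I would prove the equivalence asserted in the lemma between the operator identity and equation~$(\ref{lemma-R3-equation-d})$. Evaluating on homogeneous elements $a_i$ with $n_i=|a_i|\ge 1$, Corollary~\ref{corollary-R1} gives $a_1\rhd R_\ell(a_2,\dots,a_{\ell+1})=d_\ell(n_1,\dots,n_\ell)\,a_1a_2\cdots a_{\ell+1}$. On the right-hand side, $a_1\rhd(a_2\sotimes\cdots\sotimes a_{i+1})=\binom{n_1+1}{i}\,a_1a_2\cdots a_{i+1}$ is an element of $A$ of degree $n_1+\cdots+n_{i+1}$, and since the operation $\rhd$ on $A\sotimes T(A)$ depends on its left argument only through the degree (Definition~\ref{definition-rhd}), the proof of Corollary~\ref{corollary-R1} applies verbatim with the product $a_1a_2\cdots a_{i+1}$ in place of $a_1$ and shows that the $i$-th summand equals $(-1)^{i-1}\binom{n_1+1}{i}\,d_{\ell-i}(n_1+\cdots+n_{i+1},n_{i+2},\dots,n_\ell)\,a_1\cdots a_{\ell+1}$ (with $d_0=1$ for the term $i=\ell$). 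As $R_\ell$, $\rhd$ and the tensor monomials are multilinear, the operator identity holds in $T(A)$ for every positively graded $A$ if and only if the polynomial identity
\[
d_\ell(n_1,\dots,n_\ell)=\sum_{i=1}^{\ell}(-1)^{i-1}\binom{n_1+1}{i}\,d_{\ell-i}(n_1+\cdots+n_{i+1},n_{i+2},\dots,n_\ell)
\]
holds; this is precisely $(\ref{lemma-R3-equation-d})$, and one checks that, consistently, neither side involves $n_{\ell+1}$, since $a_{\ell+1}$ enters only as a passive rightmost factor.

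Next I would prove this numerical identity. It becomes more transparent after passing to the partial sums $N_j=n_1+\cdots+n_j$ and writing $D_\ell(N_1,\dots,N_\ell)$ for $d_\ell(N_1,N_2-N_1,\dots,N_\ell-N_{\ell-1})$: the identity to be proved reads $D_\ell(N_1,\dots,N_\ell)=\sum_{i\ge1}(-1)^{i-1}\binom{N_1+1}{i}\,D_{\ell-i}(N_{i+1},\dots,N_\ell)$, which expresses $D_\ell$ through the values of the lower operators on the tail of the sequence, whereas Lemma~\ref{lemma-R2} reads $D_\ell(N_1,\dots,N_\ell)=\sum_{i=0}^{\ell-1}(-1)^{\ell-1-i}\binom{N_{i+1}+1}{\ell-i}\,D_i(N_1,\dots,N_i)$. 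This is a recursion for the coefficients of the non-commutative Lagrange inversion formula, dual to the one proved in Lemma~\ref{lemma-R2}; it rests on the Chu--Vandermonde identity $\sum_k\binom{x}{k}\binom{y}{m-k}=\binom{x+y}{m}$ once the relevant binomial factors have been collected, and it can be proved either by the ${}_2F_1$ computation or by the trick of \cite{BFK}, Lemma~2.15, exactly as for Lemma~\ref{lemma-R2} (alternatively by a direct manipulation of the weighted sums over $\calM_\ell$).

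The main obstacle is this numerical identity itself: it is not a formal consequence of the definitions of $R_\ell$ and $\rhd$, but a genuine alternating binomial summation, and the delicate point is to keep track of which of the degrees $n_1,\dots,n_{i+1}$ are absorbed into the single slot $n_1+\cdots+n_{i+1}$ of the lower Lagrange coefficient, together with the boundary case $i=\ell$ (where $d_0=1$) and the constraint $m_1\ge1$ built into $\calM_\ell$. The reduction in the first step, by contrast, is routine given Corollary~\ref{corollary-R1}.
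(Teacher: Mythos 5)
Your reduction of the operator identity to the numerical recursion (\ref{lemma-R3-equation-d}) is correct and is exactly the first step of the paper's proof: by Corollary~\ref{corollary-R1} and the fact that $\rhd$ sees its left argument only through its degree, the $i$-th summand evaluates to $(-1)^{i-1}\binom{n_1+1}{i}\,d_{\ell-i}(n_1+\cdots+n_{i+1},n_{i+2},\dots,n_\ell)\,a_1\cdots a_{\ell+1}$, and multilinearity gives the equivalence of the two assertions. The gap is in the second step: you do not actually prove the recursion (\ref{lemma-R3-equation-d}), you only assert that it is ``dual'' to Lemma~\ref{lemma-R2} and provable by the same Chu--Vandermonde/${}_2F_1$ trick. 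That is not checked, and it is not obviously the same identity: in Lemma~\ref{lemma-R2} the merged degree $n_1+\cdots+n_{i+1}$ sits in the \emph{upper} argument of the binomial coefficient while $d_i$ retains the initial segment $(n_1,\dots,n_i)$ unmerged, whereas in (\ref{lemma-R3-equation-d}) the binomial only involves $n_1$ and the merged degree is pushed into the \emph{first slot} of the lower Lagrange coefficient $d_{\ell-i}$. Since $d_{\ell-i}$ is a sum over $\calM_{\ell-i}$ with the constraint $m_1\geq 1$, collapsing $i+1$ degrees into that first slot changes the combinatorics in a way your change of variables $N_j=n_1+\cdots+n_j$ does not by itself resolve.

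The paper closes this gap without any new binomial identity: it proves (\ref{lemma-R3-equation-d}) by induction on $\ell$, using Lemma~\ref{lemma-R2} twice. One first expands $d_\ell(n_1,\dots,n_\ell)$ via Lemma~\ref{lemma-R2} as a sum over $k=0,\dots,\ell-1$, separates the term $k=0$, expands each $d_k(n_1,\dots,n_k)$ by the inductive hypothesis, and exchanges the sums over $k$ and $i$; the resulting inner sum over $k$ is then recognized, after the substitution $j=k-i$, $p_1=n_1+\cdots+n_{i+1}$, $p_j=n_{i+j}$, as another instance of the recursion of Lemma~\ref{lemma-R2} applied to the shorter sequence $(p_1,\dots,p_{\ell-i})$, which yields exactly $d_{\ell-i}(n_1+\cdots+n_{i+1},n_{i+2},\dots,n_\ell)$. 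If you want to keep your route, you would need to carry out the hypergeometric computation for the new recursion explicitly; otherwise you should replace the appeal to ``the same method as Lemma~\ref{lemma-R2}'' by this bootstrap argument.
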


\begin{proof}
The two assertions are equivalent. Let us prove the second one by induction 
on $\ell$. 
Let us call $\tilde{d}_\ell(n_1,...,n_\ell)$ the right-hand side of 
equation (\ref{lemma-R3-equation-d}). 
For $\ell=1$, the sum in $\tilde{d}_1(n)$ has only one term for $i=1$,
which gives
\begin{align*}
\tilde{d}_1(n) & = (-1)^{1-1} \binom{n+1}{1}\ d_0 = d_1(n). 
\end{align*}
Now suppose that eq.~(\ref{lemma-R3-equation-d}) holds for any 
$1\leq k\leq \ell-1$, that is, we have
\begin{align*}
d_k(n_1,...,n_k) 
& = \sum_{i=1}^k (-1)^{i-1} 
\binom{n_1+1}{i}\ d_{k-i}(n_1\!+\!\cdots \!+\! n_{i+1},n_{i+2},...,n_k),  
\end{align*}
and prove it for $\ell$. For this, we write $d_\ell(n_1,...,n_\ell)$ 
using the recursion given in Lemma \ref{lemma-R2} as a sum over 
$0\leq k\leq \ell-1$, and separate the term $k=0$ to which we can not apply 
the inductive hypothesis. 
Then we expand the factor $d_k(n_1,...,n_k)$ using the inductive 
hypothesis and exchange the sums over $k$ and $i$. We finally obtain 
\begin{align*}
d_\ell(n_1,...,n_\ell) 
& = \sum_{k=1}^{\ell-1} (-1)^{\ell-1-k} 
\binom{n_1\!+\!\cdots \!+\! n_{k+1}+1}{\ell-k}\ d_k(n_1,...,n_k) 
+ (-1)^{\ell-1} \binom{n_1+1}{\ell}
\\ 
& \hspace{-2.5cm} 
= \sum_{i=1}^{\ell-1} (-1)^{i-1} \binom{n_1\!+\!1}{i}\ 
\sum_{k=i}^{\ell-1} (-1)^{\ell-1-k}  
\binom{n_1\!+\!\cdots \!+\! n_{k+1}\!+\!1}{\ell-k}\ 
d_{k-i}(n_1\!+\!\cdots \!+\! n_{i+1},n_{i+2},...,n_k)
\\ 
& \hspace{-1cm} 
+ (-1)^{\ell-1} \binom{n_1+1}{\ell} . 
\end{align*}
Then, $d_\ell(n_1,...,n_\ell)$ is equal to 
\begin{align*}
\tilde{d}_\ell(n_1,...,n_\ell) 
& = \sum_{i=1}^{\ell-1} (-1)^{i-1} \binom{n_1+1}{i}\ 
d_{\ell-i}(n_1\!+\!\cdots \!+\! n_{i+1},n_{i+2},...,n_\ell)
\\
& \hspace{1cm}
+ (-1)^{\ell-1} \binom{n_1+1}{\ell} , 
\end{align*}
if and only if, for any $1\leq i\leq \ell-1$, we have 
\begin{align*}
\sum_{k=i}^{\ell-1} (-1)^{\ell-1-k}  
\binom{n_1\!+\!\cdots \!+\! n_{k+1}\!+\!1}{\ell-k}\ 
d_{k-i}(n_1\!+\!\cdots \!+\! n_{i+1},n_{i+2},...,n_k)
&  
\\ 
& \hspace{-3cm} 
= d_{\ell-i}(n_1\!+\!\cdots \!+\! n_{i+1},n_{i+2},...,n_\ell) . 
\end{align*}
This identity is easily verifyed by setting $j=k-i$, 
$p_1 = n_1+\cdots + n_{i+1}$ and $p_j= n_{i+j}$ for $2\leq j\leq \ell-1-i$, 
since it gives
\begin{align*}
\sum_{j=0}^{\ell-i-1} (-1)^{\ell-i-1-j}  
\binom{p_1\!+\!\cdots \!+\! p_{j+1}\!+\!1}{\ell-i-j}\ 
d_j(p_1,p_2,...,p_j)
& = d_{\ell-i}(p_1,p_2,...,p_{\ell-i}) 
\end{align*}
which holds again by Lemma \ref{lemma-R2}. 
\end{proof}

\begin{corollary}
\label{corollary-L3}
For any $\ell\geq 1$ and any $a_1,...,a_\ell\in A$, the following recursive 
equation holds:  
\begin{align*}
%\label{lemma-R3-equation-R} 
L_\ell(a_1,...,a_\ell) 
&= \sum_{i=1}^{\ell-1} (-1)^{i-1} 
L_{\ell-i}\big(a_1 \rhd (a_2\sotimes\cdots \sotimes a_{i+1}),a_{i+2},...,a_\ell\big) 
+ (-1)^{\ell-1} a_1\sotimes\cdots\sotimes a_\ell .
\end{align*}
\end{corollary}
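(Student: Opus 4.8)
The plan is to prove this by induction on $\ell$, with Lemma~\ref{lemma-L1} (the closed, left-parenthesized form of $L_\ell$) doing all the work, together with the second defining rule of $\rhd$ in Definition~\ref{definition-rhd}. I would deliberately avoid routing through the $R$-side: combining Corollary~\ref{corollary-LR} with Lemma~\ref{lemma-R3} quickly produces the identity \emph{with an extra $\rhd\,a_{\ell+1}$ applied on both sides}, and since $W\mapsto W\rhd a_{\ell+1}$ only remembers the total product, the length, and the degree of the first tensor factor of $W$, it is far from injective; cancelling that outer operation would require a separate argument. The direct induction sidesteps this.

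First I would dispose of the base cases $\ell=1,2$: both sides equal $a_1$ and $a_1\rhd a_2 - a_1\sotimes a_2$ respectively, which are the values of $L_1$ and $L_2$ displayed after Definition~\ref{definition-L}. For the inductive step with $\ell\geq 3$, I apply Lemma~\ref{lemma-L1} to peel off the last variable,
\[
L_\ell(a_1,\dots,a_\ell) = L_{\ell-1}(a_1,\dots,a_{\ell-1})\rhd a_\ell \;-\; L_{\ell-1}(a_1,\dots,a_{\ell-1})\sotimes a_\ell,
\]
then substitute the inductive hypothesis for $L_{\ell-1}(a_1,\dots,a_{\ell-1})$, abbreviating $b_i := a_1\rhd(a_2\sotimes\cdots\sotimes a_{i+1})\in A$. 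Distributing $\rhd a_\ell$ and $\sotimes a_\ell$ over the resulting sum, the pair coming from the $i$-th summand ($1\leq i\leq\ell-2$),
\[
L_{\ell-1-i}(b_i,a_{i+2},\dots,a_{\ell-1})\rhd a_\ell - L_{\ell-1-i}(b_i,a_{i+2},\dots,a_{\ell-1})\sotimes a_\ell,
\]
folds back, by Lemma~\ref{lemma-L1} read in the other direction (valid since $\ell-i\geq 2$), into exactly $L_{\ell-i}(b_i,a_{i+2},\dots,a_\ell)$. The leftover monomial term $(-1)^{\ell-2}(a_1\sotimes\cdots\sotimes a_{\ell-1})$ contributes $(-1)^{\ell-2}\bigl((a_1\sotimes\cdots\sotimes a_{\ell-1})\rhd a_\ell - a_1\sotimes\cdots\sotimes a_\ell\bigr)$, and here the rule $(a_1\sotimes\cdots\sotimes a_{\ell-1})\rhd a_\ell = a_1\rhd(a_2\sotimes\cdots\sotimes a_\ell) = b_{\ell-1} = L_1(b_{\ell-1})$ from Definition~\ref{definition-rhd} turns this into the missing $i=\ell-1$ summand plus $(-1)^{\ell-1}(a_1\sotimes\cdots\sotimes a_\ell)$, using $-(-1)^{\ell-2}=(-1)^{\ell-1}$. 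Assembling the pieces yields the claimed formula.

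The only genuinely delicate point is the bookkeeping at the tail of the sum: the $i=\ell-1$ term has $L_{\ell-i}=L_1$, which cannot itself be expanded by Lemma~\ref{lemma-L1} (that lemma needs length $\geq 2$), so one must check that it is produced \emph{precisely} by the leftover pure-tensor term via the $\rhd$-identity above, with the correct sign. Everything else is routine, because the combinatorial content — the Chu--Vandermonde-type recursions among the $d_\ell$ — has already been absorbed into Lemma~\ref{lemma-L1} and never needs to be revisited here.
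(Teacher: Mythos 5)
Your proof is correct, and it takes a genuinely different route from the paper's. The paper proves this corollary in one line: it takes the identity of Lemma~\ref{lemma-R3}, separates the $i=\ell$ term, and applies Corollary~\ref{corollary-LR} (i.e. $b_1\rhd R_j(b_2,\dots,b_{j+1})=L_j(b_1,\dots,b_j)\rhd b_{j+1}$) to every summand, which yields exactly the claimed identity but with an extra $\rhd\,a_{\ell+1}$ appended to both sides. Your reservation about that route is well taken: $W\mapsto W\rhd a_{\ell+1}$ sends all of $T(A)$ into $\F\oplus A$, remembering only the product of the factors weighted by a binomial coefficient in the length and the degree of the leading factor, so it is not injective (for instance $\binom{|a|+|b|+1}{1}\,a\sotimes b$ and $\binom{|a|+1}{2}\,ab$ have the same image under $\rhd\,c$), and stripping off the outer $\rhd\,a_{\ell+1}$ needs a justification the paper does not give. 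Your induction via Lemma~\ref{lemma-L1} --- peel off $a_\ell$, substitute the inductive hypothesis, refold each pair $L_{\ell-1-i}(\cdots)\rhd a_\ell - L_{\ell-1-i}(\cdots)\sotimes a_\ell$ into $L_{\ell-i}(\cdots)$ using the lemma in reverse (legitimate since $\ell-i\geq 2$ there), and convert the leftover pure tensor via $(a_1\sotimes\cdots\sotimes a_{\ell-1})\rhd a_\ell = a_1\rhd(a_2\sotimes\cdots\sotimes a_\ell)=L_1(b_{\ell-1})$ with the sign $-(-1)^{\ell-2}=(-1)^{\ell-1}$ landing on the pure tensor term --- is self-contained, stays entirely on the $L$-side, and checks out in every detail, including the delicate $i=\ell-1$ tail term you flag. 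What the paper's route buys is brevity and the link between the $L$- and $R$-recursions; what yours buys is a complete argument carried out in $T(A)$ rather than one collapsed into $A$.
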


\begin{proof}
It suffices to write 
\begin{align*}
%\label{lemma-R3-equation-R} 
a_1 \rhd R_\ell(a_2,...,a_{\ell+1})
&= \sum_{i=1}^{\ell-1} (-1)^{i-1} 
\big( a_1\rhd (a_2 \sotimes\cdots \sotimes a_{i+1}) \big) 
\rhd R_{\ell-i}(a_{i+2}\sotimes \cdots \sotimes a_{\ell+1} ) 
\\ 
& \hspace{2cm} 
+ (-1)^\ell a_1 \rhd (a_2\sotimes\cdots \sotimes a_{\ell+1})
\end{align*}
after Lemma \ref{lemma-R3}, and to apply the equality 
$b_1\rhd R_j(b_2,...,b_{j+1}) = L_j(b_1,...,b_j)\rhd b_{j+1}$ 
everywhere. 
\end{proof}

%%%%%%%%%%%%

\begin{lemma}
\label{lemma-Re1}
For any $\ell\geq 2$, any $\bfe\in\calE_\ell$ and any $a_1,...,a_\ell\in A$, 
we have
\begin{align*}
R_\ell^{\bfe}(a_1,...,a_\ell) 
& = R_1^{(e_1)}(a_1) \rhd R_{\ell-1}^{(e_2,...,e_\ell)}(a_2,...,a_\ell)
\\ 
& \hspace{-1cm}
+ \sum_{i=1}^{\ell-1}\ R_i^{(e_1,...,e_i)}(a_1,...,a_i) \sotimes 
\Big(a_{i+1}\rhd R_{\ell-i-1}^{(e_{i+2},...,e_\ell)}(a_{i+2},...,a_\ell)\Big) .  
\end{align*}
\end{lemma}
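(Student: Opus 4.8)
The plan is to deduce the identity directly from the defining recursion (\ref{definition-Re-equation}) by a single reindexing of the double sum over the number $j$ of tensor factors and the compositions $\bfp\in\calC_\ell^j$; no induction on $\ell$ is needed, since both sides are already expressed through the operators $R^{(\cdot)}_p$ with $p<\ell$.

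First I would split off the term $j=1$ of (\ref{definition-Re-equation}): the only composition is $\bfp=(\ell)$, the corresponding summand is the single tensor factor $R_1^{(e_1)}(a_1)\rhd R_{\ell-1}^{(e_2,...,e_\ell)}(a_2,...,a_\ell)$, and this is exactly the first term on the right-hand side of the Lemma.

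For the terms with $j\geq 2$ I would peel off the \emph{last} tensor factor. Given $\bfp=(p_1,...,p_j)\in\calC_\ell^j$, set $i:=p_1+\cdots+p_{j-1}=\ell-p_j$; then the last tensor factor of the summand is $a_{i+1}\rhd R_{\ell-i-1}^{(e_{i+2},...,e_\ell)}(a_{i+2},...,a_\ell)$, which depends on $\bfp$ only through $i$. The remaining factors $1,...,j-1$ involve only the variables $a_1,...,a_i$ and, through the same index conventions as in (\ref{definition-Re-equation}), the labels $e_1,...,e_i$; comparing with the definition of $R^{(e_1,...,e_i)}_i(a_1,...,a_i)$, I would observe that, on summing over all $\bfp'=(p_1,...,p_{j-1})$ ranging over compositions of $i$ with any number $j'=j-1\geq 1$ of parts, these remaining factors reassemble into precisely $R^{(e_1,...,e_i)}_i(a_1,...,a_i)$. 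Since writing a composition $\bfp$ of $\ell$ with at least two parts as $\bfp=(\bfp',p_j)$, with $\bfp'$ a composition of $i:=\ell-p_j$ and $1\leq i\leq\ell-1$, is a bijection, letting $i$ run from $1$ to $\ell-1$ recovers all the $j\geq 2$ summands, whose total is therefore $\sum_{i=1}^{\ell-1}R_i^{(e_1,...,e_i)}(a_1,...,a_i)\sotimes\bigl(a_{i+1}\rhd R^{(e_{i+2},...,e_\ell)}_{\ell-i-1}(a_{i+2},...,a_\ell)\bigr)$. Adding back the $j=1$ term gives the claim.

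The one step requiring genuine care, rather than pure bookkeeping, is matching the label indices: in (\ref{definition-Re-equation}) the first variable $a_1$ carries $R_1^{(e_1)}$ while the head variable of every later tensor factor carries no $R_1^{(\cdot)}$ and its label is simply dropped, so I must check that the sub-composition $\bfp'$ of $i$ inherits exactly the tuple $(e_1,...,e_i)$ — including the positions whose labels are dropped — that the definition of $R_i^{(e_1,...,e_i)}$ prescribes. I would also dispose of the degenerate cases first: if $e_1=2$ then $R_1^{(2)}=0$ annihilates the first factor of every summand and $R_\ell^{\bfe}=0$, so both sides vanish; and if some part equals $1$, the corresponding inner operator is $R_0=\Id$ acting on $1\in\F$ (as stipulated in Def.~\ref{definition-Re}), so that factor collapses to its head variable — these conventions are built into the recursion, so the reindexing goes through verbatim. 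As a sanity check, at $\ell=2$ the identity reads $R_2^{\bfe}(a_1,a_2)=R_1^{(e_1)}(a_1)\rhd R_1^{(e_2)}(a_2)+R_1^{(e_1)}(a_1)\sotimes a_2$, matching the examples computed after Def.~\ref{definition-Re}.
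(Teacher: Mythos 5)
Your proof is correct and follows essentially the same route as the paper's: split off the $j=1$ term of the defining recursion (\ref{definition-Re-equation}) and regroup the $j\geq 2$ terms by peeling off the last tensor factor via the decomposition $\calC_\ell^j=\bigcup_{i}\calC_i^{j-1}\times\calC_{\ell-i}^1$, checking that the label indices match. The only organizational difference is that the paper wraps this regrouping in an induction on $\ell$, whereas you correctly observe that the inner sum over the first $j-1$ factors is identified with $R_i^{(e_1,\dots,e_i)}(a_1,\dots,a_i)$ directly from the defining recursion, so the induction is dispensable.
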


\begin{proof}
The term $j=1$ in the defining recursion (\ref{definition-Re-equation}) gives 
exactly 
$$
R_1^{(e_1)}(a_1) \rhd R_{\ell-1}^{(e_2,...,e_{\ell-1})}(a_2,...,a_\ell), 
$$
se it remains to prove that 
\begin{align}
\nonumber
& \sum_{j=2}^\ell \sum_{\bfp\in \calC_\ell^j} 
\big( R_1^{(e_1)}(a_1) \rhd R_{p_1-1}^{(e_2,...,e_{p_1})}(a_2,...,a_{p_1}) \big) 
\\ \label{lemma-Re1-equation}
& \hspace{1cm}
\sotimes \big( a_{p_1+1}
\rhd R_{p_2-1}^{(e_{p_1+2},...,e_{p_1+p_2})}(a_{p_1+2},...,a_{p_1+p_2}) \big) 
\sotimes \cdots 
\\ \nonumber
& \hspace{2cm}
\cdots \sotimes \big( a_{p_1+\cdots +p_{j-1}+1} \rhd 
R_{p_j-1}^{(e_{p_1+\cdots +p_{j-1}+2},...,e_\ell)}
(a_{p_1+\cdots +p_{j-1}+2},...,a_\ell) \big) 
= 
\\ \nonumber
& \hspace{3cm}
= \sum_{i=1}^{\ell-1}\ R_i^{(e_1,...,e_i)}(a_1,...,a_i) \sotimes 
\Big(a_{i+1}\rhd R_{\ell-i-1}^{(e_{i+2},...,e_\ell)}(a_{i+2},...,a_\ell)\Big) . 
\end{align}
Let us prove this identity by induction. For $\ell=2$ and $3$, it is easy to
verify on the above examples that 
\begin{align*}
R_2^{\bfe}(a,b) 
& = R_1^{(e_1)}(a) \rhd R_1^{(e_2)}(b) 
+ R_1^{(e_1)}(a) \sotimes \big(b\rhd R_0(1)), 
\\[.3cm]
R_3^{\bfe}(a,b,c) 
& = R_1^{(e_1)}(a) \rhd R_2^{(e_2,e_3)}(b,c) 
+ R_1^{(e_1)}(a) \sotimes \big(b\rhd R_1^{(e_3)}(c)\big)
+ R_2^{(e_1,e_2)}(a,b)\sotimes (c \rhd R_0(1)).
% \\
% R_4^{\bfe}(a,b,c,d) 
% & = R_1^{(e_1)}(a)\rhd R_3^{(e_2,e_3,e_4)}(b,c,d) 
% + R_1^{(e_1)}(a) \sotimes \big(b\rhd R_2^{(e_3,e_4)}(c,d)\big)
% \\
% & \hspace{1cm}
% + R_2^{(e_1,e_2)}(a,b) \sotimes \big(c\rhd R_1^{(e_4)}(d)\big)
% + R_3^{(e_1,e_2,e_3)}(a,b,c) \sotimes d .  
\end{align*}
Now suppose it holds up to order $\ell-1$, and let us prove it at order $\ell$. 

Consider the left-hand side of eq.~(\ref{lemma-Re1-equation}). Since $j\geq 2$, 
we can write 
$$
\calC^j_\ell = \bigcup_{i=j-1}^{\ell-1} \calC^{j-1}_i \times \calC^1_{\ell-i}
$$
and decompose $\bfp\in \calC^j_\ell$ into $(p_1,...,p_{j-1})\in \calC^{j-1}_i$ 
and $(p_j)\in \calC^1_{\ell-i}$ for any value $i=j-1,...,\ell-1$. 
We then have $p_1+\cdots +p_{j-1}=i$ and $p_j=\ell-(p_1+\cdots +p_{j-1})=\ell-i$. 
Therefore the left-hand side can be written as 
\begin{align*}
& \sum_{j=2}^\ell \sum_{i=j-1}^{\ell-1} \sum_{(p_1,...,p_{j-1})\in \calC_i^{j-1}} 
\delta_{e_1,1}\ \big( a_1 \rhd R_{p_1-1}^{(e_2,...,e_{p_1})}(a_2,...,a_{p_1}) \big) 
\sotimes \cdots 
\\ 
& \hspace{2cm}
\cdots \sotimes \big( a_{P_{j-2}+1}
\rhd R_{p_{j-1}-1}^{(e_{p_{j-2}+2},...,e_{p_{j-1}})}(a_{P_{j-2}+2},...,a_i) \big) \sotimes 
\\ 
& \hspace{3cm}
\sotimes \big( a_{i+1} \rhd 
R_{\ell-i-1}^{(e_{i+2},...,e_\ell)}(a_{i+2},...,a_\ell) \big) = 
\\[.3cm] 
& \hspace{.5cm}
= \sum_{i=1}^{\ell-1}\  
\left( \sum_{k=1}^i \sum_{(p_1,...,p_k)\in \calC_i^k} 
\delta_{e_1,1}\ \big( a_1 \rhd R_{p_1-1}^{(e_2,...,e_{p_1-1})}(a_2,...,a_{p_1}) \big) 
\sotimes \cdots \right. 
\\ 
& \hspace{2cm}
\left. \cdots \sotimes \big( a_{P_{k-1}+1}
\rhd R_{p_k-1}^{(e_{P_{k-1}+2},...,e_{P_k})}(a_{P_{k-1}+2},...,a_i) \big) \right) \sotimes 
\\ 
& \hspace{3cm}
\sotimes \big( a_{i+1} \rhd 
R_{\ell-i-1}^{(e_{i+2},...,e_\ell)}(a_{i+2},...,a_\ell) \big), 
\end{align*}
where $P_k=p_1+\cdots +p_k$. 
Applying the inductive hypothesis to the sum over $k$ leads to the result. 
\end{proof}

%%%%%%%%%%%%%%%%%%%

\begin{theorem}
\label{Diff(A)-proalgebraic-loop}
The associative algebra $\HFdBex$ is indeed a coloop bialgebra and 
represents the loop of formal diffeomorphisms $\Diff$ as a functor 
$\Diff:\As \longrightarrow \Loop$. 

As a consequence, given an associative algebra $A$, a series 
$a=\sum_{n\geq 0} a_n\,\lambda^{n+1} \in \Diff(A)$ can be seen as an algebra 
homomorphism $a:\HFdBex\longrightarrow A$ defined on the generators 
of $\HFdBex$ by $a(x_n)=a_n$, and the right and left division $a\slash b$ and 
$a\backslash b$ are given at any order $n$ by the following closed formulas: 
\begin{align*}
(a\slash b)_n & = \mu_A\,(a\scoprod b)\,\delta_r(x_n) 
\\ 
& = a_n-b_n + \sum_{\ell=1}^{n-1} (-1)^\ell \sum_{\bfn \in \calC_n^{\ell+1}} 
d_\ell(n_1,...,n_\ell)\ 
(a_{n_1}-b_{n_1})\, b_{n_2}\cdots b_{n_{\ell+1}} , 
\\ 
(a\backslash b)_n & = \mu_A\,(a\scoprod b)\,\delta_l(x_n) 
\\
& = b_n-a_n + \sum_{\ell=2}^{n-1} (-1)^\ell \!\sum_{\bfn\in \calC_{n}^{\ell+1}} 
\sum_{\bfe\in \calE_\ell} (-1)^{\bfe}\ d_\ell^{\bfe}(n_1,...,n_\ell)\ 
a_{n_1} c^{(e_1)}_{n_2}\cdots c^{(e_{\ell-1})}_{n_\ell}\, (b_{n_{\ell+1}}-a_{n_{\ell+1}}) ,
\end{align*}
where $c^{(e_i)}_{n_i}= a_{n_i}$ if $e_i=1$ and $c^{(e_i)}_{n_i}= b_{n_i}$ if $e_i=2$.  
\end{theorem}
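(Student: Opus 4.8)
The plan is to reduce everything to the general equivalence of Theorem~\ref{theorem-Q-functor} together with the operator identities already established, so that only the identification of the co-operations remains. First I would dispose of the routine points. Since $\HFdBex=\F\langle x_n,\ n\geq 1\rangle$ is free unital associative on the graded generators $x_n$, an algebra homomorphism $a\colon\HFdBex\to A$ is the same thing as the sequence $a_n:=a(x_n)\in A$, i.e.\ the same as a series $a=\sum_{n\geq 0}a_n\lambda^{n+1}\in\Diff(A)$ with $a_0=a(x_0)=1$; this natural bijection shows $\HFdBex$ represents $\Diff$ as a functor to $\Set$, and each $\Diff(f)$ is postcomposition, hence a loop homomorphism. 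As $\Diff(A)$ is already known to be a loop for every associative $A$, the ``viceversa'' part of Theorem~\ref{theorem-Q-functor} applies: $\HFdBex$ carries a coloop structure whose co-operations, computed inside the loop $\Diff(\HFdBex\scoprod\HFdBex)$, are $\Delta^{\mathrm c}=i_1\circ i_2$, $\delta_r^{\mathrm c}=i_1\slash i_2$, $\delta_l^{\mathrm c}=i_1\backslash i_2$ with counit $\varepsilon^{\mathrm c}=1_\F$, where $i_1,i_2\colon\HFdBex\to\HFdBex\scoprod\HFdBex$ are the inclusions viewed as the series $\sum x_n^{(1)}\lambda^{n+1}$ and $\sum x_n^{(2)}\lambda^{n+1}$. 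It then suffices to check these agree with the formulas of Definition~\ref{FaaDiBrunoColoopBialgebra}. Evaluating the composition law of Definition~\ref{definition-Diff-loop} on $i_1,i_2$ gives $(i_1\circ i_2)(x_n)=\sum_{m=0}^n\sum_{k_0+\cdots+k_m=n-m}x_m\,y_{k_0}\cdots y_{k_m}$, and grouping the $y$'s by the number $\ell$ of nonzero indices (with $\binom{m+1}{\ell}$ placements of these) recovers $\DFdBex(x_n)$, equivalently its $\rhd$-form in Theorem~\ref{FaaDiBruno-operators}; and $\varepsilon^{\mathrm c}$ manifestly kills every $x_n$, $n\geq1$.

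The substance is to identify $\delta_r^{\mathrm c}$ and $\delta_l^{\mathrm c}$. By uniqueness of the right division in $\Diff(\HFdBex\scoprod\HFdBex)$, $\delta_r^{\mathrm c}=i_1\slash i_2$ is characterised by $\delta_r^{\mathrm c}\circ i_2=i_1$, which written out is the recursion $\delta_r(x_n)=x_n-\sum_{m=0}^{n-1}\sum_{k_0+\cdots+k_m=n-m}\delta_r(x_m)\,y_{k_0}\cdots y_{k_m}$. So I would verify that the Definition~\ref{FaaDiBrunoColoopBialgebra} formula, rewritten via Theorem~\ref{FaaDiBruno-operators} as $\delta_r(x_n)=u_n+\sum_{\ell\geq1}(-1)^\ell\sum_{\bfn\in\calC_n^{\ell+1}}u_{n_1}\rhd R_\ell(y_{n_2},\dots,y_{n_{\ell+1}})$ with $u_n=x_n-y_n$, satisfies this recursion: since $\delta_r(x_m)$ is homogeneous of degree $m$, one has $\delta_r(x_m)\rhd(y_{k_1}\sotimes\cdots\sotimes y_{k_\lambda})=\binom{m+1}{\lambda}\,\delta_r(x_m)\,y_{k_1}\cdots y_{k_\lambda}$, so after collecting terms of fixed length the recursion turns into precisely the recurrence of Lemma~\ref{lemma-R2} (equivalently Lemma~\ref{lemma-R3}, or, in the $L$-form, Corollary~\ref{corollary-LR} with Definition~\ref{definition-L}), whose coefficient version is the non-commutative Lagrange-inversion recurrence for the $d_\ell$. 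Dually, $\delta_l^{\mathrm c}=i_1\backslash i_2$ is characterised by $i_1\circ\delta_l^{\mathrm c}=i_2$, i.e.\ $\sum_{m=0}^n\sum_{k_0+\cdots+k_m=n-m}x_m\,\delta_l(x_{k_0})\cdots\delta_l(x_{k_m})=y_n$. Substituting the operator form (\ref{FaaDiBruno-operators-left-codivision}) of $\delta_l$ into this equation produces variables carrying the copy-labels $e_i\in\{1,2\}$, and one must track into which copy of $\HFdBex\scoprod\HFdBex$ each generator falls; the terms that survive are exactly those indexed by $\bfm\in\calM_\ell^{\bfe}$, and the identity needed is the splitting formula for $R_\ell^{\bfe}$ of Lemma~\ref{lemma-Re1}, together with the conventions for the single-variable labeled operators of Definition~\ref{definition-Re} that fix the first slot and the base cases. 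This establishes all four coloop axioms of Definition~\ref{def-coloop}, so by Theorem~\ref{theorem-Q-functor} the representable functor $Y(\HFdBex)$ lands in $\Loop$ and, agreeing with $\Diff$ on objects and on the comultiplication, equals $\Diff\colon\As\to\Loop$.

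For the last assertion, given $a,b\in\Diff(A)$ with homomorphisms $a,b\colon\HFdBex\to A$, formula (\ref{convolution}) gives $a\slash b=\mu_A(a\scoprod b)\delta_r$ and $a\backslash b=\mu_A(a\scoprod b)\delta_l$; applying $\mu_A(a\scoprod b)$ to the operator expressions and invoking Corollary~\ref{corollary-R1} (so $u_{n_1}\rhd R_\ell(y_{n_2},\dots,y_{n_{\ell+1}})\mapsto d_\ell(n_1,\dots,n_\ell)(a_{n_1}-b_{n_1})b_{n_2}\cdots b_{n_{\ell+1}}$) and Lemma~\ref{lemma-Re3} (so the labeled terms evaluate with coefficient $d_\ell^{\bfe}(n_1,\dots,n_\ell)$ and variables $c^{(e_i)}=a$ or $b$ according as $e_i=1$ or $2$) yields the two displayed closed formulas. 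I expect the main obstacle to be the left-codivision identification: for the right codivision the required recurrence is the Lagrange recurrence already isolated in Lemma~\ref{lemma-R2} and no labels intervene, whereas for the left one the alternating-copy pattern and the weight $(-1)^{\bfe}=(-1)^{e_1+\cdots+e_\ell-\ell}$ must be matched term by term against the output of Lemma~\ref{lemma-Re1}, which is the most delicate piece of bookkeeping in the whole construction.
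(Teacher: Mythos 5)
Your proposal is correct, and it reaches the same combinatorial core as the paper (the recursion of Lemma~\ref{lemma-R2}, equivalently the defining recursion of $L_\ell$ via Corollary~\ref{corollary-LR}, for the right codivision; the splitting identity of Lemma~\ref{lemma-Re1} for the left one), but the structural framing is genuinely different and more economical. The paper verifies all four cocancellation axioms of Definition~\ref{def-coloop} directly on generators: the first right identity reduces to the recursion defining $L_\ell$, the second right identity to Lemma~\ref{lemma-R3}, the first left identity to the long $A+B+C+D+E$ manipulation culminating in Lemma~\ref{lemma-Re1}, and the second left identity to a separate induction on $n$ (the paper explicitly notes it cannot be phrased as a recurrence on the $R_\ell^{\bfe}$). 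You instead invoke the ``viceversa'' direction of Theorem~\ref{theorem-Q-functor}: since $\Diff(A)$ is already known to be a loop functorially and $\HFdBex$ represents it as a set-valued functor, the maps $i_1\slash i_2$ and $i_1\backslash i_2$ automatically satisfy all coloop axioms, so it suffices to identify the explicit formulas of Definition~\ref{FaaDiBrunoColoopBialgebra} with these categorical codivisions via their uniquely-solvable characterizing recursions $\delta_r\circ i_2=i_1$ and $i_1\circ\delta_l=i_2$ (which are exactly the paper's recursions (\ref{right-codivision-recursion-a}) and (\ref{left-codivision-recursion-a})). This buys you the second cocancellation identity on each side for free, eliminating the need for Lemma~\ref{lemma-R3} and for the paper's closing induction in part ii); the price is that the argument now leans on the functoriality of the divisions on $\Diff$ (i.e.\ that $\Diff(f)$ preserves $\slash$ and $\backslash$, which holds because the divisions are given by universal integer-polynomial recursions in the coefficients --- worth stating a little more carefully than ``postcomposition, hence a loop homomorphism''). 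The identification of $\DFdBex$ with $i_1\circ i_2$ by counting the $\binom{n_1+1}{\ell}$ placements of the nonzero indices, and the final evaluation of the closed formulas through $\mu_A(a\scoprod b)$ using Corollary~\ref{corollary-R1} and Lemma~\ref{lemma-Re3}, are as in the paper.
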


\begin{proof}
The free associative algebra $\HFdBex$ clearly represents the sets 
$\Diff(A)$ over associative algebras $A$, and the comultiplication $\DFdBex$ 
is just the Fa\`a di Bruno comultiplication $\DFdB$ seen with values in 
$\HFdB\scoprod \HFdB$ instead of $\HFdB\otimes \HFdB$, therefore 
it clearly represents the loop law given in Definition 
\ref{definition-Diff-loop}. 
Thus, the theorem is proved if we show that $\HFdBex$ is indeed a coloop 
bialgebra. 

The comultiplication $\DFdBex$ satisfies the compatibility relation with 
the standard counit, because $\DFdB$ does, and coassociativity is not required. 
So it remains to check that the codivisions $\delta_r$ and $\delta_l$ 
given in Def.~\ref{definition-Diff-loop} satisfy the identities 
(\ref{right-cocancellation}) and (\ref{left-cocancellation}). Since these maps 
are algebra morphisms, it suffices to verify these identities on the 
generators $x_n$, for any $n\geq 1$. 
\bigskip 

\noindent
i) Let us start with the right codivision and show that it satisfies  
the first identity (\ref{right-cocancellation}), namely
$$ 
(\Id\scoprod \mu)\,  (\delta_r\scoprod\Id)\,  \DFdBex(x_n) = x_n,
$$ 
which explicitely gives the recurrence (with $u_n=x_n-y_n$) 
\begin{align}
\label{right-codivision-recursion-a}
\delta_r(x_n) 
&= u_n -\sum_{m=1}^{n-1} \sum_{\ell=1}^{n-m} \sum_{\bfk\in \calC_{n-m}^\ell} 
\delta_r(x_m)\rhd \big(y_{k_1}\sotimes \cdots \sotimes y_{k_\ell} \big). 
\end{align}
Expanding $\delta_r(x_n)$ in terms of the left recursive opeators, 
this equation becomes
\begin{align*}
& 
u_n + \sum_{\ell=1}^{n-1} (-1)^\ell \sum_{\bfn\in \calC_n^{\ell+1}} 
L_\ell(u_{n_1},y_{n_2},...,y_{n_\ell}) \rhd y_{n_{\ell+1}}
\\ 
& \hspace{1cm}
= u_n -\sum_{m=1}^{n-1} \sum_{j=1}^{n-m} \sum_{\bfq\in \calC_{n-m}^j} 
\sum_{i=0}^{m-1} (-1)^i \sum_{\bfp\in \calC_m^{i+1}} 
\Big( L_i(u_{p_1},y_{p_2},...,y_{p_i}) \rhd y_{p_{i+1}}\Big) 
\rhd \big(y_{q_1}\sotimes \cdots \sotimes y_{q_j} \big) 
\\ 
& \hspace{1cm}
= u_n +\sum_{\ell=i+j=1}^{n-1} \sum_{i=0}^{\ell-1} (-1)^{i+1} \sum_{m=i+1}^{n-1} 
\underset{\bfq\in \calC_{n-m}^{\ell-i}}{\sum_{\bfp\in \calC_m^{i+1}}} 
\Big( L_i(u_{p_1},y_{p_2},...,y_{p_i})  \rhd y_{p_{i+1}}\Big) 
\rhd \big(y_{q_1}\sotimes \cdots \sotimes y_{q_{\ell-i}} \big) . 
\end{align*}
Now, since 
$$
\bigcup_{m=i+1}^{n-1} \calC_m^{i+1} \times \calC_{n-m}^{\ell-i} 
\cong \calC_{n}^{\ell+1}, 
$$
let us call $\bfn=(\bfp,\bfq)$, that is,
$$
(n_1,n_2,...,n_{\ell+1}) = (p_1,...,p_{i+1},q_1,...,q_j). 
$$
Then, the recursion (\ref{right-codivision-recursion-a})
is equivalent, for any $n\geq 1$, any $1\leq \ell\leq n-1$ and any 
$\bfn\in \calC_{n}^{\ell+1}$, to the equation
\begin{align*}
& L_\ell(u_{n_1},y_{n_2},...,y_{n_\ell}) \rhd y_{n_{\ell+1}}
\\ 
& \hspace{2cm}
= \sum_{i=0}^{\ell-1} (-1)^{\ell-1-i} 
\big( L_i(u_{n_1},y_{n_2},...,y_{n_i})  \rhd y_{n_{i+1}}\big) 
\rhd (y_{n_{i+2}}\sotimes \cdots \sotimes y_{n_{\ell+1}} ) 
\\ 
& \hspace{2cm}
= \sum_{i=0}^{\ell-1} (-1)^{\ell-1-i} 
\Big( \big( L_i(u_{n_1},y_{n_2},...,y_{n_i})  \rhd y_{n_{i+1}}\big) 
\sotimes y_{n_{i+2}}\sotimes \cdots \sotimes y_{n_\ell} \Big) 
\rhd y_{n_{\ell+1}} , 
\end{align*}
which holds by definition of $L_\ell$. 

The second identity (\ref{right-cocancellation}), namely 
$$
(\Id\scoprod\mu)\,  (\DFdBex\scoprod\Id)\,  \delta_r(x_n) = x_n,   
$$ 
is better developed using the expansion over the right recursive operators, 
and explicitely gives the recurrence 
\begin{align}
\label{right-codivision-recursion-b}
& \sum_{\ell=1}^{n-1} \sum_{\bfn\in \calC^{\ell+1}_n} (-1)^\ell\ 
x_{n_1}\rhd R_\ell(y_{n_2},...,y_{n_{\ell+1}}) 
\\
\nonumber
& \hspace{1cm} 
= \sum_{m=1}^{n-1} 
\sum_{p=1}^{n-m} \sum_{i=1}^{p} \sum_{\bfp\in \calC^i_p} 
\sum_{j=0}^{n-m-p} \sum_{\bfq\in \calC^j_{n-m-p}} (-1)^{j+1} 
\big(x_m\rhd (y_{p_1}\sotimes\cdots\sotimes y_{p_i})\big) 
\rhd R_j(y_{q_1},...,y_{q_j}) . 
\end{align}
Rewriting the sums in terms of $m=1,...,n-1$, $\ell=i+j=1,...,n-m$, 
$i=1,...,\ell$ and $j=\ell-i$, this gives a sum over 
$\bfp\in\calC_p^{i}$ and $\bfq\in\calC_{n-m-p}^{\ell-i}$ for 
$p=i,...,n-m$. That is, we get a sum over $\bfk=(\bfp,\bfq)\in \calC_{n-m}^\ell$
and consequently a sum over $\bfn=(m,\bfk)\in \calC_n^{\ell+1}$: 
\begin{align*}
& \sum_{\ell=1}^{n-1} \sum_{\bfn\in \calC^{\ell+1}_n} (-1)^\ell\ 
x_{n_1}\rhd R_\ell(y_{n_2},...,y_{n_{\ell+1}}) 
\\
& \hspace{1cm} 
= \sum_{m=1}^{n-1} 
\sum_{\ell=1}^{n-m} \sum_{i=1}^{\ell}
\sum_{p=i}^{n-m} \sum_{\bfp\in \calC^i_p} \sum_{\bfq\in \calC^{\ell-i}_{n-m-p}} 
(-1)^{\ell-i+1} 
\big(x_m\rhd (y_{p_1}\sotimes\cdots\sotimes y_{p_i})\big) 
\rhd R_{\ell-i}(y_{q_1},...,y_{q_{\ell-i}}) 
\\
& \hspace{1cm} 
= \sum_{m=1}^{n-1} 
\sum_{\ell=1}^{n-m} \sum_{\bfk\in \calC^\ell_{n-m}} 
\sum_{i=1}^{\ell} (-1)^{\ell-i+1} 
\big(x_m\rhd (y_{k_1}\sotimes\cdots\sotimes y_{k_i})\big) 
\rhd R_{\ell-i}(y_{k_{i+1}},...,y_{k_\ell}) 
\\
& \hspace{1cm} 
= \sum_{\ell=1}^{n-1} \sum_{\bfn\in \calC^{\ell+1}_n} (-1)^\ell
\sum_{i=1}^{\ell} (-1)^{i-1} 
\big(x_{n_1}\rhd (y_{n_2}\sotimes\cdots\sotimes y_{n_{i+1}})\big) 
\rhd R_{\ell-i}(y_{n_{i+2}},...,y_{n_{\ell+1}}) .  
\end{align*}
Therefore, for any $n\geq 2$, any $\ell=1,..., n-1$ and any sequence 
$\bfn\in \calC^{\ell+1}_n$, eq.~(\ref{right-codivision-recursion-b})
is equivalent to the recurrence equation 
\begin{align*}
x_{n_1}\rhd R_\ell(y_{n_2},...,y_{n_{\ell+1}}) 
& = \sum_{i=1}^{\ell} (-1)^{i-1} 
\big(x_{n_1}\rhd (y_{n_2}\sotimes\cdots\sotimes y_{n_{i+1}})\big) 
\rhd R_{\ell-i}(y_{n_{i+2}},...,y_{n_{\ell+1}}), 
\end{align*}
which is proved in Lemma \ref{lemma-R3}. 
\bigskip 

\noindent
ii) Let us show now that the left codivision given in 
Def.~(\ref{FaaDiBrunoColoopBialgebra}) satisfies the identities 
(\ref{left-cocancellation}). 
The first identity (\ref{left-cocancellation}), namely 
$$ 
(\mu\scoprod \Id)\,  (\Id\scoprod \delta_l)\,  \DFdBex(x_n) = y_n,
$$ 
explicitely gives the recurrence (with $v_n=y_n-x_n$)
\begin{align}
\label{left-codivision-recursion-a}
\delta_l(x_n)  
&= v_n - \sum_{m=1}^{n-1}\ \sum_{\lambda=1}^{n-m}\ \sum_{\bfk\in \calC_{n-m}^\lambda} 
x_m\rhd \big(\delta_l(y_{k_1})\sotimes\cdots \sotimes \delta_l(y_{k_\lambda})\big), 
\end{align}
where $y_k$ is just the $k$th generator $x_k$ in the second copy of the free 
product algebra $\HFdBex \scoprod \HFdBex$, therefore the formula for 
$\delta_l(y_k)$ is just the same as for $\delta_l(x_k)$. 

To show this, we consider the expansion
(\ref{FaaDiBruno-operators-left-codivision}) of $\delta_l(x_n)$ given in
Thm.~\ref{FaaDiBruno-operators}. Since $R_\ell^{\bfe}=0$ when $e_1=2$,
we have $x_{n_1}^{(e_1)}=x_{n_1}$ and we can rewrite
(\ref{FaaDiBruno-operators-left-codivision}) as 
\begin{align*}
\delta_l(x_n)  
& = v_n + \!\sum_{m=1}^{n-1} x_m\rhd \! 
\sum_{\ell=1}^{n-m} (-1)^\ell \!\! 
\underset{\bfe\in \calE_\ell}{\sum_{\bfn \in \calC_{n-m}^\ell}} (-1)^{\bfe}  
R_\ell^{\bfe}(x^{(e_2)}_{n_1},...,x^{(e_\ell)}_{n_{\ell-1}},v_{n_\ell}). 
\end{align*}
Then eq.~(\ref{left-codivision-recursion-a}) is clearly verified for $n=1$, 
because $\delta_l(x_1)=v_1$, and for any $n\geq 2$ and any $m=1,...,n-1$, it is 
equivalent to the equation
\begin{align}
\label{left-codivision-recursion-a-1}
\sum_{\lambda=1}^\mu \sum_{\bfk\in \calC_\mu^\lambda} 
\delta_l(x_{k_1})\sotimes\cdots \sotimes \delta_l(x_{k_\lambda}) 
& = 
\\ 
\nonumber
& \hspace{-4cm}
= v_\mu - \sum_{\ell=2}^\mu (-1)^\ell \sum_{\bfn \in \calC_\mu^\ell}\  
\sum_{\bfe\in \calE_\ell} (-1)^{\bfe} \ 
R_\ell^{\bfe}(x^{(e_2)}_{n_1},...,x^{(e_\ell)}_{n_{\ell-1}},v_{n_\ell}), 
\end{align}
for any $\mu=n-m=1,...,n-1$.

Let us prove this equation by induction on $\mu$. 
For $\mu=1$ we again have $\delta_l(x_1)=v_1$. 
So, suppose that eq.~(\ref{left-codivision-recursion-a-1}) holds up to order 
$\mu-1$ and prove it at order $\mu$. 

On the left-hand side of eq.~(\ref{left-codivision-recursion-a-1}), 
we separate the term $\lambda=1$ and observe that, for $\lambda\geq 2$, 
we can decompose $\bfk=(k_1,...,k_{\lambda-1},k_\lambda)\in \calC_\mu^\lambda$ 
into $(\bfq,\nu) \in \calC_{\mu-\nu}^{\lambda-1} \times \calC_\nu^1$ with 
\begin{align*}
q_i &= k_i \qquad\mbox{for $i=1,...,\lambda-1$} 
\\
\nu &= k_\lambda .
\end{align*}
Since 
$$
\calC_\mu^\lambda 
= \bigcup_{\nu=1}^{\mu-\lambda+1} \calC_{\mu-\nu}^{\lambda-1} \times \calC_\nu^1, 
$$
the left-hand side of eq.~(\ref{left-codivision-recursion-a-1}) 
can then be written as 
\begin{align*} 
& \delta_l(x_\mu) + \sum_{\nu=1}^{\mu-1} 
\left( \sum_{i=1}^{\mu-\nu} \sum_{\bfq\in \calC_{\mu-\nu}^i} 
\delta_l(x_{q_1})\sotimes\cdots \sotimes \delta_l(x_{q_i}) \right) 
\sotimes \delta_l(x_\nu) . 
\end{align*}
We then apply the inductive hypothesis (\ref{left-codivision-recursion-a-1})
to the sum over $i=1,...,\mu-\nu$, and expand the single factors
$\delta_l(x_\mu)$ and $\delta_l(x_\nu)$ as in
(\ref{FaaDiBruno-operators-left-codivision}), thus obtaining
\begin{align}
\nonumber
\sum_{\lambda=1}^\mu \sum_{\bfk\in \calC_\mu^\lambda} 
\delta_l(x_{k_1})\sotimes\cdots \sotimes \delta_l(x_{k_\lambda}) 
& = 
\\ \label{left-codivision-recursion-a-2}
& \hspace{-4.5cm}
= v_\mu + \sum_{\lambda=1}^{\mu-1} (-1)^\lambda \sum_{\bfn \in \calC_\mu^{\lambda+1}} 
\sum_{\bfe\in \calE_\lambda} (-1)^{\bfe} \ 
x^{(e_1)}_{n_1}\rhd R_\lambda^{\bfe}(x^{(e_2)}_{n_2},...,x^{(e_\lambda)}_{n_\lambda},v_{n_{\lambda+1}})
\\ \nonumber
& \hspace{-4cm}
+ \sum_{\nu=1}^{\mu-1} \left( 
v_{\mu-\nu} - \sum_{i=2}^{\mu-\nu} (-1)^i \sum_{\bfp \in \calC_{\mu-\nu}^i} 
\sum_{\bfe'\in \calE_i} (-1)^{\bfe'} \ 
R_i^{\bfe'}(x^{(e_2')}_{p_1},...,x^{(e_i')}_{p_{i-1}},v_{p_i}) 
\right) \sotimes 
\\  \nonumber
& \hspace{-3cm}
\sotimes \left( 
v_\nu + \sum_{j=1}^{\nu-1} (-1)^j \sum_{\bfq \in \calC_\nu^{j+1}} 
\sum_{\bfe''\in \calE_j} (-1)^{\bfe''} \ 
x^{(e_1'')}_{q_1}\rhd R_j^{\bfe''}(x^{(e_2'')}_{q_2},...,x^{(e_j'')}_{q_j},v_{q_{j+1}}) 
\right) . 
\end{align}
Finally, it remains to prove that the right-hand side of
eq.~(\ref{left-codivision-recursion-a-1}) coincides with the right-hand side of
eq.~(\ref{left-codivision-recursion-a-2}). The first term $v_\mu$ appears
in both formulas, let us compare the other terms.

The first term in eq.~(\ref{left-codivision-recursion-a-2}) is
\begin{align*}
A & = \sum_{\lambda=1}^{\mu-1} (-1)^\lambda \sum_{\bfn \in \calC_\mu^{\lambda+1}} 
\sum_{\bfe''\in \calE_\lambda} (-1)^{\bfe''} \ x_{n_1}^{(e_1'')}\rhd
R_\lambda^{\bfe''}(x^{(e_2'')}_{n_2},...,x^{(e_\lambda'')}_{n_\lambda},v_{n_{\lambda+1}}) . 
\end{align*}
We apply the trick
\begin{align}
\label{trick1}
a & = R_1^{(1)}(a) = \sum_{(e')\in \calE_1} R_1^{(e')}(a)
\end{align}
to the element $x_{n_1}^{(e_1'')}$, then set $\ell = \lambda+1$ and
$\bfe = (e',\bfe'')=(e',e_1'',...,e_\lambda'')$, and get 
\begin{align*}
A & = - \sum_{\ell=2}^\mu (-1)^\ell \sum_{\bfn \in \calC_\mu^\ell} 
\sum_{\bfe\in \calE_\ell} (-1)^{\bfe} \ 
\left( R_1^{(e_1)}(x^{(e_2)}_{n_1}) \rhd
R_{\ell-1}^{(e_2,...,e_\ell)}(x^{(e_3)}_{n_2},...,x^{(e_\ell)}_{n_{\ell-1}},v_{n_\ell}) 
\right).  
\end{align*}
The second term in eq.~(\ref{left-codivision-recursion-a-2}) is
\begin{align*}
B & = \sum_{\nu=1}^{\mu-1} v_{\mu-\nu}\sotimes v_\nu.
\end{align*}
We apply the second trick
\begin{align}
\nonumber
v_{n} & = -(x^{(1)}_n-x^{(2)}_n) = -\sum_{(e')\in\calE_1} (-1)^{e'-1} x^{(e')}_n
\\ \label{trick2}
& = -\sum_{\bfe=(e_1,e_2)\in\calE_2} (-1)^{\bfe} R_1^{(e_1)}(x^{(e_2)}_n)
\end{align}
to the element $v_{\mu-\nu}$, and get 
\begin{align*}
B & = - \sum_{\nu=1}^{\mu-1}\ \sum_{\bfe\in\calE_2} (-1)^{\bfe}\ 
R_1^{(e_1)}(x^{(e_2)}_n) \sotimes v_\nu
\\ 
& = - \sum_{\bfn \in \calC_\mu^2}\ \sum_{\bfe\in \calE_2} (-1)^{\bfe} \ 
R_1^{(e_1)}(x^{(e_1)}_{n_1}) \sotimes v_{n_2}.  
\end{align*}
Using again (\ref{trick2}), the third term in
eq.~(\ref{left-codivision-recursion-a-2}) becomes 
\begin{align*}
C & = \sum_{\nu=1}^{\mu-1} \sum_{j=2}^{\nu-1} (-1)^j \sum_{\bfq \in \calC_\nu^{j+1}} 
\sum_{\bfe'''\in \calE_j} (-1)^{\bfe'''} v_{\mu-\nu}\sotimes \Big(x^{(e_1''')}_{q_1}\rhd
R_j^{\bfe'''}(x^{(e_2''')}_{q_2},...,x^{(e_j''')}_{q_j},v_{q_{j+1}})\Big)
\\ 
& = - \sum_{\nu=1}^{\mu-1} \sum_{j=2}^{\nu-1} (-1)^j \sum_{\bfq \in \calC_\nu^{j+1}} 
\underset{\bfe'''\in \calE_j}{\underset{\bfe''\in \calE_1}{\sum_{\bfe'\in \calE_1}}}
(-1)^{\bfe'} (-1)^{\bfe''} (-1)^{\bfe'''} \
R_1^{(e')}(x^{(e'')}_{\mu-\nu})\sotimes \Big(x^{(e_1''')}_{q_1}\rhd
R_j^{\bfe'''}(x^{(e_2''')}_{q_2},...,x^{(e_j''')}_{q_j},v_{q_{j+1}})\Big). 
\end{align*}
We set $\ell=1+1+j$, $\bfn=(\mu-\nu,\bfq)\in \calC_\mu^\ell$ and
$\bfe=(e',e'',\bfe''')\in \calE_\ell$, and obtain 
\begin{align*}
C & = - \sum_{\ell=3}^{\mu-1} (-1)^\ell \sum_{\bfn \in \calC_\mu^\ell}
\sum_{\bfe\in \calE_\ell} (-1)^{\bfe}\  
R_1^{(e_1)}(x^{(e_2)}_{n_1})\sotimes \Big(x_{n_2}^{(e_3)}\rhd 
R_{\ell-2}^{(e_3,...,e_\ell)}(x^{(e_4)}_{n_3},...,x^{(e_\ell)}_{n_{\ell-1}},v_{n_\ell}) 
\Big) .
\end{align*}
The fourth term in eq.~(\ref{left-codivision-recursion-a-2}) is
\begin{align*}
D & = - \sum_{\nu=1}^{\mu-1} \sum_{i=2}^{\mu-\nu} (-1)^i \sum_{\bfp \in \calC_{\mu-\nu}^i} 
\sum_{\bfe'\in \calE_i} (-1)^{\bfe'} \ 
R_i^{\bfe'}(x^{(e_2')}_{p_1},...,x^{(e_i')}_{p_{i-1}},v_{p_i}) \sotimes v_\nu. 
\end{align*}
We write $v_{p_i}=-\sum_{\bfe''\in\calE_1} (-1)^{\bfe''} x^{(e'')}_n$ using
(\ref{trick1}), and set $\ell=i+1$, $\bfn=(\bfp,\nu)\in \calC_\mu^\ell$
and $\bfe=(\bfe',\bfe'')\in \calE_\ell$. Then we have 
\begin{align*}
D & = - \sum_{\ell=3}^\mu (-1)^\ell \sum_{\bfn \in \calC_\mu^\ell}\ 
\sum_{\bfe\in \calE_\ell} (-1)^{\bfe} \ \sum_{i=2}^{\ell-1}
R_i^{(e_1,...,e_i)}(x^{(e_2)}_{n_1},...,x^{(e_{\ell-1})}_{n_i}) \sotimes v_\ell. 
\end{align*}
With similar manipulations, setting $\ell=i+j+1$ and
$\bfn=(\bfp,\bfq)\in C_\mu^\ell$, the last term in
eq.~(\ref{left-codivision-recursion-a-2}) is
\begin{align*}
E & = - \sum_{\nu=1}^{\mu-1} \sum_{i=2}^{\mu-\nu} \sum_{j=2}^{\nu-1} (-1)^{i+j}
\sum_{\bfp \in \calC_{\mu-\nu}^i} \sum_{\bfq \in \calC_\nu^{j+1}}
\sum_{\bfe'\in \calE_i}\sum_{\bfe'''\in \calE_j}  (-1)^{\bfe'} (-1)^{\bfe'''}  
\\ 
& \hspace{2cm}
R_i^{\bfe'}(x^{(e_2')}_{p_1},...,x^{(e_i')}_{p_{i-1}},v_{p_i}) \sotimes
\Big(x_{q_1}\rhd R_j^{\bfe'''}(x^{(e_2''')}_{q_2},...,x^{(e_j''')}_{q_j},v_{q_{j+1}})\Big) 
\\[.2cm]
& = - \sum_{\ell=4}^\mu (-1)^\ell \sum_{\bfn \in \calC_\mu^\ell}
\sum_{\bfe\in \calE_\ell} (-1)^{\bfe} \sum_{i=2}^{\ell-2}\ 
R_i^{(e_1,...,e_i)}(x^{(e_2)}_{n_1},...,x^{(e_{i+1})}_{n_i}) \sotimes 
\\ 
& \hspace{2cm}
\sotimes \Big(x^{(e_{i+2})}_{n_{i+1}}\rhd 
R_{\ell-i-1}^{(e_{i+2},...,e_\ell)}(x^{(e_{i+3})}_{n_{i+2}},...,x^{(e_\ell)}_{n_{\ell-1}},
v_{n_\ell})\Big) .  
\end{align*}
We now observe that the sum $B$ extends $C$ to the value $\ell=2$, and that
$D$ extends $E$ to the value $\ell=3$. Alltogether, we have  
\begin{align*}
B+C+D+E 
& = - \sum_{\ell=3}^\mu (-1)^\ell \sum_{\bfn \in \calC_\mu^\ell}
\sum_{\bfe\in \calE_\ell} (-1)^{\bfe} \sum_{i=1}^{\ell-1}\ 
R_i^{(e_1,...,e_i)}(x^{(e_2)}_{n_1},...,x^{(e_{i+1})}_{n_i}) \sotimes 
\\ 
& \hspace{2cm}
\sotimes \Big(x^{(e_{i+2})}_{n_{i+1}}\rhd R_{\ell-i-1}^{(e_{i+2},...,e_\ell)}
(x^{(e_{i+3})}_{n_{i+2}},...,x^{(e_\ell)}_{n_{\ell-1}},v_{n_\ell})\Big) .  
\end{align*}
Therefore, eq.~(\ref{left-codivision-recursion-a-1}) is then equivalent, 
for any $\ell\geq 3$, any $\bfn\in\calC_\nu^\ell$ and any $\bfe\in\calE_\ell$, 
to the following recursion 
\begin{align*}
R_\ell^{\bfe}(x^{(e_2)}_{n_1},...,x^{(e_\ell)}_{n_{\ell-1}},v_{n_\ell}) 
& = R_1^{(e_1)}(x^{(e_2)}_{n_1})\rhd 
R_{\ell-1}^{(e_2,...,e_\ell)}(x^{(e_3)}_{n_2},...,x^{(e_\ell)}_{n_{\ell-1}},v_{n_\ell}) 
\\ 
& \hspace{-2cm}
+ \sum_{i=1}^{\ell-1}\ 
R_i^{(e_1,...,e_i)}(x^{(e_2)}_{n_1},...,x^{(e_{i+1})}_{n_i}) 
\sotimes \Big(x^{(e_{i+2})}_{n_{i+1}}\rhd R_{\ell-i-1}^{(e_{i+2},...,e_\ell)}
(x^{(e_{i+3})}_{n_{i+2}},...,x^{(e_\ell)}_{n_{\ell-1}},v_{n_\ell})\Big) ,   
\end{align*}
which is proved in Lemma \ref{lemma-Re1}. 
\bigskip 

The second identity (\ref{left-cocancellation}), namely 
$$
(\mu\scoprod\Id)\,  (\Id\scoprod\DFdBex)\,  \delta_l(x_n) = y_n,   
$$ 
can not be expressed as a recurrence on $R_\ell^{\bfe}$, because these operators 
do not show up explicitely to which factor of $\HFdBex\scoprod\HFdBex$ 
the variables belong. 
Then, let us use the recursion (\ref{left-codivision-recursion-a}) to 
describe $\delta_l$ and prove the second identity by induction on $n$.  

The identity is verified for $n=1$ because we have 
\begin{align*}
(\mu\scoprod\Id)\,  (\Id\scoprod\DFdBex)\,  \delta_l(x_1) 
&= x_1+y_1-x_1 = y_1. 
\end{align*}
Then, suppose it holds up to the degree $n-1$
Since $\DFdBex$ and $\mu$ are algebra homomorphisms, if we apply the 
operator $D=(\mu\scoprod\Id)\,  (\Id\scoprod\DFdBex)$ to the expression 
\begin{align*}
\delta_l(x_n)  
&= v_n - \sum_{m=1}^{n-1}\ \sum_{\lambda=1}^{n-m}\ \sum_{\bfk\in \calC_{n-m}^\lambda} 
x_m\rhd \big(\delta_l(y_{k_1})\sotimes\cdots \sotimes \delta_l(y_{k_\lambda})\big), 
\end{align*}
we obtain, for $D\big(\delta_l(x_n)\big)$, the sum of 
\begin{align*}
D(v_n) 
& = y_n + \sum_{m=1}^{n-1}\ \sum_{\lambda=1}^{n-m}\ \sum_{\bfk\in \calC_{n-m}^\lambda} 
x_{m}\rhd (y_{k_1} \sotimes\cdots\sotimes y_{k_\ell}) 
\end{align*}
and of 
\begin{align*}
- \sum_{m=1}^{n-1}\ \sum_{\lambda=1}^{n-m}\ \sum_{\bfk\in \calC_{n-m}^\lambda} 
x_m\rhd \Big( D\big(\delta_l(y_{k_1})\big) \sotimes\cdots 
\sotimes D\big(\delta_l(y_{k_\lambda})\big)\Big).
\end{align*}
Therefore the second identity is satisfied if, for any $m=1,...,n-1$, any 
$\lambda=1,...,n-m$ and any $\bfk\in \calC_{n-m}^\lambda$, we have 
\begin{align*}
D\big(\delta_l(y_{k_1})\big) \sotimes\cdots 
\sotimes D\big(\delta_l(y_{k_\lambda})
& = y_{k_1} \sotimes\cdots\sotimes y_{k_\ell}, 
\end{align*}
which is true by inductive hypothesis. 
\end{proof}

%%%%%%%%%%%%%%%%%%%%%%%%%%%%%%%%%%%%%%%%%%

\subsection{Properties of the diffeomorphisms loop}

\begin{proposition}
\label{proposition-antipode}
The coloop bialgebra $\HFdBex$ has a two-sided antipode $S$ such that 
\begin{align*}
\delta_r & = (\Id\scoprod S)\,  \DFdBex,
\end{align*}
while the identity $\delta_l = (S \scoprod \Id)\,  \DFdBex$ does not hold.
Moreover, the antipode in the Fa\`a di Bruno coloop bialgebra coincides with 
that in the non-commutative Fa\`a di Bruno Hopf algebra given in \cite{BFK}, 
that is, 
\begin{align*}
S(x_n) 
& = - \sum_{\ell=0}^{n-1} (-1)^\ell \sum_{\bfn \in \calC_n^{\ell+1}} 
d_{\ell+1}(n_1,...,n_{\ell+1})\ x_{n_1}\, x_{n_2}\cdots x_{n_{\ell+1}} . 
\end{align*}
\end{proposition}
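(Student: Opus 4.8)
The plan is to show by a direct computation that the right and left antipodes $S_r=\psi_2\,(\varepsilon\scoprod\Id)\,\delta_r$ and $S_l=\psi_1\,(\Id\scoprod\varepsilon)\,\delta_l$ of~(\ref{antipode-left-right}) coincide, which is exactly the condition for $\HFdBex$ to carry a (necessarily unique) two-sided antipode satisfying the $5$-terms identity~(\ref{antipode-5-terms}). First I would apply $\varepsilon\scoprod\Id$ to the closed expression of $\delta_r(x_n)$ in Definition~\ref{FaaDiBrunoColoopBialgebra}: since $\varepsilon(x_m)=\delta_{m,0}$, in each summand $d_\ell(n_1,\dots,n_\ell)\,(x_{n_1}-y_{n_1})\,y_{n_2}\cdots y_{n_{\ell+1}}$ the part containing $x_{n_1}$ is killed (as $n_1\geq 1$) and only $-y_{n_1}\,y_{n_2}\cdots y_{n_{\ell+1}}$ survives; composing with $\psi_2$, which relabels the generators of the second copy, yields
\[
S_r(x_n)=-\sum_{\ell=0}^{n-1}(-1)^\ell\sum_{\bfn\in\calC_n^{\ell+1}}d_\ell(n_1,\dots,n_\ell)\,x_{n_1}x_{n_2}\cdots x_{n_{\ell+1}}.
\]
Applying instead $\Id\scoprod\varepsilon$ to $\delta_l(x_n)$ kills every labelled generator $x^{(e_i)}_{n_i}$ with $e_i=2$, so only the sequences $\bfe=(1,\dots,1)$ contribute, and for those $d_\ell^{\bfe}=d_\ell$, $(-1)^{\bfe}=1$, while $(y_{n_{\ell+1}}-x_{n_{\ell+1}})\mapsto-x_{n_{\ell+1}}$; one obtains the very same formula for $S_l(x_n)$. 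Hence $S_r=S_l=:S$ is a two-sided antipode with the displayed closed form, which, writing the Lagrange coefficient in the notation of Corollary~\ref{corollary-R1}, is precisely the formula of the proposition and the antipode of the non-commutative Fa\`a di Bruno Hopf algebra of~\cite{BFK}.

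\textbf{The coinverse properties.} Both $\delta_r$ and $(\Id\scoprod S)\,\DFdBex$ are algebra homomorphisms, so to prove $\delta_r=(\Id\scoprod S)\,\DFdBex$ it suffices to check it on each generator $x_n$. Splitting the factor $x_{n_1}-y_{n_1}$, the part of $\delta_r(x_n)$ carrying $y_{n_1}$ is exactly $S(y_n)$ by the formula just obtained, which matches the corresponding term in $(\Id\scoprod S)\DFdBex(x_n)=x_n+S(y_n)+\sum_{\ell\geq1}\sum_{\bfn\in\calC_n^{\ell+1}}\binom{n_1+1}{\ell}\,x_{n_1}\,S(y_{n_2})\cdots S(y_{n_{\ell+1}})$; comparing the remaining, $x_{n_1}$-leading terms and factoring out $x_{n_1}$, the identity reduces to the recursion $d_\ell(n_1,\dots,n_\ell)=\sum_{j=1}^{\ell}\sum_{\bfp\in\calC_\ell^{j}}\binom{n_1+1}{j}\,d_{p_1-1}(n_2,\dots,n_{p_1})\cdots d_{p_j-1}(\dots)$ of Corollary~\ref{corollary-R2} (equivalently, expand everything with the operator $\rhd$ and invoke Corollaries~\ref{corollary-R1} and~\ref{corollary-R2}). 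For the failure of $\delta_l=(S\scoprod\Id)\,\DFdBex$ it is enough to evaluate both sides on one generator: already on $x_3$ one computes $(S\scoprod\Id)\DFdBex(x_3)-\delta_l(x_3)=x_1^2y_1-x_1y_1x_1\neq0$, the point being that $S$ applied to the first copy alone cannot reproduce the interleaved monomials carried by the labelled coefficients $d_\ell^{\bfe}$ with $\bfe\neq(1,\dots,1)$.

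\textbf{Main obstacle.} Everything except the identity $\delta_r=(\Id\scoprod S)\DFdBex$ is immediate once the two counit computations above are done. That identity is the only substantial point: after the reduction just described it becomes a non-trivial recurrence for the Lagrange coefficients, but this recurrence is already available as Corollary~\ref{corollary-R2} (together with Corollary~\ref{corollary-R1}), so in the end the verification is bookkeeping rather than a new difficulty.
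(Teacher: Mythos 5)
Your proof is correct and follows essentially the same route as the paper's: compute $S_r$ and $S_l$ by applying the counit to the codivisions (with only $\bfe=(1,\dots,1)$ surviving on the left, whence $S_r=S_l$), reduce $\delta_r=(\Id\scoprod S)\,\DFdBex$ to the recursion of Corollary~\ref{corollary-R2} after reindexing the compositions, and exhibit the failure of the left coinverse property already on $x_3$. The only remark worth making is that your closed formula carries the coefficient $d_\ell(n_1,\dots,n_\ell)$ where the statement displays $d_{\ell+1}(n_1,\dots,n_{\ell+1})$; yours is the version consistent with Definition~\ref{FaaDiBrunoColoopBialgebra} and with $S(x_1)=-x_1$, so this is an indexing slip in the statement rather than a gap in your argument.
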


\begin{proof}
i) In a coloop bialgebra, the left and right antipodes are given respectively 
by 
$$
S_l=(\Id\scoprod \varepsilon)\,  \delta_l
\qquad\mbox{and}\qquad 
S_r=(\varepsilon\scoprod \Id)\,  \delta_r, 
$$
cf.~(\ref{antipode-left-right}). 
Let us show that for $\HFdBex$ these two operators coincide, and therefore 
the two-sided antipode is well defined by $S:=S_l=S_r$. 

Indeed, let us fix $n\geq 1$. For the right antipode we have
\begin{align*}
S_r(x_n) 
& = \sum_{\ell=0}^{n-1} (-1)^\ell \sum_{\bfn \in \calC_n^{\ell+1}} 
d_{\ell+1}(n_1,...,n_{\ell+1})\ 
\big(\varepsilon(x_{n_1})-y_{n_1}\big)\, y_{n_2}\cdots y_{n_{\ell+1}} 
\\
& = - \sum_{\ell=0}^{n-1} (-1)^\ell \sum_{\bfn \in \calC_n^{\ell+1}} 
d_{\ell+1}(n_1,...,n_{\ell+1})\ x_{n_1}\, x_{n_2}\cdots x_{n_{\ell+1}} , 
\end{align*}
where $n_1>0$ implies $\varepsilon(x_{n_1})=0$, and where we renamed 
the variables $y$ as $x$ because $S_r$ takes values in $\HFdBex$. 
For the left antipode we have 
\begin{align*}
S_l(x_n) 
& = \varepsilon(y_n)-x_n 
- \sum_{\bfn \in \calC_n^2} d_1(n_1)\ x_{n_1} \big(\varepsilon(y_{n_2})-x_{n_2}\big)
\\
& \hspace{1cm}
+ \sum_{\ell=2}^{n-1} (-1)^\ell \!\sum_{\bfn\in \calC_{n}^{\ell+1}} 
\sum_{\bfe\in \calE_{\ell-1}} (-1)^{\bfe}\ d_{\ell+1}^{\bfe}(n_1,...,n_{\ell+1})\ 
(\Id\scoprod \varepsilon) \big(x_{n_1} x^{(e_1)}_{n_2}\cdots x^{(e_{\ell-1})}_{n_\ell}\, 
v_{n_{\ell+1}}\big).
\end{align*}
Since $\varepsilon$ kills the terms where some $y$ appears, in the sum over 
the sequences $\bfe\in \calE_{\ell-1}$ there only remains the sequence 
$\bfe=(1,1,...,1)$, for which $d_{\ell+1}^{\bfe}=d_{\ell+1}$ and $(-1)^{\bfe}=+$, 
and therefore we have 
\begin{align*}
S_l(x_n) & = -x_n + \sum_{\bfn \in \calC_n^2} d_1(n_1)\ x_{n_1}\,x_{n_2}
\\
& \hspace{1cm}
- \sum_{\ell=2}^{n-1} (-1)^\ell \!\sum_{\bfn\in \calC_{n}^{\ell+1}} 
\ d_{\ell+1}(n_1,...,n_{\ell+1})\ x_{n_1} x_{n_2}\cdots x_{n_{\ell+1}} 
\\ 
& = S_r(x_n). 
\end{align*}

ii) Let us now prove the identity $\delta_r = (\Id\scoprod S)\,  \DFdBex$.  
For any generator $x_n$ of $\HFdBex$, we have
\begin{align*}
(\Id\scoprod S)\,  \DFdBex(x_n) 
& = x_n + S(y_n) 
+ \sum_{j=1}^{n-1} \sum_{\bfn\in \calC_n^{j+1}} \binom{n_1+1}{j}\ 
x_{n_1} \ S(y_{n_2})\cdots S(y_{n_{j+1}}) 
\\ 
& \hspace{-3cm}
= x_n - y_n - \sum_{\ell=1}^{n-1} (-1)^\ell \sum_{\bfn \in \calC_n^{\ell+1}} 
d_{\ell+1}(n_1,...,n_{\ell+1})\ y_{n_1}\, y_{n_2}\cdots y_{n_{\ell+1}}
\\
& \hspace{-2cm}
+ \sum_{j=1}^{n-1} \sum_{\bfn\in \calC_n^{j+1}} 
\sum_{p_1=1}^{n_2} \! \cdots \! \sum_{p_j=1}^{n_{j+1}} 
(-1)^{p_1+\cdots +p_j} 
\sum_{\bfq^1\in \calC_{n_2}^{p_1}} \!\! \cdots \!\! \sum_{\bfq^j\in \calC_{n_{j+1}}^{p_j}}
\binom{n_1+1}{j}\ d_{p_1}(q^1_1,...,q^1_{p_1}) \cdots 
\\
& \hspace{2cm}
\cdots d_{p_j}(q^j_1,...,q^j_{p_j})\
x_{n_1} \ y_{q^1_1} \cdots y_{q^1_{p_1}} \cdots y_{q^j_1} \cdots y_{q^j_{p_j}} . 
\end{align*}
Set $\ell=p_1+\cdots + p_j$, then $1\leq j \leq \ell \leq n-1$. 
Since 
$$
\bigcup_{n_1=1}^{n-\ell}\, \bigcup_{p_1=1}^{n_2} \cdots \bigcup_{p_j=1}^{n_{j+1}} \ 
C^1_{n_1} \times C^{p_1}_{n_2} \times \cdots \times C^{p_j}_{n_{j+1}} = C^{\ell+1}_n,
$$
because $n_1+n_2+\cdots + n_{j+1}=n$, if we rename the sequence 
$(n_1,q^1_1,...,q^1_{p_1},...,q^j_1,...,q^j_{p_j})$ as $\bfn\in C^{\ell+1}_n$, 
the sum over $j$ becomes
\begin{align*}
& \sum_{\ell=1}^{n-1} (-1)^\ell \sum_{\bfn \in \calC_n^{\ell+1}} 
\left( 
\sum_{j=1}^\ell \sum_{\bfp\in \calC_\ell^j} 
\binom{n_1+1}{j}\ d_{p_1}(n_2,...,n_{p_1+1}) \cdots 
d_{p_j}(n_{P_{j-1}+2},...,n_{\ell+1})\right)\  
x_{n_1} \ y_{n_2} \cdots y_{n_{\ell+1}}, 
\end{align*}
where $Pi=p_1+\cdots +p_i$ for $i=1,...,j$. 
Using the recurrence proved in Corollary \ref{corollary-R2}, 
we finally obtain 
\begin{align*}
(\Id\scoprod S)\,  \DFdBex(x_n) 
& = x_n - y_n - \sum_{\ell=1}^{n-1} (-1)^\ell \sum_{\bfn \in \calC_n^{\ell+1}} 
d_{\ell+1}(n_1,...,n_{\ell+1})\ y_{n_1}\, y_{n_2}\cdots y_{n_{\ell+1}} 
\\ 
& \hspace{2cm} 
+ \sum_{\ell=1}^{n-1} (-1)^\ell \sum_{\bfn \in \calC_n^{\ell+1}} 
d_{\ell+1}(n_1,...,n_{\ell+1})\ x_{n_1}\, y_{n_2}\cdots y_{n_{\ell+1}} 
\\
& = \delta_r(x_n). 
\end{align*}

iii) The first counterexample to the analogue identity 
$\delta_l = (S \scoprod \Id)\,  \DFdBex$ is on the generator $x_3$, 
for which we have 
\begin{align*}
(S \scoprod \Id)\,  \DFdBex(x_3) &= 
v_3 - (2x_1v_2 + 3x_2v_1) + 5x_1^2v_1 - x_1v_1y_1 ,
\end{align*}
where $v_n=y_n-x_n$, while 
\begin{align*}
\delta_l(x_3) &= 
v_3 - (2x_1v_2 + 3x_2v_1) + 5x_1^2v_1 - x_1y_1v_1 . 
\end{align*}
\end{proof}

This result allows us on one side to deduce some properties of the loop of
formal diffeomorphisms, and on the other side to compare the Fa\`a di Bruno
coloop bialgebra with the non-commutative Fa\`a di Bruno Hopf algebra. 

\begin{corollary}
\begin{enumerate}
\item
The proalgebraic loop $\Diff$ is not right alternative, nor power associative. 
\item
Nevertheless, $\Diff$ has two-sided inverses and, for a given an associative 
algebra $A$ and an element $a\in \Diff(A)$, the inverse 
$a^{-1} = a\backslash e = e \slash a$ is given by the usual Lagrange 
coefficients, namely 
\begin{align*}
\big(a^{-1}\big)_n 
& = - \sum_{\ell=0}^{n-1} (-1)^\ell \sum_{\bfn \in \calC_n^{\ell+1}} 
d_{\ell+1}(n_1,...,n_{\ell+1})\ a_{n_1}\, a_{n_2}\cdots a_{n_{\ell+1}} . 
\end{align*}
\item
The inversion allows us to construct the right division, that is, 
$a \slash b = a\circ (e\slash b)$ for any $a,b\in \Diff(A)$, 
but it does not allow us to construct the left division, because
$b\backslash a \neq (b\backslash e)\circ a$ if $a_n\neq 0$ and $b_m\neq 0$ 
for some $n,m\geq 1$.
\end{enumerate}
\end{corollary}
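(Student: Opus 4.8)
\medskip\noindent\textbf{Proof plan.}\quad
All three items will be read off Proposition~\ref{proposition-antipode} together with the explicit co-operations of Definition~\ref{FaaDiBrunoColoopBialgebra}; only part~(1) needs a fresh (finite) computation, and it is there that the only real difficulty lies.

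For part~(2): Proposition~\ref{proposition-antipode}(i) already gives $S_l=S_r=:S$, so in $\Diff(A)=\Hom_{\As}(\HFdBex,A)$ the left inverse $a\backslash e=a\,S_l$ and the right inverse $e\slash a=a\,S_r$ of $a$ coincide; hence $\Diff(A)$ has a two-sided inverse $a^{-1}=a\,S$. Since $a\,S\colon\HFdBex\to A$ is the algebra homomorphism $x_n\mapsto a\big(S(x_n)\big)$, its $n$-th coefficient is obtained by substituting $a_k$ for $x_k$ in the closed formula for $S(x_n)$ supplied by Proposition~\ref{proposition-antipode}, which yields exactly the stated expression with the Lagrange coefficients. (As a consistency check, the same series also drops out of Theorem~\ref{Diff(A)-proalgebraic-loop} on setting $a=e$ in $a\slash b$, equivalently $b=e$ in $a\backslash b$.)

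For part~(3): by part~(2), $e\slash b$ and $b\backslash e$ both equal the two-sided inverse $b^{-1}=b\,S$. From the convolution formula~(\ref{convolution}), $a\slash b=\mu_A\,(a\scoprod b)\,\delta_r$, and Proposition~\ref{proposition-antipode}(ii) gives $\delta_r=(\Id\scoprod S)\,\DFdBex$; therefore $a\slash b=\mu_A\,(a\scoprod b)\,(\Id\scoprod S)\,\DFdBex=\mu_A\,(a\scoprod b^{-1})\,\DFdBex=a\circ(e\slash b)$, so the right division is recovered from the inversion. If instead $b\backslash a=(b\backslash e)\circ a$ held for all $a,b$, the same computation would force $\delta_l=(S\scoprod\Id)\,\DFdBex$ on $\HFdBex$, which is false by Proposition~\ref{proposition-antipode}(iii): the two maps differ already on $x_3$, by $x_1v_1y_1-x_1y_1v_1=x_1(y_1x_1-x_1y_1)$ (with $v_n=y_n-x_n$), whence, reading the first copy as $b$ and the second as $a$, $(b\backslash a)_3-\big((b\backslash e)\circ a\big)_3=b_1(a_1b_1-b_1a_1)$, which is nonzero precisely when $a_1,b_1$ do not commute (take $A=M_2(\C)$) — this is the stated obstruction "$a_n\neq0$ and $b_m\neq0$ for some $n,m\geq1$''.

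For part~(1): via Theorem~\ref{theorem-Q-functor}, the weak associativity laws of $\Diff$ become identities for the coassociator $K:=(\DFdBex\scoprod\Id)\,\DFdBex-(\Id\scoprod\DFdBex)\,\DFdBex\colon\HFdBex\to\HFdBex\scoprod\HFdBex\scoprod\HFdBex$ — right-alternativity being $(\Id\scoprod\mu)\,K=0$, left-alternativity $(\mu\scoprod\Id)\,K=0$, and the condition $(a\circ a)\circ a=a\circ(a\circ a)$ for power-associativity being $\mu\,(\Id\scoprod\mu)\,K=0$. The plan is to compute $K$ on the generators $x_n$ from the closed co-operations of Definition~\ref{FaaDiBrunoColoopBialgebra}: already $K(x_3)=x_1y_1z_1-x_1z_1y_1=x_1[y_1,z_1]\neq0$, so $\DFdBex$ is not coassociative (consistently $\pi\,K=0$, since $\pi\,\DFdBex$ is the coassociative comultiplication of the Fa\`a di Bruno Hopf algebra), and $(\mu\scoprod\Id)\,K(x_3)=x_1[x_1,y_1]\neq0$ already exhibits non-alternativity at order $3$. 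Carrying the expansion of $K$ to the order at which the remaining commutators survive the relevant folding, one extracts explicit series $a,b\in\Diff(A)$ — with $A$ a small non-commutative associative algebra such as $M_2(\C)$ or a free algebra $\F\langle s,t\rangle$ — realising $(a\circ b)\circ b\neq a\circ(b\circ b)$ and $(a\circ a)\circ a\neq a\circ(a\circ a)$. The main obstacle is exactly this last step: locating the order of first failure and checking the coefficient there is nonzero is a careful but routine bookkeeping, whereas everything else in the corollary is immediate from Proposition~\ref{proposition-antipode}.
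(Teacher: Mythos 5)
Parts (2) and (3) of your proposal are correct and follow the paper's own route exactly: both hinge on Proposition~\ref{proposition-antipode} ($S_l=S_r=:S$, the identity $\delta_r=(\Id\scoprod S)\,\DFdBex$, and its failure for $\delta_l$ witnessed on $x_3$). Your reading of the $x_3$ discrepancy as the concrete obstruction $b_1(a_1b_1-b_1a_1)$ is a correct and slightly more explicit version of what the paper states.

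Part (1), however, has a genuine gap, and it is precisely the part you flag as "the main obstacle" and then do not carry out. Your order-$3$ evidence does not bear on the claims being made. The corollary asserts failure of \emph{right} alternativity, i.e.\ $(\Id\scoprod\mu)\,K=0$ fails, and of power associativity, for which the relevant test is $\mu\,(\Id\scoprod\mu)\,K=0$. At order $3$ both of these identities actually \emph{hold}: from $K(x_3)=x_1y_1z_1-x_1z_1y_1$ one gets $(\Id\scoprod\mu)\,K(x_3)=x_1y_1^2-x_1y_1^2=0$ and hence also $\mu\,(\Id\scoprod\mu)\,K(x_3)=0$. The quantity you exhibit, $(\mu\scoprod\Id)\,K(x_3)=x_1(x_1y_1-y_1x_1)\neq 0$, is the obstruction to \emph{left} alternativity, which is not what the corollary claims. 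The first deviation from right alternativity and from power associativity occurs only on the generator $x_5$: the paper computes $K(x_5)$ in full and extracts $(\Id\scoprod\mu)\,K(x_5)=x_1(y_2y_1^2-y_1y_2y_1)\neq 0$ and $\mu\,(\Id\scoprod\mu)\,K(x_5)=x_1x_2x_1^2-x_1^2x_2x_1\neq 0$, then realises these by explicit $2\times 2$ matrix coefficients. That order-$5$ computation is the entire mathematical content of part (1); without it (and without at least locating the correct order of first failure), your argument for part (1) is not a proof but a plan whose decisive step is missing.
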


\begin{proof}
1. 
The loop $\Diff(A)$ is right alternative if and only if the coloop bialgebra 
$\HFdBex$ is right coalternative, that is $(\Id\scoprod\mu)\,K=0$, where 
$K=(\DFdBex\scoprod\Id)\,\DFdBex-(\Id\scoprod\DFdBex)\,\DFdBex$ is the 
coassociator. 

The first deviation from right alternativity appears on the generator $x_5$. 
If we temporarily denote by $x_n=x_n^{(1)}$, $y_n=x_n^{(2)}$ and $z_n=x_n^{(3)}$ 
the three copies of the generators in $H\scoprod H\scoprod H$, we get 
\begin{align*}
K(x_5) 
& = 6x_3(y_1z_1-z_1y_1) \\ 
& \quad + x_2\big[3(y_2z_1-z_2y_1)+3(y_1z_2-z_1y_2) 
  + (8y_1^2z_1-7y_1z_1y_1-z_1y_1^2) \\ 
& \qquad\qquad + (8y_1z_1^2-7z_1y_1z_1-z_1^2y_1)\big] \\ 
& \quad +x_1\big[(y_3z_1-z_3y_1)+(y_2z_2-z_2y_2)+(y_1z_3-z_1y_3) 
+3(y_2y_1z_1-y_2z_1y_1) \\ 
& \qquad +2(y_1y_2z_1-y_1z_2y_1)+2(y_1^2z_2-y_1z_1y_2)+2(y_1z_1z_2-z_1y_1z_2) \\ 
& \qquad +(3y_2z_1^2-2z_2y_1z_1)+(2y_1z_2z_1-3z_1y_2z_1) \\ 
& \qquad\qquad + (5y_1^2z_1^2+y_1z_1^3 - 4y_1z_1y_1z_1-y_1z_1^2y_1-z_1y_1z_1^2)\big]
\end{align*}
and therefore
$$
(\Id\scoprod\mu)\,K(x_5) = x_1(y_2y_1^2-y_1y_2y_1) \neq 0. 
$$

The generator $x_5$ corresponds to the power $\lambda^6$ of usual series
with substitution law, therefore the deviation from right alternativity
computed on the generator $x_5$ can be detected by comparing the values
$(a\circ b)\circ b$ and $a\circ(b\circ b)$ for the two series 
$$
a(\lambda)=\lambda+a_1\lambda^2 \qquad\mbox{and}\qquad 
b(\lambda)=\lambda+b_1\lambda^2+b_2\lambda^3 
$$
with $a_1=1$ and $b_2b_1^2\neq b_1b_2b_1$. 
For instance\footnote{
The authors warmly thank J. M. P\'erez-Izquierdo for comunicating 
this example.}, by taking the $2\times 2$ elementary matrices $b_1=E_{11}$ and 
$b_2=E_{21}$, for which $b_2b_1^2=b_2$ and $b_1b_2b_1=0$. 
\medskip 

The same computation shows that $\Diff(A)$ is not power associative, because 
$$
\mu\, (\Id\scoprod\mu)\,K(x_5) = x_1x_2x_1^2-x_1^2x_2x_1 \neq 0. 
$$
For a series $c(\lambda)=\lambda+c_1\lambda^2+c_2\lambda^3$, 
we then have $(c\circ c)\circ c \neq c\circ (c\circ c)$ if 
$c_1c_2c_1^2 \neq c_1^2c_2c_1$. For instance, this is verified for the two 
$2\times 2$ matrices 
$$
c_1 = \left(\begin{array}{cc} 1 & 1 \\ 0 & 1 \end{array}\right) 
\qquad\mbox{and}\qquad 
c_2= \left(\begin{array}{cc} 1 & 0 \\ 1 & 0 \end{array}\right) , 
$$
for which we have 
$$
c_1c_2c_1^2 = \left(\begin{array}{cc} 2 & 4 \\ 1 & 2 \end{array}\right) 
\qquad\mbox{and}\qquad 
c_1^2c_2c_1= \left(\begin{array}{cc} 3 & 3 \\ 1 & 1 \end{array}\right) . 
$$
\bigskip 

2. 
The left and right inverses of $a\in \Diff(A)$ can be found using 
respectively the left antipode $S_l$ and right antipode $S_r$ of $\HFdBex$, 
according to the standard rule
$$
(e \slash a)_n = a\big(S_l(x_n)\big) 
\qquad\mbox{and}\qquad 
(a\backslash e)_n = a\big(S_r(x_n)\big) . 
$$
By Proposition \ref{proposition-antipode} we have $S_r=S_l$, therefore 
$e \slash a=a\backslash e$. 
\bigskip 

3. 
The identity $a \slash b = a\circ (e\slash b)$ in the loop $\Diff(A)$ is 
equivalent to the identity $\delta_r = (\Id\scoprod S_r)\,  \DFdBex$ 
in $\HFdBex$, proved in Proposition \ref{proposition-antipode}. 
The analogue identity for the left division does not hold. 
\end{proof}

%%%%%%%%%%%%%%%%

%\subsection{Relationship with the non-commutative Fa\`a di Bruno Hopf algebra}

The commutative Fa\`a di Bruno Hopf algebra which represents the classical 
proalgebraic group $\Diff$, mentioned at the beginning of 
section~\ref{section-diffeomorphisms}, admits a non-commutative lift 
\cite{BFK}
\begin{align*}
\HFdBnc & = \F\langle x_n,\ n\geq 1\rangle, \qquad (x_0=1) 
\\
\DFdBnc(x_n) & = \sum_{m=0}^n x_m \otimes \sum_{(k)} x_{k_0}\cdots x_{k_m} ,
\end{align*}
where the sum is over the set of tuples $(k_0,k_1,k_2,...,k_m)$ 
of non-negative integers such that $k_0+k_1+k_2+\cdots +k_m=n-m$. 
Since $\Diff$ is not a group over associative algebras, the existence 
of this Hopf algebra is not {\em a priori} ensured by the extention 
of the functor $\Diff$ from $\Com_\F$ to $\As_\F$. 

\begin{corollary}
The image of the coloop bialgebra $\HFdBex$ under the canonical projection 
$\pi$ given in Def.~\ref{canonical-projection} is the non-commutative 
Fa\`a di Bruno Hopf algebra $\HFdBnc$, that is, 
$$
(\HFdBex)^\otimes = \HFdBnc. 
$$
\end{corollary}

\begin{proof}
Indeed, we have $(\HFdBex)^\otimes = \HFdBnc$ as an algebra, and eventhough 
$\DFdBex$ is not coassociative, the comultiplication $(\DFdBex)^\otimes$ coincides 
with $\DFdBnc$ and therefore it is coassociative with respect to the 
component-wise multiplication in $\HFdBnc \sotimes \HFdBnc$. 

The assertion is then proved because, by Prop.~\ref{proposition-antipode}, 
the antipode in $\HFdBex$ is unique and coincides with that in $\HFdBnc$ 
on generators.  
\end{proof}

%%%%%%%%%%%%%%%%%%%%%%%%%%%%%%%%%%%%%%%%%%%%%%%%%%%%%%%%%%%%%%%%%%%%%%%%

\section{Appendix: Categorical proofs with tangles}
\label{appendix}

Tangle diagrams are an efficient tool to prove formal (categorical) properties. 
Tangles are drawings suitable to represent operations and co-operations in a 
monoidal category, cf.~\cite{Lyubashenko} \cite{Yetter}, 
and therefore can be used to encode the structure of coloops in a category 
$(\catC,\scoprod,I)$. In the context of non-associative algebras they have been 
used in \cite{PerezIzquierdoShestakov} to code deformations of the enveloping 
algebra of a Malcev algebra, seen as the infinitesimal structure of a Moufang 
loop.

Tangles are drawings to be read from the top to the bottom as concatenation 
of operations acting on objects related by the monoidal product, and not by 
a cartesian (or tensor) product.
Here is the list of the tangles needed to represent all the operations and
the co-operations in coloops, with their defining identities. 
\medskip 

\noindent
\parbox{18cm}{
{\bf Categorical maps} \\ \nobreak
\begin{tabular}{p{0.3cm}p{2.4cm}p{2cm}p{3.5cm}p{3.5cm}p{2cm}}
\\[.1cm]
%
%%%%%%%%%%%%%%%%%%% twist
%
$\tau$ & twist \qquad 
&
$\begin{tangle}
\HH \x
\end{tangle}$ 
& 
invertible
& 
$\begin{tangle}
\HH \x \\  
\HH \xx  
\end{tangle} 
\ = \ 
\begin{tangle}
\HH \id \step \id \\  
\HH \id \step \id
\end{tangle}$ 
&
Sect. \ref{subsection-coloops}
\\[.8cm]
%
%%%%%%%%%%%%%%%%%%% folding map
%
$\mu$ & folding map \qquad 
&
$\begin{tangle}
\HH \cu
\end{tangle}$ 
& 
associative 
& 
$\begin{tangle}
\HH \hcu \step[0.5] \id \\  
\HH \step[0.5] \cu  
\end{tangle} 
\ = \ 
\begin{tangle}
\HH \id \step[0.5] \hcu \\  
\HH \hcu  
\end{tangle}$ 
&
Sect. \ref{subsection-coloops}
\\[.5cm]
& & & commutative 
& 
$\begin{tangle}
\HH \x \\  
\HH \cu  
\end{tangle} 
\ = \ 
\begin{tangle}
\HH \id \step \id \\  
\HH \cu  
\end{tangle}$ 
&
Sect. \ref{subsection-coloops}
\\[.5cm]
& & &  
unital 
& 
$\begin{tangle}
\HH \unit \step \id \\  
\HH \cu  
\end{tangle} 
\ = \ 
\begin{tangle}
\HH \id \step \unit \\  
\HH \cu  
\end{tangle} 
\ = \ 
\begin{tangle}
\HH \id \\
\HH \id 
\end{tangle}$ 
&
Sect. \ref{subsection-coloops}
\\[.8cm]
%
%%%%%%%%%%%%%%%%%%% unit
%
$u$ & unit & 
\quad 
$\begin{tangle}
\HH \unit
\end{tangle}$ 
& folding morphism 
& 
$\begin{tangle}
\HH \unit \step \unit \\  
\HH \cu  
\end{tangle} 
\ = \ 
\begin{tangle}
\HH \unit 
\end{tangle}$ 
&
Sect. \ref{subsection-coloops}
\\[.2cm]
\end{tabular}
}
\bigskip 

\noindent
\parbox{18cm}{
{\bf Coloop structure maps} \\ \nobreak
\begin{tabular}{p{0.3cm}p{2.4cm}p{2cm}p{3.5cm}p{3.5cm}p{2cm}}
\\[.1cm]
%
%%%%%%%%%%%%%%%%%%% comuliplication 
%
$\Delta$ & comultiplication
&
\quad
$\begin{tangle}
\HH \cd
\end{tangle}$ 
&
folding morphism 
& 
$\begin{tangle}
\cucd \\
\end{tangle}
\ = \
\begin{tangles}{lcr}
\HH \cd && \cd \\
\HH \id & \x & \id \\
\HH \cu && \cu
\end{tangles}$ 
&
Sect. \ref{subsection-coloops}
\\[.5cm]
& & & unital 
& 
$\begin{tangle}
\HH \step[.5] \unit \\  
\HH \cd 
\end{tangle} 
\ = \ 
\begin{tangle}
\HH \unit \step \unit 
\end{tangle}$ 
&
Sect. \ref{subsection-coloops}
\\[.5cm]
& & & counital 
& 
$\begin{tangle}
\HH \cd \\  
\HH \step \id \\  
\HH \counit 
\end{tangle} 
\ = \ 
\begin{tangle}
\HH \cd  \\  
\HH \id \\
\HH \step \counit 
\end{tangle} 
\ = \ 
\begin{tangle}
\HH \id \\
\HH \id 
\end{tangle}$ 
&
Eq. (\ref{counitary})
\\[.8cm]
%
%%%%%%%%%%%%%%%%%%% counit
%
$\varepsilon$ & counit & 
\quad 
$\begin{tangle}
\counit
\end{tangle}$ 
& folding morphism 
& 
$\begin{tangle}
\HH \cu \\
\step[.5] \counit \\  
\end{tangle} 
\ = \ 
\begin{tangle}
\counit \step \counit 
\end{tangle}$ 
&
Sect. \ref{subsection-coloops}
\\[.5cm]
& & &  
unital 
& 
$\begin{tangle}
\HH \unit \\  
\HH \counit 
\end{tangle} 
\ = \ 
\emptyset $
&
Sect. \ref{subsection-coloops}
\end{tabular}
}
%
%\\[.8cm]
%
\begin{tabular}{p{0.3cm}p{2.4cm}p{2cm}p{3.5cm}p{3.5cm}p{2cm}}
\\[.3cm]
%
%%%%%%%%%%%%%%%%%%% right codivision
%
$\delta_r$ & \parbox{2.6cm}{right codivision}
& 
\quad
$\begin{tangle}
\cd* \step[-1] \object{r} 
\end{tangle}$ 
& folding morphism 
& 
$\begin{tangle}
\HH \cu \\ 
\HH \cd* \\
\end{tangle}
\ = \
\begin{tangles}{lcr}
\HH \cd* && \cd* \\
\HH \id & \x & \id \\
\HH \cu && \cu
\end{tangles}$ 
&
Sect. \ref{subsection-coloops}
\\[.5cm]
& & & unital 
& 
$\begin{tangle}
\HH \step[.5] \unit \\  
\HH \cd* 
\end{tangle} 
\ = \ 
\begin{tangle}
\HH \unit \step \unit 
\end{tangle}$ 
&
Sect. \ref{subsection-coloops}
\\[.5cm]
& & & right cocancellation 
& 
$\begin{tangle}
\HH \step[.5] \cd \\  
\HH \cd* \step[.5] \hd \\  
\HH \id \step \cu
\end{tangle} 
\ = \ 
\begin{tangle}
\HH \step[.5] \cd* \\  
\HH \cd \step[.5] \hd \\  
\HH \id \step \cu
\end{tangle} 
\ = \ 
\begin{tangle}
\HH \id \\ 
\HH \id \step \unit
\end{tangle}$ 
&
Eq. (\ref{right-cocancellation})
\\[.8cm]
%
%
%%%%%%%%%%%%%%%%%%% left codivision
%
$\delta_l$ & left codivision 
& 
$\begin{tangle}
\cd* \step[-1] \object{l} 
\end{tangle}$ 
& folding morphism 
& 
$\begin{tangle}
\HH \cu \\ 
\HH \cd* \\
\end{tangle}
\ = \
\begin{tangles}{lcr}
\HH \cd* && \cd* \\
\HH \id & \x & \id \\
\HH \cu && \cu
\end{tangles}$ 
&
Sect. \ref{subsection-coloops}
\\[.5cm]
& & & unital 
& 
$\begin{tangle}
\HH \step[.5] \unit \\  
\HH \cd* 
\end{tangle} 
\ = \ 
\begin{tangle}
\HH \unit \step \unit 
\end{tangle}$ 
&
Sect. \ref{subsection-coloops}
\\[.5cm]
& & & left cocancellation 
& 
$\begin{tangle}
\HH \step[.5] \cd \\  
\HH \hdd \step[.5] \cd* \\  
\HH \cu \step \id
\end{tangle} 
\ = \ 
\begin{tangle}
\HH \step[.5] \cd* \\  
\HH \hdd \step[.5] \cd \\  
\HH \cu \step \id
\end{tangle} 
\ = \ 
\begin{tangle}
\HH \step \id \\ 
\HH \unit \step \id
\end{tangle}$ 
&
Eq. (\ref{left-cocancellation})
\end{tabular}
\bigskip 

\noindent
\parbox{18cm}{
{\bf Further coloop maps} \\ \nobreak
\begin{tabular}{p{0.3cm}p{2.4cm}p{2cm}p{3.5cm}p{3.5cm}p{2cm}}
\\[.1cm]
%
%%%%%%%%%%%%%%%%%%% right antipode
%
$S_r$ & right antipode 
& 
$\begin{tangle}
\HH \morph r 
\end{tangle} 
:= \ 
\begin{tangle}
\HH \object{{\ }^r} \\[-.2cm] 
\HH \cd* \\
\HH \step \id \\
\HH \counit 
\end{tangle}$ 
& folding morphism 
& 
$\begin{tangle}
\HH \step[.5] \cu \\ 
\morph r \\
\end{tangle}
\ = \
\begin{tangles}{lcr}
\HH \morph r  \morph r \\
\step \cu 
\end{tangles}$ 
&
Sect. \ref{subsection-coloops}
\\[.6cm]
& & & unital 
& 
$\begin{tangle}
\HH \step \unit \\  
\morph r
\end{tangle} 
\ = \ 
\begin{tangle}
\HH \unit 
\end{tangle}$ 
&
Sect. \ref{subsection-coloops}
\\[.8cm]
%
%%%%%%%%%%%%%%%%%%% left antipode
%
$S_l$ & left antipode 
& 
$\begin{tangle}
\HH \morph l 
\end{tangle} 
:= \ 
\begin{tangle}
\HH \object{{\ }^l} \\[-.3cm] 
\HH \cd* \\
\HH \id \\
\HH \step \counit 
\end{tangle}$ 
& folding morphism 
& 
$\begin{tangle}
\HH \step[.5] \cu \\ 
\HH \morph l \\
\end{tangle}
\ = \
\begin{tangles}{lcr}
\HH \morph l \morph l \\
\step \cu 
\end{tangles}$ 
&
Sect. \ref{subsection-coloops}
\\[.6cm]
& & & unital 
& 
$\begin{tangle}
\HH \step \unit \\  
\HH \morph l 
\end{tangle} 
\ = \ 
\begin{tangle}
\HH \unit 
\end{tangle}$ 
&
Sect. \ref{subsection-coloops}
\end{tabular}
}
\bigskip

\noindent
\parbox{18cm}{
{\bf Properties of coloops} \\ \nobreak
\begin{tabular}{p{3.5cm}p{6cm}p{3.5cm}p{2cm}}
\\[.1cm]
%
%%%%%%%%%%%%%%%%%%% unicity of convolution unit
%
& 
$\mu\, \delta_r = \mu\, \delta_l = u\, \varepsilon$
&
$\begin{tangle}
\HH \object{{\ }^r} \\[-.2cm] 
\HH \cd* \\
\HH \id \step \id \\
\HH \cu 
\end{tangle} 
\ = \ 
\begin{tangle}
\HH \object{{\ }^l} \\[-.3cm]   
\HH \cd* \\ 
\HH \id \step \id \\
\HH \cu  
\end{tangle} 
\ = \ 
\begin{tangle}
\\[.5cm]
\HH \counit \\ 
\HH \unit 
\end{tangle}$ 
&
Eq. (\ref{multiplication-codivision})
\\[.8cm]
%
%%%%%%%%%%%%%%%%%%% 
%
partial counitality
& 
$(\Id\scoprod \varepsilon)\, \delta_r 
= (\varepsilon\scoprod\Id)\, \delta_l = \Id$
& 
$\begin{tangle}
\HH \object{{\ }^r} \\[-.2cm] 
\HH \cd* \\
\HH \id  \\
\HH \step \counit 
\end{tangle} 
\ = \ 
\begin{tangle}
\HH \object{{\ }^l} \\[-.3cm]   
\HH \cd* \\ 
\HH \step \id  \\ 
\HH \counit  
\end{tangle} 
\ = \ 
\begin{tangle}
\HH \id \\ 
\HH \id
\end{tangle}$ 
&
Eq. (\ref{counit-codivision})
\\[.8cm]
%
%%%%%%%%%%%%%%%%%%% 
%
\parbox{3.5cm}{left and right \\ 5-terms identities}
& 
$\mu\,(S_r\scoprod \Id)\,\D = \mu\,(\Id \scoprod S_l)\,\D = u\,\varepsilon$
& 
$\begin{tangle}
\HH \step \cd \\
{\hh \morph r} \id \\ 
\HH \step \cu 
\end{tangle} 
\ = \ 
\begin{tangle}
\HH \cd \\
\id {\hh \morph l} \\ 
\HH \cu 
\end{tangle} 
\ = \ 
\begin{tangle}
\\[.5cm]
\HH \counit \\ 
\HH \unit 
\end{tangle}$ 
&
Eq. (\ref{antipode-left-right-5terms})
\end{tabular}
}
\bigskip

\noindent
{\bf Proof of Eq. (\ref{multiplication-codivision})}: For $\delta_r$
% \begin{align*}
% \mu\,  \delta_r 
% & = \mu\,  (\Id \scoprod \Id) \,  \delta_r \\ 
% & = \mu\,  
% \Big((\mu\,  ((u\, \varepsilon)\scoprod \Id)\,  \D)\scoprod \Id\Big) 
% \,  \delta_r \\ 
% & = \mu\,  (\mu \scoprod \Id) 
% \,  ((u\, \varepsilon)\scoprod \Id\scoprod \Id) 
% \,  (\D\scoprod \Id) \,  \delta_r \\ 
% & = \mu\,  (\Id \scoprod \mu) 
% \,  ((u\, \varepsilon)\scoprod \Id\scoprod \Id) 
% \,  (\D\scoprod \Id) \,  \delta_r \\ 
% & = \mu\,  ((u\, \varepsilon)\scoprod \Id) 
% \,  (\Id \scoprod \mu) \,  (\D\scoprod \Id) \,  \delta_r \\ 
% & = \mu\,  ((u\, \varepsilon)\scoprod \Id) 
% \,  (\Id \scoprod u) \\ 
% & = u\, \varepsilon,  
% \end{align*}
% With tangles: 
\begin{align*}
\begin{tangle}
\\[-.3cm] 
\HH \object{{\ }^r} \\[-.2cm] 
\HH \cd* \\
\HH \id \step \id \\
\HH \cu 
\end{tangle} 
\ = \ 
\begin{tangle}
\\[.5cm]
\HH \step[.5] \cd* \\
\HH \cd \step[.5] \hd \\
\HH \step \cu \\ 
\HH \counit
\end{tangle} 
\ = \ 
\begin{tangle}
\\[.5cm]
\HH \id \\
\HH \id \step \unit \\ 
\HH \counit 
\end{tangle} 
\ = \ 
\begin{tangle}
\\[.5cm]
\HH \counit \\ 
\HH \unit 
\end{tangle} 
\end{align*}
and similarly for $\delta_l$. 
\bigskip 

\noindent
{\bf Proof of Eq. (\ref{counit-codivision})}: For $\delta_r$
% \begin{align*}
% (\Id\scoprod \varepsilon) \,  \delta_r 
% & = (\Id\scoprod \varepsilon \scoprod \Id) 
% \,  (\delta_r \scoprod \varepsilon) \,  \D \\ 
% & = (\Id\scoprod \varepsilon \scoprod \varepsilon) 
% \,  (\delta_r \scoprod \Id) \,  \D \\ 
% & = (\Id\scoprod \varepsilon) \,  (\Id\scoprod \mu) 
% \,  (\delta_r \scoprod \Id) \,  \D \\ 
% & = (\Id\scoprod \varepsilon) \,  (\Id\scoprod u) \\ 
% & = \Id ,  
% \end{align*}
% With tangles: 
\begin{align*}
\begin{tangle}
\HH \object{{\ }^r} \\[-.2cm] 
\HH \cd* \\
\HH \id  \\
\HH \step \counit 
\end{tangle} 
\ = \ 
\begin{tangle}
\HH \step[.5] \cd \\ 
\HH \cd* \step[.5] \hd \\
\HH \id \\
\HH \step \counit \step \counit 
\end{tangle} 
\ = \ 
\begin{tangle}
\HH \step[.5] \cd \\ 
\HH \cd* \step[.5] \hd \\
\HH \id \step \cu \\ 
\HH \step \step[.5] \id \\
\HH \step \step[.5] \counit 
\end{tangle} 
\ = \ 
\begin{tangle}
\\[.5cm]
\HH \id \\
\HH \id \step \unit \\ 
\HH \step \counit 
\end{tangle} 
\ = \ 
\begin{tangle}
\HH \id \\
\HH \id 
\end{tangle} 
\end{align*}
and similarly for $\delta_l$. 
\bigskip 

\noindent
{\bf Proof of Eq. (\ref{antipode-left-right-5terms})}: For $S_r$
% for $S_r = (\varepsilon\scoprod \Id)\,  \delta_r$, we have 
% \begin{align*}
% \mu\,  (S_r \scoprod \Id) \,  \D & 
% =  (\varepsilon\scoprod \mu) \,  (\delta_r\scoprod \Id) \,  \D \\
% & = (\varepsilon\scoprod u) = u\, \varepsilon ,  
% \end{align*}
% With tangles: 
\begin{align*}
\begin{tangle}
\HH \step \cd \\
{\hh \morph r} \id \\ 
\HH \step \cu 
\end{tangle} 
\ = \ 
\begin{tangle}
\HH \step[.5] \cd \\
\HH \cd* \step[.5] \hd \\
\HH \step \id \step \id \\
\HH \counit \step \cu \\
\end{tangle} 
\ = \ 
\begin{tangle}
\\[.5cm]
\HH \counit \\ 
\HH \unit 
\end{tangle} 
\end{align*}
and similarly for $S_l$. 
\bigskip 

\noindent
\parbox{18cm}{
{\bf Properties of cogroups} \\ \nobreak
\begin{tabular}{p{3.5cm}p{6cm}p{3.5cm}p{2cm}}
\\[.1cm]
%
%%%%%%%%%%%%%%%%%%% 
%
coassociative
& 
$(\D\scoprod \Id)\,\D = (\Id\scoprod\D)\,\D$
&
$\begin{tangle}
\HH \step[0.5] \cd  \\
\HH \cd \step[0.5] \id 
\end{tangle} 
\ = \ 
\begin{tangle}
\HH \cd \\ 
\HH \id \step[0.5] \cd 
\end{tangle}$
&
Eq. (\ref{coassociative})
\\[.8cm]
%
%%%%%%%%%%%%%%%%%%% 
%
unique antipode
&
$S_r=S_l=:S$ 
&
$\begin{tangle}
\HH \object{{\ }^r} \\[-.2cm] 
\HH \cd* \\
\HH \step \id \\
\HH \counit 
\end{tangle}
\ =\ 
\begin{tangle}
\HH \object{{\ }^l} \\[-.3cm] 
\HH \cd* \\
\HH \id \\
\HH \step \counit 
\end{tangle}
\ =: 
\begin{tangle}
\HH \morph S 
\end{tangle}$ 
&
Prop. \ref{coloop-cogroup}
\\[.8cm]
%
%%%%%%%%%%%%%%%%%%% 
%
5-terms identity
& 
$\mu\,(S\scoprod \Id)\,\D = \mu\,(\Id\scoprod S)\,\D = u\,\varepsilon$
& 
$\begin{tangle}
\HH \step \cd \\
{\hh \morph S} \id \\ 
\HH \step \cu 
\end{tangle} 
\ = \ 
\begin{tangle}
\HH \cd \\
\id {\hh \morph S} \\ 
\HH \cu 
\end{tangle} 
\ = \ 
\begin{tangle}
\\[.5cm]
\HH \counit \\ 
\HH \unit 
\end{tangle}$ 
&
Eq. (\ref{antipode-5-terms})
\\[.8cm]
%
%%%%%%%%%%%%%%%%%%% 
%
\parbox{3.5cm}{right coinverse \\ property}
& 
$\delta_r = (\Id\scoprod S)\,\D$
& 
$\begin{tangle}
\HH \step[.5] \id \step[.5] \object r \\[-.2cm] 
\HH \cd* \\ 
\HH \id \step \id
\end{tangle} 
\ = \ 
\begin{tangle}
\HH \cd \\
\id {\hh \morph S} \\ 
\end{tangle}$
&
Eq. (\ref{coinverse})
\\[.8cm]
%
%%%%%%%%%%%%%%%%%%% 
%
\parbox{3.5cm}{left coinverse \\ property}
& 
$\delta_l = (S\scoprod \Id)\,\D$
& 
$\begin{tangle}
\HH \step[.5] \id \step[.5] \object l \\[-.2cm] 
\HH \cd* \\ 
\HH \id \step \id
\end{tangle} 
\ = \ 
\begin{tangle}
\HH \step \cd \\
{\hh \morph S} \id \\ 
\end{tangle}$ 
&
Eq. (\ref{coinverse})
\end{tabular}
}
\bigskip

%\begin{align}
%\label{antipode-comultiplication}
%\D\,  S & = (S\scoprod S)\,  \tau\,  \D  ,
%& \begin{tangle}
%\HH \morph S \\ 
%\HH \step[.5] \cd 
%\end{tangle} 
%\ = \ 
%\begin{tangle}
%\step \cd \\ 
%\step \x \\ 
%\HH \morph S  \morph S 
%\end{tangle} 
%\end{align}

\begin{lemma}
[cf. Prop. \ref{coloop-cogroup}]
If a coloop $H$ is coassociative, then it has a two-sided antipode satisfying 
the 5-terms identity (\ref{antipode-5-terms}) and the coinverse properties 
(\ref{coinverse}). Therefore it is a cogroup. 
\end{lemma}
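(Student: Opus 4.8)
The plan is to dualise, in the tangle calculus of the Appendix, the two elementary facts about groups behind Proposition~\ref{coloop-cogroup}: in a monoid a left inverse equals a right inverse, and in a group the divisions are recovered from the inverse. It will be convenient to write $f\star g:=\mu\,(f\scoprod g)\,\D$ for maps $f,g:H\to H$; by the counitary property~(\ref{counitary}) the map $u\,\varepsilon$ is a two-sided unit for $\star$, and since $\mu$ is associative, coassociativity~(\ref{coassociative}) makes $\star$ associative.

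\emph{Step 1: the two antipodes agree, and the $5$-terms identity holds.} By~(\ref{antipode-left-right-5terms}) we have $S_r\star\Id=u\,\varepsilon$ and $\Id\star S_l=u\,\varepsilon$. I would then run the tangle chain
$$
S_r \;=\; S_r\star(u\,\varepsilon)\;=\;S_r\star(\Id\star S_l)\;=\;(S_r\star\Id)\star S_l\;=\;(u\,\varepsilon)\star S_l\;=\;S_l,
$$
whose only nontrivial move is the middle associativity step, which is exactly~(\ref{coassociative}) together with the associativity of $\mu$. Writing $S:=S_r=S_l$, the identities~(\ref{antipode-left-right-5terms}) become $\mu\,(S\scoprod\Id)\,\D=\mu\,(\Id\scoprod S)\,\D=u\,\varepsilon$, i.e.\ the $5$-terms identity~(\ref{antipode-5-terms}); so $H$ is already a cogroup.

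\emph{Step 2: the coinverse property $\delta_r=(\Id\scoprod S)\,\D$.} The idea is that both $\delta_r$ and $(\Id\scoprod S)\,\D$ are carried to $i_1$ by the operator $Z\mapsto(\Id\scoprod\mu)\,(\D\scoprod\Id)\,Z$. For $Z=\delta_r$ this is the second cocancellation of~(\ref{right-cocancellation}). For $Z=(\Id\scoprod S)\,\D$ I would move the tensor factors past each other using the bifunctoriality of $\scoprod$, apply coassociativity~(\ref{coassociative}) to rewrite $(\D\scoprod\Id)\,\D$ as $(\Id\scoprod\D)\,\D$, contract the inner $\mu\,(\Id\scoprod S)\,\D$ to $u\,\varepsilon$ by Step~1, and finish with the counitary property and the factorisation $i_1=(\Id\scoprod u)\,\varphi_1$ of Definition~\ref{def-coloop}; this short tangle computation yields $(\Id\scoprod\mu)\,(\D\scoprod\Id)\,(\Id\scoprod S)\,\D=i_1$ as well.

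\emph{Step 3: cancelling the operator.} It then remains to see that $(\Id\scoprod\mu)\,(\D\scoprod\Id)$ is a split monomorphism of $H\scoprod H$, so that the two equalities of Step~2 force $\delta_r=(\Id\scoprod S)\,\D$. For this I would build a retraction out of $\delta_r$: sliding the two copies of $\mu$ together with the associativity of $\mu$, using the bifunctoriality of $\scoprod$, and applying the first cocancellation of~(\ref{right-cocancellation}), a few tangle moves give
$$
(\Id\scoprod\mu)\,(\delta_r\scoprod\Id)\,(\Id\scoprod\mu)\,(\D\scoprod\Id)\;=\;\Id_{H\scoprod H}.
$$
The left codivision identity $\delta_l=(S\scoprod\Id)\,\D$ is proved symmetrically, exchanging the roles of the two legs (so with $\mu\scoprod\Id$, $\Id\scoprod\D$, the second cocancellation of~(\ref{left-cocancellation}), and $i_2=(u\scoprod\Id)\,\varphi_2$). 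Hence $H$ is a cogroup satisfying the coinverse properties~(\ref{coinverse}). The main obstacle is Step~3: one has to guess the auxiliary operator $(\Id\scoprod\mu)\,(\delta_r\scoprod\Id)$ and keep the $\scoprod$-bookkeeping straight through the repeated (co)associativity moves; Steps~1 and~2 are routine once the operations are rewritten in the convolution form~$\star$.
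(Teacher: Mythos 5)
Your proposal is correct and follows essentially the same route as the paper's Appendix proof: the identity $S_r=S_l$ via associativity of the convolution $\star$ (which is exactly the paper's first tangle chain), and the coinverse property by checking that $(\Id\scoprod S)\,\D$ satisfies the right cocancellation(s) and must therefore equal $\delta_r$. The only difference is cosmetic: the paper verifies both cocancellations for $(\Id\scoprod S)\,\D$ and appeals implicitly to uniqueness of the codivision, whereas you verify one and make the uniqueness explicit by exhibiting $(\Id\scoprod\mu)(\delta_r\scoprod\Id)$ as a retraction of $(\Id\scoprod\mu)(\D\scoprod\Id)$ — a slightly more self-contained finish.
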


\begin{proof}
Assume that $\D$ is coassociative. then we have $S_l=S_r$ because 
$$
\begin{tangle}
\HH \morph l 
\end{tangle} 
\ = \ 
\begin{tangle}
\cd  \\
\counit \step \morph l \\ 
\HH \unit \step[2] \id \\ 
\cu
\end{tangle} 
\ = \ 
\begin{tangle}
\step[1.5] \cd  \\
\HH \step \cd \step[1.5] \id \\ 
\morph r \id \step[.5] \morph l \\ 
\HH \step \cu \step[1.5] \id \\ 
\step[1.5] \cu
\end{tangle} 
\ = \
\begin{tangle}
\step \cd  \\
\HH \step \id \step[1.5] \cd \\ 
\morph r \step[.5] \id \morph l \\ 
\HH \step \id \step[1.5] \cu \\ 
\step \cu
\end{tangle} 
\ = \
\begin{tangle}
\step \cd  \\
\morph r \step \counit \\ 
\HH \step \id \step[2] \unit \\ 
\step \cu
\end{tangle} 
\ = \
\begin{tangle}
\HH \morph r 
\end{tangle} 
$$
and therefore $S:= S_l=S_r$ satisfies the 5-terms identity becuase of 
(\ref{antipode-left-right-5terms}). 

For the the coinverse properties, let us show that the operator 
$R:= (\Id\scoprod S) \D$ satisfies the right cocancellations 
(\ref{left-cocancellation}), and therefore it coincides with $\delta_r$. 
In fact, we have
$$
\begin{tangle}
\HH \step[.5] \cd \\  
\HH \step[-.5] \object{R} \step[.5] \cd* \step[.5] \hd \\  
\HH \id \step \cu
\end{tangle} 
\ = \ 
\begin{tangle}
\HH \step[.5] \cd \\  
\HH \cd \step[.5] \hd \\ 
\id \morph S \id \\  
\HH \id \step \cu
\end{tangle} 
\ = \ 
\begin{tangle}
\HH \step[.5] \cd \\  
\HH \hdd \step[.5] \cd \\ 
\id \morph S \id \\  
\HH \id \step \cu
\end{tangle} 
\ = \ 
\begin{tangle}
\HH \cd \\  
\id \step \counit \\ 
\HH \id \step \unit
\end{tangle} 
\ = \ 
\begin{tangle}
\HH \id \\ 
\HH \id \step \unit
\end{tangle}
$$
and
$$ 
\begin{tangle}
\HH \object{R} \step[.5] \cd* \\  
\HH \cd \step[.5] \hd \\  
\HH \id \step \cu
\end{tangle} 
\ = \ 
\begin{tangle}
\HH \step[.5] \cd \\  
\HH \cd \step[.5] \hd \\ 
\id \step \id \morph S \\  
\HH \id \step \cu
\end{tangle} 
\ = \ 
\begin{tangle}
\HH \step[.5] \cd \\  
\HH \hdd \step[.5] \cd \\ 
\id \step \id \morph S \\  
\HH \id \step \cu
\end{tangle} 
\ = \ 
\begin{tangle}
\HH \cd \\ 
\id \step \counit \\ 
\HH \id \step \unit
\end{tangle} 
\ = \ 
\begin{tangle}
\HH \id \\ 
\HH \id \step \unit
\end{tangle}
$$
Similarly, the operator $(S\scoprod \Id)\D$ satisfies the left-cocancellations 
(\ref{left-cocancellation}), and therefore it coincides with $\delta_l$. 
\end{proof}

%%%%%%%%%%%%%%%%%%%%%%%%%%%%%%%%%%%%%%%%%%%%%%%%%%%%%%%%%%%%%%%%%%%%%%%%

\bibliographystyle{alpha}

\begin{thebibliography}{}

\end{thebibliography}


\begin{thebibliography}{10}
\addcontentsline{toc}{section}{\bf References}

\bibitem{Albert} 
A. A. Albert, 
{\em Quadratic forms permitting composition}, 
Ann. of Math. Second Series, {\bf 43} (1) (1942), 161--177. 

\bibitem{BergmanHausknecht}
G. M. Bergman, A.-O.~Hausknecht, 
{\em Cogroups and co-rings in categories of associative algebras\/}, 
Math Surveys and Monographs {\bf 45}, AMS Providence, RI, 1996. 

\bibitem{Berstein} 
I.~Berstein, 
{\em On co-groups in the category of associative algebras\/}, 
Trans. Amer. Math. Soc. {\bf 115} (1965), 257--269. 

\bibitem{BlaschkeBol} 
W. Blaschke, G. Bol,
{\em Geometrie der Gewebe}, Springer-Verlag, 1938.
  
\bibitem{Bourbaki}
N. Bourbaki, 
{\em Algebra I: Chapters 1–3}, Springer-Verlag, 1989 [1970].  

\bibitem{BFqedren} 
Ch.~Brouder, A.~Frabetti, 
{\em Renormalization of QED with planar binary trees\/}, 
Eur.\ Phys.\ J.~C {\bf 19} (2001), 715--741. 

\bibitem{BFqedtree} 
Ch.~Brouder, A.~Frabetti, 
{\em QED Hopf algebras on planar binary trees\/}, 
J.~Alg.\ {\bf 267} (2003), 298--322. 

\bibitem{BFK} 
Ch.~Brouder, A.~Frabetti, Ch.~Krattenthaler,  
{\em Non-commutative Hopf algebra of formal diffeomorphisms\/}, 
Adv.~Math.\ {\bf 200} (2006), 479--524. 

\bibitem{BFM} 
Ch.~Brouder, A.~Frabetti, F.~Menous,  
{\em Combinatorial Hopf algebras from renormalization\/}, 
Journal of Algebraic Combinatorics {\bf 32} (2010), 557--578. 

\bibitem{Bruck}
R. H. Bruck, 
{\em A survey of binary systems\/}, 
Ergebnisse der Mathematik und ihrer Grenzgebiete, Springer-Verlag, 1958. 

\bibitem{ConnesKreimerI} 
A.~Connes, D.~Kreimer, 
{\em Renormalization in quantum field theory and the Riemann--Hilbert 
problem.~I. The Hopf algebra structure of graphs and the main theorem.\/} 
Comm.\ Math.\ Phys.\ {\bf 210} (2000), 249--273. 

\bibitem{ConnesKreimerII} 
A.~Connes, D.~Kreimer, 
{\em Renormalization in quantum field theory and the Riemann--Hilbert 
problem.~II. The $\beta$-function, diffeomorphisms and the renormalization 
group.\/} 
Comm.\ Math.\ Phys.\ {\bf 216} (2001), 215--241. 

\bibitem{Doubilet}
P. Doubilet, 
{\em A Hopf algebra arising from the lattice of partitions of a set}, 
J. Algebra 28 (1986), 127--132.

\bibitem{Dyson} 
F.~J.~Dyson, 
{\em The S matrix in quantum electrodynamics\/}, 
Phys. Rev. {\bf 76} (1949), 1736--1755. 

\bibitem{EckmannHiltonIII} 
B.~Eckmann, P. J. Hilton, 
{\em Group-like structures in general categories I, II, III\/}, 
Math. Ann. {\bf 143} (1962), 227--255; 
{\bf 151} (1963), 150--186; 
{\bf 150} (1963), 165--187. 

\bibitem{FrabettiManchon}
A.~Frabetti, D.~Manchon, 
{\em Five interpretations of Fa\`a di Bruno’s formula}, in 
IRMA Lect. in Math. and Theor. Physics {\bf 21}, 91--148,  
K. Ebrahimi-Fard and F. Fauvet ed., European Math. Soc., 2015.  

\bibitem{Fresse} 
B.~Fresse, 
{\em Cogroups in algebras over an operad are free algebras\/}, 
Comment. Math. Helv. {\bf 73} (1998), 637--676. 

\bibitem{Geissinger}
L. Geissinger,
{\em Hopf algebras of symmetric functions and class functions\/}, 
in {\em Combinatoire et repr\'esentations du groupe sym\'etrique\/}, 
D. Foata ed., Lect. Notes in Math., {\bf 579}, Springer, (1977), 168--181.

\bibitem{GKLLRT}
I. Gelfand, D. Krob, A. Lascoux, B. Leclerc, V. Retakh, J.-Y. Thibon,
{\em Non-commutative symmetric functions}, 
Adv. in Math. {\bf 112} (1995), 218--348.

\bibitem{Herscovich}
E. Herscovich,
{\em Renormalization in Quantum Field Theory (after R.Borcherds)},
preprint (2018),  
https://www-fourier.ujf-grenoble.fr/~eherscov/Articles/Renormalization-in-QFT.pdf

\bibitem{ItzyksonZuber}
C. Itzykson, J.-P.~Zuber, 
{\em Quantum Field Theory\/}, 
McGraw Hill 1980. 

\bibitem{JoniRota}
S. Joni, G.-C. Rota, 
{\em Coalgebras and bialgebras in combinatorics\/}, 
Stud. Appl. Math. {\bf 61} (1979), 93--139. 

\bibitem{KanMonoids}
D. M. Kan, 
{\em On monoids and their dual}, 
Bol. Soc. Mat. Mexicana {\bf 3} (1958), 52--61.

\bibitem{KanHomotopy}
D. M. Kan, 
{\em A combinatorial definition of homotopy groups\/}, 
Ann. of Math {\bf 67} (1958), 282--312.

\bibitem{KuzminShestakov}
E. N. Kuz'min, I. P. Shestakov,
{\em Nonassociative structures}, in ``Encyclopaedia of Math. Sciences 
{\bf 57}, Algebra VI, 199-280'', edited by A.I. Kostrikin and I.R. Shafarevich,
Springer-Verlag 1995.

\bibitem{Lagrange}
J.-L. Lagrange, 
{\em Nouvelle m\'ethode pour r\'esoudre les \'equations litt\'erales par le 
moyen des s\'eries\/}, 
M\'emoires de l'Acad\'emie Royale des Sciences et Belles-Lettres de Berlin 
{\bf 24} (1770), 251--326.

\bibitem{LodayRonco}  
J.--L. Loday, M. 
{\em Combinatorial Hopf algebras\/},  
Clay Mathematics Proceedings {\bf 11} (2010), 347--383.

\bibitem{LodayVallette}  
J.--L. Loday, B. Vallette, 
{\em Algebraic operads\/},  
Grundlehren der mathematischen Wissenschaften,
Vol. {\bf 346}, Springer-Verlag, 2012. 

\bibitem{Lyubashenko}
V. Lyubashenko,
{\em Tangles and Hopf algebras in braided categories},
Journal of Pure and Applied Algebra {\bf 98} (1995), 245--278.

\bibitem{Macfarlane}
A. Macfarlane,
{\em Principles of the Algebra of Physics},
Proceedings of the American Association for the Advancement of Science {\bf 40}
(1891), 65--117.

\bibitem{MacLane}
S. Mac lane,
{\em Categories for the working mathematician},
Graduate Text in Mathematics Vol.5, Springer, 1978 [1971]. 

\bibitem{Malcev}
A. I. Mal'cev, 
{\em Analytic loops\/}, 
Mat. Sb. N.S. (in Russian) {\bf 36} (1955), 569--576. 

%\bibitem{McKayMeynertMyrvold}
%B. D. McKay, A. Meynert, W. Myrvold, 
%{\em Small Latin Squares, Quasigroups and Loops\/}, 
%Journal of Combinatorial Designs {\bf 15} (2007), 98--119. 

\bibitem{MikheevSabinin}  
P. O. Mikheev, L. V. Sabinin, 
{\em Quasigroups and differential geometry\/}, 
in {\em Quasigroups and Loops: Theory and Applications\/}, 
Sigma Series in Pure Mathematics {\bf 8}, 
Heldermann, Berlin, 1990, 357--430. 

\bibitem{MostovoyPerezIzquierdo}
J. Mostovoy, J. M. P\'erez-Izquierdo, 
{\em Formal multiplications, bialgebras of distributions and non-associative 
Lie theory\/}, 
Transformation Groups {\bf 15} Issue 3 (2010), 625--653. 

\bibitem{MostovoyPerezIzquierdoShestakov-Hopf}
J. Mostovoy, J. M. Perez-Izquierdo, I. P. Shestakov, 
{\em Hopf algebras in non-associative Lie theory}, 
Bull. Math. Sci. {\bf 4} no. 1 (2014), 129--173.

%\bibitem{MostovoyPerezIzquierdoShestakov}
%J. Mostovoy, J. M. P\'erez-Izquierdo, I. P. Shestakov, 
%{\em On torsion-free nilpotent loops\/}, 
%preprint (2016), {\tt https://arxiv.org/pdf/1610.06663.pdf}

\bibitem{Moufang} 
R. Moufang, 
{\em Zur Struktur von Alternativk\"orpern\/},  
Math. Ann. {\bf 110} (1935), 416--430. 

\bibitem{A057771}
Sequence A057771 in the On-line Encyclopedia of
Integer Sequences, https://oeis.org/A057771. 

\bibitem{Paige}
L. J. Paige,
{\em A class of simple Moufang loops},
Proc. Amer. Math. Soc. {\bf 7} (1956), 471--482. 

\bibitem{Perez-Izquierdo}  
J. M. P\'erez-Izquierdo, 
{\em  Algebras, hyperalgebras, nonassociative bialgebras and loops\/},  
Adv. Math. {\bf 208} (2007), 834--876. 

\bibitem{PerezIzquierdoShestakov}  
J. M. P\'erez-Izquierdo, I. P. Shestakov, 
{\em  Co-Moufang deformations of the universal enveloping algebra 
of the algebra of traceless octonions}, 
Alg. and Rep. Th. {\bf 18} issue 5 (2015), 1247-1265. 

\bibitem{Pflugfelder}
H. O. Pflugfelder, 
{\em Quasigroups and Loops: Introduction}, Heldermann, Berlin 1990. 

\bibitem{Pflugfelder2000}
H. O. Pflugfelder, 
{\em Historical notes on loop theory},
Comment. Math. Univ. Carolin. {\bf 41} vol.2 (2000), 359--370. 

\bibitem{Pinter}
G. Pinter, 
{\em The Hopf Algebra Structure of Connes and Kreimer in Epstein-Glaser 
Renormalization}, 
Lett. Math. Phys. {\bf 54} (2000), 227--233.

\bibitem{SabininMikheev}  
L. V. Sabinin, P. O. Mikheev,  
{\em On the infinitesimal theory of local analytic loops\/},  
Sov. Math. Dokl. {\bf 36} (1988), 545--548. 

\bibitem{Serre}
J.-P. Serre,
{\em Lie Algebras and Lie Groups}, 
Benjamin, 1965. 

\bibitem{ShestakovUmirbaev}  
I. P. Shestakov, U. U. Umirbaev,  
{\em Free Akivis algebras, primitive elements, and hyperalgebras\/},  
J. Algebra {\bf 250} (2002), 533--548. 

\bibitem{vanSuijlekom}
W. van Suijlekom, 
{\em Renormalization of gauge fields using Hopf algebras}. 
In Quantum Field Theory, Eds. B. Fauser, J. Tolksdorf and E. Zeidler, 
Birkhauser Verlag, Basel, 2008.  

\bibitem{vanSuijlekom-multiplicative}
W. van Suijlekom, 
{\em Multiplicative renormalization and Hopf algebras}. 
In Arithmetic and Geometry Around Quantization, Eds. O. Ceyhan et al.,
Progress in Mathematics 279, Springer Science + Business Media LLC, 2010.

\bibitem{Yetter}
D. N. Yetter, 
{\em Quantum groups and representations of monoidal categories.}
Math. Proc. of the Cambridge Phil. Soc. {\bf 108} (1990), 261--290.

\bibitem{ZSSS}
K. A. Zhevlakov, A. M. Slin'ko, I. P. Shestakov, A. I. Shirshov, 
{\em Rings that are nearly associative}. 
Academic Press, 1982 [1978].

\bibitem{Zhang} 
J. J. Zhang, 
{\em $H$-algebras\/},  
Adv. Math. {\bf 89} (1991), 144--191. 

\end{thebibliography}

\end{document}